\colorlet{NewColor}{red!30!gray}
\numberwithin{equation}{section}
\theoremstyle{definition}
\newtheorem{example}{Example}[section]
\newtheorem{definition}[example]{Definition}
\theoremstyle{plain}
\newtheorem{lemma}[example]{Lemma}
\newtheorem{theorem}[example]{Theorem}
\newtheorem{proposition}[example]{Proposition}
\newtheorem{corollary}[example]{Corollary}
\newtheorem{assumption}{Assumption}
\DeclareMathOperator{\res}{res}
\DeclareMathOperator{\refin}{\reflectbox{$\in$}}
\DeclareMathOperator{\cross}{cross}
\DeclareMathOperator{\add}{\mathsf{add}}
\DeclareMathOperator{\proj}{\mathsf{proj}}
\DeclareMathOperator{\inj}{\mathsf{inj}}
\DeclareMathOperator{\Fac}{\mathsf{Fac}}
\DeclareMathOperator{\Sub}{\mathsf{Sub}}
\DeclareMathOperator{\Mod}{\mathsf{mod}}
\DeclareMathOperator{\thick}{\mathsf{thick}}
\DeclareMathOperator{\Top}{top}
\DeclareMathOperator{\Trop}{Trop}
\DeclareMathOperator{\Int}{\mathsf{Int}}
\DeclareMathOperator{\Asym}{\mathbb{A}_{\rm sym}}
\DeclareMathOperator{\Acom}{\mathbb{A}_{\rm com}}
\DeclareMathOperator{\oQ}{\overline{\mbox{$Q$}}}
\DeclareMathOperator{\oW}{\overline{\mbox{$W$}}}
\DeclareMathOperator{\ox}{\overline{\mbox{$x$}}}
\DeclareMathOperator{\oy}{\overline{\mbox{$y$}}}
\DeclareMathOperator{\oox}{\overline{\overline{\mbox{$x$}}}}
\DeclareMathOperator{\ooy}{\overline{\overline{\mbox{$y$}}}}
\DeclareMathOperator{\g}{\gamma}
\DeclareMathOperator{\de}{\delta}
\DeclareMathOperator{\w}{\omega}
\DeclareMathOperator{\z}{\zeta}
\DeclareMathOperator{\bA}{\mathbb{A}}
\DeclareMathOperator{\bB}{\mathbb{B}}
\DeclareMathOperator{\bI}{\mathbb{I}}
\DeclareMathOperator{\bP}{\mathbb{P}}
\DeclareMathOperator{\bS}{\mathbb{S}}
\DeclareMathOperator{\bZ}{\mathbb{Z}}
\DeclareMathOperator{\cA}{\mathcal{A}}
\DeclareMathOperator{\si}{\mathsf{i}}
\DeclareMathOperator{\sr}{\mathsf{r}}
\DeclareMathOperator{\oB}{\overline{B}}
\DeclareMathOperator{\oP}{\overline{P}}
\DeclareMathOperator{\wB}{\widetilde{B}}
\DeclareMathOperator{\wG}{\widetilde{G}}
\DeclareMathOperator{\wP}{\widetilde{P}}
\DeclareMathOperator{\wz}{\widetilde{\z}}
\DeclareMathOperator{\redb}{{\color{blue}\bullet}}
\DeclareMathOperator{\bluec}{{\color{blue}\circ}}
\tikzset{->-/.style={decoration={markings,mark=at position .5 with {\arrow{>}}},postaction={decorate}}}
\title{Combinatorial cluster expansion formulas from triangulated surfaces}
\author{Toshiya Yurikusa}
\address{T. Yurikusa: Graduate School of Mathematics, Nagoya University, Chikusa-ku, Nagoya, 464-8602 Japan}
\email{m15049q@math.nagoya-u.ac.jp}
\begin{document}

\subjclass[2000]{13F60, 05C70, 05E15}
\keywords{cluster algebra, triangulated surface, perfect matching, bipartite graph, quiver with potential}
\maketitle

\begin{abstract}
 We give a cluster expansion formula for cluster algebras with principal coefficients defined from triangulated surfaces in terms of perfect matchings of angles. Our formula simplifies the cluster expansion formula given by Musiker, Schiffler and Williams in terms of perfect matchings of snake graphs. A key point of our proof is to give a bijection between perfect matchings of angles in some triangulated polygon and perfect matchings of the corresponding snake graph. Moreover, they also correspond bijectively with perfect matchings of the corresponding bipartite graph and minimal cuts of the corresponding quiver with potential.
\end{abstract}


\section{Introduction}\label{intro}

 Cluster algebras, introduced by Fomin and Zelevinsky in $2002$ \cite{FZ1}, are commutative algebras with a distinguished set of generators, which are called cluster variables. Their original motivation was coming from an algebraic framework for total positivity and canonical bases in Lie Theory. In recent years, it has interacted with various subjects in mathematics, for example, quiver representations, Calabi-Yau categories, Poisson geometry, Teichm\"{u}ller spaces, exact WKB analysis, etc.

 In a cluster algebra with principal coefficients, by Laurent phenomenon, any cluster variable is expressed by a Laurent polynomial of the initial cluster variables $(x_1,\ldots,x_N)$ and coefficients $(y_1,\ldots,y_N)$
 \begin{equation*}
  x=\frac{f(x_1, \ldots,x_N,y_1,\ldots,y_N)}{x_1^{d_1} \cdots x_N^{d_N}},
 \end{equation*}
where $f(x_1, \ldots, x_N,y_1,\ldots,y_N) \in \bZ[x_1,\ldots, x_N,y_1,\ldots,y_N]$ and $d_i \in \bZ_{\ge 0}$ \cite{FZ1,FZ2}. An explicit formula for the Laurent polynomials of cluster variables is called a {\it cluster expansion formula}.

 We study cluster algebras defined from triangulated surfaces that are developed in \cite{FoG1,FoG2,FST,FT,GSV}. In this case, Musiker, Schiffler and Williams gave a cluster expansion formula in terms of perfect matchings of snake graphs. Using it, they proved the positivity conjecture \cite{MSW1} and constructed two bases \cite{MSW2} for these cluster algebras. The first aim of this paper is to give a cluster expansion formula for these cluster algebras in terms of perfect matchings of angles (Theorem \ref{main}). This simplifies their formula as we will discuss later. The second aim of this paper is to give bijections between several different combinatorial objects containing perfect matchings of snake graphs (Theorem \ref{bijthm}).

 This paper is organized as follows. In the rest of this section, we give our results and some examples. For simplicity, we first specialize Theorem \ref{main} to the coefficient-free case, that is $y_i = 1$ for all $i$ (Theorem \ref{main'}). Using Theorem \ref{main}, we also study {\it $f$-vectors} of cluster variables. In Section \ref{Preliminary}, we recall basic definitions and facts on cluster algebras, triangulated surfaces and the cluster expansion formula of Musiker-Schiffler-Williams. We prove Theorem \ref{main} and a part of Theorem \ref{bijthm} simultaneously in Section \ref{proofofmain}. We prove our results for the corresponding bipartite graphs in Section \ref{bipartite} and study minimal cuts of the corresponding quivers with potential in Sections \ref{pmmc}. Finally, some elements in $\cA(T)$ correspond to {\it essential loops} in $T$ (see Section \ref{essentialloops} for details). In the case of a marked surface without punctures, it is known that these elements and cluster variables form a base of $\cA(T)$ (see Theorem \ref{base}). We give the formula for these elements in terms of good perfect matchings of angles in Theorem \ref{loopformula}.

\subsection{Our results in the coefficient-free case}\label{cfcase}

 Let $(S,M)$ be a marked surface and $T$ a tagged triangulation of $(S,M)$. Let $\cA(T)$ be the cluster algebra with principal coefficients defined from $T$ (see Subsection \ref{clalgfromtri}). Then there is a bijection between cluster variables in $\cA(T)$ and tagged arcs of $(S,M)$, which are obtained from ordinary arcs by tagging their endpoints {\it plain} or {\it notched} (see Theorem \ref{bij}). We represent tagged arcs as follows:
\[
\begin{tikzpicture}
 \coordinate (0) at (0,0) node[left]{plain};
 \coordinate (1) at (1,0);   \fill (1) circle (0.7mm);
 \draw (0) to (1);
\end{tikzpicture}
\hspace{7mm}
\begin{tikzpicture}
 \coordinate (0) at (0,0) node[left]{notched};
 \coordinate (1) at (1,0);   \fill (1) circle (0.7mm);
 \draw (0) to node[pos=0.8]{\rotatebox{90}{\footnotesize $\bowtie$}} (1);
\end{tikzpicture}
\]

\noindent For simplicity, in this paper, we assume that if $(S,M)$ is a closed surface with exactly one puncture, all tagged arcs are plain arcs. For a tagged arc $\de$, we denote by $x_{\de}$ the corresponding cluster variable in $\cA(T)$. We index the tagged arcs of $T$ by $[1,N] := \{1,\ldots,N\}$. In particular, $x_i$ (resp., $y_i$) is the corresponding initial cluster variable (resp., coefficient) in $\cA(T)$ for $i \in [1,N]$.

\begin{definition}
 We call a tagged arc $\de$
\begin{itemize}
 \item a {\it plain arc} if its both ends are tagged plain,
 \item a {\it $1$-notched arc} if an end of $\de$ is tagged plain and the other end is tagged notched,
 \item a {\it $2$-notched arc} if its both ends are tagged notched.
\end{itemize}
\end{definition}

 To give cluster expansion formulas, by changing tags, we can make the following assumption (see Proposition \ref{MSW1315}).
\begin{assumption}\label{ass1}
 The initial tagged triangulation $T$ consists of plain arcs and $1$-notched arcs, with at most one $1$-notched arc incident to each puncture.
\end{assumption}
\noindent In this case, for each $1$-notched arc of $T$, the corresponding plain arc is also in $T$. Then there is a unique ideal triangulation $T^0$ obtained from $T$ by replacing every $1$-notched arc with the corresponding loop cutting out a once-punctured monogon and by forgetting tags.

 For a tagged arc $\de$ of $(S,M)$, we denote by $\overline{\de}$ the plain arc corresponding to $\de$. Now, we only consider the case of $\g := \overline{\de} \notin T$. Let $p$ and $q$ be the endpoints of $\g$. Let $\g^{(p)}$ be the $1$-notched arc obtained from $\g$ by tagging its end $p$ notched. Similarly, we define the $2$-notched arc $\g^{(pq)}$ with both ends tagged notched:
\[
\begin{tikzpicture}
 \coordinate (0) at (0,0);   \fill (0) circle (0.7mm);
 \coordinate (1) at (1.5,0);   \fill (1) circle (0.7mm);
 \coordinate (2) at (0.75,-0.27) node[below] at (2) {$\g$};
 \node at (0.8,0.05){};
 \node at (0.8,-0.08){};
 \draw (0) to (1);
\end{tikzpicture}
\hspace{7mm}
\begin{tikzpicture}
 \coordinate (0) at (0,0);   \fill (0) circle (0.7mm);
 \coordinate (1) at (1.5,0) node[right] at (1) {$p$};   \fill (1) circle (0.7mm);
 \coordinate (2) at (0.75,-0.1) node[below] at (2) {$\g^{(p)}$};
 \draw (0) to node[pos=0.9]{\rotatebox{90}{\footnotesize $\bowtie$}} (1);
\end{tikzpicture}
\hspace{7mm}
\begin{tikzpicture}
 \coordinate (0) at (0,0) node[left] at (0) {$q$};   \fill (0) circle (0.7mm);
 \coordinate (1) at (1.5,0) node[right] at (1) {$p$};   \fill (1) circle (0.7mm);
 \coordinate (2) at (0.75,-0.1) node[below] at (2) {$\g^{(pq)}$};
 \draw (0) to node[pos=0.1]{\rotatebox{90}{\footnotesize $\bowtie$}} node[pos=0.9]{\rotatebox{90}{\footnotesize $\bowtie$}} (1);
\end{tikzpicture}
\]
In particular, $\de=\g$, $\g^{(p)}$ or $\g^{(pq)}$. By changing tags, we can make the following assumption (see Proposition \ref{MSW1315}).
\begin{assumption}\label{ass2}
 If $\de = \g^{(p)}$ (resp., $\de = \g^{(pq)}$), there is no $1$-notched arc incident to $p$ (resp., $p$ or $q$) in $T$.
\end{assumption}
 Our cluster expansion formula for $x_{\g}$ (resp., $x_{\g^{(p)}}$, $x_{\g^{(pq)}}$) comes down to type $A$ (resp., $D$, $\widetilde{D}$) corresponding to polygons with no punctures (resp., one puncture, two punctures). We construct a triangulated polygon $T_{\de}$ associated with $\de$ as follows.

 Let $\tau_1,\ldots,\tau_n$ be the arcs of $T^0$ crossing $\g$ in order of occurrence along $\g$ (we can have $\tau_{i}=\tau_j$ even if $i \neq j$). Hence $\g$ crosses $n+1$ triangles $\triangle_0,\ldots,\triangle_n$, in this order. Suppose first that none of these triangles is self-folded. Then for $i \in [0,n]$, let $\triangle_{\g,i}$ be a copy of the oriented triangle $\triangle_i$, hence $\triangle_{\g,i}$ contains the sides $\tau_i$ and $\tau_{i+1}$ ($\tau_1$ only if $i=0$, and $\tau_n$ only if $i=n$). Then $T_{\g}$ is the triangulation of an $(n+3)$-gon obtained by gluing these triangles along the edges $\tau_i$. Similarly, we construct $T_{\g^{(p)}}$ (resp., $T_{\g^{(pq)}}$) by adjoining to $T_{\g}$ copies of all triangles incident to $p$ (resp., $p$ and $q$) if none of them is self-folded. See Figure \ref{Tdelta}. If $\g$ crosses self-folded triangles or there are self-folded triangles incident to $p$ or $q$, we adapt the construction using the local transformations of Figure \ref{replace1notched}. Note that, by Assumption \ref{ass2}, it is not necessary to consider the case, where the end of $\de$ in the middle of Figure \ref{replace1notched} is tagged notched.

\begin{figure}[h]
   \caption{Triangulated polygon $T_{\de}$ for each tagged arc $\de$}
   \label{Tdelta}
\[
\begin{tikzpicture}
 \coordinate (l) at (0,0);
 \coordinate (lu) at (0.8,1);
 \coordinate (ru) at (3.1,1);
 \coordinate (ld) at (0.8,-1);
 \coordinate (rd) at (3.1,-1);
 \coordinate (r) at (3.9,0);
 \coordinate (u) at (2.1,1);
 \coordinate (d) at (1.8,-1);
 \node at (1.75,0.4) {$\cdots$};
 \node at (1.95,-1.4) {$T_{\g}$ (type $A$)};
 \draw (l) to (lu);
 \draw (l) to (ld);
 \draw (lu) to node[fill=white,inner sep=1,pos=0.4]{\scriptsize $\tau_1$} (ld);
 \draw (lu) to (ru);
 \draw (ld) to (rd);
 \draw (ru) to (r);
 \draw (rd) to (r);
 \draw (ru) to node[fill=white,inner sep=1,pos=0.4]{\scriptsize $\tau_n$} (rd);
 \draw (u) to node[fill=white,inner sep=1,pos=0.4]{\scriptsize $\tau_{n-1}$} (rd);
 \draw (lu) to node[fill=white,inner sep=1,pos=0.4]{\scriptsize $\tau_2$} (d);
 \draw[blue] (l) to node[fill=white,inner sep=1]{\scriptsize $\g$} (r);
\end{tikzpicture}
     \hspace{4mm}
\begin{tikzpicture}
 \coordinate (l) at (0,0);
 \coordinate (lu) at (0.8,1);
 \coordinate (ld) at (0.8,-1);
 \coordinate (ru) at (3.1,1);
 \coordinate (rd) at (3.1,-1);
 \coordinate (rru) at (4.7,1);
 \coordinate (rrd) at (4.7,-1);
 \coordinate (r) at (3.9,0);
 \coordinate (u) at (2.1,1);
 \coordinate (d) at (1.8,-1);
 \node at (1.75,0.4) {$\cdots$};
 \node at (2.35,-1.4) {$T_{\g^{(p)}}$ (type $D$)};
 \node at (4.05,0.15) {$p$};
 \draw (l) to (lu);
 \draw (l) to (ld);
 \draw (lu) to node[fill=white,inner sep=1,pos=0.4]{\scriptsize $\tau_1$} (ld);
 \draw (lu) to (ru);
 \draw (ld) to (rd);
 \draw (ru) to (r);
 \draw (rd) to (r);
 \draw (ru) to (rru);
 \draw (rru) to (rrd);
 \draw (rrd) to (rd);
 \draw (rrd) to (r);
 \draw (ru) to node[fill=white,inner sep=1,pos=0.4]{\scriptsize $\tau_n$} (rd);
 \draw (u) to node[fill=white,inner sep=1,pos=0.4]{\scriptsize $\tau_{n-1}$} (rd);
 \draw (lu) to node[fill=white,inner sep=1,pos=0.4]{\scriptsize $\tau_2$} (d);
 \draw[blue] (l) to node[fill=white,inner sep=1]{\scriptsize $\g^{(p)}$} node[pos=0.95]{\rotatebox{90}{\footnotesize $\bowtie$}} (r);
 \draw[loosely dotted] (4.4,-0.2) arc (-30:110:0.5);
\end{tikzpicture}
     \hspace{4mm}
\begin{tikzpicture}
 \coordinate (l) at (0,0);
 \coordinate (lu) at (0.8,1);
 \coordinate (ld) at (0.8,-1);
 \coordinate (ru) at (3.1,1);
 \coordinate (rd) at (3.1,-1);
 \coordinate (rru) at (4.7,1);
 \coordinate (rrd) at (4.7,-1);
 \coordinate (llu) at (-0.8,1);
 \coordinate (lld) at (-0.8,-1);
 \coordinate (r) at (3.9,0);
 \coordinate (u) at (2.1,1);
 \coordinate (d) at (1.8,-1);
 \node at (1.75,0.4) {$\cdots$};
 \node at (1.95,-1.4) {$T_{\g^{(pq)}}$ (type $\widetilde{D}$)};
 \node at (4.05,0.15) {$p$};
 \node at (-0.15,-0.15) {$q$};
 \draw (l) to (lu);
 \draw (l) to (ld);
 \draw (lu) to node[fill=white,inner sep=1,pos=0.4]{\scriptsize $\tau_1$} (ld);
 \draw (lu) to (ru);
 \draw (ld) to (rd);
 \draw (ru) to (r);
 \draw (rd) to (r);
 \draw (ru) to (rru);
 \draw (rru) to (rrd);
 \draw (rrd) to (rd);
 \draw (rrd) to (r);
 \draw (lu) to (llu);
 \draw (llu) to (lld);
 \draw (lld) to (ld);
 \draw (llu) to (l);
 \draw (ru) to node[fill=white,inner sep=1,pos=0.4]{\scriptsize $\tau_n$} (rd);
 \draw (u) to node[fill=white,inner sep=1,pos=0.4]{\scriptsize $\tau_{n-1}$} (rd);
 \draw (lu) to node[fill=white,inner sep=1,pos=0.4]{\scriptsize $\tau_2$} (d);
 \draw[blue] (l) to node[fill=white,inner sep=1]{\scriptsize $\g^{(pq)}$} node[pos=0.05]{\rotatebox{90}{\footnotesize $\bowtie$}} node[pos=0.95]{\rotatebox{90}{\footnotesize $\bowtie$}} (r);
 \draw[loosely dotted] (4.4,-0.2) arc (-30:110:0.5);
 \draw[loosely dotted] (-0.5,0.2) arc (150:290:0.5);
\end{tikzpicture}
\]
\end{figure}
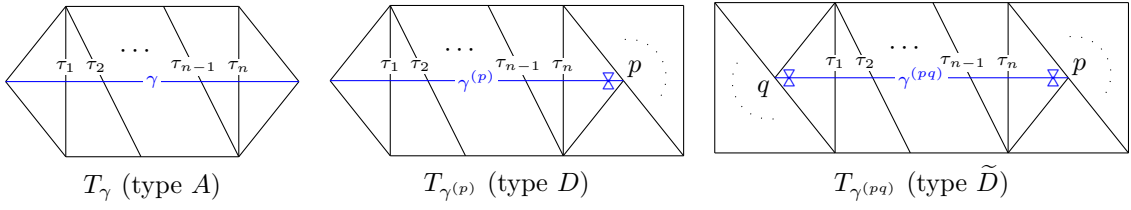
\begin{figure}[h]
   \caption{Replacing self-folded triangles}
   \label{replace1notched}
\begin{tikzpicture}[baseline=7mm]
 \coordinate (0) at (0,0);   \fill (0) circle (0.7mm);
 \coordinate (p) at (0,1);   \fill (p) circle (0.7mm);
 \draw (0) to node[fill=white,inner sep=1,pos=0.7]{\scriptsize $\tau_i$} (p);
 \draw (0) .. controls (-1.5,2) and (1.5,2) .. node[fill=white,inner sep=1]{\scriptsize $\tau_{i-1}=\tau_{i+1}$} (0);
 \draw [blue] (-0.7,0.4) node [below] {\scriptsize $\de$} to (0.7,0.4);
\end{tikzpicture}
 \hspace{-8mm}$\rightarrow$
\begin{tikzpicture}[baseline=7mm]
 \coordinate (0) at (0,0);
 \coordinate (l) at (-1,1.5);
 \coordinate (p) at (0,1.5);
 \coordinate (r) at (1,1.5);
 \draw (0) to node[fill=white,inner sep=1,pos=0.7]{\scriptsize $\tau_{i-1}$} (l);
 \draw (p) to node[fill=white,inner sep=1]{\scriptsize $\tau_{i}$} (l);
 \draw (p) to node[fill=white,inner sep=1,pos=0.3]{\scriptsize $\tau_{i}$} (0);
 \draw (p) to node[fill=white,inner sep=1]{\scriptsize $\tau_{i}$} (r);
 \draw (0) to node[fill=white,inner sep=1,pos=0.7]{\scriptsize $\tau_{i+1}$} (r);
 \draw [blue] (-0.7,0.5) node [below] {\scriptsize $\de$} to (0.7,0.5);
\end{tikzpicture}
     \hspace{3mm}
\begin{tikzpicture}[baseline=7mm]
 \coordinate (0) at (0,0);
 \coordinate (p) at (0,1);
 \draw (0) to node[fill=white,inner sep=1]{\scriptsize $r$} (p);
 \draw (0) .. controls (-1.5,2) and (1.5,2) .. node[fill=white,inner sep=1]{\scriptsize $\tau_{1}\ \mbox{or}\ \tau_{n}$} (0);
 \draw [blue] (-0.7,1) node [left] {\scriptsize $\de$} to (p);
 \fill (0) circle (0.7mm); \fill (p) circle (0.7mm);
\end{tikzpicture}
 \hspace{-9mm}$\rightarrow$
\begin{tikzpicture}[baseline=7mm]
 \coordinate (d) at (0,0);
 \coordinate (u) at (0,1.5);
 \coordinate (r) at (1,1);
 \draw (d) to node[fill=white,inner sep=1,pos=0.45]{\scriptsize $\tau_{1}$} node[fill=white,inner sep=1,pos=0.3]{\scriptsize $\mbox{or}\ \tau_{n}$} (u);
 \draw (r) to node[fill=white,inner sep=1]{\scriptsize $r$} (u);
 \draw (r) to node[fill=white,inner sep=1]{\scriptsize $r$} (d);
 \draw [blue] (-0.4,1) node [left] {\scriptsize $\de$} to (r);
\end{tikzpicture}
     \hspace{3mm}
\begin{tikzpicture}[baseline=7mm]
 \coordinate (0) at (0,0);
 \coordinate (p) at (0,1);
 \draw (0) to node[fill=white,inner sep=1,pos=0.7]{\scriptsize $\z_i$} (p);
 \draw (0) .. controls (-1.5,2) and (1.5,2) .. node[fill=white,inner sep=1]{\scriptsize $\z_{i-1}=\z_{i+1}$} (0);
 \draw [blue] (-0.7,0) node [above] {\scriptsize $\de$} to node[pos=0.7]{\rotatebox{90}{\footnotesize $\bowtie$}} (0);
 \fill (0) circle (0.7mm); \fill (p) circle (0.7mm);
\end{tikzpicture}
 \hspace{-8mm}$\rightarrow$
\begin{tikzpicture}[baseline=7mm]
 \coordinate (0) at (0,0);
 \coordinate (l) at (-1,1.5);
 \coordinate (p) at (0,1.5);
 \coordinate (r) at (1,1.5);
 \draw (0) to node[fill=white,inner sep=1,pos=0.7]{\scriptsize $\z_{i-1}$} (l);
 \draw (p) to node[fill=white,inner sep=1]{\scriptsize $\z_{i}$} (l);
 \draw (p) to node[fill=white,inner sep=1,pos=0.3]{\scriptsize $\z_{i}$} (0);
 \draw (p) to node[fill=white,inner sep=1]{\scriptsize $\z_{i}$} (r);
 \draw (0) to node[fill=white,inner sep=1,pos=0.7]{\scriptsize $\z_{i+1}$} (r);
 \draw [blue] (-0.7,0) node [above] {\scriptsize $\de$} to node[pos=0.7]{\rotatebox{90}{\footnotesize $\bowtie$}} (0);
\end{tikzpicture}
\end{figure}
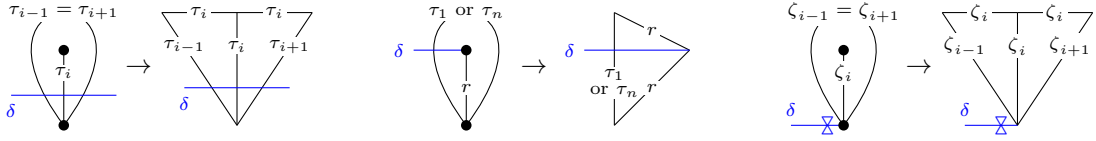

 In this paper, we call interior arcs of each polygon $T_{\de}$ {\it diagonals} and non-interior arcs of $T_{\de}$ {\it boundary segments}. We recall a definition we introduced in \cite{Y}.

\begin{definition}\label{defpm}
 A {\it perfect matching of angles} in $T_{\de}$ is a selection of marked angles such that:\par
 (1) each vertex $v$ incident to at least one diagonal is matched to one marked angle incident to $v$,\par
 (2) each triangle of $T_{\de}$ has exactly one marked angle.\\
 We denote by $\bA(T_{\de})$ the set of perfect matchings of angles in $T_{\de}$.
\end{definition}

 It is easy to see that $\bA(T_{\de})\neq\emptyset$ (e.g. see Figure \ref{minmatching}).

 For a diagonal or boundary segment $\tau$ of $T_{\de}$, we denote $x_{\tau} = x_{\tau'}$ if $\tau$ corresponds to a non-boundary segment $\tau'$ of $T$ and we denote $x_{\tau} = 1$ otherwise. Then, for an angle $a$ of $T_{\de}$, $x_a:=x_{\tau}$, where $\tau$ is the side opposite to $a$ in the triangle containing $a$. Using Assumption \ref{ass1}, we define a ring homomorphism
\begin{equation}\label{Phi1}
\Phi : \bZ[x_1^{\pm 1},\ldots, x_N^{\pm 1}] \rightarrow \bZ[x_1^{\pm 1},\ldots,x_N^{\pm 1}]
\end{equation}
 by
{\setlength\arraycolsep{0.5mm}
\begin{eqnarray*}
   \Phi(x_{j})&:=&\left\{
     \begin{array}{ll}
 x_{j}x_{k} & \ \ \mbox{if $j$ is a $1$-notched arc, where $k$ is the plain arc of $T$ corresponding to $j$,} \\
 x_{j} & \ \ \mbox{otherwise,}
     \end{array} \right.
\end{eqnarray*}}for any $j \in [1,N]$. Our main result Theorem \ref{main} gives a cluster expansion formula for cluster algebras with principal coefficients defined from triangulated surfaces. In this subsection, we specialize it to the coefficient-free case.

\begin{theorem}\label{main'}
 Let $\de$ be a tagged arc of $(S,M)$.\par
 (1) If $\overline{\de} \notin T$, we have
\[
 x_{\de}=\Phi\Biggl(\frac{1}{\cross(T,\de)}\sum_{A\in\bA(T_{\de})}x(A)\Biggr),\ \ \text{where}\ \ \cross(T,\de):=\prod_{\tau\in T_{\de}} x_{\tau}\ \ \text{and}\ \ x(A):=\prod_{a\in A} x_a.
\]

 (2) Suppose that $\overline{\de} \in T$ and $\de \notin T$. Let $p$ and $q$ be the endpoints of $\overline{\de}$. If $p$ (resp., $q$) is a puncture, we denote by $\ell_p$ (resp., $\ell_q$) the loop with endpoint $q$ (resp., $p$) cutting out a monogon containing only $p$ (resp., $q$). We can define triangulated polygons $T_{\ell_p}$ and $T_{\ell_q}$ in the same way as for plain arcs. Then, for $s = p$ or $q$, we have
{\setlength\arraycolsep{0.5mm}
\begin{eqnarray*}
   x_{\de}&=&\left\{
     \begin{array}{ll}
 \frac{x_{\ell_s}}{x_{\overline{\de}}} &\ \ \mbox{if $\de=\overline{\de}^{(s)}$}\\
 \frac{x_{\ell_p}x_{\ell_q}+1}{x_{\overline{\de}}} &\ \ \mbox{if $\de=\overline{\de}^{(pq)}$}\\
     \end{array} \right\},
\ \ \text{where}\ \ x_{\ell_s}=\Phi\Biggl(\frac{1}{\cross(T,\ell_s)}\sum_{A\in\bA(T_{\ell_s})}x(A)\Biggr).
\end{eqnarray*}}
\end{theorem}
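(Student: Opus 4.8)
The plan is to deduce Theorem~\ref{main'} from the main result Theorem~\ref{main} by specializing all coefficients $y_i = 1$, so the bulk of the work is in Theorem~\ref{main} itself; here I only indicate how the coefficient-free shape emerges and how part~(2) reduces to part~(1). First I would recall that under Assumption~\ref{ass1} the ideal triangulation $T^0$ is well defined, so the crossing numbers $\cross(T,\de)$ and the polygons $T_\de$ make sense as stated. For part~(1), with $\overline{\de}\notin T$, the combinatorial object in Theorem~\ref{main} is a sum over $\bA(T_\de)$ weighted by $x(A)$ together with a coefficient monomial in the $y_i$; setting $y_i=1$ kills the coefficient monomial and leaves exactly $\Phi\bigl(\cross(T,\de)^{-1}\sum_{A} x(A)\bigr)$, where the application of $\Phi$ accounts for the fact that a $1$-notched arc $j$ of $T$ contributes the product $x_j x_k$ rather than $x_j$ (this is precisely the definition of $\Phi$ in \eqref{Phi1}). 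So part~(1) is a direct reading-off of Theorem~\ref{main}.

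For part~(2), where $\overline{\de}\in T$ but $\de\notin T$, the arc $\de$ is $\overline{\de}^{(s)}$ or $\overline{\de}^{(pq)}$, and the relevant formulas are the standard relations among a plain arc, its once-notched and twice-notched versions, and the loops $\ell_p,\ell_q$ enclosing the punctures. Concretely, I would invoke the exchange-type identities for tagged arcs at a puncture (these are the relations underlying Proposition~\ref{MSW1315} and the tag-changing isomorphisms): in the cluster algebra with principal coefficients one has, after normalization, $x_{\overline{\de}}\, x_{\overline{\de}^{(s)}} = x_{\ell_s}$ and $x_{\overline{\de}}\, x_{\overline{\de}^{(pq)}} = x_{\ell_p} x_{\ell_q} + 1$ (in the coefficient-free specialization). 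Rearranging gives the displayed formulas for $x_\de$. Finally, since each loop $\ell_p$ (resp.\ $\ell_q$) is an ordinary arc not in $T$ once we pass to $T^0$ — it cuts out a once-punctured monogon and is exactly the kind of arc handled in the construction of $T_\g$ — we may define $T_{\ell_p}$ and $T_{\ell_q}$ verbatim as for plain arcs and apply part~(1) to each loop, yielding $x_{\ell_s}=\Phi\bigl(\cross(T,\ell_s)^{-1}\sum_{A\in\bA(T_{\ell_s})}x(A)\bigr)$. Substituting these expressions completes part~(2).

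The main obstacle is not in this reduction but upstream: establishing Theorem~\ref{main} (via the bijection between $\bA(T_\de)$ and perfect matchings of the snake graph $G_\de$ of Musiker--Schiffler--Williams, matching weights and boundary contributions, and checking the self-folded-triangle cases through the local moves of Figure~\ref{replace1notched}). Granting Theorem~\ref{main}, the only genuinely delicate point in the present statement is bookkeeping the normalization of the puncture relations so that they hold on the nose after setting $y_i=1$ — in particular verifying that the ``$+1$'' in the $\overline{\de}^{(pq)}$ case is exactly what survives the specialization rather than a coefficient monomial — and confirming that the arcs $\ell_p,\ell_q$ genuinely fall outside $T$ (equivalently outside $T^0$), so that part~(1) is applicable to them; this follows from Assumption~\ref{ass1}, which permits at most one $1$-notched arc at each puncture and hence forbids $\ell_s$ from being an arc of $T^0$ in the situation of part~(2).
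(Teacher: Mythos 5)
Your plan --- read off Theorem~\ref{main'} from Theorem~\ref{main} by setting $y_1=\cdots=y_N=1$, deferring the substance to the proof of Theorem~\ref{main}(1) and the citations for Theorem~\ref{main}(2) --- is the same as the paper's: the paper introduces Theorem~\ref{main'} explicitly as the coefficient-free specialization of Theorem~\ref{main}, and proves no more than that. Part~(1) of your argument is therefore fine. Part~(2), however, has two concrete problems.

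First, the citation. The relations you need, $x_{\overline{\de}}\,x_{\overline{\de}^{(s)}}=x_{\ell_s}$ and the twice-notched analogue, are not ``the relations underlying Proposition~\ref{MSW1315}''. That proposition records how a fixed cluster expansion transforms under simultaneous tag changes at a puncture; it does not produce the loop relations. The paper's actual sources for Theorem~\ref{main}(2) are \cite[Lemma~8.2, Theorem~8.6]{FT} and \cite[Proposition~4.21]{MSW1}, and any proof of part~(2) must go through those.

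Second, and this is the genuine gap: you describe the ``$+1$'' in the $\overline{\de}^{(pq)}$ case as a bookkeeping check. On the route you propose it is not a check but a contradiction. Specializing the numerator in Theorem~\ref{main}(2),
\[
x_{\ell_p}x_{\ell_q}\,y_{\overline{\de}}+\Bigl(1-\prod_{\tau\in T}y_\tau^{e_p(\tau)}\Bigr)\Bigl(1-\prod_{\tau\in T}y_\tau^{e_q(\tau)}\Bigr),
\]
at $y_i=1$ for all $i$ gives $x_{\ell_p}x_{\ell_q}\cdot 1+(1-1)(1-1)=x_{\ell_p}x_{\ell_q}$, with no $+1$. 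So the relation $x_{\overline{\de}}\,x_{\overline{\de}^{(pq)}}=x_{\ell_p}x_{\ell_q}+1$ does \emph{not} follow from Theorem~\ref{main}(2) by specialization, and as printed the two statements in the paper disagree in the $\overline{\de}^{(pq)}$ case. You would either have to source the coefficient-free $+1$ relation independently and reconcile it with \cite[Proposition~4.21]{MSW1}, or conclude that one of the two displayed formulas carries a typo. Claiming that ``the $+1$ survives the specialization'' is, as it stands, false.

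A smaller point: Assumption~\ref{ass1} does not forbid $\ell_s\in T^0$; it allows a $1$-notched arc at each puncture. What actually forces $\ell_s\notin T^0$ in the once-notched case is the hypothesis $\de\notin T$ together with $\iota(\ell_s)=\overline{\de}^{(s)}=\de$. In the twice-notched case this alone is insufficient: $T$ may contain $\overline{\de}^{(p)}$ (it is compatible with both $\overline{\de}$ and $\overline{\de}^{(pq)}$) even when $\de=\overline{\de}^{(pq)}\notin T$, in which case $\ell_p\in T^0$ and $T_{\ell_p}$ is not defined. One needs the analogue of Assumption~\ref{ass2} for this configuration, or a tag-change reduction using Proposition~\ref{MSW1315}, before part~(1) can be invoked for $\ell_p$ and $\ell_q$.
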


 There are two key steps to prove Theorem \ref{main}.

 The first step is the cluster expansion formula given by Musiker-Schiffler-Williams \cite{MSW1}. A {\it perfect matching} in a graph $G$ is a set $P$ of edges of $G$ such that each vertex of $G$ is contained in exactly one edge in $P$. One can construct a snake graph $G_{\de}$ associated with $T_{\de}$. Musiker-Schiffler-Williams gave a cluster expansion formula in terms of perfect matchings of $G_{\de}$ (see Subsection \ref{MSWformula}). Note that perfect matchings of $G_{\g^{(p)}}$ and $G_{\g^{(pq)}}$ are different from general perfect matchings of graphs, that are also called {\it symmetric perfect matchings} and {\it compatible perfect matchings}, respectively (see Definitions \ref{defsym} and \ref{defcom}).

 The second step is Theorem \ref{bijthm} below. It gives bijections between several different combinatorial objects, that we introduce now.
 The bipartite graph $B_{\de}$ associated with $T_{\de}$ is defined as follows: The set of black vertices consists of vertices incident to at least one diagonal of $T_{\de}$ and the set of white vertices consists of triangles of $T_{\de}$. Edges are drawn between the white vertex corresponding to a triangle $ABC$ and the three black vertices corresponding to $A$, $B$ and $C$ if they exist.
 On the other hand, we associate to $\de$ a quiver with potential $(\oQ_{\de},\overline{W}_{\de})$ in Subsection \ref{QPandcuts}, and we define {\it minimal cuts} of $(\oQ_{\de},\overline{W}_{\de})$ in Definition \ref{defmin}.

\begin{theorem}\label{bijthm}
 There are bijections between the following objects:\par
 (1) Perfect matchings of angles in $T_{\de}$, \hspace{3mm} (2) Perfect matchings of $G_{\de}$,\par
 (3) Perfect matchings of $B_{\de}$, \hspace{17mm} (4) Minimal cuts of $(\oQ_{\de},\overline{W}_{\de})$,\\
 for any tagged arc $\de$ of $(S,M)$ such that $\overline{\de} \notin T$.
\end{theorem}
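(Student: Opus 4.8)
The plan is to establish the four-way bijection by constructing three explicit, combinatorially local maps and checking each is well-defined and invertible; the cyclic structure $(1)\leftrightarrow(2)$, $(1)\leftrightarrow(3)$, $(1)\leftrightarrow(4)$ suffices. The common hub is the perfect matching of angles in $T_{\de}$ (object (1)), since it carries the most geometric information — a choice, for each triangle, of which of its three angles is marked, subject to the vertex-covering condition. First I would set up the \emph{dictionary between $T_{\de}$ and $G_{\de}$}. Recall that the snake graph $G_{\de}$ is built by gluing the tiles $G_i$ dual to the triangles $\triangle_{\g,0},\ldots,\triangle_{\g,n}$ crossed by $\g$, each tile being a square with its two triangular halves corresponding to the two triangles sharing the crossed arc $\tau_i$; the edges of each tile are labeled by the sides of the corresponding triangles. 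A perfect matching $P$ of $G_{\de}$ selects, tile by tile, a set of edges; the key observation is that in each triangle $\triangle_{\g,i}$ the edge of $G_{\de}$ \emph{not} covered along the diagonal-side determines a distinguished vertex, and this vertex is exactly the one that receives the marked angle. Concretely, I would define $\Psi\colon \bA(T_{\de})\to \{\text{PM of }G_{\de}\}$ by: for a perfect matching of angles $A$, in each triangle the marked angle is at a vertex $v$, and the two sides of that triangle incident to $v$ are put into the matching unless this conflicts with an adjacent tile; one checks that the vertex-covering condition (Definition \ref{defpm}(1)) is precisely what makes the resulting edge set a perfect matching (each vertex of $G_{\de}$ covered exactly once), and condition (2) is automatic from the tile-by-tile nature of the construction. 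The inverse reads off, from a perfect matching $P$ of $G_{\de}$, which vertex of each triangle is ``exposed'' and declares that angle marked. I would also need to remark that for $\g^{(p)}$ and $\g^{(pq)}$ the symmetric/compatible condition on $G_{\de}$ (Definitions \ref{defsym}, \ref{defcom}) matches, under $\Psi$, exactly the condition that the extra glued triangles at $p$ (and $q$) inherit a consistent marked angle at the puncture vertex — this is where the construction of $T_{\de}$ via Figure \ref{replace1notched} must be used carefully.

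Next I would handle $(1)\leftrightarrow(3)$, which is the most transparent one and the reason the bipartite graph $B_{\de}$ is introduced. By definition, black vertices of $B_{\de}$ are the vertices of $T_{\de}$ incident to at least one diagonal, white vertices are the triangles of $T_{\de}$, and edges record incidence. A perfect matching of $B_{\de}$ is therefore a bijection between white vertices and a subset of black vertices pairing each triangle with one of its incident vertices and each relevant vertex with one triangle — this is literally the data of a perfect matching of angles: the marked angle of a triangle $ABC$ at vertex $A$ corresponds to the edge $\{ABC, A\}$ of $B_{\de}$. The only thing to verify is that Definition \ref{defpm}(1)--(2) say exactly that every black vertex is covered and every white vertex is covered, and that no black vertex incident to a diagonal is ever \emph{missed} — i.e. every such vertex lies in some triangle, which is immediate since $T_{\de}$ is a triangulation of a polygon. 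Boundary vertices incident to no diagonal are by construction not black vertices, which is consistent with part (1) of Definition \ref{defpm} only requiring matching of vertices incident to a diagonal. So $(1)\leftrightarrow(3)$ is essentially a tautology once the definitions are aligned, and I would present it as such.

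For $(1)\leftrightarrow(4)$, I would use the description of $(\oQ_{\de},\oW_{\de})$ from Subsection \ref{QPandcuts}: the quiver $\oQ_{\de}$ has one vertex per diagonal of $T_{\de}$, arrows within each triangle of $T_{\de}$, and the potential $\oW_{\de}$ is the sum of the 3-cycles coming from the internal triangles (together with boundary corrections). A \emph{cut} is a subset $I$ of arrows; it is \emph{minimal} (Definition \ref{defmin}) when $I$ meets each 3-cycle of $\oW_{\de}$ in exactly one arrow and the cut quiver $\oQ_{\de}\setminus I$ has no oriented cycles through... — in any case, a cut selecting one arrow per 3-cycle is the same datum as selecting, per triangle, one of the three ``corners,'' i.e. one marked angle, because the arrows of a triangle's 3-cycle are in canonical bijection with its angles (the arrow $\tau_i\to\tau_{i+1}$ ``is'' the angle between them). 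Thus I would define a map sending a perfect matching of angles $A$ to the cut $I(A) = \{\text{the arrow of each triangle's 3-cycle dual to the unmarked... }\}$ — more precisely one picks the convention so that the minimality/acyclicity condition on cuts translates into the vertex-covering condition on $A$. The verification that $I(A)$ is a \emph{minimal} cut, and conversely that every minimal cut arises this way, is the step I expect to be the real work: one must show that the global acyclicity condition defining minimal cuts is equivalent to the local matching condition Definition \ref{defpm}(1), which amounts to a combinatorial argument on the polygon $T_{\de}$ tracking how a marked angle at a vertex forces the orientations in adjacent triangles. This is the one place where the ``minimal'' qualifier does genuine combinatorial work rather than just bookkeeping, so I would isolate it as a lemma (proved by induction on the triangles of $T_{\de}$, peeling off ears of the polygon), and expect it to be the main obstacle of the whole proof; the other three correspondences are, after the definitions are set up, direct.
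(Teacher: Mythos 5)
Your overall hub-and-spoke plan — using perfect matchings of angles (1) as the common language and constructing maps $(1)\leftrightarrow(2)$, $(1)\leftrightarrow(3)$, $(1)\leftrightarrow(4)$ — is exactly the architecture of the paper, and your treatment of $(1)\leftrightarrow(3)$ as a tautology after aligning definitions is correct (it matches the proof given at the start of Section~\ref{bipartite}). However there are two genuine gaps.

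For $(1)\leftrightarrow(2)$: for plain arcs your idea is workable (the paper's map $\varphi$ sends an angle to the \emph{opposite} side of the tile, not to the two incident sides; but your intent, a local triangle-by-triangle dictionary, is the right shape, and this is [Y, Lemma~3.2, Proposition~3.4] quoted as Theorem~\ref{p.m.bij}). Where you are underselling the difficulty is the $1$-notched and $2$-notched cases. You write as if $T_{\g^{(p)}}$ and $G_{\g^{(p)}}$ are directly comparable via the same kind of local dictionary, with only some ``care'' about Figure~\ref{replace1notched}. But $G_{\g^{(p)}}$ is not a snake graph of the polygon $T_{\g^{(p)}}$; it is by definition (Definition~\ref{defsym}) the set of \emph{$\g$-symmetric} perfect matchings of a strictly larger snake graph $G_{\ell_p}$. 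The polygon $T_{\g^{(p)}}$ has a puncture, $T_{\ell_p}$ does not, and the two objects live on different sides of that gap. The paper bridges it by introducing an intermediate set $\Asym(T_{\ell_p})$ of $\g$-symmetric perfect matchings of angles in $T_{\ell_p}$ and constructing two separate bijections $\psi^p\colon \Asym(T_{\ell_p})\to\bA(T_{\g^{(p)}})$ (Proposition~\ref{bijsymangle}) and $\varphi^p\colon\Asym(T_{\ell_p})\to\bP(G_{\g^{(p)}})$ (Proposition~\ref{bijsym}); the bijection for $\bA(T_{\g^{(p)}})$ is the composite. The bijection $\psi^p$ is not local: it depends on a case split according to whether a distinguished angle $a_i^\circ$ lies in $A$, and checking it is a bijection and preserves the $y$-weight takes several lemmas (\ref{cc'}, \ref{lema}, \ref{note1}). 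The $2$-notched case layers on a further intermediate set $\Acom(T_{\g^{(p)}},T_{\g^{(q)}})$ of $\g$-compatible pairs and the maps $\si$, $\sr$, $\w$. Your outline does not contain any analogue of these intermediate objects, so as written it has no route through the notched cases, which are the bulk of the paper's work.

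For $(1)\leftrightarrow(4)$: you have the definition of minimal cut wrong, and as a result you have identified the wrong ``hard lemma.'' In the paper (Definition~\ref{defmin}) a cut $C$ of $(\oQ_\de,\oW_\de)$ is \emph{minimal} iff $|C|=n(\de)$; there is no acyclicity condition on $\oQ_\de\setminus C$. Moreover you only account for the triangle cycles in $\oW_\de$, but the potential also has one \emph{exterior cycle} per vertex of $T_\de$ incident to a diagonal, and the cut condition requires meeting each of those in exactly one arrow too. It is precisely the interaction of these two families — triangle cycles partition one set of arrows, exterior cycles another, and they intersect exactly in $\rho(A(T_\de))$ — that makes the argument work: Lemma~\ref{minnumber} shows any cut has $|C|\geq n(\de)$ (since the $n(\de)$ triangle cycles are pairwise disjoint), with equality iff $C\subseteq\rho(A(T_\de))$, and then the cut condition restricted to $\rho(A(T_\de))$ is literally Definition~\ref{defpm}. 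So the actual proof is a short counting argument; the induction-on-ears acyclicity argument you anticipate as ``the real work'' is not needed and is aimed at a property that is not part of the definition.
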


 By Theorem \ref{bijthm}, we also obtain cluster expansion formulas in terms of perfect matchings of bipartite graphs and minimal cuts of quivers with potential. More precisely, the bijection between (1) and (3) in Theorem \ref{bijthm} is induced by a natural bijection $\varpi$ between the set of angles incident to at least one diagonal of $T_{\de}$ and the set of edges of $B_{\de}$ given by the following picture:
\[
\begin{tikzpicture}[baseline=-8mm]
 \coordinate (u) at (0,0);
 \coordinate (l) at (-0.7,-1.2);
 \coordinate (r) at (0.7,-1.2);
 \node at (0,-0.4) {\small $a$};
 \draw (u) -- (l);
 \draw (u) -- (r);
 \draw (r) -- (l);
\end{tikzpicture}
 \hspace{3mm} \leftrightarrow \hspace{3mm}
\begin{tikzpicture}[baseline=-8mm]
 \coordinate (u) at (0,0);
 \coordinate (l) at (-0.7,-1.2);
 \coordinate (r) at (0.7,-1.2);
 \coordinate (c) at (0,-0.8);
 \draw[dotted] (u) -- (l);
 \draw[dotted] (u) -- (r);
 \draw[dotted] (r) -- (l);
 \draw[left] (u) -- node[fill=white,inner sep=1]{$\varpi(a)$} (c);
 \fill (u) circle (0.7mm); \filldraw[fill=white] (c) circle (0.7mm);
\end{tikzpicture}
\]
 For a side $e$ of $B_{\de}$, we denote $x_{e} = x_{\varpi^{-1}(e)}$. 
 For a tagged arc $\de$ of $(S,M)$ with $\overline{\de} \notin T$, we have
\[
 x_{\de}=\Phi\Biggl(\frac{1}{\cross(T,\de)}\sum_{E}x(E)\Biggr),\ \ \text{where}\ \ x(E):=\prod_{e\in A} x_e,
\]
and $E$ runs over all perfect matchings of $B_{\de}$. Similarly, we obtain a cluster expansion formula in terms of minimal cuts of quivers with potential (see Corollary \ref{cutformula}).

 Our main result Theorem \ref{main} is obtained from the bijection between (1) and (2) in Theorem \ref{bijthm} and the cluster expansion formula of Musiker-Schiffler-Williams by showing that the bijection preserves the corresponding initial cluster variables. Notice that the construction of $T_{\de}$ is simpler than the one of $G_{\de}$. Moreover, the definition of a perfect matching of angles is more uniform than the definition of a perfect matching of $G_{\de}$, where three cases need to be distinguished depending of the tags attached to $\de$. Therefore, our new formula simplifies the formula of Musiker-Schiffler-Williams.

\subsection{Example in the coefficient-free case}\label{excoefffree}
 Let $(S,M)$ be a square with three punctures. We consider the following tagged triangulation $T$ and the corresponding ideal triangulation $T^0$:
\[
 \right),
$}
\]
which is not affected by $\Phi$ since $x_2$ don't appear.

 For the case (2), we illustrate Theorem \ref{bijthm} in Examples \ref{ex2s}, \ref{ex2b} and \ref{ex2q}.

\subsection{Our results in the principal coefficients case}\label{principalcoeff}

 We keep the notations of Subsection \ref{cfcase}. Let $\z_1,\ldots,\z_m$ (resp., $\xi_1,\ldots,\xi_{\ell}$) be the diagonals of $T_{\de}$ incident to $p$ (resp., $q$) winding counter-clockwisely around $p$ (resp., $q$) such that $\tau_n$, $\z_1$, and $\z_m$ (resp., $\tau_1$, $\xi_1$, and $\xi_{\ell}$) are contained in the same triangle (see Figure \ref{minmatching}).
 We define an element $A_{-}(T_{\de}) \in \bA(T_{\de})$, which we call the {\it minimal matching} of $T_{\de}$, satisfying the following {\it min-condition}: For each boundary vertex $v$ of $T_{\de}$ that is incident to at least one diagonal of $T_{\de}$, the angle $a \in A_{-}(T_{\de})$ at $v$ comes first in the counterclockwise order around $v$. Clearly, the minimal matching is uniquely determined (see Figure \ref{minmatching}).

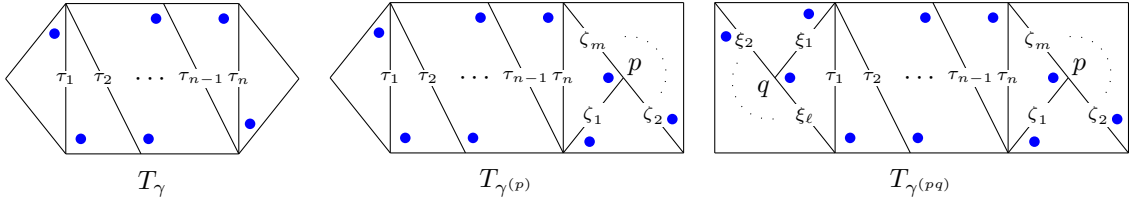
\begin{figure}[h]
   \caption{Minimal matchings}
   \label{minmatching}
\[
\begin{tikzpicture}
 \coordinate (l) at (0,0);
 \coordinate (lu) at (0.8,1);
 \coordinate (ru) at (3.1,1);
 \coordinate (ld) at (0.8,-1);
 \coordinate (rd) at (3.1,-1);
 \coordinate (r) at (3.9,0);
 \coordinate (u) at (2.1,1);
 \coordinate (d) at (1.8,-1);
 \fill[blue] (0.65,0.6) circle (0.7mm); \fill[blue] (2,0.8) circle (0.7mm); \fill[blue] (2.9,0.8) circle (0.7mm); \fill[blue] (1,-0.8) circle (0.7mm); \fill[blue] (1.9,-0.8) circle (0.7mm); \fill[blue] (3.25,-0.6) circle (0.7mm);
 \node at (1.95,0) {$\cdots$};
 \node at (1.95,-1.4) {$T_{\g}$};
 \draw (l) to (lu);
 \draw (l) to (ld);
 \draw (lu) to node[fill=white,inner sep=1]{\scriptsize $\tau_1$} (ld);
 \draw (lu) to (ru);
 \draw (ld) to (rd);
 \draw (ru) to (r);
 \draw (rd) to (r);
 \draw (ru) to node[fill=white,inner sep=1]{\scriptsize $\tau_n$} (rd);
 \draw (u) to node[fill=white,inner sep=1]{\scriptsize $\tau_{n-1}$} (rd);
 \draw (lu) to node[fill=white,inner sep=1]{\scriptsize $\tau_2$} (d);
\end{tikzpicture}
     \hspace{4mm}
\begin{tikzpicture}
 \coordinate (l) at (0,0);
 \coordinate (lu) at (0.8,1);
 \coordinate (ld) at (0.8,-1);
 \coordinate (ru) at (3.1,1);
 \coordinate (rd) at (3.1,-1);
 \coordinate (rru) at (4.7,1);
 \coordinate (rrd) at (4.7,-1);
 \coordinate (r) at (3.9,0);
 \coordinate (u) at (2.1,1);
 \coordinate (d) at (1.8,-1);
 \fill[blue] (0.65,0.6) circle (0.7mm); \fill[blue] (2,0.8) circle (0.7mm); \fill[blue] (2.9,0.8) circle (0.7mm); \fill[blue] (1,-0.8) circle (0.7mm); \fill[blue] (1.9,-0.8) circle (0.7mm); \fill[blue] (3.45,-0.85) circle (0.7mm); \fill[blue] (4.55,-0.55) circle (0.7mm); \fill[blue] (3.7,0) circle (0.7mm);
 \node at (1.95,0) {$\cdots$};
 \node at (2.35,-1.4) {$T_{\g^{(p)}}$};
 \node at (4.05,0.15) {$p$};
 \draw (l) to (lu);
 \draw (l) to (ld);
 \draw (lu) to node[fill=white,inner sep=1]{\scriptsize $\tau_1$} (ld);
 \draw (lu) to (ru);
 \draw (ld) to (rd);
 \draw (ru) to node[fill=white,inner sep=1]{\scriptsize $\z_m$} (r);
 \draw (rd) to node[fill=white,inner sep=1]{\scriptsize $\z_1$} (r);
 \draw (ru) to (rru);
 \draw (rru) to (rrd);
 \draw (rrd) to (rd);
 \draw (rrd) to node[fill=white,inner sep=1]{\scriptsize $\z_2$} (r);
 \draw (ru) to node[fill=white,inner sep=1]{\scriptsize $\tau_n$} (rd);
 \draw (u) to node[fill=white,inner sep=1]{\scriptsize $\tau_{n-1}$} (rd);
 \draw (lu) to node[fill=white,inner sep=1]{\scriptsize $\tau_2$} (d);
 \draw[loosely dotted] (4.4,-0.2) arc (-30:110:0.5);
\end{tikzpicture}
     \hspace{4mm}
\begin{tikzpicture}
 \coordinate (l) at (0,0);
 \coordinate (lu) at (0.8,1);
 \coordinate (ld) at (0.8,-1);
 \coordinate (ru) at (3.1,1);
 \coordinate (rd) at (3.1,-1);
 \coordinate (rru) at (4.7,1);
 \coordinate (rrd) at (4.7,-1);
 \coordinate (llu) at (-0.8,1);
 \coordinate (lld) at (-0.8,-1);
 \coordinate (r) at (3.9,0);
 \coordinate (u) at (2.1,1);
 \coordinate (d) at (1.8,-1);
 \fill[blue] (2,0.8) circle (0.7mm); \fill[blue] (2.9,0.8) circle (0.7mm); \fill[blue] (1,-0.8) circle (0.7mm); \fill[blue] (1.9,-0.8) circle (0.7mm); \fill[blue] (3.45,-0.85) circle (0.7mm); \fill[blue] (4.55,-0.55) circle (0.7mm); \fill[blue] (3.7,0) circle (0.7mm); \fill[blue] (0.45,0.85) circle (0.7mm); \fill[blue] (-0.65,0.55) circle (0.7mm); \fill[blue] (0.2,0) circle (0.7mm);
 \node at (1.95,0) {$\cdots$};
 \node at (1.95,-1.4) {$T_{\g^{(pq)}}$};
 \node at (4.05,0.15) {$p$};
 \node at (-0.15,-0.15) {$q$};
 \draw (l) to node[fill=white,inner sep=1]{\scriptsize $\xi_1$} (lu);
 \draw (l) to node[fill=white,inner sep=1]{\scriptsize $\xi_{\ell}$} (ld);
 \draw (lu) to node[fill=white,inner sep=1]{\scriptsize $\tau_1$} (ld);
 \draw (lu) to (ru);
 \draw (ld) to (rd);
 \draw (ru) to node[fill=white,inner sep=1]{\scriptsize $\z_m$} (r);
 \draw (rd) to node[fill=white,inner sep=1]{\scriptsize $\z_1$} (r);
 \draw (ru) to (rru);
 \draw (rru) to (rrd);
 \draw (rrd) to (rd);
 \draw (rrd) to node[fill=white,inner sep=1]{\scriptsize $\z_2$} (r);
 \draw (lu) to (llu);
 \draw (llu) to (lld);
 \draw (lld) to (ld);
 \draw (llu) to node[fill=white,inner sep=1]{\scriptsize $\xi_2$} (l);
 \draw (ru) to node[fill=white,inner sep=1]{\scriptsize $\tau_n$} (rd);
 \draw (u) to node[fill=white,inner sep=1]{\scriptsize $\tau_{n-1}$} (rd);
 \draw (lu) to node[fill=white,inner sep=1]{\scriptsize $\tau_2$} (d);
 \draw[loosely dotted] (4.4,-0.2) arc (-30:110:0.5);
 \draw[loosely dotted] (-0.5,0.2) arc (150:290:0.5);
\end{tikzpicture}
\]
\end{figure}

 We expand the ring homomorphism (\ref{Phi1}) into
\[
\Phi : \bZ[x_1^{\pm 1},\ldots, x_N^{\pm 1},y_1^{\pm 1},\ldots,y_N^{\pm 1}] \rightarrow \bZ[x_1^{\pm 1},\ldots,x_N^{\pm 1},y_1^{\pm 1},\ldots,y_N^{\pm 1}]
\]
 by
{\setlength\arraycolsep{0.5mm}
\begin{eqnarray*}
   \Phi(y_j)&:=&\left\{
     \begin{array}{ll}
 \frac{y_j}{y_k} & \ \ \mbox{if $j$ is plain and corresponds to the $1$-notched arc $k$ of $T$,} \\
 y_j & \ \ \mbox{otherwise,}
     \end{array} \right.
\end{eqnarray*}}for any $j \in [1,N]$.
 For two sets $A$ and $B$, we denote by $A \triangle B$ the symmetric difference $(A \cup B)\setminus(A \cap B)$. An {\it exterior angle} of $T_{\de}$ is an angle between a boundary segment and a diagonal of $T_{\de}$. Let $A \in \bA(T_{\de})$. We denote by $Y'(A)$ the set of diagonals of $T_{\de}$ that are sides of at least one exterior angle in $A_{-}(T_{\de}) \triangle A$. We define the set
{\setlength\arraycolsep{0.5mm}
\begin{eqnarray*}
   Y(A)&:=&\left\{
     \begin{array}{ll}
 Y'(A)\sqcup\{\tau_1\} & \ \ \mbox{if $\de=\g^{(pq)}$, $n=1$, and $A$ contains at least one of the four angles between} \\
 & \ \ \mbox{$\z_m$ or $\xi_{\ell}$ and $\tau_1$ or a boundary segment of $T_{\g^{(pq)}}$,} \\
 Y'(A) & \ \ \mbox{otherwise.}
     \end{array} \right.
\end{eqnarray*}}We are ready to state the main theorem of this paper.


\begin{theorem}\label{main}
 Let $\de$ be a tagged arc of $(S,M)$.\par
 (1) If $\overline{\de} \notin T$, we have
\[
 x_{\de}=\Phi\Biggl(\frac{1}{\cross(T,\de)}\sum_{A\in\bA(T_{\de})}x(A)y(A)\Biggr),\ \ \text{where}\ \ y(A):=\prod_{\tau \in Y(A)} y_{\tau}.
\]

 (2) Suppose that $\overline{\de} \in T$ and $\de \notin T$. Let $r$ and $s$ be the endpoints of $\overline{\de}$. Then, for $s = p$ or $q$, we have
{\setlength\arraycolsep{0.5mm}
\begin{eqnarray*}
   x_{\de}&=&\left\{
     \begin{array}{ll}
 \frac{x_{\ell_s}}{x_{\overline{\de}}} &\ \ \mbox{if $\de=\overline{\de}^{(s)}$,}\\
 \frac{x_{\ell_p}x_{\ell_q}y_{\overline{\de}}+(1-\prod_{\tau \in T}y_{\tau}^{e_p(\tau)})(1-\prod_{\tau \in T}y_{\tau}^{e_q(\tau)})}{x_{\overline{\de}}} &\ \ \mbox{if $\de=\overline{\de}^{(pq)}$,}\\
     \end{array} \right.
\end{eqnarray*}}where $e_s(\tau)$ is the number of ends of $\tau$ incident to $s$, and
\[
 x_{\ell_s}=\Phi\Biggl(\frac{1}{\cross(T,\ell_s)}\sum_{A\in\bA(T_{\ell_s})}x(A)y(A)\Biggr).
\]
\end{theorem}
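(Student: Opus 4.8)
The plan is to derive Theorem \ref{main} from the cluster expansion formula of Musiker--Schiffler--Williams \cite{MSW1} together with the bijection between the objects (1) and (2) of Theorem \ref{bijthm}, the latter being established simultaneously in Section \ref{proofofmain}. First, by Proposition \ref{MSW1315} one changes tags so that Assumptions \ref{ass1} and \ref{ass2} hold; then, for $\g:=\overline{\de}\notin T$, the three cases $\de=\g$, $\de=\g^{(p)}$, $\de=\g^{(pq)}$ fall under types $A$, $D$, $\widetilde{D}$, and Musiker--Schiffler--Williams give $x_\de=\Phi\bigl(\tfrac{1}{\cross(T,\de)}\sum_{P}x(P)y(P)\bigr)$, where $P$ runs over the perfect matchings of the snake graph $G_\de$ (ordinary ones in type $A$, the symmetric ones of Definition \ref{defsym} in type $D$, the compatible ones of Definition \ref{defcom} in type $\widetilde{D}$), $x(P)$ is the product of the arc labels of the edges in $P$, $y(P)$ is the height monomial normalized so that $y(P_-)=1$ for the minimal matching $P_-$ of $G_\de$, and $\Phi$ reflects the passage between $\cA(T)$ and the combinatorial data of the ideal triangulation $T^0$. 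Part (2) is then obtained from part (1): by Assumption \ref{ass2} the relevant loops $\ell_p$, $\ell_q$ are plain arcs not in $T$, so $T_{\ell_p}$, $T_{\ell_q}$ are defined and part (1) computes $x_{\ell_p}$, $x_{\ell_q}$; feeding these into the standard relations for tagged arcs incident to a puncture in the principal-coefficient setting (which produce the factors $1-\prod_{\tau\in T}y_\tau^{e_p(\tau)}$ and $1-\prod_{\tau\in T}y_\tau^{e_q(\tau)}$; see \cite{FT,MSW1}) yields the asserted formula.

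\textbf{Constructing the bijection.}
It remains to produce, for each of the three types, a bijection $\Psi\colon\bA(T_\de)\to\{\text{perfect matchings of }G_\de\}$ and to check it is weight preserving. I would build $\Psi$ by induction on the number $n$ of arcs $\tau_1,\dots,\tau_n$ of $T^0$ crossed by $\g$: the snake graph $G_\de$ is assembled tile by tile, one tile per $\tau_i$, while $T_\de$ is assembled by gluing the triangles $\triangle_{\g,i}$ along the $\tau_i$, and attaching one tile to $G_\de$ should match attaching one triangle to $T_\de$. Concretely, a perfect matching of angles selects, at each boundary vertex of $T_\de$ incident to a diagonal, the extreme angle in the fan at that vertex, and this is mirrored by the choice, in each tile, of which pair of opposite edges lies in $P$. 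In types $D$ and $\widetilde{D}$ the extra triangles of $T_\de$ around $p$ (and $q$) correspond to the extra tiles of $G_\de$, and the selection rule forced on a perfect matching of angles at the vertices $p$, $q$ is exactly the symmetry (resp.\ compatibility) condition of Definition \ref{defsym} (resp.\ \ref{defcom}). Configurations in which $\g$ crosses self-folded triangles, or a self-folded triangle is incident to $p$ or $q$, reduce to the generic one via the local moves of Figure \ref{replace1notched}.

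\textbf{Weight compatibility.}
Two identities must be verified under $\Psi$. For the $x$-weights, the construction identifies the side of $T_\de$ opposite to an angle $a$ with the arc labelling the corresponding edge of $G_\de$, so $x(A)=x(\Psi(A))$ term by term and $\cross(T,\de)=\prod_{\tau\in T_\de}x_\tau$ equals the Musiker--Schiffler--Williams crossing monomial. For the $y$-weights I would first show that $\Psi$ sends the minimal matching $A_-(T_\de)$ of Figure \ref{minmatching} to the minimal matching $P_-$ of $G_\de$ (both are distinguished by the same extremal condition on their boundary). It then suffices to show that the exponent set of $y(\Psi(A))$, i.e.\ the index set of the tiles enclosed by $P_-\triangle\Psi(A)$, equals $Y(A)$. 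By definition $Y(A)$ differs from the set $Y'(A)$ of diagonals that are sides of an exterior angle of $A_-(T_\de)\triangle A$ only by the term $\{\tau_1\}$ in the exceptional case $\de=\g^{(pq)}$, $n=1$; so I would check, by induction on the size of the symmetric difference, that flipping a tile $G_i$ of $G_\de$ across $P_-$ corresponds precisely to re-selecting the angles at the two endpoints of the exterior angle of $A_-(T_\de)$ lying on $\tau_i$, and that the degenerate central tile of the type $\widetilde{D}$ snake graph (present only when $n=1$) is enclosed exactly when $A$ contains one of the four angles listed in the definition of $Y(A)$, so that its contribution $y_{\tau_1}$ is accounted for. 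Finally, tracking the labels of $T^0$ through $\Phi$ as in Proposition \ref{MSW1315} converts the identity for $T^0$-data into the stated formula for $x_\de\in\cA(T)$.

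\textbf{Main obstacle.}
The technical heart is the $y$-weight identity: reconciling the Musiker--Schiffler--Williams height monomial, defined through the sub-snake-graphs enclosed by $P_-\triangle P$, with $Y(A)$, defined through exterior angles in $A_-(T_\de)\triangle A$, uniformly across the three types and including both the exceptional $\g^{(pq)}$, $n=1$ correction and the self-folded configurations. Keeping the $\Phi$-bookkeeping and the translation between $T$ and $T^0$ consistent throughout the inductions is the secondary source of friction; by contrast, the $x$-weight identity and the bijectivity of $\Psi$ should be routine once the tile-by-triangle dictionary is in place.
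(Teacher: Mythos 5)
Your overall strategy (reduce part (2) to part (1) via the known relations from Fomin--Thurston and Musiker--Schiffler--Williams, then prove part (1) case-by-case using the MSW formula together with a weight-preserving bijection between $\bA(T_\de)$ and $\bP(G_\de)$) matches the paper's, and your outline for type $A$ is essentially the paper's Theorem \ref{p.m.bij} and Proposition \ref{ybijgamma}.

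There is, however, a genuine gap in types $D$ and $\widetilde{D}$. Your construction of $\Psi$ is premised on a tile-by-triangle dictionary: ``one tile per $\tau_i$'' in $G_\de$ versus ``one triangle $\triangle_{\g,i}$'' in $T_\de$, with the extra triangles around $p$ (and $q$) in $T_\de$ corresponding to extra tiles in $G_\de$. This count only works in type $A$. For $\de=\g^{(p)}$ the snake graph $G_\de=G_{\ell_p}$ is built from the loop $\ell_p$, which traverses $\tau_1,\dots,\tau_n$, winds around $p$, and traverses $\tau_n,\dots,\tau_1$ again, so $G_{\ell_p}$ has roughly twice as many tiles as $T_{\g^{(p)}}$ has triangles; likewise $\bP(G_{\g^{(p)}})$ is not the set of all perfect matchings of a graph but the subset of $\g$-symmetric matchings of $G_{\ell_p}$ (Definition \ref{defsym}), and the weight appearing in the MSW formula is the quotient $\ox(P)=x(P)/x(\res(P))$, not $x(P)$. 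So the statement you want to prove is $x(A)=\ox(\Psi(A))$ rather than $x(A)=x(\Psi(A))$, and there is no direct triangle-tile correspondence to induct on. The paper handles this by interposing the full triangulated polygon $T_{\ell_p}$ of the loop: it first transports the type-$A$ bijection to $T_{\ell_p}\leftrightarrow G_{\ell_p}$, restricts to $\g$-symmetric matchings of angles $\Asym(T_{\ell_p})$ (Proposition \ref{bijsym}), and then proves a separate, nontrivial bijection $\psi^p:\Asym(T_{\ell_p})\to\bA(T_{\g^{(p)}})$ compatible with the quotient weights $\ox,\oy$ (Proposition \ref{bijsymangle}); this second step requires the angle-bookkeeping of Lemmas \ref{cc'}, \ref{lema}, \ref{note1} and a careful decomposition of $T_{\ell_p}$ into $T_{\ell_p}^1$, $T_{\ell_p}^2$ and their complements. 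The $\widetilde{D}$ case then stacks an analogous reduction on top (Propositions \ref{bijcomangle}, \ref{bijcom}), including the $n=1$ edge case you flag. Your proposal compresses this whole reduction into the sentence that the selection rule at $p,q$ ``is exactly the symmetry/compatibility condition,'' but that assertion is the content of the hardest part of the proof, not an observation that falls out of a tile-by-triangle induction.

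A secondary, smaller issue: you state that $\Psi$ should send $A_-(T_\de)$ to $P_-$ ``both distinguished by the same extremal condition.'' In type $A$ this is Lemma \ref{exbound} plus the uniqueness of the boundary matching containing $e_0$, and it is indeed routine. But in types $D$ and $\widetilde{D}$ the minimal matching lives on $G_{\ell_p}$ (or the pair), while $A_-(T_\de)$ lives on $T_{\g^{(p)}}$ (or $T_{\g^{(pq)}}$); the equality $\psi^p(A_-(T_{\ell_p}))=A_-(T_{\g^{(p)}})$ is used in the paper's proof of Proposition \ref{bijsymangle} and again has to be argued through the intermediate object, not read off from an extremality criterion stated on $T_\de$ directly.
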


 Since Theorem \ref{main}(2) follows from \cite[Lemma 8.2, Theorem 8.6]{FT} and \cite[Proposition 4.21]{MSW1}, we only prove Theorem \ref{main}(1) in Section \ref{proofofmain}. Theorem \ref{main} is a generalization of \cite[Theorem 1.3]{Y}.

 In the rest of this section, we consider the bipartite graph $B_{\de}$.
 We define the {\it minimal matching} of $B_{\de}$ by $E_-(B_{\de}) := \varpi^{-1}(A_-(T_{\de})) \in \bP(B_{\de})$, where $\bP(B_{\de})$ the set of perfect matchings of $B_{\de}$. For a diagonal $\tau$ of $T_{\de}$, there are exactly two triangles $\triangle$, $\triangle'$ of $T_{\de}$ with edge $\tau$. We label by $\tau$ the square of $B_{\de}$ whose vertices are two white vertices corresponding to $\triangle$, $\triangle'$ and two black vertices corresponding to endpoints of $\tau$.

\begin{proposition}\label{biparenclosed}
 For $E \in \bP(B_{\de})$, the set $E_-(B_{\de}) \triangle E$ consists of all boundary edges of some (possibly empty or disconnected) subgraph $B_E$ of $B_{\de}$ that is a union of squares.
\end{proposition}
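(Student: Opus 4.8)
The plan is to transfer the problem from the bipartite graph $B_{\de}$ to the triangulated polygon $T_{\de}$ via the bijection $\varpi$, prove the statement there, and push it back. Recall that $\varpi$ identifies angles of $T_{\de}$ incident to at least one diagonal with edges of $B_{\de}$, and under this identification a perfect matching of angles $A \in \bA(T_{\de})$ corresponds to a perfect matching $E = \varpi^{-1}(A)$ of $B_{\de}$. A square of $B_{\de}$ labelled by a diagonal $\tau$ has its four edges corresponding to the four angles of $T_{\de}$ adjacent to $\tau$ (two in each of the two triangles sharing $\tau$); its two ``vertical'' edges are the two angles opposite to $\tau$ in those triangles, and its two ``horizontal'' edges are the angles at the two endpoints of $\tau$ subtended by $\tau$. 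So I would first restate the proposition in the language of $T_{\de}$: $A_-(T_{\de}) \triangle A$, viewed as a set of angles, is exactly the set of ``boundary angles'' of a union of squares $B_A$, where a square here is the set of four angles adjacent to a fixed diagonal.

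First I would analyze the structure of $A_-(T_{\de}) \triangle A$ locally. Fix a black vertex $v$ of $B_{\de}$, i.e.\ a vertex of $T_{\de}$ incident to at least one diagonal; the diagonals and boundary segments at $v$ divide the angles at $v$ into a cyclic (actually linear, since $v$ is a boundary vertex in the polygon) sequence. Both $A_-(T_{\de})$ and $A$ select exactly one angle at $v$: call them $a_-$ and $a$. By the min-condition, $a_-$ is the first angle at $v$ in counterclockwise order. If $a = a_-$ then $v$ is not an endpoint of any edge of $A_-(T_{\de}) \triangle A$ at all; if $a \ne a_-$ then $v$ is incident to exactly the two edges $a_-$ and $a$ of $A_-(T_{\de})\triangle A$. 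Similarly, fix a white vertex (a triangle $\triangle$ of $T_{\de}$): by condition (2) of Definition~\ref{defpm}, $A_-(T_{\de})$ and $A$ each pick exactly one marked angle in $\triangle$; again either they agree, contributing nothing, or they disagree, contributing exactly two edges at that white vertex. Hence every vertex of $B_{\de}$ has degree $0$ or $2$ in the subgraph with edge set $A_-(T_{\de}) \triangle A$, so this subgraph is a disjoint union of cycles (and isolated vertices); since $B_{\de}$ is bipartite and planar, drawn inside the polygon, each such cycle bounds a region.

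The key step, and the one I expect to be the main obstacle, is to show that each such cycle bounds a union of squares of $B_{\de}$ — equivalently, that the region it encloses, when viewed in $T_{\de}$, is a union of the ``diamond'' regions around diagonals. The idea is to argue that any cycle $C$ in $A_-(T_{\de}) \triangle A$ must, at each diagonal $\tau$ it ``crosses'', use a pair of parallel edges of the square labelled $\tau$: when $C$ enters the square of $\tau$ through one edge it must leave through another, and I would show that the only consistent way to do this across the whole cycle — given that each triangle contributes at most one ``flip'' of the selected angle and each vertex at most one — is that $C$ is precisely the boundary of a union of full squares. Concretely, I would set up an induction on the number of diagonals $\tau$ for which the selected marked angle differs between $A_-$ and $A$: the ``outermost'' such diagonal (closest to the minimal end in the counterclockwise ordering) forces its whole square into $B_A$, and removing that square reduces to a smaller instance. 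One should also separately check the boundary behaviour near the special vertices $p$, $q$ (and the $n=1$ exceptional case in the definition of $Y(A)$) does not interfere — but since Proposition~\ref{biparenclosed} is a purely combinatorial statement about $B_{\de}$ and does not involve the $y$-variables, the $Y(A)$ subtlety is irrelevant here and only the min-condition and Definition~\ref{defpm} are needed. Finally I would translate $B_A$ back: it is a union of squares, hence a subgraph of $B_{\de}$, and by the local analysis its boundary edges are exactly $E_-(B_{\de}) \triangle E$, completing the proof.
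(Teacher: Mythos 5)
Your core argument is exactly the paper's: every vertex of $B_{\de}$, black or white, is covered by exactly one edge of $E_-(B_{\de})$ and exactly one edge of $E$, so it has degree $0$ or $2$ in $E_-(B_{\de})\triangle E$, and the symmetric difference is therefore a disjoint union of non-crossing cycles. The paper stops there and concludes, tacitly using that $B_{\de}$ is drawn in the plane inside the polygon $T_{\de}$ and that every bounded face of $B_{\de}$ is precisely one of the squares labelled by a diagonal; a non-crossing cycle in such a drawing bounds a union of faces and hence a union of squares. You correctly sense that something remains after the degree count, but the clean resolution is this planarity observation, not the induction you sketch. That induction as written is not well posed: a diagonal does not carry ``a selected marked angle'' (only vertices and triangles do), the notion of ``outermost such diagonal'' is undefined in the general $T_{\de}$, and the step ``removing that square reduces to a smaller instance'' is not justified since removing a square from $T_{\de}$ need not leave a triangulated polygon of the same type. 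The preliminary transfer to $T_{\de}$ via $\varpi$ is also an unneeded detour here, since the degree count reads off directly from the definition of a perfect matching of $B_{\de}$; the statement really is a fact about the bipartite graph alone, with the $Y(A)$ subtleties (as you note) irrelevant.
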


 We denote by $I(E)$ the set of the squares of $B_{\de}$ contained in $B_E$.

\begin{proposition}\label{biparformula}
 For $E \in \bP(B_{\de})$, $I(E)=Y(\varpi^{-1}(E))$ holds.
\end{proposition}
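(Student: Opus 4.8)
The plan is to compare the two sets side by side through the bijection $\varpi$, reducing the identity $I(E) = Y(\varpi^{-1}(E))$ to a purely local statement about how squares of $B_{\de}$ sit inside $B_E$ and how the corresponding diagonals of $T_{\de}$ appear as sides of exterior angles in $A_-(T_{\de}) \triangle \varpi^{-1}(E)$. Write $A := \varpi^{-1}(E) \in \bA(T_{\de})$, so that $\varpi^{-1}(E_-(B_{\de})) = A_-(T_{\de})$ by definition of the minimal matching of $B_{\de}$; then under $\varpi$ the symmetric difference $E_-(B_{\de}) \triangle E$ corresponds exactly to the set of angles $A_-(T_{\de}) \triangle A$. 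The first step is therefore to translate Proposition \ref{biparenclosed}: the boundary edges of $B_E$ are precisely the edges $e$ with $\varpi^{-1}(e) \in A_-(T_{\de}) \triangle A$, and a square labelled by a diagonal $\tau$ lies in $B_E$ if and only if it is ``enclosed'' by such boundary edges.

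The second step is to pin down, for a fixed diagonal $\tau$ of $T_{\de}$ with adjacent triangles $\triangle, \triangle'$, exactly which edges of the square labelled $\tau$ can lie in $E_-(B_{\de}) \triangle E$. The four edges of this square are $\varpi$-images of the four angles of $\triangle$ and $\triangle'$ that face the two endpoints of $\tau$ — equivalently, the two angles at each endpoint $v$ of $\tau$ that are ``adjacent to $\tau$''. I would then argue that $\tau \in Y'(A)$, i.e. $\tau$ is a side of some exterior angle in $A_-(T_{\de}) \triangle A$, exactly when at least one (hence, by a parity/boundary argument, a suitable configuration) of these four angles lies in $A_-(T_{\de}) \triangle A$; and that this in turn is equivalent to the square labelled $\tau$ having a boundary edge in $B_E$, i.e. to $\tau \in I(E)$. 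The key local input is that an exterior angle of $T_{\de}$ is one between a boundary segment and a diagonal, so being a side of an exterior angle in the symmetric difference is detected by the $\varpi$-image of that angle being a boundary edge of $B_E$; here one uses that $\varpi$ carries exterior angles at a boundary vertex to the edges of $B_{\de}$ incident to the black vertex of that vertex, together with the structure of $B_E$ as a union of squares from Proposition \ref{biparenclosed} (so interior diagonals of an enclosed region contribute no boundary edge, matching the fact that a diagonal strictly inside is not a side of an exterior angle of the relevant subpolygon).

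The third and most delicate step is the correction term in the definition of $Y(A)$: the case $\de = \g^{(pq)}$, $n = 1$, where $Y(A) = Y'(A) \sqcup \{\tau_1\}$ whenever $A$ contains one of four specific angles between $\z_m$ or $\xi_\ell$ and $\tau_1$ or a boundary segment of $T_{\g^{(pq)}}$. I expect this to be the main obstacle, since in this degenerate geometry the diagonal $\tau_1$ need not appear as a side of an exterior angle in $A_-(T_{\de}) \triangle A$ in the naive sense, yet the square of $B_{\de}$ labelled $\tau_1$ must still be counted in $I(E)$ for the formula — and hence the cluster expansion — to come out right. The plan is to analyze the small graph $B_{\g^{(pq)}}$ with $n=1$ explicitly, enumerate its few perfect matchings, and check directly that the square labelled $\tau_1$ is enclosed in $B_E$ precisely under the stated angle condition; this is a finite verification. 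I would conclude by assembling the two sides: $\tau \in I(E) \iff \tau \in Y(A)$ for every diagonal $\tau$, taking the generic equivalence from steps one and two for all $\tau$ (and for $\tau_1$ outside the degenerate case) and the explicit check from step three for $\tau_1$ in the degenerate case. The remaining bookkeeping — that every element of $I(E)$ and of $Y(A)$ is indeed a diagonal, and that no diagonal is double-counted — is immediate from the definitions, so the proof reduces to these local comparisons.
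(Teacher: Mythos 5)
Your proposal is correct and follows essentially the same route as the paper: use the $\varpi$-correspondence to identify exterior angles with boundary edges of $B_{\de}$, deduce $\tau \in Y'(A) \Leftrightarrow \tau \in I(E)$ for each diagonal $\tau$ having an adjacent boundary segment (such a diagonal has a boundary edge of $B_{\de}$ on its square, and that edge lies in $E_{-}(B_{\de}) \triangle E$ precisely when the square is in $B_E$), and then treat the degenerate $\de = \g^{(pq)}$, $n=1$ case for $\tau_1$ separately.

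One caution on your third step: $B_{\g^{(pq)}}$ with $n=1$ is not a single fixed graph, since the fan sizes $m$ and $\ell$ around $p$ and $q$ are arbitrary, so ``enumerating its few perfect matchings'' is not literally a finite check. What is needed is a local criterion around $\tau_1$, and the paper supplies this in Lemma~\ref{inI} by observing that $A_{-}(T_{\g^{(pq)}})$ contains the angle between $\xi_1$ and a boundary segment, after which the union-of-squares structure of Proposition~\ref{biparenclosed} settles the case uniformly in $m$ and $\ell$. Also, in your step two the relevant angles are the \emph{exterior} ones among the four angles of the square labeled $\tau$, not all four; the statement as you phrased it should be tightened accordingly, though the surrounding discussion makes clear you had the right fact in mind.
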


 By Theorem \ref{main} and Proposition \ref{biparformula}, for a tagged arc $\de$ of $(S,M)$ such that $\overline{\de} \notin T$, we have
\[
 x_{\de}=\Phi\Biggl(\frac{1}{\cross(T,\de)}\sum_{E \in \mathbb{P}(B_{\de})}x(E)y(E)\Biggr),\ \ \text{where}\ \ y(E):=\prod_{i \in I(E)} y_{i}.
\]
This formula is a generalization of the cluster expansion formula in type $A$ given by Carroll and Price \cite{CPr} (see also \cite{CPi,P}). We prove Propositions \ref{biparenclosed} and \ref{biparformula} in Section \ref{bipartite}.

\subsection{Example in the principal coefficients case}\label{exprincipal}

 We consider the square $(S,M)$ with three punctures and the tagged triangulation $T$ of $(S,M)$ given in Subsection \ref{excoefffree}. The cluster algebra $\cA(T)$ has initial cluster variables $x_1,\ldots,x_{10}$ and initial principal coefficients $y_1,\ldots,y_{10}$. The ring homomorphism $\Phi : \bZ[x_1^{\pm 1},\ldots, x_{10}^{\pm 1},y_1^{\pm 1},\ldots,y_{10}^{\pm 1}] \rightarrow \bZ[x_1^{\pm 1},\ldots,x_{10}^{\pm 1},y_1^{\pm 1},\ldots,y_{10}^{\pm 1}]$ is given by
{\setlength\arraycolsep{0.5mm}
\begin{eqnarray*}
   \Phi(x_{i})=\left\{
 \right),
$}
\]
which is not affected by $\Phi$ since $x_2$ and $y_1$ don't appear.

\subsection{$f$-vectors and intersection numbers}

 We keep the notations of Subsection \ref{principalcoeff}. We recall $f$-vectors of cluster variables \cite[Definition 2.6]{FuG}: For a cluster variable $x$ of $\cA(T)$, let $f_{x,1},\ldots,f_{x,N}$ be the maximal degrees of $y_1,\ldots,y_N$ in the polynomial obtained from the Laurent expression of $x$ by substituting $1$ for each of $x_1,\ldots,x_N$. The integer vector $f_x:=(f_{x,1},\ldots,f_{x,N}) \in \bZ_{\ge 0}^N$ is called the {\it $f$-vector of $x$}. For a tagged arc $\de$ of $(S,M)$ such that $\overline{\de} \notin T$, by Theorem \ref{main}(1), the $f$-vector $(f_{x_{\de},1},\ldots,f_{x_{\de},N})$ of $x_{\de}$ is given by
\begin{equation}\label{fTde}
 \prod_{i=1}^N y_i^{f_{x_{\de},i}} = \Phi\Biggl( \prod_{\tau \in T_{\de}} y_{\tau}\Biggr).
\end{equation}

 On the other hand, for tagged arcs $\de$ and $\epsilon$ of $(S,M)$, Qiu and Zhou \cite{QZ} defined the intersection number between $\de$ and $\epsilon$ as follows: Assume that $\de$ and $\epsilon$ intersect transversally in a minimum number of points in $S \setminus M$. Then we define the intersection number $\Int(\de,\epsilon)=A+B+C$, where
\begin{itemize}
 \item $A$ is the number of intersection points of $\de$ and $\epsilon$ in $S \setminus M$;
 \item $B$ is the number of pairs of an end of $\de$ and an end of $\epsilon$ that are incident to a common puncture such that their tags are different;
 \item $C=0$ unless the ideal arcs corresponding to $\de$ and $\epsilon$ form a self-folded triangle, in which case $C=-1$.
\end{itemize}
 Note that this definition is different from the ``intersection number'' $(\de | \epsilon)$ defined in \cite[Definition 8.4]{FST}. We give the main result of this subsection.

\begin{theorem}
 For a tagged arc $\de$ of $(S,M)$, we have $f_{x_{\de},i}=\Int(\de,i)$ for $i \in [1,N]$.
\end{theorem}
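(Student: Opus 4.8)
The plan is to derive the theorem from Theorem \ref{main} via the identity (\ref{fTde}), after a few reductions.

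First, the reductions. Changing tags at a puncture is realised by an isomorphism of cluster algebras (Proposition \ref{MSW1315}) under which $f$-vectors transform covariantly, and it leaves Qiu--Zhou's $\Int$ unchanged; hence we may assume that $T$ satisfies Assumptions \ref{ass1} and \ref{ass2}. If $\de\in T$ then $x_{\de}$ is an initial cluster variable, so $f_{x_{\de}}=0$; on the other side $\de$ is compatible with every arc of $T$, and since Assumption \ref{ass1} rules out self-folded triangles in $T$ we get $\Int(\de,i)=0$ for all $i$. If $\overline{\de}\in T$ but $\de\notin T$, we specialise $x_j\mapsto 1$ in the explicit formula of Theorem \ref{main}(2) and read off, coordinate by coordinate, the top degree in the $y_i$ of the resulting polynomial; this expresses $f_{x_{\de}}$ through the analogous quantities for the loops $\ell_p,\ell_q$ and through the exponents $e_p(\tau),e_q(\tau)$, and comparing with $\Int(\overline{\de}^{(s)},i)$ and $\Int(\overline{\de}^{(pq)},i)$ — which \cite{FT} computes geometrically — reduces this case to the main one. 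So from now on $\overline{\de}\notin T$; write $\g=\overline{\de}$.

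By (\ref{fTde}) we have $\prod_{i=1}^N y_i^{f_{x_{\de},i}}=\Phi\bigl(\prod_{\tau\in T_{\de}}y_{\tau}\bigr)=\prod_{m=1}^N\Phi(y_m)^{c_m(\de)}$, where $c_m(\de)$ is the number of diagonals of $T_{\de}$ with label $m$ (equivalently, the number of copies of the arc $m$ of $T^0$ among the diagonals of $T_{\de}$; boundary segments do not contribute). Since $\Phi(y_m)=y_m$ unless $m$ is a plain arc of $T$ whose $1$-notched partner $k$ also lies in $T$, in which case $\Phi(y_m)=y_m/y_k$, comparing exponents gives
\[
 f_{x_{\de},i}=\begin{cases} c_i(\de) & \text{if $i$ is not a $1$-notched arc of $T$,}\\ c_i(\de)-c_j(\de) & \text{if $i$ is a $1$-notched arc of $T$ with plain partner $j$;}\end{cases}
\]
in particular (\ref{fTde}) forces these differences to be nonnegative. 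It remains to identify the right-hand side with $\Int(\de,i)=A+B+C$, which I would do by inspecting the construction of $T_{\de}$. The diagonals $\tau_1,\dots,\tau_n$ — the copies of the arcs of $T^0$ crossed by $\g$ — account for $A$ and $C$: if $i$ is a plain arc of $T$, then $c_i(\de)$ is the number of transversal crossings of $\g$ with $i$, which equals that of $\de$ with $i$; if $i$ is a $1$-notched arc of $T$, then in $T^0$ it becomes a loop enclosing a puncture whose radius is its plain partner $j$, each essential passage of $\g$ through that once-punctured monogon contributes $2$ to $c_i(\de)$ and $1$ to $c_j(\de)$, so $c_i(\de)-c_j(\de)$ is the number of crossings of $\de$ with the notched arc $i$, and the local move of Figure \ref{replace1notched} changes the relevant multiplicity by exactly one precisely when $\de$ and $i$ span a self-folded triangle, producing $C=-1$. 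If $\de=\g$ is plain there are no further diagonals and no ends are in play, so $B=0$. If $\de=\g^{(p)}$ (resp.\ $\de=\g^{(pq)}$), the extra diagonals $\z_1,\dots,\z_m$ around $p$ (resp.\ together with $\xi_1,\dots,\xi_{\ell}$ around $q$) are by construction the arcs of $T^0$ incident to $p$ (resp.\ $q$); since Assumption \ref{ass2} forbids a $1$-notched arc at $p$ or $q$, a local analysis at the puncture shows that, after applying $\Phi$, their joint contribution to $f_{x_{\de},i}$ equals the number of ends of $i$ incident to $p$ (resp.\ $q$) whose tag differs from that of $\de$ — that is, $B$. The degenerate case $n=1$, $\de=\g^{(pq)}$, which is exactly the case that puts $\tau_1$ into $Y(A)$ in the definition of $Y$, must be checked by hand and matches one additional unit of $B$.

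The main obstacle is precisely this last verification at punctures and self-folded triangles: one must check, configuration by configuration, that the $\Phi$-twisted diagonal count of $T_{\de}$ reproduces the summands $B$ and $C$ of $\Int(\de,i)$, accounting for every way $\g$ (or a loop $\ell_s$) can wind through a once-punctured monogon or terminate at a puncture. Once the dictionary ``crossing of $\g$ with $T^0$'' $\leftrightarrow$ ``tagged intersection with $T$ after $\Phi$'' is in place, the remaining cases are routine.
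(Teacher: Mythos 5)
Your approach is the same as the paper's: both extract the same intermediate quantity $f$ from the identity (\ref{fTde}) and then compare it case-by-case with $\Int(\de,i)$. Your computation of $f$ from $\Phi\bigl(\prod_{\tau\in T_{\de}}y_{\tau}\bigr)$ — namely $f=c_i(\de)$ if $i$ is not $1$-notched and $f=c_i(\de)-c_j(\de)$ if $i$ is $1$-notched with plain partner $j$ — is correct and matches the paper's. But the dictionary you then propose between diagonal counts and the summands $A,B,C$ of $\Int(\de,i)$ contains two genuine errors.

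First, the assertion that $B=0$ whenever $\de=\g$ is plain is false. If $\g$ has an endpoint at a puncture $s$ and $i\in T$ is the $1$-notched arc with its notched end at $s$, then the ends of $\g$ and $i$ at $s$ carry different tags, so $B(\g,i)\ge 1$. This contribution is already encoded in the $\tau$-diagonals: the middle replacement in Figure~\ref{replace1notched} places one copy of the loop $\iota^{-1}(i)$ among the diagonals while both copies of the radius $j$ land on the boundary, so $c_i(\de)-c_j(\de)$ jumps by $1$ without any interior crossing of $\g$ and $j$. Hence the $\tau$'s account for $A$ \emph{and part of} $B$, not $A$ and $C$, and your statement that $c_i(\de)-c_j(\de)$ equals the number of crossings of $\de$ with $i$ is also off in this situation. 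Second, in the regime $\overline{\de}\notin T$ one always has $C=0$: for $\de=\g^{(p)}$ the ideal arc $\iota^{-1}(\de)=\ell_p$ has radius $\g\notin T$, so $\de$ and $i\in T$ never bound a self-folded triangle; so attributing a $C=-1$ to the local move of Figure~\ref{replace1notched} is not correct in this case. The only place $C=-1$ arises is $\overline{\de}\in T$ and $i=\overline{\de}$, which you have relegated to Theorem~\ref{main}(2). The same confusion shows up in your $\de\in T$ step: two compatible arcs of $T$ with the same underlying curve that differ in tag at one puncture have $A=0$, $B=1$, $C=-1$, so it is the cancellation $B+C=0$ — not the absence of self-folded triangles in $T$ — that makes $\Int(\de,i)=0$ there.
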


\begin{proof}
 Considering in each case, it is easy to show that both $f_{x_{\de},i}$ and $\Int(\de,i)$ are equal to $f \in \bZ_{\ge 0}$ given as follows: If $\de \in T$, then $f=0$; Suppose that $\overline{\de} \notin T$. If $i$ is a plain arc of $T$, then $f$ is the number of diagonals of $T_{\de}$ corresponding to $i$. If $i$ is a $1$-notched arc of $T$, then $f$ is the number of diagonals of $T_{\de}$ corresponding to $i$ minus the number of diagonals of $T_{\de}$ corresponding to $\overline{i}$; Suppose that $\overline{\de} \in T$ and $T \notin T$. We use the notations of Theorem \ref{main}(2). If $\de=\overline{\de}^{(s)}$, then $f=e_s(i)-\de_{i\overline{\de}}$, where $\de_{i\overline{\de}}$ is the Kronecker delta. If $\de=\overline{\de}^{(pq)}$, then $f=e_p(i)+e_q(i)$.
\end{proof}

\section{Preliminary}\label{Preliminary}

 For the convenience of the reader, we recall basic definitions and facts about cluster algebras, triangulated surfaces and the cluster expansion formulas of Musiker-Schiffler-Williams (e.g. \cite{FST,FZ1,FZ2,MSW1}).


\subsection{Cluster algebras with principal coefficients}\label{CApc}

 To define cluster algebras with principal coefficients, we need to prepare some notations. Let ${\mathcal F}:=\mathbb{Q}(t_1,\ldots,t_{2N})$ be the field of rational functions in $2N$ variables over $\mathbb{Q}$.

\begin{definition}\cite[Definition 2.3]{FZ2}
 A {\it labeled seed} (or simply, {\it seed}) is a pair $(x,\oB)$ consisting of the following data:\par
 (i) $x=(x_1,\ldots,x_N,y_1,\ldots,y_N)$ is a free generating set of ${\mathcal F}$ over $\mathbb{Q}$.\par
 (ii) $\oB=(b_{ij})_{1 \le i \le 2N,1 \le j \le N}$ is a $2N \times N$ integer matrix whose upper part $B=(b_{ij})_{1 \le i, j \le N}$ is {\it skew-symmetric}, that is, $b_{ij}=-b_{ji}$ for any $i, j \in [1,N]$.\\
 Then we refer to $x$ as the {\it cluster}, to each $x_i$ as a {\it cluster variable}, to each $y_i$ as a {\it coefficient} and to $\oB$ as the {\it exchange matrix} of $(x,\oB)$.
\end{definition}

 In general, one may consider {\it skew-symmetrizable} or {\it sign-skew-symmetric} matrices as exchange matrices \cite{FZ1}. In this paper, we only study the skew-symmetric case as we focus on cluster algebras defined from triangulated surfaces.

\begin{definition}\cite[Definition 2.4, (2.15)]{FZ2}
 For a seed $(x,\oB)$, the $\it{mutation}$ $\mu_k(x,\oB)=(x',\oB')$ in direction $k$ $(1 \le k \le N)$ is defined as follows.\par
 (i) $x'=(x'_1,\ldots,x'_N,y_1,\ldots,y_N)$ is defined by
\begin{equation}\label{clustermutation}
  x_k x'_k = \prod_{i=1}^{N}x_i^{[b_{ik}]_+}y_i^{[b_{N+i,k}]_+}+\prod_{i=1}^{N}x_i^{[-b_{ik}]_+}y_i^{[-b_{N+i,k}]_+}, \ \ \mbox{and} \ \ x'_i = x_i \ \ \mbox{if} \ \ i \neq k,
\end{equation}
 where $[x]_{+}:=\max(x,0)$.\par
 (ii) $\oB'=(b'_{ij})_{1 \le i \le 2N,1 \le j \le N}$ is defined by
\begin{eqnarray}\label{matrixmutation}
   b'_{ij} =  \left\{
     \begin{array}{ll}
   -b_{ij} & \mbox{if} \ \ i=k \ \ \mbox{or} \ \ j=k,\\
   b_{ij}+\frac{b_{ik}}{|b_{ik}|}[b_{ik}b_{kj}]_+ & \mbox{otherwise}.
     \end{array} \right.
\end{eqnarray}
\end{definition}

 Then it is elementary that $\mu_k(x,\oB)$ is also a seed. Moreover, $\mu_k$ is an involution, that is, we have $\mu_k\mu_k(x,\oB)=(x,\oB)$.\par

 Now we define cluster algebras with principal coefficients. For a skew-symmetric $N \times N$ integer matrix $B$, we define $\wB=(b_{ij})$ as the $2N \times N$ integer matrix whose upper part $(b_{ij})_{1 \le i,j \le N}$ is $B$ and lower part $(b_{ij})_{N+1 \le i \le 2N,1 \le j \le N}$ is the $N \times N$ identity matrix. We fix a seed $(x=(x_1,\ldots,x_N,y_1,\ldots,y_N),\wB)$ that we call an {\it initial seed}. We also call each $x_i$ an {\it initial cluster variable}.

\begin{definition}\cite[Definition 3.1]{FZ2}\label{clusteralg}
 The {\it cluster algebra} $\cA(B)=\cA(x,\wB)$ {\it with principal coefficients} for the initial seed $(x,\wB)$ is the $\bZ$-subalgebra of ${\mathcal F}$ generated by the cluster variables and coefficients obtained by all sequences of mutations from $(x,\wB)$.
\end{definition}

 One of the remarkable properties of cluster algebras is the {\it Laurent phenomenon}.

\begin{theorem}\cite[Theorem 3.1]{FZ1}
 Every element of the cluster algebra $\cA(B)$ is a Laurent polynomial over $\bZ[y_1,\ldots,y_N]$ in the initial cluster variables, that is, $\cA(B)$ is contained in $\bZ[x_1^{\pm 1},\ldots,x_N^{\pm 1},y_1,\ldots,y_N]$.
\end{theorem}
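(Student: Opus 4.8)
The plan is to follow the original argument of Fomin and Zelevinsky \cite{FZ1}, specialized to principal coefficients. Set $\mathcal{L}:=\bZ[x_1^{\pm 1},\ldots,x_N^{\pm 1},y_1,\ldots,y_N]$. This ring is a localization of the polynomial ring $\bZ[x_1,\ldots,x_N,y_1,\ldots,y_N]$, hence a unique factorization domain; it is closed under the ring operations and it contains every initial cluster variable and every coefficient. Since $\cA(B)$ is generated as a $\bZ$-algebra by the cluster variables and coefficients reachable from the initial seed $(x,\wB)$ by sequences of mutations, and the $y_i$ are fixed by every mutation by \eqref{clustermutation}, it will suffice to show that each cluster variable reachable from $(x,\wB)$ already lies in $\mathcal{L}$.

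I would index the seeds reachable from $(x,\wB)$ by the vertices of the $N$-regular tree $\mathbb{T}_N$, with edges labelled by mutation directions and the initial seed placed at a base vertex, and then induct on the graph distance from that vertex. For distance $1$ the claim is immediate from \eqref{clustermutation}: solving the exchange relation for the new cluster variable $x_k'$ expresses it as a polynomial in the $x_i$ and $y_i$ divided by $x_k$, which lies in $\mathcal{L}$. The obstruction at larger distance is that $x_k'$ now sits in a denominator, and a subsequent mutation in a direction $\ell\neq k$ re-introduces $x_k$; to close the induction one must prove that the numerator created by the next exchange relation is divisible in $\mathcal{L}$ by the appropriate power of $x_k'$.

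The device that makes this work is the \emph{Caterpillar Lemma} of \cite{FZ1}. Fixing two distinct directions $i\neq j$, one considers the cluster variables obtained from an arbitrary seed along an alternating mutation sequence $\cdots\mu_i\mu_j\mu_i$ and proves, by induction on the length of this alternating path, two statements simultaneously: (a) all the cluster variables along the path lie in $\mathcal{L}$; and (b) suitable pairs of these Laurent polynomials are coprime in the UFD $\mathcal{L}$ after stripping Laurent-monomial factors --- in particular the two cluster variables attached to a common edge together with the relevant neighbour beyond it. From (a) and (b) one writes the rescaled rank-$2$ exchange relations along the path, uses the coprimality to check that the candidate Laurent expression for the far endpoint has no spurious poles, and so propagates Laurentness one further step. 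The general case of an arbitrary geodesic in $\mathbb{T}_N$ is then recovered by the combinatorial bookkeeping that names the lemma: one cuts the geodesic into alternating segments hanging off a spine and feeds the conclusion of each segment into the next.

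The hard part is item (b): coprimality is not automatically preserved, so it must be carried through the induction as a second invariant alongside Laurentness, and formulating the coprimality assertions so that they are simultaneously usable in the inductive step and themselves inductively stable is the delicate point of the FZ1 argument --- this is where the combinatorics of the matrix-mutation rule \eqref{matrixmutation} and the sign patterns of the $[b_{ik}]_+$ enter. By contrast, the coefficients $y_i$ cause no extra trouble: in every exchange relation \eqref{clustermutation} the $y_i$ appear only with exponents $[b_{N+i,k}]_+\ge 0$, and solving for a cluster variable divides only by a cluster variable, never by a $y_i$, so the $y_i$ remain polynomial throughout and the coefficient-free argument transfers with no change. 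In practice we shall simply invoke \cite[Theorem 3.1]{FZ1}.
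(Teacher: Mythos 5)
Your proposal is correct and takes the same route as the paper, which gives no proof of its own and simply cites \cite[Theorem 3.1]{FZ1}; your sketch of the caterpillar-lemma induction (Laurentness plus coprimality in the ambient UFD, propagated along alternating mutation paths) is a faithful summary of that cited argument. One small remark: the theorem of \cite{FZ1} literally yields Laurentness with coefficients in $\bZ[y_1^{\pm1},\ldots,y_N^{\pm1}]$, and the sharper polynomiality in the $y_i$ as stated here is recorded in \cite{FZ2}; your observation that no exchange relation ever inverts a $y_i$ is exactly the reason the same induction can be run over the ground ring $\bZ[y_1,\ldots,y_N]$.
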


\begin{example}\label{ex1}
The matrix $B=
\begin{bmatrix}
0&1\\
-1&0
\end{bmatrix}$
is skew-symmetric. Let $((x_1,x_2,y_1,y_2),\wB)$ be a seed. Then we get the cluster algebra with principal coefficients
 \begin{equation*}
  \cA(B)=\bZ\bigl[x_1,x_2,\mbox{$\frac{x_2+y_1}{x_1},\frac{1+x_1y_2}{x_2},\frac{x_2+y_1+x_1y_1y_2}{x_1x_2}$},y_1,y_2\bigr].
 \end{equation*}
\end{example}


\subsection{Ideal and tagged triangulations}\label{tag tri}

 Let $S$ be a connected compact oriented Riemann surface with (possibly empty) boundary and $M$ a non-empty finite set of marked points on $S$ with at least one marked point on each boundary component if $S$ has boundaries. We call the pair $(S,M)$ a {\it marked surface}. Any marked point in the interior of $S$ is called a {\it puncture}. For technical reasons, $(S,M)$ is not a monogon with at most one puncture, a digon without punctures, a triangle without punctures nor a sphere with at most three punctures.

 An {\it ordinary arc} $\de$ in $(S,M)$ is a curve in $S$ with endpoints in $M$, considered up to isotopy, such that: $\de$ does not intersect itself except at its endpoints; $\de$ is disjoint from $M$ and from the boundary of $S$ except at its endpoints; $\de$ does not cut out an unpunctured monogon or an unpunctured digon. An ordinary arc with two identical endpoints is called a {\it loop}. A curve homotopic to a boundary component between two marked points is called a {\it boundary segment}.

 Two ordinary arcs are called {\it compatible} if they do not intersect in the interior of $S$. An {\it ideal triangulation} is a maximal collection of pairwise compatible ordinary arcs.
 A triangle with only two distinct sides is called {\it self-folded} (see Figure \ref{self-folded}).
\begin{figure}[htp]
   \caption{A self-folded triangle and the corresponding tagged arc}
   \label{self-folded}
\[
 \begin{xy}
  (0,0)="0"*{\bullet}, **\crv{(-10,10)&(0,20)&(10,10)}, "0"+(0,10)="1"*{\bullet}, "1"+(0,6)*{{\scriptsize\mbox{$\g$}}}
  \ar@{-}"0";"1"^(.0){o}^(.9){p}
 \end{xy}\hspace{30mm}
 \begin{xy}
  (0,0)="0"*{\bullet}, "0"+(0,10)="1"*{\bullet}, **\crv{(5,7)}, "1"+(2,-2)*{{\rotatebox{40}{\footnotesize $\bowtie$}}}, "1"+(6,-5)*{{\scriptsize\mbox{$\iota(\g)$}}}
  \ar@{-}"0";"1"^(.0){o}^(.9){p}
 \end{xy}
\]
\end{figure}
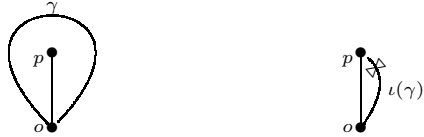

 For an ideal triangulation $T$, a {\it flip} at an ordinary arc $\de \in T$ replaces $\de$ by another arc $\de' \notin T$ such that $T\setminus\{\de\}\cup\{\de'\}$ is an ideal triangulation. Notice that an ordinary arc inside a self-folded triangle can not be flipped. This problem was solved by the notion of {\it tagged arcs} introduced in \cite{FST}.

\begin{definition}\cite[Definition 7.1]{FST}
 A {\it tagged arc} is an ordinary arc with each end tagged in one of two ways, {\it plain} or {\it notched}, such that the following conditions are satisfied: the tagged arc does not cut out a once-punctured monogon; an endpoint lying on the boundary of $S$ is tagged plain; both ends of a loop are tagged in the same way.
\end{definition}

 In this paper, we assume that if $(S,M)$ is a closed surface with exactly one puncture, all tagged arcs are plain arcs. For an ordinary arc $\g$ of $(S,M)$, we define a tagged arc $\iota(\g)$ as follows: If $\g$ does not cut out a once-punctured monogon, $\iota(\g)$ is the tagged arc obtained from $\g$ by tagging both ends plain: If $\g$ cuts out a once-punctured monogon with endpoint $o$ and puncture $p$, $\iota(\g)$ is the tagged arc obtained by tagging the unique arc that connects $o$ and $p$ and does not intersect $\g$, plain at $o$ and notched at $p$ (see Figure \ref{self-folded}). For a tagged arc $\de$, we denote by $\de^{\circ}$ the ordinary arc obtained from $\de$ by forgetting its tags.

\begin{definition}\cite[Definition 7.4]{FST}
 Two tagged arcs $\de$ and $\epsilon$ are called {\it compatible} if the following conditions are satisfied:
\begin{itemize}
 \item the two ordinary arcs $\de^{\circ}$ and $\epsilon^{\circ}$ are compatible,
 \item if $\de^{\circ}=\epsilon^{\circ}$, at least one end of $\epsilon$ is tagged in the same way as the corresponding end of $\de$,
 \item if $\de^{\circ}\neq\epsilon^{\circ}$ and they have a common endpoint $o$, the ends of $\de$ and $\epsilon$ at $o$ are tagged in the same way.
\end{itemize}
 A {\it tagged triangulation} is a maximal collection of pairwise compatible tagged arcs.
\end{definition}

 Note that it is possible to flip at any tagged arc of a tagged triangulation \cite[Theorem 7.9]{FST}. Moreover, any two tagged triangulations of $(S,M)$ are connected by a sequence of flips by \cite[Proposition 7.10]{FST}.


\subsection{Cluster algebras defined from triangulated surfaces}\label{clalgfromtri}

 Let $(S,M)$ be a marked surface. First, we consider an ideal triangulation $T$ of $(S,M)$. For an ordinary arc $\g$, $\pi(\g)$ is defined as follows: if there is a self-folded triangle in $T$ with non-loop side $\g$, $\pi(\g)$ is its loop side; otherwise $\pi(\g)=\g$.

\begin{definition}\cite[Definition 4.1]{FST}
 Let $T$ be an ideal triangulation of $(S,M)$ and $t_1,\ldots,t_N$ be all ordinary arcs of $T$. For any non-self-folded triangle $\triangle$ in $T$, an $N \times N$ matrix $B^{\triangle}=(b_{ij}^{\triangle})$ is defined by
\[
b_{ij}^{\triangle}=\left\{
\begin{array}{l}
1,\ \ \ \mbox{if $\pi(t_i)$ and $\pi(t_j)$ are sides of $\triangle$ with $\pi(t_j)$ following $\pi(t_i)$ in the clockwise order,}\\
-1,\ \mbox{if $\pi(t_i)$ and $\pi(t_j)$ are sides of $\triangle$ with $\pi(t_j)$ following $\pi(t_i)$ in the counterclockwise order,}\\
0,\ \ \ \mbox{otherwise.}
\end{array}
\right.
\]
 We define the $N \times N$ matrix $B_T=\sum_{\triangle}B^{\triangle}$, where $\triangle$ runs over all non-self-folded triangles in $T$.
\end{definition}

 We consider a tagged triangulation $T$ of $(S,M)$. We obtain a tagged triangulation $\hat{T}$ from $T$ by simultaneous changing all tags at some punctures, in such a way that there is an ideal triangulation $T^0$ satisfying $\hat{T}=\iota(T^0)$ (see \cite[Remark 3.11]{MSW1}). Notice that $\hat{T}$ satisfies Assumption \ref{ass1}.

\begin{definition}\cite[Definition 9.6]{FST}
 For a tagged triangulation $T$, we define the $N \times N$ matrix $B_T:=B_{T^0}$.
\end{definition}

 Since $B_T$ is skew-symmetric, we get a cluster algebra $\cA(T):=\cA(B_T)$ with principal coefficients for any tagged triangulation $T$.

\begin{theorem}\label{bij}\cite[Theorem 7.11]{FST}\cite[Theorem 6.1]{FT}
 Let $T$ be a tagged triangulation of $(S,M)$. Then the tagged arcs $\de$ of $(S,M)$ correspond bijectively with the cluster variables $x_{\de}$ in $\cA(T)$. This induces that the tagged triangulations $T'$ of $(S,M)$ correspond bijectively with the clusters $x_{T'}$ in $\cA(T)$. Moreover, the tagged triangulation obtained from $T'$ by flipping at $\de \in T'$ corresponds the cluster obtained from $x_{T'}$ by mutating at $x_{\de}$.
\end{theorem}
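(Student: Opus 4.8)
The plan is to build the correspondence explicitly, following \cite[Theorem 7.11]{FST} and \cite[Theorem 6.1]{FT}, by combining the combinatorics of flips with the geometry of $\lambda$-lengths. First I would record the two parallel structures. On the surface side, the tagged triangulations of $(S,M)$ are the vertices of a connected graph under flips: every tagged arc of a tagged triangulation can be flipped (\cite[Theorem 7.9]{FST}), the flip is unique, and any two tagged triangulations are joined by a finite sequence of flips (\cite[Proposition 7.10]{FST}). On the algebra side, by Definition \ref{clusteralg} the seeds of $\cA(T)$ are exactly those reached from the initial seed $(x,\wB)$ by arbitrary sequences of mutations.

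The combinatorial core is that flips intertwine with matrix mutation: if $T''$ is obtained from a tagged triangulation $T'$ by flipping the tagged arc $\de$, then $B_{T''}=\mu_{\de}(B_{T'})$ on the upper $N\times N$ block, while the principal coefficient part of $\wB$ is untouched on both sides (matching \eqref{matrixmutation}). I would prove this by passing to the underlying ideal triangulations via the operation $\hat{\cdot}$ of Subsection \ref{clalgfromtri}: a flip is either an honest quadrilateral flip, where the identity is the classical local computation carried out in \cite[Sections 4, 9]{FST}, or it involves a self-folded triangle, which reduces to the short list of puzzle-piece configurations treated there. This produces a flip-equivariant assignment
\[
\Psi:\{\text{tagged triangulations of }(S,M)\}\longrightarrow\{\text{seeds of }\cA(T)\},\qquad \Psi(T)=(x,\wB),
\]
obtained by following any sequence of flips from $T$ to $T'$ and the matching sequence of mutations; a priori $\Psi$ depends on the chosen path, and similarly each tagged arc appearing along a path is matched with a cluster variable.

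To make $\Psi$ well defined and simultaneously establish injectivity, I would invoke Penner's decorated Teichm\"uller coordinates as in \cite[Sections 7--8]{FT}. Fixing a decorated hyperbolic structure on $S\setminus M$, each tagged arc $\de$ determines a positive function $\lambda_{\de}$ on the decorated Teichm\"uller space (the conjugate $\lambda$-length when an end is notched), and the $\lambda$-lengths of the arcs of any tagged triangulation are algebraically independent. The Ptolemy relations satisfied by these functions under a flip are precisely the exchange relations \eqref{clustermutation} after substituting $x_{\de}\mapsto\lambda_{\de}$ and reading off the coefficients $y_i$ as prescribed in \cite{FT}; hence $\de\mapsto\lambda_{\de}$ extends to a ring homomorphism from $\cA(T)$ into the function ring, carrying each cluster variable produced along a path of mutations to the $\lambda$-length of the tagged arc produced along the matching path of flips. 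Since $\lambda$-lengths of non-isotopic tagged arcs are distinct functions --- distinguished by their growth along pinching paths, which records the geometric intersection pattern, with the plain/notched distinction detected near the punctures --- the assignment $\de\mapsto x_{\de}$ is injective. Consequently $\Psi$ is path-independent (two paths ending at the same $T'$ yield seeds with the same, algebraically independent, cluster, hence the same seed) and surjective by construction of $\cA(T)$, giving the bijection in part~(1). For part~(2), if $T'\neq T''$ then some $\de\in T'\setminus T''$ has $\lambda_{\de}$ distinct from every $\lambda_{\epsilon}$ with $\epsilon\in T''$, so $x_{\de}$ is not a cluster variable of $x_{T''}$ and $x_{T'}\neq x_{T''}$; together with surjectivity this gives the bijection $T'\leftrightarrow x_{T'}$. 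The final assertion is then exactly the flip-equivariance of $\Psi$.

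I expect the main obstacle to be the injectivity on tagged arcs, i.e.\ verifying that non-isotopic tagged arcs genuinely give different $\lambda$-length functions. The delicate cases are arcs related by changing a single tag at a puncture, and arcs bounding a once-punctured monogon: one must check that the conjugate-$\lambda$-length convention separates the notched arc from both the underlying loop and the associated plain arc, and that everything is compatible with the normalization adopted in this paper forcing all tagged arcs to be plain when $(S,M)$ is closed with exactly one puncture. Handling these cases, rather than the quadrilateral-flip computation or the formal bookkeeping, is where the real work lies.
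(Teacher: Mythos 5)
Your proposal is essentially the argument of the cited sources: this theorem is quoted in the paper without proof, being imported from \cite{FST} (flip combinatorics, compatibility of tagged flips with matrix mutation) and \cite{FT} (lambda lengths, Ptolemy relations, and the separation of non-isotopic tagged arcs by their lambda-length functions), and your sketch follows exactly that strategy, correctly identifying the injectivity/well-definedness step via lambda lengths as the substantive point. So the approach matches the paper's (cited) proof, and no genuinely different route or gap needs to be discussed.
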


 For a tagged arc $t$ and a puncture $p$ of $(S,M)$, we denote by $t^{(p)}$ the tagged arc obtained from $t$ by changing tags at $p$, where $t^{(p)}=t$ if $p$ is not an endpoint of $t$.

\begin{proposition}\cite[Proposition 3.15]{MSW1}\label{MSW1315}
 Let $T$ be a tagged triangulation of $(S,M)$ consisting of tagged arcs $t_1,\ldots,t_N$. We denote by $T^{(p)}$ the tagged triangulation consisting of $t_1^{(p)},\ldots,t_N^{(p)}$. Let $\Sigma_{T}=(x, B_T)$ and $\Sigma_{T^{(p)}}=(x^{(p)},$ $B_{T^{(p)}})$ be the corresponding initial seeds of $\cA(T)$ and $\cA(T^{(p)})$, respectively. Then for a tagged arc $\de$, we have
\[
 [x_{\de^{(p)}}]_{\Sigma_{T^{(p)}}}^{\cA(T^{(p)})}=[x_{\de}]_{\Sigma_{T}}^{\cA(T)}|_{x \leftarrow x^{(p)}, y \leftarrow y^{(p)}},
\]
 where $[x_{\de}]_{\Sigma_{T}}^{\cA(T)}$ is the cluster expansion of $x_{\de}$ with respect to $\Sigma_{T}$ in $\cA(T)$.
\end{proposition}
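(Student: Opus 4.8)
The statement is \cite[Proposition 3.15]{MSW1}; we indicate how the proof runs in the present terminology. The plan is to promote the tag-changing operation $t \mapsto t^{(p)}$ to a renaming of the initial variables and to show that this renaming carries every cluster variable of $\cA(T)$ to the cluster variable of $\cA(T^{(p)})$ attached to the correspondingly re-tagged arc; the displayed identity is then the instance $\epsilon = \de$. Concretely, let $F$ be the ring isomorphism determined on the initial data of $\Sigma_T$ by $x_i \mapsto x_i^{(p)}$ and $y_i \mapsto y_i^{(p)}$ for $i \in [1,N]$. The goal is to prove, for every tagged arc $\epsilon$ of $(S,M)$,
\[
 F\bigl([x_{\epsilon}]_{\Sigma_T}^{\cA(T)}\bigr) = [x_{\epsilon^{(p)}}]_{\Sigma_{T^{(p)}}}^{\cA(T^{(p)})}.
\]

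Two combinatorial inputs are needed. First, $t \mapsto t^{(p)}$ is an involution on the set of tagged arcs that preserves compatibility (each clause of \cite[Definition 7.4]{FST} is symmetric under simultaneously switching the tags at $p$ on both arcs), hence restricts to a bijection on tagged triangulations; and since a flip is the unique way to replace a single arc of a tagged triangulation by another, it commutes with flips: if $T_0'$ is obtained from $T_0$ by flipping at $\delta \in T_0$, then $(T_0')^{(p)}$ is obtained from $T_0^{(p)}$ by flipping at $\delta^{(p)}$. Second, the initial exchange matrices agree: $\oB_T = \oB_{T^{(p)}}$ once the arc $t_i \in T$ is identified with $t_i^{(p)} \in T^{(p)}$. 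As both lower parts are the identity, this reduces to $B_{T^0} = B_{(T^{(p)})^0}$, which follows from the bookkeeping of simultaneous tag changes: if $\hat T = \iota(T^0)$ is obtained from $T$ by changing tags at a set $P$ of punctures, then changing the tags of $T^{(p)}$ at $P \triangle \{p\}$ again yields $\hat T = \iota(T^0)$, so $T^0$ also represents $T^{(p)}$ and hence $B_{(T^{(p)})^0} = B_{T^0}$ by the well-definedness of $B_{\cdot}$ recorded in \cite{FST}; a routine check confirms that the arc labellings of these matrices are matched by $t_i \mapsto t_i^{(p)}$.

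Now run an induction on the flip distance $d$ from $T$ to a tagged triangulation $T_0$, proving simultaneously that $F$ sends each cluster variable of the seed $x_{T_0}$ of $\cA(T)$ to the corresponding cluster variable of the seed $x_{T_0^{(p)}}$ of $\cA(T^{(p)})$, and that $\oB_{T_0} = \oB_{T_0^{(p)}}$ under the identification $t_i \mapsto t_i^{(p)}$. For $d = 0$ this is precisely the second input above. For the inductive step, let $T_0$ be obtained from a triangulation $T_1$ with flip distance $< d$ by flipping at $\epsilon' \in T_1$, producing the new arc $\epsilon \in T_0$. By Theorem \ref{bij} the cluster $x_{T_1}$ is a seed of $\cA(T)$, and the exchange relation (\ref{clustermutation}) for $x_{T_1}$ at $\epsilon'$ expresses $x_{\epsilon}$ through the remaining variables of $x_{T_1}$ and the $y_j$, with exponents read off from $\oB_{T_1}$. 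Applying $F$ and using the inductive hypothesis for $x_{T_1}$ (both the variable part and $\oB_{T_1} = \oB_{T_1^{(p)}}$) turns this into the exchange relation (\ref{clustermutation}) for the seed $x_{T_1^{(p)}}$ mutated at $x_{\epsilon'^{(p)}}$; by the first input this mutation produces the seed $x_{T_0^{(p)}}$, whose new variable is $x_{\epsilon^{(p)}}$, while $\oB_{T_0^{(p)}} = \oB_{T_0}$ because matrix mutation (\ref{matrixmutation}) in a fixed direction is a deterministic function of the matrix alone. Cancelling the common factor $x_{\epsilon'^{(p)}}$ from the two exchange relations gives $F(x_{\epsilon}) = x_{\epsilon^{(p)}}$, closing the induction; the case $\epsilon = \de$ is the proposition.

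The one delicate point is the second combinatorial input, namely the invariance $B_{T^0} = B_{(T^{(p)})^0}$ together with the compatibility of the arc labellings: one must control how the representative ideal triangulation and the correspondence of arcs behave under $(\cdot)^{(p)}$, especially in the presence of self-folded triangles incident to $p$, where $\iota$, the hat operation and the passage to $(\cdot)^0$ interact. Everything else is the standard propagation of an identity along mutation sequences via (\ref{clustermutation}), (\ref{matrixmutation}) and Theorem \ref{bij}.
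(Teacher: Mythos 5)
The paper does not prove this statement at all: it is quoted verbatim from \cite[Proposition 3.15]{MSW1}, so there is no in-paper argument to compare against. Your reconstruction is essentially the argument of Musiker--Schiffler--Williams (and the underlying results of \cite{FST}): the relabeling $x_i\mapsto x_i^{(p)}$, $y_i\mapsto y_i^{(p)}$ is an isomorphism of seeds because $T$ and $T^{(p)}$ admit a common representing ideal triangulation $T^0$ (so $\wB_T=\wB_{T^{(p)}}$ under $t_i\leftrightarrow t_i^{(p)}$), simultaneous tag change at $p$ preserves compatibility and hence commutes with flips, and the identity $F(x_{\epsilon})=x_{\epsilon^{(p)}}$ then propagates along mutation sequences via Theorem \ref{bij} and the exchange relations; this is sound, and the one delicate point you flag (well-definedness of $B_T$ and the matching of labels in the presence of self-folded triangles at $p$) is precisely what \cite{FST} supplies.
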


 In view of Proposition \ref{MSW1315}, since we have $\hat{T}=T^{(p_1\cdots p_r)}$ for some punctures $p_1,\ldots,p_r$, it is enough to consider a tagged triangulation $T$ satisfying $T=\hat{T}$, that is satisfying Assumption \ref{ass1}. In the rest of this paper, we assume that any tagged triangulation satisfy Assumption \ref{ass1}. Moreover, suppose that there is a $1$-notched arc $t \in T$ with endpoint $p$ tagged notched. Let $s \in T$ the corresponding plain arc. Then $t^{(p)}=s$ and $s^{(p)}=t$ hold. Therefore, for a tagged arc $\de$, we have
\[
 [x_{\de^{(p)}}]_{\Sigma_{T}}^{\cA(T)} = [x_{\de}]_{\Sigma_{T^{(p)}}}^{\cA(T^{(p)})}|_{x \leftarrow x^{(p)}, y \leftarrow y^{(p)}} = [x_{\de}]_{\Sigma_{T}}^{\cA(T)}|_{x_t \leftrightarrow x_s}
\]
by Proposition \ref{MSW1315}. Thus we can make Assumption \ref{ass2}.


\subsection{Musiker-Schiffler-Williams cluster expansion formulas}\label{MSWformula}

 In this subsection, we recall the cluster expansion formula given by Musiker-Schiffler-Williams \cite{MS,MSW1}. We call it the {\it MSW formula}. Fix a marked surface $(S,M)$ and a tagged triangulation $T$ of $(S,M)$ satisfying Assumptions \ref{ass1} and \ref{ass2}. Let $\g$ be a plain arc of $(S,M)$ such that $\g \notin T$. We use the notations of the introduction.

\subsubsection{Formula for plain arcs}

 Recall the MSW formula for $x_{\g}$. In the triangulation $T_{\g}$ constructed in the introduction, triangles have at most two sides that are non-boundary segments and at least one side that is a boundary segment. We construct the {\it snake graph} $\overline{G}_{\g}:=\overline{G}_{T_{\g}}$ from $T_{\g}$ by {\it unfolding} each triangle of $T_{\g}$, two sides of which are non-boundary segments, along its third side (see Figure \ref{unfolding}). We label all edges of $\overline{G}_{\g}$ by the corresponding tagged arcs of $T$.

\begin{figure}[h]
   \caption{Unfolding $\triangle$, where $a$ is boundary segment, while $b$ and $c$ are not}
   \label{unfolding}
\[
  \begin{xy}
   (0,0)="0", +(15,0)="1", +(15,0)="2", "0"+(0,-10)="3", +(10,0)="4", +(10,0)="5", +(10,0)="6", "2"+(5,-5)="A", +(15,0)="B", +(5,10)="00", +(15,0)="01", "00"+(0,-10)="02", +(10,0)="03", +(10,0)="04", +(10,0)="05", "03"+(5,-10)="06", +(15,0)="07", "1"+(0,-7)*{\triangle}, "01"+(0,-7)*{\triangle}, "06"+(0,7)*{\rotatebox{180}{$\triangle$}}
   \ar@{-}"0";"1" \ar@{-}"1";"2" \ar@{-}"3";"4" {\color{blue}\ar@{-}"4";"5"_{a}}\ar@{-}"5";"6" \ar@{-}"1";"4"_{b} \ar@{-}"1";"5"^{c} \ar@{=>}"A";"B"^{{\rm unfolding}}_{{\rm along}\ \ a} \ar@{-}"00";"01" \ar@{-}"02";"03" {\color{blue}\ar@{-}"03";"04"|{\ a\ }}\ar@{-}"04";"05" \ar@{-}"06";"07" \ar@{-}"01";"03"_{b} \ar@{-}"01";"04"^{c} \ar@{-}"03";"06"_{b} \ar@{-}"04";"06"^{c}
  \end{xy}
\]
\end{figure}
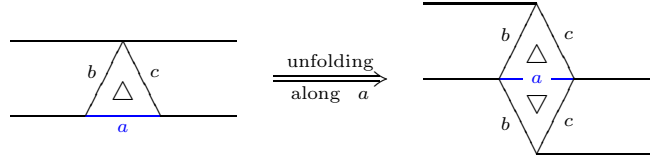

\begin{example}\label{exsnake}
 We construct the snake graph $\overline{G}_{\de_1}$ for the tagged arc $\de_1$ given in Subsection \ref{excoefffree} as follows:
\[
 T_{\de_1}\hspace{2mm}
\begin{tikzpicture}[scale=0.7,baseline=0mm]
 \coordinate (l) at (0,0);
 \coordinate (lu) at (0.3,1);
 \coordinate (lc) at (1.3,1);
 \coordinate (ru) at (3.3,1);
 \coordinate (rc) at (2.3,1);
 \coordinate (r) at (3.6,0);
 \coordinate (d) at (1.8,-1);
 \fill [pattern=north east lines, pattern color=blue] (d)--(lu)--(lc)--(d);
 \draw (l) to node[fill=white,inner sep=1]{\scriptsize $3$} (lu);
 \draw (lu) to node[fill=white,inner sep=1]{\scriptsize $1$} (lc);
 \draw (lc) to node[fill=white,inner sep=1]{\scriptsize $1$} (rc);
 \draw (rc) to (ru);
 \draw (ru) to node[fill=white,inner sep=1]{\scriptsize $6$} (r);
 \draw (r) to node[fill=white,inner sep=1,pos=0.4]{\scriptsize $4$} (d);
 \draw (l) to (d);
 \draw (lu) to node[fill=white,inner sep=1,pos=0.4]{\scriptsize $2$} (d);
 \draw (lc) to node[fill=white,inner sep=1,pos=0.4]{\scriptsize $1$} (d);
 \draw (rc) to node[fill=white,inner sep=1,pos=0.4]{\scriptsize $2$} (d);
 \draw (ru) to node[fill=white,inner sep=1,pos=0.4]{\scriptsize $3$} (d);
\end{tikzpicture}
     \hspace{3mm}\rightarrow\hspace{3mm}
\begin{tikzpicture}[scale=0.7,baseline=7mm]
 \coordinate (l) at (0.3,0);
 \coordinate (lu) at (0.3,1);
 \coordinate (lc) at (1.3,1);
 \coordinate (ru) at (3.3,1);
 \coordinate (rc) at (2.3,1);
 \coordinate (r) at (3.6,2);
 \coordinate (u) at (1.8,3);
 \coordinate (d) at (1.3,0);
 \fill [pattern=north west lines, pattern color=blue] (u)--(lu)--(lc)--(u);
 \fill [pattern=north east lines, pattern color=blue] (d)--(lu)--(lc)--(d);
 \fill [blue,pattern=dots] (u)--(rc)--(lc)--(u);
 \draw (l) to node[fill=white,inner sep=1]{\scriptsize $3$} (lu);
 \draw (lu) to node[fill=white,inner sep=1]{\scriptsize $1$} (lc);
 \draw (lc) to node[fill=white,inner sep=1]{\scriptsize $1$} (rc);
 \draw (rc) to (ru);
 \draw (ru) to node[fill=white,inner sep=1]{\scriptsize $6$} (r);
 \draw (r) to node[fill=white,inner sep=1,pos=0.4]{\scriptsize $4$} (u);
 \draw (l) to (d);
 \draw (lu) to node[fill=white,inner sep=1,pos=0.4]{\scriptsize $2$} (u);
 \draw (lc) to node[fill=white,inner sep=1,pos=0.4]{\scriptsize $1$} (u);
 \draw (rc) to node[fill=white,inner sep=1,pos=0.4]{\scriptsize $2$} (u);
 \draw (ru) to node[fill=white,inner sep=1,pos=0.4]{\scriptsize $3$} (u);
 \draw (lu) to node[fill=white,inner sep=1]{\scriptsize $2$} (d);
 \draw (lc) to node[fill=white,inner sep=1]{\scriptsize $1$} (d);
\end{tikzpicture}
     \hspace{3mm}\rightarrow\hspace{3mm}
\begin{tikzpicture}[scale=0.7,baseline=7mm]
 \coordinate (ld) at (0,0);
 \coordinate (d) at (1,0);
 \coordinate (l) at (0,1);
 \coordinate (c) at (1,1);
 \coordinate (lu) at (0,2);
 \coordinate (cu) at (1,2);
 \coordinate (u) at (1,3);
 \coordinate (r) at (3,2);
 \coordinate (ru) at (2,3.3);
 \fill [blue,pattern=dots] (lu)--(c)--(cu)--(lu);
 \fill [blue,pattern=dots] (r)--(c)--(cu)--(r);
 \fill [pattern=north west lines, pattern color=blue] (r)--(cu)--(u)--(r);
 \draw (l) to node[fill=white,inner sep=1]{\scriptsize $2$} (d);
 \draw (ld) to node[fill=white,inner sep=1]{\scriptsize $3$} (l);
 \draw (l) to node[fill=white,inner sep=1]{\scriptsize $1$} (c);
 \draw (c) to node[fill=white,inner sep=1]{\scriptsize $1$} (d);
 \draw (ld) to (d);
 \draw (lu) to node[fill=white,inner sep=1]{\scriptsize $2$} (l);
 \draw (lu) to node[fill=white,inner sep=1]{\scriptsize $2$} (cu);
 \draw (c) to node[fill=white,inner sep=1]{\scriptsize $1$} (cu);
 \draw (lu) to node[fill=white,inner sep=1]{\scriptsize $1$} (c);
 \draw (r) to node[fill=white,inner sep=1,pos=0.6]{\scriptsize $1$} (c);
 \draw (r) to node[fill=white,inner sep=1,pos=0.6]{\scriptsize $2$} (cu);
 \draw (r) to node[fill=white,inner sep=1,pos=0.6]{\scriptsize $3$} (u);
 \draw (r) to node[fill=white,inner sep=1,pos=0.6]{\scriptsize $4$} (ru);
 \draw (ru) to node[fill=white,inner sep=1]{\scriptsize $6$} (u);
 \draw (u) to (cu);
\end{tikzpicture}
     \hspace{3mm}\rightarrow\hspace{3mm} \overline{G}_{\de_1} \hspace{2mm}
\begin{tikzpicture}[scale=0.7,baseline=7mm]
 \coordinate (ld) at (0,0);
 \coordinate (d) at (1,0);
 \coordinate (l) at (0,1);
 \coordinate (c) at (1,1);
 \coordinate (lu) at (0,2);
 \coordinate (cu) at (1,2);
 \coordinate (u) at (1,3);
 \coordinate (rd) at (2,1);
 \coordinate (r) at (2,2);
 \coordinate (ru) at (2,3);
 \fill [pattern=north east lines, pattern color=blue] (r)--(cu)--(rd)--(r);
 \fill [pattern=north east lines, pattern color=blue] (r)--(cu)--(u)--(r);
 \draw (l) to node[fill=white,inner sep=1]{\scriptsize $2$} (d);
 \draw (ld) to node[fill=white,inner sep=1]{\scriptsize $3$} (l);
 \draw (l) to node[fill=white,inner sep=1]{\scriptsize $1$} (c);
 \draw (c) to node[fill=white,inner sep=1]{\scriptsize $1$} (d);
 \draw (ld) to (d);
 \draw (lu) to node[fill=white,inner sep=1]{\scriptsize $2$} (l);
 \draw (lu) to node[fill=white,inner sep=1]{\scriptsize $2$} (cu);
 \draw (c) to node[fill=white,inner sep=1]{\scriptsize $1$} (cu);
 \draw (lu) to node[fill=white,inner sep=1]{\scriptsize $1$} (c);
 \draw (rd) to node[fill=white,inner sep=1]{\scriptsize $1$} (c);
 \draw (rd) to node[fill=white,inner sep=1]{\scriptsize $2$} (cu);
 \draw (r) to node[fill=white,inner sep=1]{\scriptsize $3$} (rd);
 \draw (cu) to node[fill=white,inner sep=1]{\scriptsize $2$} (u);
 \draw (r) to node[fill=white,inner sep=1]{\scriptsize $3$} (u);
 \draw (u) to node[fill=white,inner sep=1]{\scriptsize $4$} (ru);
 \draw (ru) to node[fill=white,inner sep=1]{\scriptsize $6$} (r);
 \draw (r) to (cu);
\end{tikzpicture}
\]
\end{example}

 Note that $\overline{G}_{\g}$ consists of $n$ squares with diagonals $\tau_i$ for $1 \le i \le n$. We call these squares {\it tiles} of $\overline{G}_{\g}$. Let $G_{\g} := G_{T_{\g}}$ be the graph obtained from $\overline{G}_{\g}$ by removing the diagonal of each tile. It is easy to see that the following special perfect matching is uniquely determined.

\begin{definition}\cite[Definition 4.7]{MSW1}\label{e0min}
 Let $e_0$ be the edge of $G_{\g}$ corresponding to the boundary segment of $T_{\g}$ that follows $\tau_1$ in the clockwise direction in the triangle $T_0$. The {\it minimal matching} $P_{-}(G_{\g})$ is the perfect matching of $G_{\g}$ containing $e_0$ and consisting only of boundary edges.
\end{definition}

 In Example \ref{exsnake}, $e_0$ is the bottom edge of $\overline{G}_{\de_1}$.

\begin{theorem}\cite[Theorem 5.1]{MS}\label{JPthm}
 For $P \in \bP(G_{\g})$, the set $P_{-}(G_{\g}) \triangle P$ consists of all boundary edges of some (possibly empty or disconnected) subgraph $G_P$ of $G_{\g}$ that is a union of tiles.
\end{theorem}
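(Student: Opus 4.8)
The plan is to deduce the statement from two facts of a very different nature: a general lemma about symmetric differences of perfect matchings, and the planar structure of the snake graph $G_{\g}$ (whose bounded faces are exactly its tiles). So I would organize the argument in three steps.

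First I would record the general lemma. For any graph $G$ and any two perfect matchings $P',P$, every vertex meets exactly one edge of $P'$ and one edge of $P$, so it has degree $0$ or $2$ in $P'\triangle P$ (degree $1$ is impossible, since a vertex incident to an edge of $P'\setminus P$ is also incident to an edge of $P\setminus P'$). Hence $P'\triangle P$ is a disjoint union of pairwise vertex-disjoint simple cycles, each alternating between $P'$ and $P$ and therefore of even length. Taking $P'=P_{-}(G_{\g})$, the problem reduces to: \emph{every simple cycle of $G_{\g}$ is the set of boundary edges of a union of tiles}.

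Next I would exploit planarity. It is immediate from the construction of $G_{\g}=G_{T_{\g}}$ from the polygon triangulation $T_{\g}$ that $G_{\g}$ is a plane graph, that the union of its $n$ tiles $S_1,\dots,S_n$ is a simply connected region of the plane tiled by them, and hence that the bounded faces of $G_{\g}$ are precisely $S_1,\dots,S_n$, with the dual graph on bounded faces being the path $S_1 - S_2 - \cdots - S_n$. Now let $C$ be a simple cycle of $G_{\g}$. By the Jordan curve theorem the bounded faces it encloses are connected in this dual (any two such faces are joined by an arc through the interior of $C$, which may be perturbed so that consecutive faces along it share an edge, and that edge cannot lie on $C$), so they form a contiguous block $S_i,S_{i+1},\dots,S_j$. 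In the cycle space of the plane graph over $\bZ/2\bZ$ one then has $C=\partial S_i+\cdots+\partial S_j$, i.e.\ the edges of $C$ are exactly the edges of $S_i\cup\cdots\cup S_j$ lying on its topological boundary. Thus every simple cycle of $G_{\g}$ is the boundary-edge set of a union of consecutive tiles.

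Finally I would assemble the pieces: writing $P_{-}(G_{\g})\triangle P=C_1\sqcup\cdots\sqcup C_k$, each $C_m$ is $\partial B_m$ for a contiguous tile-block $B_m$; since the $C_m$ are pairwise vertex-disjoint, no two of the blocks $B_m$ can share a tile or abut along a common tile-edge (either would force $\partial B_m$ and $\partial B_{m'}$ to share vertices), so the blocks are mutually separated, and $G_P:=B_1\cup\cdots\cup B_k$ is a subgraph of $G_{\g}$ that is a union of tiles with $\partial G_P=\partial B_1\sqcup\cdots\sqcup\partial B_k=P_{-}(G_{\g})\triangle P$, which is the claim (the empty case $k=0$ and the disconnected case $k\ge 2$ both being covered). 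I expect the main obstacle to be exactly this last geometric bookkeeping --- pinning down that an arbitrary simple cycle of $G_{\g}$ encloses a \emph{contiguous} block of tiles, and that the cycles occurring in $P_{-}(G_{\g})\triangle P$ enclose mutually separated blocks --- which is precisely where the ``one tile wide'' structure of a snake graph (its bounded-face dual being a path) is indispensable; everything else is the general matching lemma plus the standard planar cycle-space identity. I would also note in passing that this argument never uses the minimality of $P_{-}(G_{\g})$: the same conclusion holds for $P'\triangle P$ with any two perfect matchings of $G_{\g}$.
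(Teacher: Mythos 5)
This theorem is stated in the paper with a citation to \cite{MS} and is not proved there, so there is no proof in this paper to compare against directly. The closest internal comparison is the paper's Proposition~\ref{biparenclosed} (the bipartite-graph analogue), whose proof is a two-sentence version of your Step~1: the symmetric difference of two perfect matchings has all vertex degrees $0$ or $2$, hence is a disjoint union of non-crossing simple cycles, ``thus the assertion holds.'' The paper treats the passage from ``disjoint union of cycles in a plane graph whose bounded faces are the tiles'' to ``boundary edges of a union of tiles'' as self-evident; your Steps~2 and~3 spell out exactly this passage.

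Your argument is correct. The general lemma on symmetric differences is standard, and the planar cycle-space identity $C=\sum_{S\subseteq\operatorname{int}C}\partial S$ (mod~2) is the right tool. The one place where you are leaning on structure of the snake graph without quite saying so is the implicit claim that the interior of a simple cycle $C$ of $G_\gamma$ is covered entirely by tiles, i.e.\ never meets the unbounded face; this is what makes your ``arc through the interior, perturbed to pass face-to-face'' stay inside the tile region and lets you conclude connectedness in the face dual. It does hold, precisely because the union of tiles is a simply connected staircase region and a cycle of $G_\gamma$ lies inside it, so the (connected, unbounded) outer face, being disjoint from $C$, is entirely on the exterior side of $C$; but it would be worth a sentence. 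You also make good use of the ``$1$-wide'' snake structure when you argue that the enclosed block is contiguous and that distinct cycles enclose mutually non-adjacent blocks; that adjacency argument (a shared tile-edge between two blocks would appear on both boundary cycles, forcing shared vertices) is exactly where the claim could fail for a generic polyomino-tiled region, so it is good that you isolate it. Your closing remark is also correct: nothing in the proof uses minimality of $P_-(G_\gamma)$, and indeed neither does the paper's proof of Proposition~\ref{biparenclosed}; the minimal matching is only singled out because later formulas (e.g.\ the $y$-monomial $y(P)$) are defined relative to it.
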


 We denote by $J(P)$ the set of the diagonals of all tiles of $\overline{G}_{\g}$ that are contained in $G_P$. The following cluster expansion formula is obtained by using perfect matchings of $G_{\g}$.

\begin{theorem}\label{MSW1cef}\cite[Theorem 4.10]{MSW1}
 We have
 \begin{equation*}
  x_{\g}=\Phi\Biggl(\frac{1}{\cross(T,\g)}\sum_{P \in \bP(G_{\g})}x(P)y(P)\Biggr),\ \ x(P):=\prod_{e \in P}x_{e},\ \ y(P):=\prod_{j \in J(P)}y_j.
 \end{equation*}
\end{theorem}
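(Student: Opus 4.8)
The plan is not to reprove the Musiker--Schiffler--Williams cluster expansion formula but to match the formulation above with \cite[Theorem 4.10]{MSW1}, from which it follows by a translation of conventions. First I would recall the precise statement of \cite[Theorem 4.10]{MSW1}: for a plain arc $\g \notin T$ one walks along $\g$ through the triangles $\triangle_0,\ldots,\triangle_n$ of $T^0$ crossed by $\g$, builds the snake graph and its minimal matching, and reads off $x_\g = \frac{1}{\cross(T,\g)}\sum_P x(P)y(P)$; I would also recall the companion statements of \cite{MSW1} (and \cite{FT}) expressing the cluster-algebra element attached to a loop that cuts out a once-punctured monogon as the product of the cluster variables of the enclosed radius and of the corresponding $1$-notched arc. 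This last relation is exactly what the homomorphism $\Phi$ of \eqref{Phi1} records under Assumption \ref{ass1}.

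Next I would verify that every piece of combinatorial data attached to $\g$ in the present paper coincides with the one of \cite{MSW1}. By construction the triangulated polygon $T_\g$ is just a repackaging of the ordered list $\triangle_0,\ldots,\triangle_n$ of triangles of $T^0$ crossed by $\g$ (the local moves of Figure \ref{replace1notched} handling self-folded triangles in the same way as \cite{MSW1}), so unfolding its triangles along their boundary sides (Figure \ref{unfolding}) and deleting the tile diagonals yields the snake graph $G_\g = G_{T_\g}$, which coincides, edge labels included, with MSW's snake graph. Hence the distinguished edge $e_0$ following $\tau_1$ clockwise in $\triangle_0$, and so the minimal matching $P_{-}(G_\g)$ of Definition \ref{e0min}, agree with MSW's minimal matching; and for $P \in \bP(G_\g)$ the set $J(P)$ of tile diagonals of the subgraph with boundary $P_{-}(G_\g)\triangle P$ --- well defined by Theorem \ref{JPthm} --- agrees with the set indexing the $y$-monomial $y(P)$ of \cite{MSW1}. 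With these identifications in place the displayed equality is literally \cite[Theorem 4.10]{MSW1}.

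The only step requiring real care --- and the one I expect to involve the most bookkeeping --- is the role of $\Phi$. If $T$ has no $1$-notched arc then $T = T^0$, the map $\Phi$ fixes every variable that actually occurs, and nothing further is needed. In general the edges of $G_\g$ and the factors of $\cross(T,\g)$ are labeled by ideal arcs of $T^0$, some of which are loops or radii coming from self-folded triangles; the element of $\cA(T)$ that \cite{MSW1} associates to such a loop is not an initial cluster variable but the product $x_jx_k$ of the variables of the $1$-notched arc $j$ and its plain radius $k$, with the matching correction $y_j \mapsto y_j/y_k$ on the coefficient side, and performing this substitution is precisely $\Phi$. Thus the theorem follows from \cite[Theorem 4.10]{MSW1} and the notched-arc formulas of \cite{MSW1,FT} after applying $\Phi$; the residual work is to check, using the local pictures of Figure \ref{replace1notched} and Assumption \ref{ass2}, that every loop-labeled edge of $G_\g$ is accounted for consistently by $\Phi$, which requires no input beyond \cite{MSW1,FT}.
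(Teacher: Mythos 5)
The paper does not prove Theorem \ref{MSW1cef} at all: it is stated in Section \ref{MSWformula} as a recalled fact with the citation \cite[Theorem 4.10]{MSW1}, and the translation of conventions (in particular the role of $\Phi$ in encoding the variable $x_\ell = x_jx_k$ for a loop $\ell$ of $T^0$ enclosing a self-folded triangle with $1$-notched arc $j$ and radius $k$, together with $\Phi(y_k)=y_k/y_j$) is left implicit. Your write-up takes the same route --- it is a citation, not a reproof --- and the translation you spell out is correct in substance; the only nitpick is a small index swap when you describe the $y$-correction (in the paper's convention $\Phi$ acts on the coefficient of the \emph{plain} radius, not of the $1$-notched arc, sending $y_k \mapsto y_k/y_j$), which does not affect the argument.
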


\subsubsection{Formula for $1$-notched arcs}

 Recall the MSW formula for $x_{\g^{(p)}}$. Let $q \neq p$ be the other endpoint of $\g^{(p)}$. In the same way as above, for the ordinary loop $\ell_p$ defined in Theorem \ref{main'}, we get the snake graph $G_{\ell_p}$ which is denoted by $G_{\g^{(p)}}$ in the introduction. By construction, $G_{\ell_p}$ contains two disjoint subgraphs $G_{\ell_p}^1$ and $G_{\ell_p}^2$ with same form as $G_{\g}$. Moreover, we consider the subgraph $H_{\ell_p}^i$ of $G_{\ell_p}^i$ obtained by removing the vertex $p$ and the two edges $\z_1$, $\z_m$.

\begin{example}\label{exloops}
 Let $\ell_p$ be the ordinary loop such that $\iota(\ell_p)=\de_2$ in given Subsection \ref{excoefffree}. We have the triangulated polygon $T_{\ell_p}$, the snake graph $G_{\ell_p}$ and the subgraphs $G_{\ell_p}^i$ and $H_{\ell_p}^i$ of $G_{\ell_p}$ as follows:
\[
\begin{tikzpicture}[scale=0.8,baseline=1mm]
 \coordinate (l) at (-2.5,0);
 \coordinate (llu) at (-2,1);
 \coordinate (lu) at (-1,1);
 \coordinate (u) at (0,1);
 \coordinate (ru) at (1,1);
 \coordinate (r) at (2.5,0);
 \coordinate (ld) at (-1,-1);
 \coordinate (d) at (0,-1);
 \coordinate (rd) at (1,-1);
 \coordinate (rrd) at (2,-1);
 \node at (-1,1.7) {$T_{\ell_p}$};
 \draw (llu)--(lu) (rrd)--(rd) (d)--(ld);
 \draw (l) to node[fill=white,inner sep=1]{\scriptsize $1$} (llu);
 \draw (lu) to node[fill=white,inner sep=1]{\scriptsize $6$} (u) node[above]{\scriptsize $p$};
 \draw (u) to node[fill=white,inner sep=1]{\scriptsize $4$} (ru);
 \draw (ru) to node[fill=white,inner sep=1]{\scriptsize $1$} (r);
 \draw (r) to node[fill=white,inner sep=1]{\scriptsize $1$} (rrd);
 \draw (rd) to node[fill=white,inner sep=1]{\scriptsize $7$} (d);
 \draw (ld) to node[fill=white,inner sep=1]{\scriptsize $1$} (l);
 \draw (llu) to node[fill=white,inner sep=1]{\scriptsize $2$} (ld);
 \draw (lu) to node[fill=white,inner sep=1]{\scriptsize $3$} (ld);
 \draw (u) to node[fill=white,inner sep=1]{\scriptsize $4$} (ld);
 \draw (u) to node[fill=white,inner sep=1]{\scriptsize $5$} (d);
 \draw (u) to node[fill=white,inner sep=1]{\scriptsize $6$} (rd);
 \draw (ru) to node[fill=white,inner sep=1]{\scriptsize $3$} (rd);
 \draw (ru) to node[fill=white,inner sep=1]{\scriptsize $2$} (rrd);
\end{tikzpicture}
     \hspace{10mm}
\begin{tikzpicture}[scale=0.7,baseline=15mm]
 \coordinate (00) at (0,0);
 \coordinate (01) at (0,1);
 \coordinate (02) at (0,2);
 \coordinate (10) at (1,0);
 \coordinate (11) at (1,1);
 \coordinate (12) at (1,2);
 \coordinate (21) at (2,1);
 \coordinate (22) at (2,2);
 \coordinate (23) at (2,3);
 \coordinate (24) at (2,4);
 \coordinate (25) at (2,5);
 \coordinate (31) at (3,1);
 \coordinate (32) at (3,2);
 \coordinate (33) at (3,3);
 \coordinate (34) at (3,4);
 \coordinate (35) at (3,5);
 \node at (0.5,4) {$G_{\ell_p}$};
 \draw (00) to node[fill=white,inner sep=1]{\scriptsize $1$} (01);
 \draw (01) to node[fill=white,inner sep=1]{\scriptsize $2$} (02);
 \draw (00) to node[fill=white,inner sep=1]{\scriptsize $1$} (10);
 \draw (01) to (11);
 \draw (02) to node[fill=white,inner sep=1]{\scriptsize $4$} (12) node[above]{\scriptsize $p$};
 \draw (10) to node[fill=white,inner sep=1]{\scriptsize $3$} (11);
 \draw (11) to node[fill=white,inner sep=1]{\scriptsize $6$} (12);
 \draw (11) to node[fill=white,inner sep=1]{\scriptsize $3$} (21);
 \draw (12) to node[fill=white,inner sep=1]{\scriptsize $5$} (22);
 \draw (21) to (22);
 \draw (22) to node[fill=white,inner sep=1]{\scriptsize $5$} (23) node[left]{\scriptsize $p$};
 \draw (23) to node[fill=white,inner sep=1]{\scriptsize $6$} (24);
 \draw (24) to node[fill=white,inner sep=1]{\scriptsize $3$} (25);
 \draw (21) to node[fill=white,inner sep=1]{\scriptsize $4$} (31) node[right,below]{\scriptsize $p$};
 \draw (22) to node[fill=white,inner sep=1]{\scriptsize $7$} (32);
 \draw (23) to node[fill=white,inner sep=1]{\scriptsize $4$} (33);
 \draw (24) to (34);
 \draw (25) to node[fill=white,inner sep=1]{\scriptsize $1$} (35);
 \draw (31) to node[fill=white,inner sep=1]{\scriptsize $6$} (32);
 \draw (32) to node[fill=white,inner sep=1]{\scriptsize $3$} (33);
 \draw (33) to node[fill=white,inner sep=1]{\scriptsize $2$} (34);
 \draw (34) to node[fill=white,inner sep=1]{\scriptsize $1$} (35);
 \draw[white,ultra thin] (01) to node[black]{\scriptsize $2$} (10);
 \draw[white,ultra thin] (02) to node[black]{\scriptsize $3$} (11);
 \draw[white,ultra thin] (12) to node[black]{\scriptsize $4$} (21);
 \draw[white,ultra thin] (22) to node[black]{\scriptsize $5$} (31);
 \draw[white,ultra thin] (23) to node[black]{\scriptsize $6$} (32);
 \draw[white,ultra thin] (24) to node[black]{\scriptsize $3$} (33);
 \draw[white,ultra thin] (25) to node[black]{\scriptsize $2$} (34);
\end{tikzpicture}
     \hspace{10mm}
\begin{tikzpicture}[scale=0.7,baseline=15mm]
 \coordinate (00) at (0,0);
 \coordinate (01) at (0,1);
 \coordinate (02) at (0,2);
 \coordinate (10) at (1,0);
 \coordinate (11) at (1,1);
 \coordinate (12) at (1,2);
 \coordinate (23) at (2,3);
 \coordinate (24) at (2,4);
 \coordinate (25) at (2,5);
 \coordinate (33) at (3,3);
 \coordinate (34) at (3,4);
 \coordinate (35) at (3,5);
 \node at (1.3,4) {$G_{\ell_p}^2$};
 \node at (1.7,1) {$G_{\ell_p}^1$};
 \draw (00) to node[fill=white,inner sep=1]{\scriptsize $1$} (01);
 \draw (01) to node[fill=white,inner sep=1]{\scriptsize $2$} (02);
 \draw (00) to node[fill=white,inner sep=1]{\scriptsize $1$} (10);
 \draw (01) to (11);
 \draw (02) to node[fill=white,inner sep=1]{\scriptsize $4$} (12) node[above]{\scriptsize $p$};
 \draw (10) to node[fill=white,inner sep=1]{\scriptsize $3$} (11);
 \draw (11) to node[fill=white,inner sep=1]{\scriptsize $6$} (12);
 \draw (23) node[left]{\scriptsize $p$} to node[fill=white,inner sep=1]{\scriptsize $6$} (24);
 \draw (24) to node[fill=white,inner sep=1]{\scriptsize $3$} (25);
 \draw (23) to node[fill=white,inner sep=1]{\scriptsize $4$} (33);
 \draw (24) to (34);
 \draw (25) to node[fill=white,inner sep=1]{\scriptsize $1$} (35);
 \draw (33) to node[fill=white,inner sep=1]{\scriptsize $2$} (34);
 \draw (34) to node[fill=white,inner sep=1]{\scriptsize $1$} (35);
 \draw[white,ultra thin] (01) to node[black]{\scriptsize $2$} (10);
 \draw[white,ultra thin] (02) to node[black]{\scriptsize $3$} (11);
 \draw[white,ultra thin] (24) to node[black]{\scriptsize $3$} (33);
 \draw[white,ultra thin] (25) to node[black]{\scriptsize $2$} (34);
\end{tikzpicture}
     \hspace{10mm}
\begin{tikzpicture}[scale=0.7,baseline=15mm]
 \coordinate (00) at (0,0);
 \coordinate (01) at (0,1);
 \coordinate (02) at (0,2);
 \coordinate (10) at (1,0);
 \coordinate (11) at (1,1);
 \coordinate (24) at (2,4);
 \coordinate (25) at (2,5);
 \coordinate (33) at (3,3);
 \coordinate (34) at (3,4);
 \coordinate (35) at (3,5);
 \node at (1.3,4) {$H_{\ell_p}^2$};
 \node at (1.7,1) {$H_{\ell_p}^1$};
 \draw (00) to node[fill=white,inner sep=1]{\scriptsize $1$} (01);
 \draw (01) to node[fill=white,inner sep=1]{\scriptsize $2$} (02);
 \draw (00) to node[fill=white,inner sep=1]{\scriptsize $1$} (10);
 \draw (01) to (11);
 \draw (10) to node[fill=white,inner sep=1]{\scriptsize $3$} (11);
 \draw (24) to node[fill=white,inner sep=1]{\scriptsize $3$} (25);
 \draw (24) to (34);
 \draw (25) to node[fill=white,inner sep=1]{\scriptsize $1$} (35);
 \draw (33) to node[fill=white,inner sep=1]{\scriptsize $2$} (34);
 \draw (34) to node[fill=white,inner sep=1]{\scriptsize $1$} (35);
 \draw[white,ultra thin] (01) to node[black]{\scriptsize $2$} (10);
 \draw[white,ultra thin] (25) to node[black]{\scriptsize $2$} (34);
\end{tikzpicture}
\]
\end{example}

\begin{definition}\cite[Definition 4.15]{MSW1}\label{defsym}
 A perfect matching $P$ of $G_{\ell_p}$ is $\g${\it -symmetric} if $P|_{H_{\ell_p}^1} \simeq P|_{H_{\ell_p}^2}$. We denote by $\bP(G_{\g^{(p)}})$ the set of $\g$-symmetric perfect matchings of $G_{\ell_p}$. We also refer to elements of $\bP(G_{\g^{(p)}})$ as perfect matchings of $G_{\g^{(p)}}$.
\end{definition}

\begin{theorem}\cite[Theorem 4.17, Lemma 12.4]{MSW1}\label{MSW1sym}
 For $P \in \bP(G_{\ell_p})$, let $\res (P)$ be a unique perfect matching of $G_{\g}$ such that $\res (P) \setminus (\res (P) \cap \{\z_1,\z_m\}) = P|_{H_{\ell_p}^1}$. Then $P|_{G_{\ell_p}^i} \simeq \res(P)$ for some $i \in \{1,2\}$. Moreover, we have
\[
  x_{\g^{(p)}}=\Phi\Biggl(\frac{1}{\cross(T,\g^{(p)})}\sum_{P \in \bP(G_{\g^{(p)}})}\ox(P)\oy(P)\Biggr),\ \ \ox(P):=\frac{x(P)}{x(\res (P))},\ \ \oy(P):=\frac{y(P)}{y(\res (P))}.
\]
\end{theorem}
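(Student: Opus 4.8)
The plan is to obtain the statement directly from the Musiker--Schiffler--Williams cluster expansion formula for $1$-notched arcs, \cite[Theorem 4.17]{MSW1}, the remaining work being to rewrite the weight that they attach to a $\g$-symmetric perfect matching of $G_{\ell_p}$ in the ratio form $\ox(P)\oy(P)=x(P)y(P)/\bigl(x(\res(P))y(\res(P))\bigr)$. The technical input that makes this rewriting legitimate --- in particular that $\res(P)$ is well defined, and that the factors meant to cancel in $x(P)/x(\res(P))$ and $y(P)/y(\res(P))$ really are the ones occurring in the MSW weight --- is supplied by \cite[Lemma 12.4]{MSW1}.

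First I would settle the structural assertion $P|_{G_{\ell_p}^i}\simeq\res(P)$ for some $i\in\{1,2\}$. By construction $G_{\ell_p}$ consists of the two copies $G_{\ell_p}^1$, $G_{\ell_p}^2$ of a graph of the same shape as $G_\g$, and the only vertex through which they are glued together is $p$; deleting $p$ and the two edges $\z_1$, $\z_m$ at $p$ from $G_{\ell_p}^i$ yields $H_{\ell_p}^i$. In a perfect matching $P$ of $G_{\ell_p}$ the vertex $p$ is covered by exactly one edge $e_P$, which belongs to exactly one half $G_{\ell_p}^i$; then $P|_{G_{\ell_p}^i}$ matches $p$ by $e_P$ and matches every other vertex of $G_{\ell_p}^i$ inside $H_{\ell_p}^i$, so it is a perfect matching of $G_{\ell_p}^i\simeq G_\g$. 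Deleting from $P|_{G_{\ell_p}^i}$ its intersection with $\{\z_1,\z_m\}$ recovers $P|_{H_{\ell_p}^i}$, which by $\g$-symmetry is isomorphic to $P|_{H_{\ell_p}^1}$; the defining property together with the uniqueness of $\res(P)$ then forces $P|_{G_{\ell_p}^i}\simeq\res(P)$.

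For the formula itself I would quote \cite[Theorem 4.17]{MSW1}: it writes $x_{\g^{(p)}}=\Phi\bigl(\cross(T,\g^{(p)})^{-1}\sum_P w(P)\bigr)$, the sum over $\g$-symmetric perfect matchings $P$ of $G_{\ell_p}$, with $w(P)$ their weight. It then remains to verify $w(P)=\ox(P)\oy(P)$. I would do this by expanding $x(P)$ (the product over the edges of $P$) and $y(P)$ (the product of the $y_j$ over the diagonals of the tiles enclosed by $P_-(G_{\ell_p})\triangle P$, in the sense of Theorem~\ref{JPthm}) over the two halves $G_{\ell_p}^1$ and $G_{\ell_p}^2$: using the structural claim, the half carrying $e_P$ accounts, up to the edges $\z_1,\z_m$, for $x(\res(P))y(\res(P))$, while the complementary half --- together with the tiles straddling the gluing region --- contributes, via \cite[Lemma 12.4]{MSW1}, precisely $w(P)\cdot x(\res(P))y(\res(P))$. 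Cancelling gives $w(P)=\ox(P)\oy(P)$, and combining with \cite[Theorem 4.17]{MSW1} yields the displayed formula.

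The main obstacle is this last bookkeeping: reconciling $w(P)$ with $\ox(P)\oy(P)$ requires comparing the minimal matching $P_-(G_{\ell_p})$ with $P_-(G_\g)$, keeping precise track of which tiles near the glued region lie in the enclosed subgraph of $P_-(G_{\ell_p})\triangle P$, and handling the two boundary edges $\z_1,\z_m$ and the tiles they bound --- which is exactly the content that \cite[Lemma 12.4]{MSW1} is designed to control. Everything else reduces to a direct appeal to \cite[Theorem 4.17]{MSW1}.
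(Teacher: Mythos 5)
The paper does not prove this theorem: it is cited verbatim from \cite[Theorem 4.17, Lemma 12.4]{MSW1} and the paper simply defers to that reference. Your proposal therefore has nothing in the paper to be compared against, and is best read as an attempted reconstruction of the argument in [MSW1]. Your plan of ``quote Theorem~4.17 for the expansion and use Lemma~12.4 to reconcile the weight with the ratio form $\ox(P)\oy(P)$'' is consistent with what the paper implicitly does by citing both results jointly, so the overall architecture of your proposal is fine.

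However, the structural argument you give for $P|_{G_{\ell_p}^i}\simeq\res(P)$ has a concrete flaw. You describe $G_{\ell_p}$ as consisting of the two halves $G_{\ell_p}^1$, $G_{\ell_p}^2$ ``glued together'' at a single vertex $p$, and conclude that whichever half contains the edge $e_P$ covering $p$ restricts to a perfect matching. But in fact $G_{\ell_p}^1$ and $G_{\ell_p}^2$ are \emph{disjoint} subgraphs of the connected snake graph $G_{\ell_p}$: the unfolding that produces a snake graph duplicates the vertex $p$ of $T_{\ell_p}$ into several distinct vertices of $G_{\ell_p}$, one in each half and more in the intervening tiles (visible in Example~\ref{exloops}, where $p$ labels three separate vertices). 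Consequently there is no single shared vertex $p$; each half has its own copy, and each half also has further boundary vertices (not equal to its copy of $p$) that are joined by edges of $G_{\ell_p}$ to the tiles lying between the two halves. Matching $p$ inside one half therefore does not by itself force all vertices of that half to be matched inside it --- the escape routes you need to rule out are not all incident to $p$. Establishing $P|_{G_{\ell_p}^i}\simeq\res(P)$ really does require using the $\g$-symmetry condition together with the snake-graph structure as in \cite[Lemma 12.4]{MSW1}; it cannot be reduced to the ``which half grabs $p$'' dichotomy. Since the paper only cites the result, none of this affects the paper, but if you wish to give a genuine proof you would need to fill this gap rather than appeal to a gluing at $p$ that does not exist.
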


\begin{example}\label{ex2s}
 For $G_{\ell_p}$ and $G_{\ell_p}^i$ in Example \ref{exloops}, their minimal matchings are
\[
P_{-}(G_{\ell_p})=
 \right).
$}
\end{eqnarray*}
}
\end{example}

\subsubsection{Formula for $2$-notched arcs}

 Recall the MSW formula for $x_{\g^{(pq)}}$. As above, we get ordinary loops $\ell_p$ and $\ell_q$ and the snake graphs $G_{\ell_p}$ and $G_{\ell_q}$. Note that the pair $(G_{\ell_p},G_{\ell_q})$ is denoted by $G_{\g^{(pq)}}$ in the introduction. Remark that $\g$ may be a loop. Then we denote by $\ell_p$ and $\ell_q$ the loops as in Figure \ref{looploop} although they are not ordinary loops.

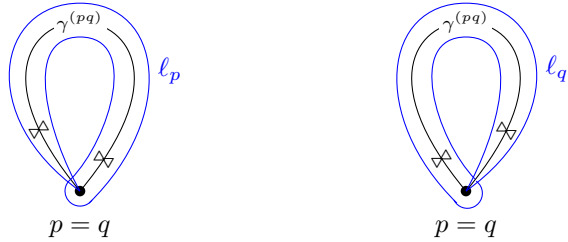
\begin{figure}[h]
   \caption{Analogues of $\ell_p$ and $\ell_q$ for a $2$-notched loop}
   \label{looploop}
\begin{tikzpicture}
 \coordinate (0) at (0,0);   \fill (0) circle (0.7mm); \node at (0,-0.5) {$p=q$};
 \draw (0) .. controls (-2.5,3) and (2.5,3) .. node[fill=white,inner sep=1]{\scriptsize $\g^{(pq)}$} node[pos=0.1]{\rotatebox{30}{\footnotesize $\bowtie$}} node[pos=0.95]{\rotatebox{150}{\footnotesize $\bowtie$}} (0);
 \draw [blue] (0) .. controls (-1.4,1) and (-1.1,2.5) .. (0,2.5);
 \draw [blue] (0,2.5) .. controls (1.1,2.5) and (1.3,1) .. node[right]{$\ell_p$} (0.12,-0.16);
 \draw [blue] (0.16,-0.12) arc (-37:-234:0.2);
 \draw [blue] (-0.12,0.16) .. controls (0.4,0.7) and (0.8,2.05) .. (0,2.05);
 \draw [blue] (0,2.05) .. controls (-0.8,2.05) and (-0.4,0.7) .. (0);
\end{tikzpicture}
\begin{tikzpicture}
 \coordinate (0) at (0,0);   \fill (0) circle (0.7mm); \node at (0,-0.5) {$p=q$};
 \draw (0) .. controls (-2.5,3) and (2.5,3) .. node[fill=white,inner sep=1]{\scriptsize $\g^{(pq)}$} node[pos=0.05]{\rotatebox{30}{\footnotesize $\bowtie$}} node[pos=0.9]{\rotatebox{150}{\footnotesize $\bowtie$}} (0);
 \draw [blue] (0) .. controls (1.4,1) and (1.1,2.5) .. node[right]{$\ell_q$} (0,2.5);
 \draw [blue] (0,2.5) .. controls (-1.1,2.5) and (-1.3,1) .. (-0.12,-0.16);
 \draw [blue] (-0.16,-0.12) arc (-153:54:0.2);
 \draw [blue] (0.12,0.16) .. controls (-0.4,0.7) and (-0.8,2.05) .. (-0,2.05);
 \draw [blue] (-0,2.05) .. controls (0.8,2.05) and (0.4,0.7) .. (0);
\end{tikzpicture}
\end{figure}

\begin{definition}\cite[Definition 4.18]{MSW1}\label{defcom}
 Let $P_p$ and $P_q$ be $\g$-symmetric perfect matchings of $G_{\ell_p}$ and $G_{\ell_q}$, respectively. The pair $(P_p,P_q)$ is $\g${\it -compatible} if $\res(P_p) \simeq \res(P_q)$. We denote by $\bP(G_{\g^{(pq)}})$ the set of $\g$-compatible pairs of $\bP(G_{\g^{(p)}}) \times \bP(G_{\g^{(q)}})$. We also refer to elements of $\bP(G_{\g^{(pq)}})$ as perfect matchings of $G_{\g^{(pq)}}$.
\end{definition}

\begin{theorem}\label{MSW1com}\cite[Theorem 4.20]{MSW1}
 We have
 \begin{equation*}
  x_{\g^{(pq)}}=\Phi\Biggl(\frac{1}{\cross(T,\g^{(pq)})}\sum_{(P_p,P_q) \in \bP(G_{\g^{(pq)}})}\oox(P_p,P_q)\ooy(P_p,P_q)\Biggr),
 \end{equation*}
 where
\[
 \oox(P_p,P_q):=\frac{x(P_p)x(P_q)}{x(\res (P_p))^3}, \hspace{3mm} \ooy(P_p,P_q):=\frac{y(P_p)y(P_q)}{y(\res (P_p))^3}.
\]
\end{theorem}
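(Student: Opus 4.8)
The plan is to deduce the $2$-notched formula from the plain-arc formula of Theorem \ref{MSW1cef}, the $1$-notched formula of Theorem \ref{MSW1sym}, and the algebraic identity of \cite[Lemma 8.2, Theorem 8.6]{FT} that links a $2$-notched cluster variable to the two associated $1$-notched ones. Write $p\neq q$ for the endpoints of $\g$; the degenerate case $p=q$, in which $\g$ is a loop and $\ell_p,\ell_q$ are the non-ordinary curves of Figure \ref{looploop}, will be handled at the end by observing that the whole argument is local at each puncture and is therefore unaffected.

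The first step is to isolate the purely algebraic input. Since $\iota(\ell_s)=\g^{(s)}$, the element $x_{\ell_s}=x_{\g^{(s)}}$ is a genuine cluster variable of $\cA(T)$, and the results of \cite{FT} provide an identity of the shape
\[
 x_{\g}\,x_{\g^{(pq)}}\;=\;M^{+}\,x_{\ell_p}\,x_{\ell_q}\;+\;M^{-},
\]
with $M^{\pm}$ explicit coefficient monomials in $y_1,\dots,y_N$ (in the coefficient-free specialisation $M^{+}=M^{-}=1$). Making $M^{\pm}$ precise --- by tracking $F$-polynomials and $g$-vectors along a sequence of flips producing $\g^{(pq)}$ --- is the first substantive task, and the one I expect to be most error-prone, since the two notches interact and the local picture degenerates when $\g$ is a loop or runs into a self-folded triangle at $p$ or $q$.

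Granting this identity, it suffices to show that the candidate Laurent polynomial $E:=\Phi\bigl(\cross(T,\g^{(pq)})^{-1}\sum_{(P_p,P_q)\in\bP(G_{\g^{(pq)}})}\oox(P_p,P_q)\ooy(P_p,P_q)\bigr)$ satisfies $x_{\g}\,E=M^{+}x_{\ell_p}x_{\ell_q}+M^{-}$. Using Theorems \ref{MSW1cef} and \ref{MSW1sym} to expand $x_\g$, $x_{\ell_p}$ and $x_{\ell_q}$ in terms of perfect matchings, this becomes a polynomial identity in the initial $x_i,y_i$, hence a purely combinatorial statement about matchings of $G_\g$, $G_{\g^{(p)}}$, $G_{\g^{(q)}}$ and $G_{\g^{(pq)}}$. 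I would prove it by splitting the double sum expanding $x_{\ell_p}x_{\ell_q}$ --- which ranges over all of $\bP(G_{\g^{(p)}})\times\bP(G_{\g^{(q)}})$ --- into the $\g$-compatible pairs and the rest. For a $\g$-compatible pair, $\res(P_p)\simeq\res(P_q)$, and a direct bookkeeping with the definitions of $\ox,\oy,\res$ and with the way each of the two copies of $G_\g$ inside $G_{\ell_s}$ is glued across the tiles at $s$ shows that, after multiplying by $x_\g$ and comparing $\cross$-normalisations, the weight of the pair collapses to exactly $\oox(P_p,P_q)\ooy(P_p,P_q)$; this is in particular where the cubes in the denominators of $\oox,\ooy$ come from, and it reconstructs $x_\g\,E$. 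It then remains to check that the pairs with $\res(P_p)\not\simeq\res(P_q)$ contribute nothing net once combined with $M^{-}$ and the residual $x_\g$-divisibility; this should follow from a weight-preserving matching surgery on the non-compatible pairs (flipping along cycles of $\res(P_p)\triangle\res(P_q)$), and showing this surgery is well defined and accounts for exactly those pairs is the technical heart of the proof. Putting these pieces together with the first step gives the stated formula, and the loop case $p=q$ goes through verbatim with Figure \ref{looploop} in place of the ordinary loops.

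The main obstacle is precisely this combinatorial cancellation: one has to verify simultaneously that no ``incompatible'' term survives, that the surviving compatible terms carry exactly the cube $x(\res)^{3}$ and not a lower power, and that the monomials $M^{\pm}$ produced by the algebraic step agree with the $y$-weights that emerge from the matchings. All of this rests on a careful understanding of how $G_{\ell_s}$ is assembled from two copies of $G_\g$ across the tiles at $s$ and of how $\res$ and the minimal matchings behave under that gluing.
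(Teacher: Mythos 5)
This statement is not proved in the paper at all: it is recalled verbatim from Musiker--Schiffler--Williams as \cite[Theorem 4.20]{MSW1} in the preliminary Subsection \ref{MSWformula}, and the paper uses it as a black box (the author's own contribution, Theorem \ref{main}, is deduced \emph{from} Theorems \ref{MSW1cef}, \ref{MSW1sym} and \ref{MSW1com}, not the other way around). So there is no proof in this paper for your attempt to be compared against.

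As for your plan on its own terms: the general strategy --- pull in the Fomin--Thurston algebraic relation tying $x_{\g^{(pq)}}$ to $x_{\g^{(p)}}$, $x_{\g^{(q)}}$ and $x_{\g}$, expand each of the latter via Theorems \ref{MSW1cef} and \ref{MSW1sym}, and then argue a weight-preserving cancellation of the non-$\g$-compatible pairs in the product sum --- is plausible and is in fact close in spirit to what MSW actually do. But as written it is an outline, not a proof: the two steps you yourself flag as the hard ones (identifying the coefficient monomials $M^{\pm}$ precisely, and establishing the matching surgery that kills incompatible pairs while reproducing the cube $x(\res)^3$) are exactly where all the content lives, and they are left entirely unproved. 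There is also a small but real slip at the start: $x_{\ell_s}$ is \emph{not} equal to $x_{\g^{(s)}}$. The loop $\ell_s$ is an ordinary arc with $\iota(\ell_s)=\g^{(s)}$, and Theorem \ref{MSW1sym} expresses $x_{\g^{(s)}}$ as the $\ox,\oy$-normalized (i.e.\ divided by $x(\res(P)), y(\res(P))$) matching sum over $G_{\ell_s}$; the unnormalized sum over $\bP(G_{\ell_s})$ gives a different Laurent polynomial. When $\g\in T$ the discrepancy is exactly the factor $x_{\overline{\de}}$ appearing in Theorem \ref{main'}(2), but for $\g\notin T$ one must work with $\ox,\oy$ throughout. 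This needs to be handled carefully before the bookkeeping in your middle step can even be set up.
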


\section{Proof of Theorem \ref{main}}\label{proofofmain}

 In this section, we keep the notations of the previous sections. We prove the bijection between (1) and (2) in Theorem \ref{bijthm} and Theorem \ref{main} in the three cases of $\de=\g$, $\g^{(p)}$ and $\g^{(pq)}$. Notice that the same notations $\Phi$ and $\cross(T,\de)$ appear in Theorems \ref{main}, \ref{MSW1cef}, \ref{MSW1sym} and \ref{MSW1com}. So we only need to consider $x(A)$ and $y(A)$ for $A \in \bA(T_{\de})$. Let $A(T_{\de})$ be the set of angles incident to at least one diagonal of $T_{\de}$, and let $A_{\rm ex}(T_{\de})$ be the set of exterior angles of $T_{\de}$ which are angles between boundary segments and diagonals of $T_{\de}$. In particular, $A_{\rm ex}(T_{\de})$ is contained in $A(T_{\de})$. For a set $S$, we denote by $\#S$ the cardinality of $S$.

\subsection{The case of plain arcs}

 Recall the result of our previous paper \cite{Y}. For a plain arc $\g$, we denote by $(G_{\g})_1$ (resp., $(G_{\g})_b$) the set of edges (resp., boundary edges) of $G_{\g}$. Let $A(\overline{G}_{\g})$ be the set of angles between a diagonal $\tau_i$ and a side of the square with diagonal $\tau_i$ in $\overline{G}_{\g}$, and $\overline{\varphi} : A(\overline{G}_{\g}) \rightarrow (G_{\g})_1$ the surjective map sending $a \in A(\overline{G}_{\g})$ to the side that is opposite to $a$. By the unfolding process (see Subsection \ref{MSWformula}), there is a canonical surjection $\pi : A(\overline{G}_{\g}) \rightarrow A(T_{\g})$ compatible with the construction of $\overline{G}_{\g}$.

\begin{theorem}\label{p.m.bij}\cite[Lemma 3.2, Proposition 3.4]{Y}
 There exists a bijection $\varphi : A(T_{\g}) \rightarrow (G_{\g})_1$ making the following diagram commutative:
\[\SelectTips{eu}{} \xymatrix@C=0.3mm@R=7mm{
 & A(\overline{G}_{\g}) \ar@{->>}[ld]_{\pi} \ar@{->}[rd]^{\overline{\varphi}} & \\
 A(T_{\g}) \ar@{->}[rr]^{\sim}_{\varphi} & & (G_{\g})_1
} \]
Moreover the map $\varphi$ induces a bijection $\varphi : \bA(T_{\g}) \rightarrow \bP(G_{\g})$ satisfying $x(A)=x(\varphi(A))$ for $A \in \bA(T_{\g})$.
\end{theorem}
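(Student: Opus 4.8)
The plan is to prove Theorem~\ref{p.m.bij} in two stages: first constructing the bijection $\varphi$ at the level of angles from a careful reading of the unfolding procedure, and then promoting it to a bijection of perfect matchings.

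\emph{Stage 1: the map $\varphi$ and the commuting triangle.} Each tile of $\overline{G}_{\g}$ is a square with diagonal $\tau_i$ formed by gluing copies of the two triangles $\triangle_{i-1}$ and $\triangle_i$ of $T_{\g}$ adjacent to $\tau_i$, and the surjection $\pi$ records from which angle of which triangle of $T_{\g}$ a diagonal--side angle of $\overline{G}_{\g}$ arises. The only source of non-injectivity of $\pi$ is that the \emph{apex} angle of an interior triangle $\triangle_i$, namely the angle at the vertex where $\tau_i$ and $\tau_{i+1}$ meet, occurs both inside the tile $\tau_i$ and inside the tile $\tau_{i+1}$; hence every fibre of $\pi$ has one or two elements. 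I would then prove the key lemma that $\overline{\varphi}$ is constant along the fibres of $\pi$: for a two-element fibre, both copies of the apex angle of $\triangle_i$ are opposite the boundary side $b_i$ of $\triangle_i$, and $b_i$ is exactly the single edge of $G_{\g}$ shared by the tiles $\tau_i$ and $\tau_{i+1}$, so both copies have the same image under $\overline{\varphi}$. This produces a well-defined $\varphi$ with $\varphi\circ\pi=\overline{\varphi}$, which is precisely the asserted commuting triangle, and surjectivity of $\pi$ and $\overline{\varphi}$ forces surjectivity of $\varphi$. A cardinality count then finishes Stage~1: on the one hand $\#A(T_{\g})=3n+1$, since the triangles $\triangle_0,\triangle_1,\dots,\triangle_{n-1},\triangle_n$ contribute $2,3,\dots,3,2$ diagonal-incident angles; on the other hand $G_{\g}$ is a chain of $n$ squares and so has $4n-(n-1)=3n+1$ edges, and a surjection between finite sets of equal size is bijective.

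\emph{Stage 2: perfect matchings and the monomial identity.} Extend $\varphi$ to subsets by $\varphi(A):=\{\varphi(a)\mid a\in A\}$. I would show $A\in\bA(T_{\g})\iff\varphi(A)\in\bP(G_{\g})$ by a local analysis at each vertex of $G_{\g}$: every vertex $u$ of $G_{\g}$ lies in one or two tiles and carries the label of a vertex $\bar u$ of $T_{\g}$, and the edges of $G_{\g}$ at $u$ lying in $\varphi(A)$ are the images under $\varphi$ of exactly those marked angles of $A$ whose opposite side (inside the relevant tile) has $\bar u$ as an endpoint. Unwinding this, condition~(2) of Definition~\ref{defpm} (one marked angle per triangle of $T_{\g}$) together with condition~(1) (each diagonal-incident vertex matched once) becomes precisely the statement that each vertex of $G_{\g}$ is covered exactly once; running the same argument along $\varphi^{-1}$ gives the converse, so $\varphi$ restricts to a bijection $\bA(T_{\g})\to\bP(G_{\g})$. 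The identity $x(A)=x(\varphi(A))$ is then immediate: by construction $\varphi(a)$ is the edge of $G_{\g}$ labelled by the side of $T_{\g}$ opposite to $a$, so $x_{\varphi(a)}=x_a$ for every angle $a$ and the two products agree factor by factor.

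\emph{Main obstacle.} The technical heart is Stage~2 — reconciling the two notions of ``perfectness'' at every vertex of $G_{\g}$. The delicate points are the two end triangles $\triangle_0$ and $\triangle_n$, which behave differently from the interior ones (they contribute no interior vertex to the chain and only two diagonal-incident angles each), and the behaviour across a shared edge $b_i$: when the marked angle of an interior triangle $\triangle_i$ is its apex angle, $\varphi$ sends it onto $b_i$, so it simultaneously constrains the tiles $\tau_i$ and $\tau_{i+1}$, and one must verify that this is consistent with, rather than in conflict with, the covering conditions on both tiles. Organising this bookkeeping cleanly — ideally via one statement describing, for each vertex of $G_{\g}$, exactly which angles of $T_{\g}$ are carried by the edges at that vertex — is the crux of the argument.
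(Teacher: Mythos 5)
The paper itself gives no proof of Theorem \ref{p.m.bij}; it defers entirely to \cite[Lemma 3.2, Proposition 3.4]{Y}, so there is no in-paper argument to compare yours against, and I can only judge your proposal on its own merits.

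Your Stage 1 is correct and essentially complete. The two-element fibres of $\pi$ are exactly the apex angles of the interior triangles $\triangle_i$ ($1\le i\le n-1$), $\overline{\varphi}$ sends both preimages of such an apex angle onto the one edge of $G_{\gamma}$ shared by tiles $i$ and $i+1$, and the cardinality identity $\#A(T_{\gamma})=3n+1=\#(G_{\gamma})_1$ then upgrades the resulting surjection $\varphi$ to a bijection. (You tacitly use that $\overline{\varphi}$ is surjective; say the one-line reason, namely that within a fixed tile the four angles of $A(\overline{G}_{\gamma})$ are sent to the four sides.)

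Stage 2 is the actual substance of the theorem, and you have given a plan rather than a proof. Worse, the plan as phrased is not quite right. You assert that the edges of $G_{\gamma}$ incident to a vertex $u$ labelled $\bar u$ are the $\varphi$-images of the angles whose opposite side contains $\bar u$; but the two defining conditions of $\bA(T_{\gamma})$ are stated at vertices of $T_{\gamma}$ and at triangles of $T_{\gamma}$, and the translation between these and your condition is exactly the point at issue. Already for $n=1$, with the single tile $ABCD$ and diagonal $\tau_1=AC$, the covering constraint at the $G_{\gamma}$-vertex $A$ pulls back under $\varphi^{-1}$ to ``exactly one marked angle at $C$'' (the \emph{other} endpoint of $\tau_1$), while the constraints at the off-diagonal vertices $B$ and $D$ pull back to the two triangle constraints. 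What your argument really needs is the following lemma, which you should state and prove: there is a bijection $\chi$ from the $2n+2$ vertices of $G_{\gamma}$ to the disjoint union of the set of vertices of $T_{\gamma}$ incident to a diagonal and the set of triangles of $T_{\gamma}$, such that for every $a\in A(T_{\gamma})$ the two endpoints of $\varphi(a)$ are $\chi^{-1}(v(a))$ and $\chi^{-1}(\triangle(a))$, where $v(a)$ is the vertex of $a$ and $\triangle(a)$ the triangle containing $a$. Equivalently, you may construct an isomorphism of abstract graphs between $G_{\gamma}$ and the bipartite graph $B_{\gamma}$ of Theorem \ref{bijthm} that intertwines $\varphi$ and $\varpi$. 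Until some such statement is proved, the equivalence $A\in\bA(T_{\gamma})\Leftrightarrow\varphi(A)\in\bP(G_{\gamma})$ is asserted but not established. Your concluding remark that $x(A)=x(\varphi(A))$ is correct and is indeed immediate once the matching bijection is in place.
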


 Theorem \ref{p.m.bij} clearly gives the bijection between (1) and (2) in Theorem \ref{bijthm} for plain arcs. We only need to show that $y(A)=y(\varphi(A))$ for $A \in \bA(T_{\g})$ to prove Theorem \ref{main} for plain arcs.

\begin{lemma}\label{exbound}
 The restriction $\varphi|_{A_{\rm ex}(T_{\g})}$ of $\varphi$ deduces a bijection between $A_{\rm ex}(T_{\g})$ and $(G_{\g})_b$.
\end{lemma}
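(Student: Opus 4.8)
The statement asserts that the bijection $\varphi : A(T_{\g}) \to (G_{\g})_1$ from Theorem \ref{p.m.bij} restricts to a bijection between the exterior angles $A_{\rm ex}(T_{\g})$ and the boundary edges $(G_{\g})_b$ of the snake graph. Since $\varphi$ is already known to be a bijection, it suffices to show that $\varphi$ maps $A_{\rm ex}(T_{\g})$ \emph{onto} $(G_{\g})_b$, or equivalently that $a \in A_{\rm ex}(T_{\g})$ if and only if $\varphi(a)$ is a boundary edge of $G_{\g}$. The first thing I would do is unwind the definitions: an exterior angle $a$ of $T_{\g}$ is an angle between a boundary segment and a diagonal of $T_{\g}$, so its opposite side (the side that $\varphi$ lifts through $\overline{\varphi}$) is exactly the third side of the triangle containing $a$. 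The plan is to use the commutative triangle relating $\varphi$, $\overline{\varphi}$, and the canonical surjection $\pi : A(\overline{G}_{\g}) \to A(T_{\g})$, and to track, tile by tile, which sides of the squares of $\overline{G}_{\g}$ are boundary edges after the diagonals are deleted to form $G_{\g}$.

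The key step is a local analysis of the unfolding construction. Recall from Subsection \ref{MSWformula} that $\overline{G}_{\g}$ is built by unfolding each triangle of $T_{\g}$ with two non-boundary sides along its (unique) boundary side; thus each interior arc $\tau_i$ becomes the diagonal of a tile, and the boundary segments of $T_{\g}$ (other than the two ``end'' segments at $\triangle_0$ and $\triangle_n$ which become the first and last edges $e_0$ and $e_n$) reappear as the shared/non-shared edges of consecutive tiles. I would go through the three possible shapes of a triangle of $T_{\g}$ (one boundary side, two boundary sides with $n$ small, and the two special end triangles) and check directly: an angle $a$ of $T_{\g}$ has a boundary segment as its opposite side precisely when $a$ is an interior (non-exterior) angle, while $a$ is exterior precisely when its opposite side is itself a diagonal $\tau_j$ — and under $\varphi$ (via $\overline{\varphi} \circ \pi^{-1}$) such an $a$ is sent to a non-diagonal edge of $G_{\g}$ that lies on the boundary of the snake graph. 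A small bookkeeping point: both ends of the diagonal $\tau_i$ in a tile give two angles opposite to $\tau_i$ in adjacent tiles, and I need to verify that the two boundary edges of $\overline{G}_{\g}$ not shared between two tiles correspond exactly to angles between $\tau_i$ and the \emph{non}-shared sides of the tile, which are exterior in $T_{\g}$.

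I would organize this as: (i) $|A_{\rm ex}(T_{\g})| = |(G_{\g})_b|$, which follows by a direct count — each interior diagonal $\tau_i$ contributes two exterior angles if $\tau_i$ is a side of a triangle whose other sides are $\tau_{i\pm 1}$, adjusted at the two ends, and this matches the standard count of boundary edges of a snake graph with $n$ tiles; (ii) $\varphi(A_{\rm ex}(T_{\g})) \subseteq (G_{\g})_b$, by the tile-by-tile check above; then (iii) conclude surjectivity of the restriction from the equality of (finite) cardinalities in (i) together with injectivity of $\varphi$, and hence bijectivity. The main obstacle I anticipate is (ii), and specifically the careful handling of the boundary behavior at the two end tiles (the tiles containing $\tau_1$ and $\tau_n$): there the boundary segments of $T_{\g}$ include the ``outer'' sides of $\triangle_0$ and $\triangle_n$, so I must make sure the angles between $\tau_1$ (resp.\ $\tau_n$) and these end segments map to genuine boundary edges of $G_{\g}$ (namely the edges adjacent to $e_0$ and to the last edge) rather than to interior edges. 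Once the local dictionary between the unfolding picture and the snake-graph boundary is pinned down, the argument is a finite case check with no real computation.
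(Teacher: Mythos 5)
Your proposal is correct and follows essentially the same strategy as the paper: verify that $\varphi$ respects the boundary/interior dichotomy and conclude bijectivity from equal cardinalities plus injectivity of $\varphi$. The paper simply runs this on the \emph{complement}, which is more economical: $A(T_\gamma)\setminus A_{\rm ex}(T_\gamma)$ is exactly the $n-1$ angles between consecutive diagonals $\tau_i,\tau_{i+1}$, and these map by the unfolding to the $n-1$ interior (shared) edges of $G_\gamma$, avoiding the longer tile-by-tile check over all $2n+2$ exterior angles and end-tile special cases that your plan anticipates.
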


\begin{proof}
 The complement $A(T_{\g}) \setminus A_{\rm ex}(T_{\g})$ consists of angles $a_i$ between $\tau_i$ and $\tau_{i+1}$ for $i \in [1,n-1]$, in particular, $\#(A(T_{\g})\setminus A_{\rm ex}(T_{\g}))=n-1$. It follows from the unfolding process that $\varphi(a_i) \in (G_{\g})_1\setminus(G_{\g})_b$. Since $\#((G_{\g})_1\setminus(G_{\g})_b)=n-1$, the restriction $\varphi|_{A(T_{\g}) \setminus A_{\rm ex}(T_{\g})}$ is bijective and so is $\varphi|_{A_{\rm ex}(T_{\g})}$.
\end{proof}

\begin{proposition}\label{ybijgamma}
 For $A \in \bA(T_{\g})$, we have $Y(A)=J({\varphi(A)})$, that is $y(A)=y(\varphi(A))$.
\end{proposition}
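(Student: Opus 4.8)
The plan is to reduce the identity $Y(A) = J(\varphi(A))$ to a statement about symmetric differences with the respective minimal matchings, and then to check that the bijection $\varphi$ carries the minimal matching $A_-(T_\g)$ to the minimal matching $P_-(G_\g)$ and intertwines the two notions of ``boundary-enclosed subgraph.'' Recall that $Y(A) = Y'(A)$ for plain arcs (the special case in the definition of $Y$ only occurs when $\de = \g^{(pq)}$ with $n=1$), and $Y'(A)$ is the set of diagonals of $T_\g$ that are sides of at least one exterior angle in $A_-(T_\g)\,\triangle\,A$; on the other side, by Theorem \ref{JPthm}, $J(P)$ is the set of diagonals of tiles contained in the union-of-tiles subgraph $G_P$ whose boundary is $P_-(G_\g)\,\triangle\,P$.

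First I would establish that $\varphi(A_-(T_\g)) = P_-(G_\g)$. The minimal matching $A_-(T_\g)$ is characterized by the min-condition: at each boundary vertex incident to a diagonal, the selected angle comes first in the counterclockwise order. Using the explicit description of $\varphi$ coming from the unfolding process (Theorem \ref{p.m.bij}) together with Lemma \ref{exbound}, which says $\varphi$ restricts to a bijection $A_{\rm ex}(T_\g) \to (G_\g)_b$, the min-condition on angles translates into the statement that $\varphi(A_-(T_\g))$ consists only of boundary edges and contains the edge $e_0$ of Definition \ref{e0min} (the boundary segment following $\tau_1$ clockwise in $\triangle_0$). Since $P_-(G_\g)$ is the unique such perfect matching, this gives the claim. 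This is mostly a careful bookkeeping argument comparing the two ``first in clockwise/counterclockwise order'' conventions through the unfolding.

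Next, since $\varphi$ is a bijection $\bA(T_\g) \to \bP(G_\g)$, it commutes with symmetric differences in the sense that $\varphi$ sends the edge set $A_-(T_\g)\,\triangle\,A$ bijectively onto $P_-(G_\g)\,\triangle\,\varphi(A)$ as subsets of angles versus edges — more precisely, using $\varphi(A_-(T_\g)) = P_-(G_\g)$ and $\varphi(A) = \varphi(A)$, an angle $a$ lies in $A_-(T_\g)\,\triangle\,A$ iff exactly one of $A_-(T_\g), A$ contains $a$, iff exactly one of $P_-(G_\g), \varphi(A)$ contains $\varphi(a)$, iff $\varphi(a) \in P_-(G_\g)\,\triangle\,\varphi(A)$. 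Now I need the geometric heart: a diagonal $\tau$ of $T_\g$ is a side of some exterior angle in $A_-(T_\g)\,\triangle\,A$ if and only if the corresponding diagonal of the tile of $\overline{G}_\g$ labelled $\tau$ lies in $G_{\varphi(A)}$. The forward direction: if $\tau$ bounds an exterior angle $a \in A_-(T_\g)\,\triangle\,A$, then via $\overline\varphi$ the opposite boundary edge is in $P_-(G_\g)\,\triangle\,\varphi(A) = \partial G_{\varphi(A)}$, hence the tile with diagonal $\tau$ meets $G_{\varphi(A)}$; I would then argue, using that $G_{\varphi(A)}$ is a union of whole tiles, that the tile labelled $\tau$ is in fact contained in $G_{\varphi(A)}$. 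The converse is similar, reading off which boundary edges of a tile in $G_{\varphi(A)}$ are in $P_-(G_\g)\,\triangle\,\varphi(A)$ and pulling them back through $\overline\varphi$ and $\pi$ to exterior angles.

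The main obstacle I anticipate is precisely this last equivalence, i.e.\ relating ``being a side of an exterior angle in the symmetric difference'' (a local condition phrased on the polygon $T_\g$) to ``being the diagonal of a tile contained in $G_P$'' (a condition phrased on the snake graph, where tiles may be glued in two different relative positions depending on orientation). The subtlety is that a single diagonal $\tau_i$ of $T_\g$ may appear as several distinct diagonals in $\overline{G}_\g$ if $\tau_i = \tau_j$ for $i \ne j$, and one must check that the exterior-angle condition correctly detects whether \emph{any} such tile is enclosed; here the fact (Theorem \ref{JPthm}, Theorem \ref{MSW1cef}) that $J(P)$ is already well-defined as a set of diagonal-labels, together with the commutativity of the triangle in Theorem \ref{p.m.bij}, should resolve the ambiguity. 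I would handle this by a case analysis on the local configuration of the tile (whether it is ``straight'' or ``bent'' relative to its neighbours) and on which of the two angles at $\tau_i$ in each adjacent triangle belongs to $A$, verifying in each case that the tile is enclosed exactly when one of the two relevant angles is exterior and in the symmetric difference. Once this local verification is done, summing over all diagonals gives $Y(A) = J(\varphi(A))$, hence $y(A) = \prod_{\tau \in Y(A)} y_\tau = \prod_{j \in J(\varphi(A))} y_j = y(\varphi(A))$, as required.
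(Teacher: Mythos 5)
Your plan follows the paper's proof essentially step for step: (i) show $\varphi(A_-(T_\g))=P_-(G_\g)$ via the min-condition and Lemma \ref{exbound}, (ii) deduce $\varphi(A_-(T_\g)\triangle A)=P_-(G_\g)\triangle\varphi(A)$, (iii) match diagonal $\tau_i$ being a side of an exterior angle of $A_-\triangle A$ with tile $i$ being enclosed in $G_{\varphi(A)}$. The ``subtlety'' you flag about $\tau_i=\tau_j$ is not a real obstacle: $Y(A)$ and $J(P)$ are sets of indexed diagonals (equivalently tiles), not sets of arc-labels, and $\varphi$ induces a canonical index-preserving bijection; the paper closes step (iii) in one stroke by observing that $\varphi$ carries the four angles incident to $\tau_i$ to the four sides of tile $i$, which together with Lemma \ref{exbound} and Theorem \ref{JPthm} makes your proposed straight-versus-bent case analysis unnecessary.
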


\begin{proof}
 By Theorem \ref{p.m.bij} and Lemma \ref{exbound}, $\varphi(A_{-}(T_{\g}))$ is a perfect matching of $G_{\g}$ consisting only of boundary edges. In particular, since $e_0 \in \varphi(A_{-}(T_{\g}))$, where $e_0$ was defined in Definition \ref{e0min}, $\varphi(A_{-}(T_{\g}))=P_{-}(G_{\g})$ holds. Thus we have $\varphi(A_{-}(T_{\g}) \triangle A)=P_{-}(G_{\g}) \triangle \varphi(A)$. On the other hand, $\varphi$ maps the four angles incident to $\tau_i$ in $T_{\g}$ to sides of the square with diagonal $\tau_i$ in $\overline{G}_{\g}$. Therefore, $(A_{-}(T_{\g}) \triangle A) \cap A_{\rm ex}(T_{\g})$ contains an angle incident to $\tau_i$, which is equivalent to $\tau_i \in Y(A)$, if and only if $(P_{-}(G_{\g}) \triangle \varphi(A)) \cap (G_{\g})_b$ contains an edge of the square with diagonal $\tau_i$ in $\overline{G}_{\g}$, which is equivalent to $\tau_i \in J({\varphi(A)})$ by the definition.
\end{proof}

\begin{proof}[Proof of Theorem \ref{main} for plain arcs]
 The assertion follows from Theorems \ref{MSW1cef} and \ref{p.m.bij} and Proposition \ref{ybijgamma}.
\end{proof}

 Finally, we prepare the following lemma to use later.

\begin{lemma}\label{boundseg}
 For $A \in \bA(T_{\g})$, if $A_{-}(T_{\g}) \triangle A$ contains an exterior angle incident to $\tau_i$ in $T_{\g}$, it contains all exterior angles incident to $\tau_i$ in $T_{\g}$.
\end{lemma}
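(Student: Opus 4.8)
The plan is to transport the statement to the snake graph $G_\g$ via the bijection $\varphi$ of Theorem \ref{p.m.bij} and then invoke Theorem \ref{JPthm}. Put $P := \varphi(A)$. From the proof of Proposition \ref{ybijgamma} we have $\varphi(A_{-}(T_\g)) = P_{-}(G_\g)$, hence $\varphi(A_{-}(T_\g)\triangle A) = P_{-}(G_\g)\triangle P$, and by Theorem \ref{JPthm} this set is precisely the set of all boundary edges of a subgraph $G_P$ of $G_\g$ which is a union of tiles of $\overline{G}_\g$.

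The second step is to localize the bijection of Theorem \ref{p.m.bij} at the diagonal $\tau_i$. As recorded in the proof of Proposition \ref{ybijgamma}, $\varphi$ carries the four angles of $T_\g$ incident to $\tau_i$ to the four sides of the tile with diagonal $\tau_i$; unwinding the unfolding construction, an angle incident to $\tau_i$ turns out to be exterior precisely when its image is a boundary edge of $G_\g$, the non-exterior ones (those between $\tau_i$ and a neighbouring diagonal) being sent exactly to the edges of $G_\g$ along which the tile of $\tau_i$ is glued to its neighbouring tiles. Combining this with Lemma \ref{exbound}, $\varphi$ restricts to a bijection between the exterior angles of $T_\g$ incident to $\tau_i$ and the boundary edges of $G_\g$ lying on the tile with diagonal $\tau_i$.

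Finally, each boundary edge of $G_\g$ lying on the tile with diagonal $\tau_i$ is a side of that tile and of no other, so it lies in $P_{-}(G_\g)\triangle P$ --- equivalently, is a boundary edge of $G_P$ --- if and only if the tile with diagonal $\tau_i$ is contained in $G_P$. Since this condition is independent of the chosen edge, either all these boundary edges lie in $P_{-}(G_\g)\triangle P$ or none does; pulling back along $\varphi$ gives the same dichotomy for the exterior angles of $T_\g$ incident to $\tau_i$ with respect to $A_{-}(T_\g)\triangle A$, which in particular yields the claim. I expect the only point needing care to be the localization in the second step --- determining which of the four sides of the tile of $\tau_i$ are boundary edges of $G_\g$ and matching them with the exterior (rather than interior) angles at $\tau_i$ --- which reduces to a short case check according to whether $i=1$, $i=n$, $1<i<n$, or $n=1$; no genuine obstacle is anticipated.
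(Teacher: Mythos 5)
Your proposal is correct and follows essentially the same route as the paper's own proof: transfer to $G_{\g}$ via $\varphi$ using $\varphi(A_{-}(T_{\g}))=P_{-}(G_{\g})$, invoke Theorem \ref{JPthm}, and use the fact (plus Lemma \ref{exbound}) that $\varphi$ sends the four angles at $\tau_i$ to the four sides of the tile with diagonal $\tau_i$, with exterior angles landing on the boundary sides. You spell out the all-or-none step more explicitly (each boundary edge of $G_{\g}$ on a given tile lies in $P_{-}(G_{\g})\triangle P$ iff that tile is in $G_P$, a condition not depending on the edge) than the paper, which compresses this into a single appeal to Theorem \ref{JPthm}, but the argument is the same.
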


\begin{proof}
 By Theorem \ref{JPthm}, for $P \in \bP(G_{\g})$, if $P_{-}(T_{\g}) \triangle P$ contains a boundary sides of the square with diagonal $\tau_i$ in $\overline{G}_{\g}$, it contains all boundary sides of the square with diagonal $\tau_i$ in $\overline{G}_{\g}$. Since $\varphi$ maps the four angles incident to $\tau_i$ in $T_{\g}$ to sides of the square with diagonal $\tau_i$ in $\overline{G}_{\g}$, the assertion follows from Lemma \ref{exbound}.
\end{proof}

\subsection{The case of $1$-notched arcs}

 In this subsection, we show the following theorem.

\begin{theorem}\label{1-notchedbij}
 There is a bijection $\varphi_p : \bA(T_{\g^{(p)}}) \rightarrow \bP(G_{\g^{(p)}})$ satisfying $x(A)=\ox(\varphi_p(A))$ and $y(A)=\oy(\varphi_p(A))$ for $A \in \bA(T_{\g^{(p)}})$.
\end{theorem}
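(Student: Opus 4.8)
The strategy is to reduce the $1$-notched case to the plain case already handled by Theorem \ref{p.m.bij} and Proposition \ref{ybijgamma}, using the fact that $T_{\g^{(p)}}$ is built from $T_{\g}$ by adjoining the triangles incident to $p$ (equivalently, that the loop $\ell_p$ has a snake graph $G_{\ell_p}$ containing two disjoint copies $G_{\ell_p}^1$, $G_{\ell_p}^2$ of the snake graph of $\g$). The key point is that a perfect matching of angles of $T_{\g^{(p)}}$ is essentially an ordinary perfect matching of angles of the underlying polygon, whereas the target $\bP(G_{\g^{(p)}})$ consists of the $\g$-symmetric perfect matchings of $G_{\ell_p}$. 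So I want to exhibit $\varphi_p$ as the restriction of (a suitable enlargement of) the map $\varphi$ of Theorem \ref{p.m.bij}, applied now to the polygon $T_{\ell_p}$ whose snake graph is $G_{\ell_p}$, and then show the image lands exactly in the $\g$-symmetric locus.

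First I would observe that $T_{\g^{(p)}}$ is, combinatorially, the triangulated polygon $T_{\ell_p}$ of the introduction with $p$ an interior boundary vertex, and apply Theorem \ref{p.m.bij} to $\ell_p$: this gives a bijection $\varphi : \bA(T_{\ell_p}) \to \bP(G_{\ell_p})$ with $x(A) = x(\varphi(A))$. Next I would identify, inside $T_{\ell_p}$, the two ``copies'' of $T_{\g}$ glued along the fan of triangles at $p$, matching the decomposition $G_{\ell_p} \supset G_{\ell_p}^1 \sqcup G_{\ell_p}^2$ (with the auxiliary subgraphs $H_{\ell_p}^i$ obtained by deleting $p$ and the edges $\z_1, \z_m$). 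The crucial compatibility statement to prove is: for $A \in \bA(T_{\ell_p})$, one has $\varphi(A)|_{H_{\ell_p}^1} \simeq \varphi(A)|_{H_{\ell_p}^2}$ automatically, i.e. every perfect matching of angles of $T_{\ell_p}$ is carried to a $\g$-symmetric perfect matching, and conversely every $\g$-symmetric matching arises this way. This should follow from the matching conditions at the vertex $p$: conditions (1) and (2) of Definition \ref{defpm} force the selection of angles in the two copies to agree away from $p$, because the angle chosen at $p$ together with the ``one marked angle per triangle'' rule propagates identically into both fans. From this, $\bA(T_{\g^{(p)}}) = \bA(T_{\ell_p})$ maps bijectively onto $\bP(G_{\g^{(p)}}) \subseteq \bP(G_{\ell_p})$; I then set $\varphi_p := \varphi$ with this codomain, and the relation $x(A) = \ox(\varphi_p(A))$ follows from $x(A) = x(\varphi(A))$, $x(A) = x(\varphi(A)|_{G_{\ell_p}^i})^2 / (\text{edge labels at }p)$ on the symmetric part, and the definition $\ox(P) = x(P)/x(\res(P))$ together with Theorem \ref{MSW1sym} (which says $P|_{G_{\ell_p}^i} \simeq \res(P)$).

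For the $y$-statement I would argue as in Proposition \ref{ybijgamma}: the minimal matching $A_-(T_{\g^{(p)}})$ maps under $\varphi$ to $P_-(G_{\ell_p})$ (check that $\varphi$ of the min-matching consists only of boundary edges and contains the distinguished edge $e_0$, using Lemma \ref{exbound} applied to $\ell_p$), hence $\varphi(A_-(T_{\g^{(p)}}) \triangle A) = P_-(G_{\ell_p}) \triangle \varphi(A)$. Then $\tau_i \in Y(A)$ iff an exterior angle at $\tau_i$ lies in $A_-(T_{\g^{(p)}}) \triangle A$ iff a boundary side of the $\tau_i$-square lies in $P_-(G_{\ell_p}) \triangle \varphi(A)$ iff $\tau_i \in J(\varphi(A))$, exactly as before; one must additionally track that the diagonals $\z_1,\dots,\z_m$ at $p$ are handled consistently with the definition of $\res$ and with $\oy(P) = y(P)/y(\res(P))$, so that the diagonals deleted in passing to $H_{\ell_p}^i$ do not contribute. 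Note that the special clause in the definition of $Y(A)$ involving $\tau_1$ only occurs for $\de = \g^{(pq)}$ with $n=1$, so it plays no role here.

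\textbf{Main obstacle.} The technical heart is the vertex-$p$ bookkeeping: proving that the ``one marked angle per triangle'' condition at the fan of triangles around $p$ in $T_{\ell_p}$ corresponds precisely to $\g$-symmetry of matchings on $G_{\ell_p}$, and that this matches the auxiliary subgraph surgery ($H_{\ell_p}^i$ = delete $p$ and the edges $\z_1,\z_m$) used in Definition \ref{defsym} and Theorem \ref{MSW1sym}. Concretely, I expect to need a careful case analysis of which angle at $p$ is selected (one of the $\z$-diagonals, or a boundary segment of $T_{\g^{(p)}}$), showing that each choice determines, via the unfolding/reflection used to build $G_{\ell_p}$ from two copies, a matched pair of edges in $G_{\ell_p}^1$ and $G_{\ell_p}^2$; self-folded triangles at $p$ (handled via the local transformations of Figure \ref{replace1notched}) will need separate verification but reduce to the same local picture. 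Everything else is a formal transport of Theorem \ref{p.m.bij}, Lemma \ref{exbound}, Lemma \ref{boundseg}, and Proposition \ref{ybijgamma} from $\g$ to $\ell_p$.
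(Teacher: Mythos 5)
Your proposal conflates two different combinatorial objects, and this conflation produces a false claim that undermines the whole argument. You write that $T_{\g^{(p)}}$ ``is, combinatorially, the triangulated polygon $T_{\ell_p}$,'' but these are genuinely different: $T_{\g^{(p)}}$ is a polygon with the puncture $p$ as an \emph{interior} vertex (so it has $n+m$ triangles), while $T_{\ell_p}$ is the ordinary triangulated polygon of the loop $\ell_p$, with $p$ a \emph{boundary} vertex and roughly $2n+m+1$ triangles (see Example~\ref{exloops}). Consequently $\bA(T_{\g^{(p)}})$ and $\bA(T_{\ell_p})$ are not the same set, and the equation ``$\bA(T_{\g^{(p)}}) = \bA(T_{\ell_p})$'' at the end of your second paragraph is simply wrong.

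The ``crucial compatibility statement'' you propose to prove — that for every $A \in \bA(T_{\ell_p})$ the image $\varphi(A)$ is automatically $\g$-symmetric — is false. If it were true, $\bP(G_{\g^{(p)}})$ would equal $\bP(G_{\ell_p})$, but the snake graph $G_{\ell_p}$ has many non-symmetric perfect matchings (in Example~\ref{ex2s} only nine of the matchings of the $8$-tile snake graph are $\g$-symmetric). The paper therefore proceeds in two genuinely distinct steps: it first defines the subset $\Asym(T_{\ell_p}) \subsetneq \bA(T_{\ell_p})$ of $\g$-symmetric matchings of angles and shows (Proposition~\ref{bijsym}) that $\varphi^p$ restricts to a bijection $\Asym(T_{\ell_p}) \to \bP(G_{\g^{(p)}})$ — this part is close in spirit to your plan; but it then needs a \emph{second} bijection $\psi^p : \Asym(T_{\ell_p}) \to \bA(T_{\g^{(p)}})$ (Proposition~\ref{bijsymangle}), built by identifying boundary edges of $T_{\ell_p}$ to form $T_{\g^{(p)}}$ and requiring substantial case analysis (Lemmas~\ref{cc'}, \ref{lema}, \ref{note1}) about which angles at $p$ and at the boundary of the fan can appear. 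The composite $\varphi_p = \varphi^p \circ (\psi^p)^{-1}$ is the desired map. The second step — constructing and verifying $\psi^p$, including the weighted compatibility $\ox(A) = x(\psi^p(A))$, $\oy(A) = y(\psi^p(A))$ — is the real content of the proof, and your proposal omits it entirely by incorrectly identifying the two triangulated polygons from the start.
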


 Theorem \ref{1-notchedbij} clearly gives the bijection between (1) and (2) in Theorem \ref{bijthm} for $1$-notched arcs. To prove Theorem \ref{1-notchedbij}, we prepare the following notations as in Figure \ref{Tellgamma(p)}. By construction of the triangulated polygon $T_{\ell_p}$, it contains two disjoint subgraphs $T_{\ell_p}^1$ and $T_{\ell_p}^2$ with same form as $T_{\g}$, where $T_{\ell_p}^1$ has the boundary segment $\z_m$ of $T_{\ell_p}$. The subgraph $U_{\ell_p}^i$ of $T_{\ell_p}^i$ is obtained by removing the vertex $p$ and the two sides $\z_1$, $\z_m$. For $i \in \{1,2\}$, let $v_i$ (resp., $v_i'$) be the common endpoint of $\tau_n$ and $\zeta_m$ (resp., $\zeta_1$) in $T_{\ell_p}^i$. Let $a_i$ (resp., $a_i'$) be the angle at $v_i$ (resp., $v_i'$) that comes first in the counterclockwise (resp., clockwise) order around $v_i$ (resp., $v_i'$). We denote by $a_i^{\circ}$ an angle between $\tau_n$ and the boundary segment of the triangle with sides $\tau_{n-1}$ and $\tau_n$ of $T_{\ell_p}^i$. If $n>1$, it is uniquely determined, that is $a_i^{\circ}=a_i$ or $a_i^{\circ}=a_i'$.

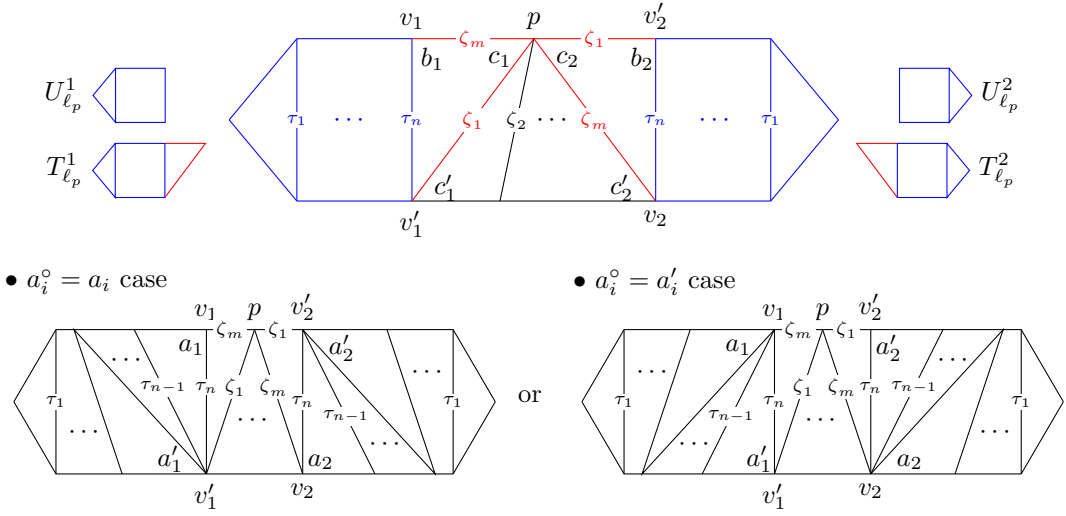
\begin{figure}[h]
   \caption{$T_{\ell_p}$ and subgraphs $T_{\ell_p}^i$ and $U_{\ell_p}^i$ of $T_{\ell_p}$}
   \label{Tellgamma(p)}
\[
\begin{tikzpicture}[scale=0.3,baseline=-20mm]
 \coordinate (l) at (-5,0); \node[left] at (l) {$U_{\ell_p}^1$};
 \coordinate (llu) at (-4,1.2);
 \coordinate (lld) at (-4,-1.2);
 \coordinate (lu) at (-1.8,1.2);
 \coordinate (ld) at (-1.8,-1.2);
 \draw[blue] (llu)--(l)--(lld)--(llu)--(lu)--(ld)--(lld);
\end{tikzpicture}
\hspace{-17mm}
\begin{tikzpicture}[scale=0.3,baseline=-10mm]
 \coordinate (l) at (-5,0); \node[left] at (l) {$T_{\ell_p}^1$};
 \coordinate (llu) at (-4,1.2);
 \coordinate (lld) at (-4,-1.2);
 \coordinate (lu) at (-1.8,1.2);
 \coordinate (ld) at (-1.8,-1.2);
 \coordinate (p) at (0,1.2);
 \draw[blue] (llu)--(l)--(lld)--(llu)--(lu)--(ld)--(lld);
 \draw[red] (lu)--(p)--(ld);
\end{tikzpicture}
\hspace{3mm}
\begin{tikzpicture}[scale=0.9]
 \coordinate (l) at (-4.5,0);
 \coordinate (llu) at (-3.5,1.2);
 \coordinate (lld) at (-3.5,-1.2);
 \coordinate (lu) at (-1.8,1.2); \node[above] at (lu) {$v_1$};
 \coordinate (ld) at (-1.8,-1.2); \node[below] at (ld) {$v_1'$};
 \coordinate (p) at (0,1.2); \node[above] at (p) {$p$};
 \coordinate (d) at (-0.5,-1.2);
 \coordinate (ru) at (1.8,1.2); \node[above] at (ru) {$v_2'$};
 \coordinate (rd) at (1.8,-1.2); \node[below] at (rd) {$v_2$};
 \coordinate (rru) at (3.5,1.2);
 \coordinate (rrd) at (3.5,-1.2);
 \coordinate (r) at (4.5,0);
 \node[blue] at (-2.7,0) {$\cdots$};
 \node[blue] at (2.7,0) {$\cdots$};
 \node at (0.3,0) {$\cdots$};
 \node at (-1.5,0.9) {$b_1$};
 \node at (1.6,0.9) {$b_2$};
 \node at (-0.5,0.9) {$c_1$};
 \node at (-1.3,-1) {$c_1'$};
 \node at (0.5,0.9) {$c_2$};
 \node at (1.3,-1) {$c_2'$};
 \draw[blue] (l) to (llu);
 \draw[blue] (l) to (lld);
 \draw[blue] (llu) to (lu);
 \draw[blue] (lld) to (ld);
 \draw (ld) to (rd);
 \draw[blue] (rd) to (rrd);
 \draw[blue] (ru) to (rru);
 \draw[blue] (r) to (rru);
 \draw[blue] (r) to (rrd);
 \draw[blue] (llu) to node[fill=white,inner sep=1]{\scriptsize $\tau_1$} (lld);
 \draw[blue] (lu) to node[fill=white,inner sep=1]{\scriptsize $\tau_n$} (ld);
 \draw[red] (lu) to node[fill=white,inner sep=1]{\scriptsize $\z_m$} (p);
 \draw[red] (p) to node[fill=white,inner sep=1]{\scriptsize $\z_1$} (ru);
 \draw[red] (p) to node[fill=white,inner sep=1]{\scriptsize $\z_1$} (ld);
 \draw (p) to node[fill=white,inner sep=1]{\scriptsize $\z_2$} (d);
 \draw[red] (p) to node[fill=white,inner sep=1]{\scriptsize $\z_m$} (rd);
 \draw[blue] (ru) to node[fill=white,inner sep=1]{\scriptsize $\tau_n$} (rd);
 \draw[blue] (rru) to node[fill=white,inner sep=1]{\scriptsize $\tau_1$} (rrd);
\end{tikzpicture}
\hspace{8mm}
\begin{tikzpicture}[scale=0.3,baseline=-20mm]
 \coordinate (ru) at (1.8,1.2);
 \coordinate (rd) at (1.8,-1.2);
 \coordinate (rru) at (4,1.2);
 \coordinate (rrd) at (4,-1.2);
 \coordinate (r) at (5,0); \node[right] at (r) {$U_{\ell_p}^2$};
 \draw[blue] (rru)--(r)--(rrd)--(rru)--(ru)--(rd)--(rrd);
\end{tikzpicture}
\hspace{-23mm}
\begin{tikzpicture}[scale=0.3,baseline=-10mm]
 \coordinate (p) at (0,1.2);
 \coordinate (ru) at (1.8,1.2);
 \coordinate (rd) at (1.8,-1.2);
 \coordinate (rru) at (4,1.2);
 \coordinate (rrd) at (4,-1.2);
 \coordinate (r) at (5,0); \node[right] at (r) {$T_{\ell_p}^2$};
 \draw[blue] (rru)--(r)--(rrd)--(rru)--(ru)--(rd)--(rrd);
 \draw[red] (ru)--(p)--(rd);
\end{tikzpicture}
\]
\[
\begin{tikzpicture}[scale=0.8,baseline=0]
 \coordinate (l) at (-4,0);
 \coordinate (llu) at (-3.3,1.2);
 \coordinate (lld) at (-3.3,-1.2);
 \coordinate (ln-1) at (-2,1.2);
 \coordinate (lu) at (-0.8,1.2); \node[above] at (lu) {$v_1$};
 \coordinate (ld) at (-0.8,-1.2); \node[below] at (ld) {$v_1'$};
 \coordinate (p) at (0,1.2); \node[above] at (p) {$p$};
 \coordinate (d) at (-0.2,-1.2);
 \coordinate (ru) at (0.8,1.2); \node[above] at (ru) {$v_2'$};
 \coordinate (rd) at (0.8,-1.2); \node[below] at (rd) {$v_2$};
 \coordinate (rn-1) at (2,-1.2);
 \coordinate (rru) at (3.3,1.2);
 \coordinate (rrd) at (3.3,-1.2);
 \coordinate (r) at (4,0);
 \coordinate (lr) at (-3,1.2);
 \coordinate (rl) at (3,-1.2);
 \coordinate (lrd) at (-2.2,-1.2);
 \coordinate (rld) at (2.2,1.2);
 \node at (-2.8,2) {\mbox{$\bullet\ a_i^{\circ}=a_i$ case}};
 \node at (-2.8,-0.5) {$\cdots$};
 \node at (2.9,0.5) {$\cdots$};
 \node at (-2.1,0.7) {$\cdots$};
 \node at (2.2,-0.7) {$\cdots$};
 \node at (0,-0.3) {$\cdots$};
 \node at (-1.05,0.9) {$a_1$};
 \node at (1.1,-1) {$a_2$};
 \node at (-1.4,-0.95) {$a_1'$};
 \node at (1.45,0.9) {$a_2'$};
 \draw (l)--(llu)--(lu);
 \draw (l)--(lld)--(ld)--(rd)--(rrd)--(r);
 \draw (r)--(rru)--(ru);
 \draw (ld)--(lr)--(lrd);
 \draw (ru)--(rl)--(rld);
 \draw (llu) to node[fill=white,inner sep=1]{\scriptsize $\tau_1$} (lld);
 \draw (ln-1) to node[fill=white,inner sep=1,pos=0.4]{\scriptsize $\tau_{n-1}$} (ld);
 \draw (lu) to node[fill=white,inner sep=1,pos=0.4]{\scriptsize $\tau_n$} (ld);
 \draw (lu) to node[fill=white,inner sep=1]{\scriptsize $\z_m$} (p);
 \draw (p) to node[fill=white,inner sep=1]{\scriptsize $\z_1$} (ru);
 \draw (p) to node[fill=white,inner sep=1,pos=0.4]{\scriptsize $\z_1$} (ld);
 \draw (p) to node[fill=white,inner sep=1,pos=0.4]{\scriptsize $\z_m$} (rd);
 \draw (ru) to node[fill=white,inner sep=1]{\scriptsize $\tau_n$} (rd);
 \draw (rn-1) to node[fill=white,inner sep=1,pos=0.4]{\scriptsize $\tau_{n-1}$} (ru);
 \draw (rru) to node[fill=white,inner sep=1]{\scriptsize $\tau_1$} (rrd);
\end{tikzpicture}
     \hspace{3mm} {\rm or} \hspace{3mm}
\begin{tikzpicture}[scale=0.8,baseline=0]
 \coordinate (l) at (-4,0);
 \coordinate (llu) at (-3.3,1.2);
 \coordinate (lld) at (-3.3,-1.2);
 \coordinate (ln-1) at (-2,-1.2);
 \coordinate (lu) at (-0.8,1.2); \node[above] at (lu) {$v_1$};
 \coordinate (ld) at (-0.8,-1.2); \node[below] at (ld) {$v_1'$};
 \coordinate (p) at (0,1.2); \node[above] at (p) {$p$};
 \coordinate (d) at (-0.2,-1.2);
 \coordinate (ru) at (0.8,1.2); \node[above] at (ru) {$v_2'$};
 \coordinate (rd) at (0.8,-1.2); \node[below] at (rd) {$v_2$};
 \coordinate (rn-1) at (2,1.2);
 \coordinate (rru) at (3.3,1.2);
 \coordinate (rrd) at (3.3,-1.2);
 \coordinate (r) at (4,0);
 \coordinate (lr) at (-3,-1.2);
 \coordinate (rl) at (3,1.2);
 \coordinate (lrd) at (-2.2,1.2);
 \coordinate (rld) at (2.2,-1.2);
 \node at (-2.8,2) {\mbox{$\bullet\ a_i^{\circ}=a_i'$ case}};
 \node at (-2.8,0.5) {$\cdots$};
 \node at (2.9,-0.5) {$\cdots$};
 \node at (-2.1,-0.7) {$\cdots$};
 \node at (2.2,0.7) {$\cdots$};
 \node at (0,-0.3) {$\cdots$};
 \node at (-1.4,0.9) {$a_1$};
 \node at (1.45,-1) {$a_2$};
 \node at (-1.05,-0.95) {$a_1'$};
 \node at (1.1,0.9) {$a_2'$};
 \draw (l)--(llu)--(lu);
 \draw (l)--(lld)--(ld)--(rd)--(rrd)--(r);
 \draw (r)--(rru)--(ru);
 \draw (lu)--(lr)--(lrd);
 \draw (rd)--(rl)--(rld);
 \draw (llu) to node[fill=white,inner sep=1]{\scriptsize $\tau_1$} (lld);
 \draw (ln-1) to node[fill=white,inner sep=1,pos=0.4]{\scriptsize $\tau_{n-1}$} (lu);
 \draw (lu) to node[fill=white,inner sep=1]{\scriptsize $\tau_n$} (ld);
 \draw (lu) to node[fill=white,inner sep=1]{\scriptsize $\z_m$} (p);
 \draw (p) to node[fill=white,inner sep=1]{\scriptsize $\z_1$} (ru);
 \draw (p) to node[fill=white,inner sep=1,pos=0.4]{\scriptsize $\z_1$} (ld);
 \draw (p) to node[fill=white,inner sep=1,pos=0.4]{\scriptsize $\z_m$} (rd);
 \draw (ru) to node[fill=white,inner sep=1,pos=0.4]{\scriptsize $\tau_n$} (rd);
 \draw (rn-1) to node[fill=white,inner sep=1,pos=0.4]{\scriptsize $\tau_{n-1}$} (rd);
 \draw (rru) to node[fill=white,inner sep=1]{\scriptsize $\tau_1$} (rrd);
\end{tikzpicture}
\]
\end{figure}

 By Theorem \ref{p.m.bij} and Proposition \ref{ybijgamma}, there exists a bijection $\varphi^p : A(T_{\ell_p}) \rightarrow (G_{\ell_p})_1$ which induces a bijection $\varphi^p : \bA(T_{\ell_p}) \rightarrow \bP(G_{\ell_p})$ satisfying $x(A)=x(\varphi^p(A))$ and $y(A)=y(\varphi^p(A))$ for $A \in \bA(T_{\ell_p})$.

\begin{lemma}\label{fourbij}
 The restrictions of $\varphi^p$ induce bijections
\[
   \varphi^p|_{A(U_{\ell_p}^i)\sqcup \{a_i^{\circ}\}} : A(U_{\ell_p}^i)\sqcup \{a_i^{\circ}\} \rightarrow (H_{\ell_p}^i)_1,\ \ \varphi^p|_{A(T_{\ell_p}^i)} : A(T_{\ell_p}^i) \rightarrow (G_{\ell_p}^i)_1
\]
 for $i \in \{1,2\}$. Moreover, the map $\varphi^p|_{A(T_{\ell_p}^i)}$ induces a bijection between $\bA(T_{\ell_p}^i)$ and $\bP(G_{\ell_p}^i)$.
\end{lemma}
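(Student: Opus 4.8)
The plan is to deduce Lemma \ref{fourbij} from the already-established bijection $\varphi^p : \mathbb{A}(T_{\ell_p}) \to \mathbb{P}(G_{\ell_p})$ (which comes from Theorem \ref{p.m.bij} and Proposition \ref{ybijgamma} applied to the plain loop $\ell_p$) by a careful bookkeeping of which angles and edges get matched. First I would recall that $T_{\ell_p}$ decomposes as two disjoint copies $T_{\ell_p}^1$ and $T_{\ell_p}^2$ of $T_{\g}$ sharing only the puncture $p$ and the diagonals $\z_1,\ldots,\z_m$; correspondingly $G_{\ell_p}$ contains the two disjoint subgraphs $G_{\ell_p}^1, G_{\ell_p}^2$, each of the same shape as $G_{\g}$, and the subgraphs $H_{\ell_p}^i$ obtained by deleting the vertex $p$ and the two edges $\z_1,\z_m$. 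The unfolding process producing $\overline{G}_{\ell_p}$ from $T_{\ell_p}$ respects this decomposition: the tiles coming from triangles of $T_{\ell_p}^i$ give exactly the tiles of $G_{\ell_p}^i$. Hence the canonical surjection $\pi : A(\overline{G}_{\ell_p}) \to A(T_{\ell_p})$ of Theorem \ref{p.m.bij} restricts to surjections onto the angle sets of $T_{\ell_p}^i$, and the opposite-side map $\overline{\varphi}$ restricts correspondingly; so $\varphi^p$ carries $A(T_{\ell_p}^i)$ into $(G_{\ell_p}^i)_1$.

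Next I would verify bijectivity of $\varphi^p|_{A(T_{\ell_p}^i)} : A(T_{\ell_p}^i) \to (G_{\ell_p}^i)_1$ by a counting argument analogous to the proof of Lemma \ref{exbound}: $A(T_{\ell_p})$ is the disjoint union $A(T_{\ell_p}^1) \sqcup A(T_{\ell_p}^2)$ together with the angles at the puncture $p$ (between consecutive $\z_j$'s), and $(G_{\ell_p})_1 = (G_{\ell_p}^1)_1 \sqcup (G_{\ell_p}^2)_1 \sqcup \{\text{connecting edges}\}$; since $\varphi^p$ is a global bijection and already respects the $i$-decomposition on the portions not involving $p$, matching cardinalities forces each restriction to be bijective. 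For the finer statement about $A(U_{\ell_p}^i) \sqcup \{a_i^{\circ}\} \to (H_{\ell_p}^i)_1$, I would note that $H_{\ell_p}^i$ is obtained from $G_{\ell_p}^i$ by deleting the vertex $p$ and the two edges labelled $\z_1, \z_m$; on the triangulation side, $U_{\ell_p}^i$ is $T_{\ell_p}^i$ with the vertex $p$ and the sides $\z_1, \z_m$ removed. The angles of $T_{\ell_p}^i$ incident to $\z_1$ or $\z_m$ are: the two angles at $p$ inside $T_{\ell_p}^i$, and the angles $a_i, a_i'$ at $v_i, v_i'$ (and in the $n=1$ degenerate case some further identifications) — and exactly one of $a_i, a_i'$, namely $a_i^{\circ}$, is opposite to a side that survives in $H_{\ell_p}^i$ (the boundary segment of the triangle with sides $\tau_{n-1}, \tau_n$), while the other is opposite to one of $\z_1, \z_m$ and hence disappears. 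Tracking this through $\overline{\varphi}$ shows that $A(U_{\ell_p}^i) \sqcup \{a_i^{\circ}\}$ is precisely the preimage of $(H_{\ell_p}^i)_1$, and the restriction is bijective.

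Finally, the statement that $\varphi^p|_{A(T_{\ell_p}^i)}$ induces a bijection $\mathbb{A}(T_{\ell_p}^i) \to \mathbb{P}(G_{\ell_p}^i)$ follows because $T_{\ell_p}^i$ has the same combinatorial form as $T_{\g}$ (a triangulated polygon whose triangles each have a boundary side), so the already-proven Theorem \ref{p.m.bij} applies verbatim to $T_{\ell_p}^i$, and one checks the bijection it furnishes agrees with the restriction of $\varphi^p$ — which it does because both are characterized by the same commuting triangle with $\pi$ and $\overline{\varphi}$, and these maps are local to each tile. I expect the main obstacle to be the degenerate case $n=1$: there $\tau_{n-1}$ does not exist, $a_i^{\circ}$ is not canonically one of $a_i, a_i'$, and the angles at $v_i = v_i'$ and the interaction with the boundary segments must be analyzed separately; the figure distinguishes the $a_i^{\circ}=a_i$ and $a_i^{\circ}=a_i'$ sub-cases precisely to handle this, and getting the edge count in $H_{\ell_p}^i$ exactly right in this boundary situation is the delicate point. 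Everything else is a routine transcription of the plain-arc arguments through the two-fold symmetry of $T_{\ell_p}$.
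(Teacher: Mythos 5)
Your proposal takes essentially the same approach as the paper, which simply observes that the first pair of bijections "follows immediately from the unfolding process" and that the last claim "follows from $T_{\ell_p}^i \simeq T_{\g}$, $G_{\ell_p}^i \simeq G_{\g}$, and Theorem \ref{p.m.bij}"; your expansion (tracking angles through the unfolding, cardinality bookkeeping, then applying the plain-arc theorem to $T_{\ell_p}^i$) is a verbose unpacking of that one-line argument rather than a different route. One small slip worth noting: there is only one angle at $p$ inside a given $T_{\ell_p}^i$ (the angle between $\z_1$ and $\z_m$ in the single triangle of $T_{\ell_p}^i$ containing $p$), not two; and the $n=1$ degeneracy you flag is indeed left implicit by the paper's proof as well, so flagging it without fully resolving it matches the level of detail the paper itself gives.
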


\begin{proof}
 The first assertion follows immediately from the unfolding process. The second assertion follows from $T_{\ell_p}^i \simeq T_{\g}$, $G_{\ell_p}^i \simeq G_{\g}$, and Theorem \ref{p.m.bij}.
\end{proof}

\begin{definition}
 We say that $A \in \bA(T_{\ell_p})$ is $\g${\it -symmetric} if the restrictions of $A$ satisfies $A|_{A(U_{\ell_p}^1)\sqcup \{a_1^{\circ}\}} \simeq A|_{A(U_{\ell_p}^2)\sqcup \{a_2^{\circ}\}}$. We denote by $\Asym(T_{\ell_p})$ the set of $\g$-symmetric perfect matchings of angles in $T_{\ell_p}$.
\end{definition}

 Let $A \in \bA(T_{\ell_p})$. It follows from Theorem \ref{MSW1sym} and Lemma \ref{fourbij} that $A|_{A(T_{\ell_p}^i)} \in \bA(T_{\ell_p}^i)$ for some $i \in \{1,2\}$. Since it is uniquely determined up to isomorphism, we denote it by $\res(A)$.

\begin{proposition}\label{bijsym}
 The map $\varphi^p$ induces a bijection $\varphi^p : \Asym(T_{\ell_p}) \rightarrow \bP(G_{\g^{(p)}})$ satisfying $\ox(A)=\ox(\varphi^p(A))$ and $\oy(A)=\oy(\varphi^p(A))$ for $A \in \Asym(T_{\ell_p})$, where
\[
 \ox(A):=\frac{x(A)}{x(\res(A))},\ \ \oy(A):=\frac{y(A)}{y(\res(A))}.
\]
\end{proposition}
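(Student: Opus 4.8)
The plan is to reduce Proposition~\ref{bijsym} to the already-established bijection $\varphi^p : \bA(T_{\ell_p}) \to \bP(G_{\ell_p})$ of Theorem~\ref{p.m.bij} (applied to the plain arc underlying $\ell_p$) together with Proposition~\ref{ybijgamma}, by showing that $\varphi^p$ carries the $\g$-symmetry condition on perfect matchings of angles to the $\g$-symmetry condition on perfect matchings of $G_{\ell_p}$. First I would observe that by Lemma~\ref{fourbij} the map $\varphi^p$ restricts to bijections $A(U_{\ell_p}^i) \sqcup \{a_i^{\circ}\} \to (H_{\ell_p}^i)_1$ that are compatible with the isomorphism $H_{\ell_p}^1 \simeq H_{\ell_p}^2$ coming from $U_{\ell_p}^1 \simeq U_{\ell_p}^2$ (both being copies of the relevant sub-polygon of $T_{\g}$, with $a_i^{\circ}$ matched to the boundary edge opposite it). Hence for $A \in \bA(T_{\ell_p})$ one has $A|_{A(U_{\ell_p}^1) \sqcup \{a_1^{\circ}\}} \simeq A|_{A(U_{\ell_p}^2) \sqcup \{a_2^{\circ}\}}$ if and only if $\varphi^p(A)|_{H_{\ell_p}^1} \simeq \varphi^p(A)|_{H_{\ell_p}^2}$, i.e.\ $A \in \Asym(T_{\ell_p})$ if and only if $\varphi^p(A) \in \bP(G_{\g^{(p)}})$. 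This gives the desired bijection $\varphi^p : \Asym(T_{\ell_p}) \to \bP(G_{\g^{(p)}})$ by restriction.

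Next I would check that $\varphi^p$ intertwines the two notions of ``restriction to a symmetric half.'' By Lemma~\ref{fourbij}, $\varphi^p|_{A(T_{\ell_p}^i)}$ is a bijection $\bA(T_{\ell_p}^i) \to \bP(G_{\ell_p}^i)$, and under the identifications $T_{\ell_p}^i \simeq T_{\g}$, $G_{\ell_p}^i \simeq G_{\g}$ this is exactly the bijection $\varphi$ of Theorem~\ref{p.m.bij}. For $A \in \Asym(T_{\ell_p})$, the matching $\res(A) \in \bA(T_{\ell_p}^i)$ is characterized (up to isomorphism) by the condition that its restriction to $A(U_{\ell_p}^i) \sqcup \{a_i^{\circ}\}$ agrees with $A|_{A(U_{\ell_p}^i) \sqcup \{a_i^{\circ}\}}$; applying $\varphi$ and using the compatibility of the restriction maps, $\varphi(\res(A))$ is the unique element of $\bP(G_{\g})$ whose restriction to $H_{\ell_p}^i$ equals $\varphi^p(A)|_{H_{\ell_p}^i}$. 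By the definition of $\res$ in Theorem~\ref{MSW1sym}, this says precisely $\varphi(\res(A)) = \res(\varphi^p(A))$. Therefore
\[
 \ox(A) = \frac{x(A)}{x(\res(A))} = \frac{x(\varphi^p(A))}{x(\res(\varphi^p(A)))} = \ox(\varphi^p(A)),
\]
using $x(A) = x(\varphi^p(A))$ and $x(\res(A)) = x(\varphi(\res(A)))$ from Theorem~\ref{p.m.bij}, and likewise $\oy(A) = \oy(\varphi^p(A))$ using Proposition~\ref{ybijgamma} (which gives $y(A) = y(\varphi^p(A))$ and $y(\res(A)) = y(\varphi(\res(A)))$). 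This is exactly the claimed statement.

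The one point that requires genuine care, and which I expect to be the main obstacle, is the careful bookkeeping of the ``extra'' angle $a_i^{\circ}$ and the vertex $p$ when $n = 1$ versus $n > 1$, and the verification that $\varphi^p$ really does respect the isomorphism $H_{\ell_p}^1 \simeq H_{\ell_p}^2$ rather than some twisted version of it; one must confirm that the unfolding process that defines $\overline{G}_{\ell_p}$ produces the two halves $H_{\ell_p}^i$ in a way that is compatible with the reflection symmetry built into $T_{\ell_p}$, so that matched angles on one side correspond to matched edges on the other under the \emph{same} identification used to define $\res$. When $n = 1$ the triangle with sides $\tau_{n-1}, \tau_n$ degenerates and $a_i^{\circ}$ must be read off from Figure~\ref{Tellgamma(p)} directly; I would handle this as a separate small case, noting that $a_i^{\circ}$ is then one of the two angles $a_i, a_i'$ adjacent to the long edge $\ld$--$\rd$, and that the same argument goes through verbatim. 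Once these compatibilities are nailed down, the proof is a formal consequence of Theorem~\ref{p.m.bij}, Lemma~\ref{fourbij}, Proposition~\ref{ybijgamma}, and Theorem~\ref{MSW1sym}.
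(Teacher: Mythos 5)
Your proposal is correct and follows essentially the same route as the paper's own proof: transfer the $\g$-symmetry condition via Lemma \ref{fourbij}, use Theorem \ref{p.m.bij} and Proposition \ref{ybijgamma} for the equalities $x(A)=x(\varphi^p(A))$, $y(A)=y(\varphi^p(A))$ (and likewise for $\res(A)$ via $T_{\ell_p}^i\simeq T_{\g}$), and conclude by the compatibility of $\varphi^p$ with $\res$. The only difference is that you flag and try to justify in more detail the compatibility of $\varphi^p$ with the identifications $H_{\ell_p}^1\simeq H_{\ell_p}^2$ and the $n=1$ edge case, which the paper leaves implicit inside the citation of Lemma \ref{fourbij}.
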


\begin{proof}
 It follows from Lemma \ref{fourbij} that $A \in \bA(T_{\ell_p})$ is $\g$-symmetric if and only if $\varphi^p(A) \in \bP(G_{\g^{(p)}})$. Since $\varphi^p$ is a bijection between $\bA(T_{\ell_p})$ and $\bP(G_{\ell_p})$, it induces a bijection between $\Asym(T_{\ell_p})$ and $\bP(G_{\g^{(p)}})$. On the other hand, Theorem \ref{p.m.bij} and Proposition \ref{ybijgamma} imply that $x(A)=x(\varphi^p(A))$ and $y(A)=y(\varphi^p(A))$ for $A \in \bA(T_{\ell_p})$, and also $x(\res(A))=x(\varphi^p(\res(A)))$ and $y(\res(A))=y(\varphi^p(\res(A)))$ for $A \in \Asym(T_{\ell_p})$ since $T_{\ell_p}^i \simeq T_{\g}$. Since $\varphi^p$ is compatible with $\res$, we have
\[
 \ox(A)=\frac{x(\varphi^p(A))}{x(\varphi^p(\res(A)))}=\frac{x(\varphi^p(A))}{x(\res(\varphi^p(A)))}=\ox(\varphi^p(A)),
\]
similarly, $\oy(A)=\oy(\varphi^p(A))$ for $A \in \Asym(T_{\ell_p})$.
\end{proof}

 All that is left is to give the following proposition for the proof of Theorem \ref{1-notchedbij}.

\begin{proposition}\label{bijsymangle}
 There is a bijection $\psi^p : \Asym(T_{\ell_p}) \rightarrow \bA(T_{\g^{(p)}})$ satisfying $\ox(A)=x(\psi^p(A))$ and $\oy(A)=y(\psi^p(A))$ for $A \in \Asym(T_{\ell_p})$.
\end{proposition}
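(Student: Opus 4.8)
We must construct a bijection $\psi^p \colon \Asym(T_{\ell_p}) \to \bA(T_{\g^{(p)}})$ compatible with the weight functions $x$ and $y$. The geometric picture is that $T_{\ell_p}$ is obtained from $T_{\g^{(p)}}$ by reflecting the ``fan'' of triangles incident to $p$ across the triangle containing $\tau_n$ (and renaming the two copies $T_{\ell_p}^1,T_{\ell_p}^2$ of $T_\g$ accordingly, with $U_{\ell_p}^i$ the part not touching $p$), so that a $\g$-symmetric matching of $T_{\ell_p}$ records on each half exactly the same data, and this data is precisely what a matching of $T_{\g^{(p)}}$ records once. So the first step is to make this folding map precise: given $A \in \Asym(T_{\ell_p})$, I would define $\psi^p(A)$ to consist of the marked angles of $A$ lying in $T_{\ell_p}^1$ (equivalently $T_{\ell_p}^2$, by $\g$-symmetry of the $U$-parts), transported along the identification of $T_{\ell_p}^1 \setminus (U\text{-part issues})$ with the ``$T_\g$-portion'' of $T_{\g^{(p)}}$, together with a careful bookkeeping of the angles at $p$ and at the vertices $v_i,v_i'$.

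\textbf{Key steps, in order.} First, I would set up the notation from Figure~\ref{Tellgamma(p)}: the vertices $v_i,v_i'$, the angles $a_i,a_i',b_i,c_i,c_i'$ and their analogues $a_i^\circ$, and the corresponding angles in $T_{\g^{(p)}}$ near the puncture $p$ (the diagonals $\z_1,\ldots,\z_m$ winding around $p$). Second, I would analyze the combinatorial constraints a matching $A \in \Asym(T_{\ell_p})$ imposes near $p$: since $p$ is an interior vertex incident to many diagonals in each $T_{\ell_p}^i$, condition (1) of Definition~\ref{defpm} forces a unique marked angle at $p$ in each half, and the $\g$-symmetry on $U$-parts plus the ``matching on each triangle'' condition pins down how the two halves glue. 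The upshot is a case analysis (parallel to the $a_i^\circ = a_i$ versus $a_i^\circ = a_i'$ dichotomy already used in the paper, cf.\ Figure~\ref{Tellgamma(p)} and Theorem~\ref{MSW1sym}) showing that the restriction of $A$ to $T_{\ell_p}^1$ together with the angle data at $p$ assembles to a well-defined element of $\bA(T_{\g^{(p)}})$; the inverse map unfolds a matching of $T_{\g^{(p)}}$ into a symmetric matching of $T_{\ell_p}$. Third, I would verify $\psi^p$ is a bijection by exhibiting its inverse explicitly (unfolding), using Lemma~\ref{fourbij} to guarantee that the local choices are consistent with being a global perfect matching of angles. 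Fourth, the weight identities: $\ox(A) = x(A)/x(\res(A))$ by definition, and since $x(A) = x(A|_{T_{\ell_p}^1})\cdot x(A|_{T_{\ell_p}^2})/x(\text{shared }\tau_n\text{-angle contributions})$ while $x(\res(A)) = x(A|_{T_{\ell_p}^i})$, the quotient collapses to the $x$-weight of the data retained by $\psi^p(A)$, which is $x(\psi^p(A))$; the same bookkeeping, restricted to exterior angles and using Lemma~\ref{boundseg} (so that exterior angles at a fixed $\tau_i$ are all in or all out of $A_-\triangle A$), gives $\oy(A) = y(\psi^p(A))$. Here one must be careful that the three angles at the puncture $p$ in $T_{\g^{(p)}}$ are \emph{not} exterior angles (as the paper explicitly remarks in the $\de_2$ example), so they contribute to $x$ but not to $y$ — this is exactly why the $y$-identity works out after the cancellation in $\oy$.

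\textbf{The main obstacle.} The hard part is the bookkeeping at the puncture $p$ and at the vertices $v_1,v_1',v_2,v_2'$: one must check that the min-condition/matching conditions of Definition~\ref{defpm} on $T_{\g^{(p)}}$ near $p$ correspond exactly, under folding, to the $\g$-symmetry condition plus the matching conditions on $T_{\ell_p}$, and that no spurious matchings appear on either side. This requires the explicit case analysis ($a_i^\circ = a_i$ vs.\ $a_i^\circ = a_i'$, and whether $n = 1$ or $n > 1$), paralleling the structure already visible in Theorem~\ref{MSW1sym} and Figure~\ref{Tellgamma(p)}. Once the bijection is pinned down, the weight identities follow formally from Theorem~\ref{p.m.bij}, Proposition~\ref{ybijgamma}, and the definitions of $\ox,\oy$; indeed, combining $\psi^p$ with the bijection $\varphi^p$ of Proposition~\ref{bijsym} would give an alternative route, but constructing $\psi^p$ directly on the angle side is cleaner and makes the weight-matching transparent. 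Then Theorem~\ref{1-notchedbij} follows by setting $\varphi_p := \varphi^p \circ (\psi^p)^{-1}$.
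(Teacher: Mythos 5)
Your high-level plan — fold $T_{\ell_p}$ onto $T_{\g^{(p)}}$, case-analyze near $p$, exhibit an explicit unfolding inverse, and reduce the weight identities to a cancellation in $\ox,\oy$ — is the paper's own route, and your identification of the bookkeeping near $p$ and $v_i,v_i'$ as the main obstacle is correct. But two concrete claims in your sketch are false as stated, and repairing them requires precisely the content of the paper's Lemma~\ref{lema}, which you gesture at but do not supply.

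First, defining $\psi^p(A)$ as "the marked angles of $A$ lying in $T_{\ell_p}^1$" keeps too little: $T_{\ell_p}^1$ is just one copy of $T_\g$, and none of the fan angles around $p$ (the angles between consecutive $\z_j$'s, interior or exterior) lie in it, yet $\bA(T_{\g^{(p)}})$ has to assign an angle to every fan triangle and fan vertex. The paper's map instead restricts $A$ to the \emph{complement} $T_{\ell_p}\setminus T_{\ell_p}^j$ (one $T_\g$-copy \emph{plus} the entire fan), transported by the gluing bijection $g_i$ onto $A(T_{\g^{(p)}})\setminus\{e_j\}$. Moreover, your parenthetical "(equivalently $T_{\ell_p}^2$, by $\g$-symmetry of the $U$-parts)" is wrong: $\g$-symmetry identifies only $A|_{A(U_{\ell_p}^1)\sqcup\{a_1^\circ\}}$ with $A|_{A(U_{\ell_p}^2)\sqcup\{a_2^\circ\}}$, not the restrictions to the full $T_{\ell_p}^i$, and generically only one of $A|_{A(T_{\ell_p}^1)}$, $A|_{A(T_{\ell_p}^2)}$ is a perfect matching (the one equal to $\res(A)$). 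Which one, and therefore which complement you keep, is exactly the content of the case split on $a_i^\circ\in A$ or $\notin A$ from Lemma~\ref{lema}; in your proposal this appears as a downstream consistency check on a fixed definition, whereas in the paper it \emph{is} the definition of $\psi^p$.

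Second, the weight formula $x(A)=x(A|_{T_{\ell_p}^1})\cdot x(A|_{T_{\ell_p}^2})/x(\text{shared }\tau_n\text{-angle contributions})$ is incorrect: $T_{\ell_p}^1$ and $T_{\ell_p}^2$ are disjoint subpolygons meeting only at $p$, so there are no shared $\tau_n$-angles to divide by, and the fan angles are absent from both factors. The correct statement, which again comes from Lemma~\ref{lema}, is the disjoint decomposition $A=A|_{A(T_{\ell_p}\setminus T_{\ell_p}^j)}\sqcup A|_{A(T_{\ell_p}^j)}$ with $\res(A)=A|_{A(T_{\ell_p}^j)}$, whence $\ox(A)=x(A|_{A(T_{\ell_p}\setminus T_{\ell_p}^j)})=x(\psi^p(A))$ with no further cancellation. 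Likewise the $y$-identity is not just "restrict the same bookkeeping to exterior angles": in the subcase $a_i^\circ=a_i\notin A$ (or $a_i^\circ=a_i'\in A$) one must additionally track the diagonal $\z_1$, which enters $Y(A)$ via $c_1,c_1'$ and enters $Y(\psi^p(A))$ via $e_1$, and the matching of these requires an argument (the paper uses the fact that $e_1\in A_-(T_{\g^{(p)}})$ and $e_1\notin\psi^p(A)$ by Lemma~\ref{note1}); your sketch omits this.
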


 To prove Proposition \ref{bijsymangle}, we prepare some lemmas. We denote by $T_{\ell_p} \setminus T_{\ell_p}^2$ the subgraphs obtained from $T_{\ell_p}$ by removing $U_{\ell_p}^2$ and $\z_1$ of $T_{\ell_p}^2$. Similarly, we define the notation $T_{\ell_p} \setminus T_{\ell_p}^1$. For $i \in \{1,2\}$, let $c_i$ and $c_i'$ be the angles as in Figure \ref{Tellgamma(p)}.

\begin{lemma}\label{cc'}
 For $A \in \Asym(T_{\ell_p})$ and $i \in \{1,2\}$, $c_i \in A$ if and only if $c_i' \in A$.
\end{lemma}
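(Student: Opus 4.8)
The claim concerns the two angles $c_i$ and $c_i'$ of $T_{\ell_p}$, which sit on opposite sides of the diagonal $\z_1$ (resp.\ $\z_m$) inside the triangle $v_i' p d$ (resp.\ the triangle $v_i p \cdot$) near the puncture $p$, with vertices $v_i'$ and $d$ (resp.\ $v_i$) on the boundary arc $\tau_n$ of $T_{\ell_p}^i$. The key structural fact is that $c_i$ and $c_i'$ belong to \emph{the same triangle} of $T_{\ell_p}$, namely the triangle whose third side is $\tau_n$; indeed, in the picture of Figure \ref{Tellgamma(p)}, $\z_1$ and $\z_m$ are the two sides of $T_{\ell_p}^i$ issuing from $p$ adjacent to $\tau_n$, so the triangle bounded by $\tau_n$, $\z_1$-copy and $\z_m$-copy of $T_{\ell_p}^i$ has $c_i$ and $c_i'$ among its three angles (the third being the angle at the remaining boundary vertex of $\tau_n$). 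First I would make this adjacency precise by unpacking the construction of $T_{\ell_p}$: the triangle containing $\tau_n$ as its boundary side is the one glued along $\tau_n$ to the rest of $T_{\ell_p}^i$, and its other two sides are (copies of) $\z_1$ and $\z_m$ meeting at $p$; hence its three angles are $c_i$, $c_i'$, and the angle at $v_i$ or $v_i'$ opposite to $\tau_n$.

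Given that $c_i$ and $c_i'$ lie in the same triangle $\triangle$ of $T_{\ell_p}$, condition (2) of Definition \ref{defpm} forces $A \cap \{\text{angles of } \triangle\}$ to be a single angle. So at most one of $c_i, c_i'$ can lie in $A$, and the statement "$c_i \in A \iff c_i' \in A$" must in fact say that \emph{neither} lies in $A$. Therefore the real content is: \textbf{for $A \in \Asym(T_{\ell_p})$, neither $c_i$ nor $c_i'$ belongs to $A$.} To prove this I would argue that the marked angle of $\triangle$ chosen by $A$ must be the one at the vertex $v_i$ or $v_i'$ opposite $\tau_n$, i.e.\ the angle incident to $\tau_n$ and $\z_m$ (resp.\ $\tau_n$ and $\z_1$) — in other words $a_i^{\circ}$. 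The point is that the other two vertices of $\triangle$ are $p$ (incident only to diagonals of the "$p$-part" $T_{\ell_p}\setminus(T_{\ell_p}^1\cup T_{\ell_p}^2)$) and the boundary vertex of $\tau_n$; and the $\g$-symmetry condition, which matches $A|_{A(U_{\ell_p}^1)\sqcup\{a_1^\circ\}} \simeq A|_{A(U_{\ell_p}^2)\sqcup\{a_2^\circ\}}$, together with condition (1) at $p$, pins down which angle at $p$ is used by $A$. Concretely: if $c_i \in A$ then $A$ assigns to $p$ the angle $c_i$ (the angle at $p$ between $\z_1$ and $\z_2$, say), which by the symmetry constraint would force the mirror choice on the other side, but then one of the vertices $v_i$ or $v_i'$ fails to be matched — because the angle $a_i^\circ$, being the unique angle at that vertex lying in $A(U_{\ell_p}^i)\sqcup\{a_i^\circ\}$, is now forced to be excluded, contradicting condition (1). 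I would turn this into a clean finite case-check distinguishing the $a_i^\circ = a_i$ and $a_i^\circ = a_i'$ cases of Figure \ref{Tellgamma(p)}.

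The cleanest route, and the one I would actually write up, avoids re-deriving everything by hand: transport the statement through the bijection $\varphi^p : \bA(T_{\ell_p}) \to \bP(G_{\ell_p})$ of Theorem \ref{p.m.bij} and Lemma \ref{fourbij}. Under $\varphi^p$, the angles $c_i, c_i'$ incident to $\z_1$ (resp.\ $\z_m$) correspond to edges of the square-tile of $\overline{G}_{\ell_p}$ with diagonal $\z_1$ (resp.\ $\z_m$); in the snake graph $G_{\ell_p}$ these are precisely the edges labelled $\z_1, \z_m$ that are \emph{deleted} when passing to the subgraphs $H_{\ell_p}^i \subset G_{\ell_p}^i$ in Definition \ref{defsym}. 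A $\g$-symmetric perfect matching $P = \varphi^p(A) \in \bP(G_{\g^{(p)}})$ restricts to a perfect matching of $H_{\ell_p}^1$ and of $H_{\ell_p}^2$ that agree; in particular $P$ must already be a perfect matching on $H_{\ell_p}^i$ \emph{before} those boundary edges are added, so $P$ cannot contain the edge $\z_1$ nor $\z_m$ incident to the removed vertex $p$ — otherwise $P|_{H_{\ell_p}^i}$ would miss the vertex $p$ yet $P$ covers $p$, contradicting that $P|_{H_{\ell_p}^i}$ is a perfect matching of $H_{\ell_p}^i$. Pulling back, $c_i, c_i' \notin A$, which trivially gives the desired equivalence. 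The main obstacle, and where I would spend the most care, is the bookkeeping of which edge of which tile of $\overline{G}_{\ell_p}$ the angles $c_i$ and $c_i'$ map to, and checking that "$c_i$ or $c_i'$ in $A$" translates exactly to "$P$ uses a $\z_1$- or $\z_m$-edge at $p$"; once this identification is nailed down via the commutative triangle of Theorem \ref{p.m.bij}, the rest is immediate.
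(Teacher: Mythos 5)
Your proof has a fatal error at the very first step, and the cascading conclusion is the exact opposite of what the lemma asserts.

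\textbf{The key structural claim is false.} You assert that $c_i$ and $c_i'$ lie in the same triangle of $T_{\ell_p}$, namely the triangle $\triangle^i_n$ with sides $\tau_n$, $\z_m$, $\z_1$. Reading off Figure \ref{Tellgamma(p)}: $c_1$ is indeed the angle at $p$ inside $\triangle^1_n$ (between $\z_m$ and $\z_1$), but $c_1'$ is the angle at $v_1'$ on the \emph{other} side of $\z_1$, i.e.\ in the \emph{adjacent} triangle $v_1' p d$ (between $\z_1$ and the boundary segment from $v_1'$ to $d$). The angle of $\triangle^1_n$ at $v_1'$ is the one between $\tau_n$ and $\z_1$, which is a different angle — it is not $c_1'$. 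You yourself write that $c_i, c_i'$ ``sit on opposite sides of the diagonal $\z_1$'', which already contradicts them lying in a common triangle.

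\textbf{The derived conclusion is false.} Since $c_i$ and $c_i'$ are not in one triangle, condition (2) of Definition \ref{defpm} does not force at most one of them into $A$, and the lemma is \emph{not} secretly the statement ``neither lies in $A$''. In fact both occur together: for instance, one of the two minimal matchings (depending on orientation) has $c_i \in A$ and $c_i' \in A$ simultaneously. The appeal to Lemma \ref{lema} that motivates your reading does not help either — Lemma \ref{lema} is about when $c_j, c_j'$ are forced \emph{out} of $A$, which is a consequence of $\g$-symmetry, not of the present purely local counting lemma (which, as the paper's proof shows, does not use symmetry at all).

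\textbf{The fallback via $\varphi^p$ also breaks.} You argue that a $\g$-symmetric $P$ ``cannot contain the edge $\z_1$ nor $\z_m$ incident to $p$'' because $P|_{H_{\ell_p}^i}$ should be a perfect matching of $H_{\ell_p}^i$. But $P$ is a perfect matching of the whole $G_{\ell_p}$, so it \emph{must} cover each copy of the vertex $p$, and hence must contain a $\z_1$- or $\z_m$-edge at $p$. The definition of $\g$-symmetry (Definition \ref{defsym}) only requires $P|_{H_{\ell_p}^1} \simeq P|_{H_{\ell_p}^2}$; it does not make $P|_{H_{\ell_p}^i}$ a perfect matching of $H_{\ell_p}^i$, and indeed Theorem \ref{MSW1sym} has to complete $P|_{H_{\ell_p}^1}$ by possibly adding $\z_1$ or $\z_m$ to obtain $\res(P)$.

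\textbf{What the paper actually does.} The argument is a short double count of $\#(A\cap A(T_{\ell_p}^i))$. Counting by triangles: $T_{\ell_p}^i$ has $n+1$ triangles, each contributing exactly one angle of $A$, and the only such angle that can lie outside $A(T_{\ell_p}^i)$ is $c_i$ (the angle of $\triangle^i_n$ at $p$, which is not incident to any diagonal of $T_{\ell_p}^i$); so the count is $n+1$ or $n$ according as $c_i \notin A$ or $c_i \in A$. Counting by vertices: $T_{\ell_p}^i$ has $n+1$ vertices incident to a diagonal of $T_{\ell_p}^i$, each contributing exactly one angle of $A$, and the only such angle that can lie outside $A(T_{\ell_p}^i)$ is $c_i'$ (the angle at $v_i'$ adjacent only to $\z_1$ and a boundary segment); so the count is $n+1$ or $n$ according as $c_i' \notin A$ or $c_i' \in A$. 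Equating the two counts gives $c_i \in A \iff c_i' \in A$.
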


\begin{proof}
 Suppose that $c_i \in A$. Since $T_{\ell_p}^i$ has $n+1$ triangles, it follows from $c_i \in A$ that $\#A|_{A(T_{\ell_p}^i)}=n$. Thus $c_i' \in A$ since $T_{\ell_p}^i$ has $n+1$ vertices incident to at least one diagonal in $T_{\ell_p}^i$. The proof of the converse assertion is similar.
\end{proof}

 For $A \in \Asym(T_{\ell_p})$, the $\g$-symmetry implies that $a_1^{\circ} \in A$ if and only if $a_2^{\circ} \in A$. It is consistent to use the notations $a_i^{\circ} \in A$ and $a_i^{\circ} \notin A$. Let $b_1$ (resp., $b_2$) be the angles as in Figure \ref{Tellgamma(p)}.

\begin{lemma}\label{lema}
 For $A \in \Asym(T_{\ell_p})$,\par
 (1) if $a_i^{\circ}=a_i \in A$ or $a_i^{\circ}=a_i' \notin A$, then $c_2, c_2' \notin A$,\par
 (2) if $a_i^{\circ}=a_i \notin A$ or $a_i^{\circ}=a_i' \in A$, then $c_1, c_1' \notin A$.\\
Moreover, $A = A|_{A(T_{\ell_p} \setminus T_{\ell_p}^j)} \sqcup A|_{A(T_{\ell_p}^j)}$ and $\res(A)=A|_{A(T_{\ell_p}^j)}$ hold for $j \in \{1,2\}$.
\end{lemma}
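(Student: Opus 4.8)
The plan is to analyze $A\in\Asym(T_{\ell_p})$ by tracking how a perfect matching of angles is forced at and around the puncture vertices of $T_{\ell_p}^1$ and $T_{\ell_p}^2$. First I would recall the local geometry near $p$ from Figure \ref{Tellgamma(p)}: the diagonals $\z_1,\ldots,\z_m$ all meet at $p$, and the triangle with sides $\tau_n$, $\z_1$, $\z_m$ in $T_{\ell_p}^i$ carries the three angles $a_i$ (at $v_i$), $a_i'$ (at $v_i'$) and the angle at $p$; similarly the triangles with sides $\z_j,\z_{j+1}$ carry angles $c_i$ and $c_i'$ as labeled. The key combinatorial fact is that inside a copy $T_{\ell_p}^i\simeq T_{\g}$ there are $n+1$ triangles but $n+2$ vertices (the extra one being $p$), while the vertices incident to at least one diagonal number $n+1$ in the subgraph $T_{\ell_p}^i$ but $p$ is \emph{shared} between the two copies inside $T_{\ell_p}$. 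So the marking of angles at $p$ — there are two angles at $p$ available, one ``belonging'' to each copy — is the pivot.

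Next I would argue parts (1) and (2) by a counting/forcing argument entirely parallel to the proof of Lemma \ref{cc'}. Suppose $a_i^{\circ}=a_i\in A$ (or the symmetric alternative $a_i^{\circ}=a_i'\notin A$); I want $c_2,c_2'\notin A$. The point is that the vertex $p$ must be matched to exactly one marked angle incident to $p$, and the condition on $a_i$ determines, via the ``exactly one marked angle per triangle'' constraint propagating through the fan of triangles $\z_1\z_2,\z_2\z_3,\ldots$ around $p$, which side of $p$ (i.e.\ which copy $T_{\ell_p}^1$ or $T_{\ell_p}^2$) the angle at $p$ lies in. Once the angle at $p$ is pinned to lie in (say) $T_{\ell_p}^1$, the triangles of $T_{\ell_p}^2$ incident to $p$ — which contain $c_2$ and $c_2'$ — must get their unique marked angle elsewhere, namely at the other (non-$p$) vertices, forcing $c_2,c_2'\notin A$; combined with Lemma \ref{cc'} this is internally consistent. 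The $\g$-symmetry already gives that $a_1^{\circ}\in A \iff a_2^{\circ}\in A$, so the dichotomy ``$a_i^{\circ}=a_i\in A$ or $a_i^{\circ}=a_i'\notin A$'' versus its negation is well-defined across both copies, and the two cases (1), (2) exhaust the possibilities. For the ``Moreover'' clause, I would observe that in case (1) the choice $j=1$ works: since $c_2,c_2'\notin A$ and the angle at $p$ sits on the $T_{\ell_p}^1$ side, every triangle of $T_{\ell_p}^2$ has its marked angle among the angles of $T_{\ell_p}^2$ proper (not touching the transition region), so $A$ splits as $A|_{A(T_{\ell_p}\setminus T_{\ell_p}^1)}\sqcup A|_{A(T_{\ell_p}^1)}$ with $A|_{A(T_{\ell_p}^1)}\in\bA(T_{\ell_p}^1)$, whence $\res(A)=A|_{A(T_{\ell_p}^1)}$; case (2) is the mirror image with $j=2$.

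The step I expect to be the main obstacle is making the ``propagation of the forced angle at $p$ through the fan of triangles around $p$'' argument airtight when $n=1$, i.e.\ when $T_{\ell_p}^i$ degenerates so that $a_i^{\circ}$ is not uniquely determined (the caveat ``If $n>1$'' in the text preceding the lemma). In that boundary case one must handle both $a_i^{\circ}=a_i$ and $a_i^{\circ}=a_i'$ on equal footing and check directly that the fan argument still pins the angle at $p$ to one copy; I would treat $n=1$ as a separate short subcase, drawing the small polygon explicitly and exhausting the finitely many perfect matchings of angles near $p$. A secondary technical point is bookkeeping the identification of angles and triangles across the ``glued'' boundary between $T_{\ell_p}\setminus T_{\ell_p}^j$ and $T_{\ell_p}^j$ (the diagonal $\z_1$ of $T_{\ell_p}^j$ is removed in forming $T_{\ell_p}\setminus T_{\ell_p}^j$), but this is routine once the definitions are unwound. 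Everything else is elementary counting of triangles versus vertices, exactly as in Lemma \ref{cc'}.
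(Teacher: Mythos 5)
Your strategy diverges from the paper's in a way that introduces real gaps. The paper's proof of (1) is strictly local. In the subcase $a_i^\circ=a_i\in A$, it observes that $a_2$ and $c_2'$ are two \emph{distinct angles at the same vertex} $v_2$, so $a_2\in A$ immediately forces $c_2'\notin A$. In the subcase $a_i^\circ=a_i'\notin A$, at the vertex $v_2'$ the only remaining angle is $b_2$, so $b_2\in A$; since $b_2$ is the angle of the triangle $\z_1\z_m\tau_n$ of $T_{\ell_p}^2$ at $v_2'$, that triangle already has its marked angle, which forces $c_2\notin A$. Lemma~\ref{cc'} then supplies the remaining one of $c_2,c_2'$. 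No ``propagation around $p$'' is needed or used.

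Your proposed mechanism---pin the marked angle at $p$ to one copy via the hypothesis on $a_i^\circ$, then conclude $c_2,c_2'\notin A$ because the triangles of $T_{\ell_p}^2$ incident to $p$ get their marked angle at a non-$p$ vertex---has two concrete defects. First, $c_2'$ is \emph{not} an angle at $p$ and is \emph{not} in a triangle of $T_{\ell_p}^2$: it is the angle at $v_2$ in the middle-fan triangle adjacent to $T_{\ell_p}^2$ across $\z_m$. So ``that triangle's marked angle lies at a non-$p$ vertex'' is perfectly compatible with $c_2'\in A$; your argument never rules it out, and Lemma~\ref{cc'} is doing the real work there, not merely ``confirming consistency.'' Second, the hypothesis on $a_i^\circ$ lives at $v_i$ or $v_i'$, not at $p$, and it does not pin the marked angle at $p$ to either copy: that angle can sit in the middle fan between $\z_j$ and $\z_{j+1}$. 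In the paper the implication runs the other way: one first shows $c_2,c_2'\notin A$ by the local check at $v_2$/$v_2'$, and only then can one say the marked angle at $p$ is not $c_2$.

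On the ``Moreover'' clause, your index is backwards. In case (1), having $c_2,c_2'\notin A$ is precisely what lets $A$ split across the diagonal $\z_m$ bounding $T_{\ell_p}^2$, giving $A=A|_{A(T_{\ell_p}\setminus T_{\ell_p}^2)}\sqcup A|_{A(T_{\ell_p}^2)}$ with $j=2$, and then $\#A|_{A(T_{\ell_p}^2)}=n+1$ equals the number of triangles of $T_{\ell_p}^2$, so $A|_{A(T_{\ell_p}^2)}\in\bA(T_{\ell_p}^2)$ and $\res(A)=A|_{A(T_{\ell_p}^2)}$; case (2) is the mirror with $j=1$. You wrote $j=1$ for case (1) even while correctly noting that it is $T_{\ell_p}^2$ whose triangles have all their marked angles inside---the index flipped. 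Finally, your caution about $n=1$ is reasonable, but the paper sidesteps it: once a choice of $a_i^\circ$ is fixed, ``$a_i^\circ=a_i\in A$'' is a single compound condition, the $\g$-symmetry makes ``$a_i^\circ\in A$'' well defined across $i$, and the local check at $v_2$/$v_2'$ does not degenerate when $n=1$; no separate subcase is required.
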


\begin{proof}
 If $a_i^{\circ}=a_i \in A$, then $c_2' \notin A$. If $a_i^{\circ}=a_i' \notin A$, then $b_2 \in A$, and $c_2 \notin A$. The assertion (1) follows from Lemma \ref{cc'}. Consequently, we have a decomposition $A = A|_{A(T_{\ell_p} \setminus T_{\ell_p}^2)} \sqcup A|_{A(T_{\ell_p}^2)}$. Since $\#A|_{A(T_{\ell_p}^2)}=n+1$ and $T_{\ell_p}^2$ has $n+1$ triangles, then $A|_{A(T_{\ell_p}^2)} \in \bA(T_{\ell_p}^2)$. Thus $\res(A)=A|_{A(T_{\ell_p}^2)}$ holds. The proof of (2) is similar
\end{proof}

 Next, we consider the triangulated polygon $T_{\g^{(p)}}$ with one puncture $p$. We prepare the following notations as in Figure \ref{Tgamma(p)}. Let $v$ (resp., $v'$) be the common endpoint of $\tau_n$ and $\zeta_m$ (resp., $\zeta_1$) in $T_{\g^{(p)}}$. Let $d$ (resp., $d'$) be the angle at $v$ (resp., $v'$) that comes first in the counterclockwise (resp., clockwise) order around $v$ (resp., $v'$). We denote by $d^{\circ}$ an angle between $\tau_n$ and the boundary segment of the triangle with sides $\tau_{n-1}$ and $\tau_n$ of $T_{\g^{(p)}}$. If $n>1$, it is uniquely determined, that is $d^{\circ}=d$ or $d^{\circ}=d'$. Let $e_1$ (resp., $e_2$) be the angle between $\z_1$ (resp., $\z_m$) and a boundary segment of $T_{\g^{(p)}}$.

\begin{figure}[h]
   \caption{$T_{\g^{(p)}}$}
   \label{Tgamma(p)}
\[
\begin{tikzpicture}[scale=1,baseline=0]
 \coordinate (l) at (-3.3,0);
 \coordinate (llu) at (-2.5,1.2);
 \coordinate (lld) at (-2.5,-1.2);
 \coordinate (ln-1) at (-1,1.2);
 \coordinate (lr) at (-2.1,1.2);
 \coordinate (lu) at (0,1.2); \node[above] at (lu) {$v$};
 \coordinate (ld) at (0,-1.2); \node[below] at (ld) {$v'$};
 \coordinate (p) at (1.2,0);;
 \coordinate (ru) at (2.4,1.2);
 \coordinate (rd) at (2.4,-1.2);
 \node at (-2,2) {\mbox{$\bullet\ d^{\circ}=d$ case}};
 \node at (-1.8,0) {$\cdots$};
 \node at (-1.2,0.65) {$\cdots$};
 \node at (2,0) {$\vdots$};
 \node at (0.6,-1) {$e_1$};
 \node at (1.8,-1) {$\alpha_2$};
 \node at (1,0.9) {$e_2=\alpha_m$};
 \node at (-0.2,0.9) {$d$};
 \node at (-0.55,-1) {$d'$};
 \draw (l)--(llu)--(ru)--(rd)--(lld)--(l);
 \draw (lr)--(ld);
 \draw (llu) to node[fill=white,inner sep=1]{\scriptsize $\tau_1$} (lld);
 \draw (ln-1) to node[fill=white,inner sep=1,pos=0.4]{\scriptsize $\tau_{n-1}$} (ld);
 \draw (lu) to node[fill=white,inner sep=1]{\scriptsize $\tau_n$} (ld);
 \draw (p) to node[fill=white,inner sep=1,pos=0.4]{\scriptsize $\z_1$} (ld);
 \draw (p) to node[fill=white,inner sep=1,pos=0.4]{\scriptsize $\z_2$} (rd);
 \draw (p) to node[fill=white,inner sep=1,pos=0.35]{\scriptsize $\z_{m-1}$} (ru) node[below=13,fill=white,inner sep=1]{$\alpha_{m-1}$};
 \draw (p) node[fill=white,inner sep=1]{$p$} to node[fill=white,inner sep=1,pos=0.4]{\scriptsize $\z_m$} (lu);
\end{tikzpicture}
     \hspace{5mm} {\rm or} \hspace{5mm}
\begin{tikzpicture}[scale=1,baseline=0]
 \coordinate (l) at (-3.3,0);
 \coordinate (llu) at (-2.5,1.2);
 \coordinate (lld) at (-2.5,-1.2);
 \coordinate (ln-1) at (-1,-1.2);
 \coordinate (lr) at (-2.1,-1.2);
 \coordinate (lu) at (0,1.2); \node[above] at (lu) {$v$};
 \coordinate (ld) at (0,-1.2); \node[below] at (ld) {$v'$};
 \coordinate (p) at (1.2,0);;
 \coordinate (ru) at (2.4,1.2);
 \coordinate (rd) at (2.4,-1.2);
 \node at (-2,2) {\mbox{$\bullet\ d^{\circ}=d'$ case}};
 \node at (-1.8,0) {$\cdots$};
 \node at (-1.2,-0.65) {$\cdots$};
 \node at (2,0) {$\vdots$};
 \node at (0.6,-1) {$e_1$};
 \node at (1.8,-1) {$\alpha_2$};
 \node at (1,0.9) {$e_2=\alpha_m$};
 \node at (-0.6,0.9) {$d$};
 \node at (-0.2,-0.95) {$d'$};
 \draw (l)--(llu)--(ru)--(rd)--(lld)--(l);
 \draw (lr)--(lu);
 \draw (llu) to node[fill=white,inner sep=1]{\scriptsize $\tau_1$} (lld);
 \draw (ln-1) to node[fill=white,inner sep=1,pos=0.4]{\scriptsize $\tau_{n-1}$} (lu);
 \draw (lu) to node[fill=white,inner sep=1]{\scriptsize $\tau_n$} (ld);
 \draw (p) to node[fill=white,inner sep=1,pos=0.4]{\scriptsize $\z_1$} (ld);
 \draw (p) to node[fill=white,inner sep=1,pos=0.4]{\scriptsize $\z_2$} (rd);
 \draw (p) to node[fill=white,inner sep=1,pos=0.35]{\scriptsize $\z_{m-1}$} (ru) node[below=13,fill=white,inner sep=1]{$\alpha_{m-1}$};
 \draw (p) node[fill=white,inner sep=1]{$p$} to node[fill=white,inner sep=1,pos=0.4]{\scriptsize $\z_m$} (lu);
\end{tikzpicture}
\]
\end{figure}

\begin{lemma}\label{note1} For $A \in \bA(T_{\g^{(p)}})$,\par
 (1) if $d^{\circ}=d \in A$ or $d^{\circ}=d' \notin A$, then $e_2 \notin A$,\par
 (2) if $d^{\circ}=d \notin A$ or $d^{\circ}=d' \in A$, then $e_1 \notin A$.
\end{lemma}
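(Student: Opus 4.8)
I would argue directly from Definition~\ref{defpm}, using only the two local constraints defining a perfect matching of angles: every triangle of $T_{\g^{(p)}}$ carries exactly one marked angle, and every vertex incident to a diagonal is covered by exactly one marked angle. First I fix notation following Figure~\ref{Tgamma(p)}: write $\triangle_{n-1}$ for the triangle with sides $\tau_{n-1},\tau_n$, and $\triangle_n$ for the triangle with sides $\tau_n,\z_1,\z_m$; let $\beta$ (resp.\ $\beta'$) be the angle of $\triangle_n$ at $v$ (resp.\ $v'$). Label the $m$ triangles incident to the puncture $p$ cyclically as $T_0=\triangle_n,T_1,\dots,T_{m-1}$, where $T_i$ ($1\le i\le m-1$) is the triangle between $\z_i$ and $\z_{i+1}$, so $T_{i-1}$ and $T_i$ share the diagonal $\z_i$ whose rim endpoint I call $u_i$ (with $u_1=v'$, $u_m=v$); thus $e_1$ is the angle of $T_1$ at $v'$ and $e_2$ is the angle of $T_{m-1}$ at $v$.

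The key elementary observation is that when $d^{\circ}=d$ the vertex $v$ is incident to exactly three angles, namely $d$, $\beta$ and $e_2$: the angular sector at $v$ is bounded by the two boundary segments of $T_{\g^{(p)}}$ issuing from $v$ (one belonging to $\triangle_{n-1}$, one to $T_{m-1}$) and is cut only by $\tau_n$ and $\z_m$, so no further fan triangle reaches $v$. Symmetrically, when $d^{\circ}=d'$ the vertex $v'$ carries exactly the three angles $d'$, $\beta'$, $e_1$. This immediately settles two of the four implications: if $d^{\circ}=d\in A$, then since $v$ is covered exactly once neither $\beta$ nor $e_2$ lies in $A$, so $e_2\notin A$; and if $d^{\circ}=d'\in A$, then likewise $e_1\notin A$.

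For the remaining two implications I would run a rotation argument around the wheel $T_0,\dots,T_{m-1}$ at $p$. Take $d^{\circ}=d'$ with $d'\notin A$ and suppose for contradiction that $e_2\in A$. Then $e_2$ is the marked angle of $T_{m-1}$, so the angle of $T_{m-1}$ at $u_{m-1}$ is unmarked; as $u_{m-1}$ is incident to the diagonal $\z_{m-1}$ and its only two angles come from $T_{m-2}$ and $T_{m-1}$, the angle of $T_{m-2}$ at $u_{m-1}$ must be marked. Iterating through the rim vertices $u_{m-1},u_{m-2},\dots,u_2$ (vacuous if $m=2$), we get that for every $i\in\{1,\dots,m-1\}$ the marked angle of $T_i$ is at a rim vertex $u_{i+1}$, so none of $T_1,\dots,T_{m-1}$ has its apex (its angle at $p$) marked. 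Since $p$ is incident to $\z_1,\dots,\z_m$ it is covered, so the apex of $T_0=\triangle_n$ is its marked angle; in particular $\beta'\notin A$, and moreover the marked angle of $T_1$ is at $u_2$, not at $v'$, so $e_1\notin A$. But the vertex $v'$ carries only $d',\beta',e_1$, and it is covered, forcing $d'\in A$ — contradicting $d'\notin A$. The case $d^{\circ}=d$ with $d\notin A$ is entirely symmetric: assuming $e_1\in A$ one rotates from $u_2$ through $u_3,\dots,u_{m-1}$ up to $u_m=v$, deduces $e_2\notin A$ and (as above) $\beta\notin A$, and then the three-angle vertex $v$ forces $d\in A$, a contradiction.

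The only place where genuine care is needed is the verification, in each of the two cases $d^{\circ}=d$ and $d^{\circ}=d'$, that the relevant vertex ($v$, resp.\ $v'$) really has exactly three incident angles: this depends on which endpoint of $\tau_n$ the arc $\tau_{n-1}$ meets and on $T_{\g^{(p)}}$ being a genuine polygon, so that the two boundary segments cutting off that vertex are distinct and no earlier fan triangle $\triangle_{n-2},\triangle_{n-3},\dots$ touches it. The rotation step itself is routine once the local picture at each intermediate rim vertex $u_i$ ($2\le i\le m-1$, incident only to the single diagonal $\z_i$) is recorded, and the small or degenerate configurations (small $m$ or $n$) are either ruled out or absorbed verbatim by the same argument, so I would dispatch them with a remark rather than a separate case analysis.
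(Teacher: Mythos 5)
Your proof is correct and follows essentially the same route as the paper's: the core content in both is the induction around the fan at $p$ (the paper's ``$\alpha_m=e_2\in A\Rightarrow\alpha_k\in A$ for all $k$'') together with the observation that the vertex $v$ (resp.\ $v'$) carries only the three angles $d,\beta,e_2$ (resp.\ $d',\beta',e_1$) when $d^\circ=d$ (resp.\ $d^\circ=d'$). The only difference is organizational — the paper runs a single contrapositive ``suppose $e_2\in A$'' and derives both conclusions at once, whereas you split into four direct implications, two handled immediately by the three-angle observation and two by rotation-plus-contradiction — but the argument is the same.
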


\begin{proof}
 We only prove (1) since the proof of (2) is similar. Suppose that $e_2 \in A$. For $k \in [2,m]$, we denote by $\alpha_k$ the angle between $\zeta_k$ and the boundary segment of the triangle with sides $\zeta_{k-1}$ and $\zeta_k$. An easy induction shows that $\alpha_k \in A$ for all $k \in [2,m]$ since $\alpha_m=e_2 \in A$. Thus $A$ has the angle between $\z_1$ and $\z_m$, and $d^{\circ}=d \notin A$ or $d^{\circ}=d' \in A$ follows easily.
\end{proof}

 The graph $T_{\g^{(p)}}$ is obtained from $T_{\ell_p} \setminus T_{\ell_p}^2$ by identifying the two edges $\z_m$ along the direction from $p$ to the other endpoint of $\z_m$. Similarly, it is also obtained from $T_{\ell_p} \setminus T_{\ell_p}^1$ by identifying the two edges $\z_1$ from $p$ to the other endpoint of $\z_1$. These constructions induce bijections
\[
g_1 : A(T_{\ell_p} \setminus T_{\ell_p}^2) \rightarrow A(T_{\g^{(p)}}) \setminus \{e_2\}\ \ \mbox{and} \ \ g_2 : A(T_{\ell_p} \setminus T_{\ell_p}^1) \rightarrow A(T_{\g^{(p)}}) \setminus \{e_1\}
\]
 such that $g_1(a_1^{\circ})=d^{\circ}=g_2(a_2^{\circ})$ holds. In particular, for $\{i,j\}=\{1,2\}$ and $A \in \bA(T_{\ell_p} \setminus T_{\ell_p}^j)$, we also have $g_i(A) \in \bA(T_{\g^{(p)}})$ and $x(A)=x(g_i(A))$. Moreover, there are bijections
\begin{equation}\label{Aexg2}
 A_{\rm ex}(T_{\ell_p} \setminus T_{\ell_p}^2) \setminus \{b_1,c_1,\mbox{the angle between $\z_{m-1}$ and $\z_m$}\} \sqcup \{c_2'\} \xrightarrow[]{\sim} A_{\rm ex}(T_{\g^{(p)}})
\end{equation}
given by $a \mapsto g_1(a)$ if $a \neq c_2'$ and $c_2' \mapsto e_2$, and
\begin{equation}\label{Aexg1}
 A_{\rm ex}(T_{\ell_p} \setminus T_{\ell_p}^1) \setminus \{b_2,c_2,\mbox{the angle between $\z_1$ and $\z_2$}\} \sqcup \{c_1'\} \xrightarrow[]{\sim} A_{\rm ex}(T_{\g^{(p)}})
\end{equation}
given by $a \mapsto g_2(a)$ if $a \neq c_1'$ and $c_1' \mapsto e_1$. Finally, we give one lemma for a general $\de$. For $k \in [1,n]$, let $T_{\de}^{-;k}$ and $T_{\de}^{+;k}$ be the two subpolygons of $T_{\de}$ obtained by cutting $T_{\de}$ along $\tau_k$, where $T_{\de}^{-;k}$ contains $q$. We denote by $A'(T_{\de}^{\pm;k})$ the restriction $A(T_{\de})|_{T_{\de}^{\pm;k}}$. We also define that $T_{\de}^{-;n+1}$ (resp., $T_{\de}^{+;0}$) is the subgraph obtained from $T_{\de}^{-;n}$ (resp., $T_{\de}^{+;1}$) by adding the triangle with sides $\tau_n$, $\zeta_1$ and $\zeta_m$ (resp., $\tau_1$, $\xi_1$ and $\xi_{\ell}$).

\begin{lemma}
 For $A \in \bA(T_{\de})$, there is a unique completion $C_{\tau_k}(A|_{A'(T_{\de}^{\pm;k})}) \in \bA(T_{\de}^{\pm;k\mp1})$ containing $A|_{A'(T_{\de}^{\pm;k})}$.
\end{lemma}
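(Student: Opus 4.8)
The plan is to reduce the statement to a local assertion about adjoining one triangle, read off uniqueness of the completion by inspection, and then prove existence of the completion by propagating a constraint along the cut‑off piece until it collapses into a counting contradiction. To begin, observe that by construction $T_{\de}^{\pm;k\mp1}$ is obtained from $T_{\de}^{\pm;k}$ by gluing back the single triangle $\triangle$ of $T_{\de}$ adjacent to $\tau_k$ on the side that was cut away: thus $\tau_k$, a boundary segment of $T_{\de}^{\pm;k}$, becomes a diagonal of $T_{\de}^{\pm;k\mp1}$, while the two remaining sides of $\triangle$ are boundary segments of $T_{\de}^{\pm;k\mp1}$, and in particular the vertex $w$ of $\triangle$ opposite $\tau_k$ is incident to no diagonal of $T_{\de}^{\pm;k\mp1}$. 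Since cutting a triangulated polygon along a diagonal shares only that diagonal's two endpoints $u,u'$ between the two pieces, and $A\in\bA(T_{\de})$, the restriction $A|_{A'(T_{\de}^{\pm;k})}$ already puts exactly one marked angle in each triangle of $T_{\de}^{\pm;k}$ and matches every diagonal‑incident vertex of $T_{\de}^{\pm;k}$ except possibly $u$ and $u'$, whose marked angle in $A$ may lie on the side that was cut away. Hence a completion is nothing but a choice of the marked angle of $\triangle$ among its three corners $u,u',w$.

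For uniqueness, note that this choice is forced. If both $u$ and $u'$ are matched by the restriction, the marked angle of $\triangle$ must be taken at $w$ (taking it at $u$ or $u'$ would doubly‑match that vertex, whereas $w$ requires no match), and the result is then straightforwardly checked to lie in $\bA(T_{\de}^{\pm;k\mp1})$; if exactly one of $u,u'$ is unmatched, the marked angle of $\triangle$ must be taken there, again producing an element of $\bA(T_{\de}^{\pm;k\mp1})$; and if both are unmatched, no placement satisfies Definition \ref{defpm}(1). So the whole lemma comes down to showing that $u$ and $u'$ are never both unmatched by $A|_{A'(T_{\de}^{\pm;k})}$; equivalently, since each of them carries exactly one marked angle of $A$, that they are not both matched inside the cut‑off sub‑polygon, call it $P$ — a triangulated polygon all of whose boundary edges other than $\tau_k$ are boundary segments of $T_{\de}$.

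For existence I would argue by contradiction: assume both $u,u'$ are matched in $A|_P\in\bA(P)$, and run a downward induction along the spine $\triangle_{k-1},\triangle_{k-2},\dots$ of triangles of $P$ (for the $+$ case; $\triangle_k,\triangle_{k+1},\dots$ for the $-$ case) successively separated by $\tau_{k-1},\tau_{k-2},\dots$, proving that for each spine diagonal $\tau_j$ both of its endpoints are matched by $A$ inside the sub‑polygon $P^{(j)}$ of $P$ lying beyond $\tau_j$. The inductive step uses that exactly one endpoint of $\tau_j$ lies on the boundary segment of $T_{\de}$ that is the third side of $\triangle_{j-1}$; that endpoint's only triangle inside $P^{(j)}$ is $\triangle_{j-1}$, which must therefore carry its marked angle there, which in turn pushes the marked angles of the other two vertices of $\triangle_{j-1}$ — equivalently, of the two endpoints of $\tau_{j-1}$ — into $P^{(j-1)}$. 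Carrying the induction to $j=1$ forces both endpoints of $\tau_1$ to be matched inside $P^{(1)}$, the region beyond $\tau_1$. If the corresponding end of the spine is a non‑puncture vertex, $P^{(1)}$ is the lone extreme triangle $\triangle_0$ (or $\triangle_n$ in the $-$ case), which would then have to carry two marked angles — impossible. If it is a puncture, $P^{(1)}$ is the fan of triangles around it (the triangle $\tau_n,\zeta_1,\zeta_m$ together with the fan at $p$, or $\tau_1,\xi_1,\xi_\ell$ together with the fan at $q$); every vertex of this fan other than the two endpoints of the relevant boundary edge $\tau_n$ (resp. $\tau_1$) has all of its $T_{\de}$‑triangles inside the fan and hence is matched by $A$ restricted to the fan, which uses exactly one marked angle per fan triangle, leaving room for at most one of those two endpoints — again a contradiction. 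This establishes the claim, and with it the lemma.

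The only genuinely substantive step is the last one: carrying out the spine induction and the closing count uniformly in the three cases $\de=\g,\g^{(p)},\g^{(pq)}$, keeping careful track of which endpoint of each $\tau_j$ lies on a true boundary segment of $T_{\de}$, and handling the fan regions at punctures (including the small degenerate fans). Each individual check is routine once the local picture is drawn, but the combinatorial bookkeeping is where the care is needed.
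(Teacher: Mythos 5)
Your proof is correct but takes a genuinely different, and substantially heavier, route than the paper's. You reduce the lemma to the assertion that the two endpoints of $\tau_k$ cannot both be unmatched by $A|_{A'(T_{\de}^{\pm;k})}$, and establish this by a spine induction that propagates a forced marking through $\triangle_{k-1},\triangle_{k-2},\dots$ (or $\triangle_k,\triangle_{k+1},\dots$) until it collapses at the far end in a counting contradiction, with a separate base case for the fan shapes at a puncture. The paper proves the same thing in one stroke with a global count: $\#A|_{A'(T_{\de}^{\pm;k})}=\#\{\text{triangles of }T_{\de}^{\pm;k}\}=\#\{\text{vertices of }T_{\de}^{\pm;k}\text{ incident to at least one diagonal}\}$, where the second equality is an Euler-type identity valid uniformly for $\de=\g,\g^{(p)},\g^{(pq)}$. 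Since every diagonal-incident vertex of $T_{\de}^{\pm;k}$ other than the two endpoints of $\tau_k$ already has its full star of angles inside $T_{\de}^{\pm;k}$ and is therefore matched, the count forces exactly one endpoint of $\tau_k$ to be unmatched, and the unique angle at that vertex in the adjoined triangle gives the unique completion. Your closing count at the spine's end (extreme triangle or fan) is essentially this same argument restricted to the base case only; the induction then transports it along the rest of $T_{\de}^{\pm;k}$, which is extra work the global count avoids. A minor remark: your branch ``both $u,u'$ matched by the restriction'' never occurs (the count shows exactly one is unmatched), so it is a vacuous case of your analysis, though harmless. In short, the paper's count buys uniformity across the three cases and brevity; your propagation is more local and hands-on, but at the cost of tracking which endpoint of each $\tau_j$ lies on a boundary segment of $T_{\de}$, handling the fan bases separately, and considerably more combinatorial bookkeeping.
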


\begin{proof}
 Since the equality
\[
 \#A|_{A'(T_{\de}^{\pm;k})}=\#\{\mbox{triangles of }T_{\de}^{\pm;k}\}=\#\{\mbox{vertices of }T_{\de}^{\pm;k}\mbox{ incident to at least one diagonal}\}
\]
holds, there is exactly one endpoint $v$ of $\tau_k$ such that $A|_{A'(T_{\de}^{\pm;k})}$ has no angle incident to $v$. Therefore, there is exactly one angle $a_v$ of $A(T_{\de}^{\pm;k\mp1}) \setminus A'(T_{\de}^{\pm;k})$ incident to $v$, and we have a unique completion $C_{\tau_k}(A|_{A'(T_{\de}^{\pm;k})})=A|_{A'(T_{\de}^{\pm;k})}\sqcup\{a_v\} \in \bA(T_{\de}^{\pm;k\mp1})$.
\end{proof}

 For $\{i,j\}=\{1,2\}$ and $A \in \bA(T_{\ell_p} \setminus T_{\ell_p}^i)$, there exists a unique symmetric completion $\overline{A} \in \Asym(T_{\ell_p})$ of $A$, that is $\overline{A}|_{A(T_{\ell_p} \setminus T_{\ell_p}^i) \sqcup \{c_i'\}} = A$ and $\overline{A}|_{A(T_{\ell_p}^i) \sqcup \{c_i\}} \simeq C_{\tau_n}(A|_{A(U_{\ell_p}^j) \sqcup \{a_j^{\circ}\}})$. We are ready to prove Proposition \ref{bijsymangle}.

\begin{proof}[Proof of Proposition \ref{bijsymangle}]
 By Lemma \ref{lema}, we can define the map $\psi^p : \Asym(T_{\ell_p}) \rightarrow \bA(T_{\g^{(p)}})$ by
\begin{eqnarray}\label{psi}
   \Asym(T_{\ell_p}) \hspace{1mm}\mbox{\reflectbox{$\in$}}\hspace{1mm} A \mapsto \left\{
     \begin{array}{ll}
 g_1(A|_{A(T_{\ell_p} \setminus T_{\ell_p}^2)}) & \mbox{if $a_i^{\circ}=a_i \in A$ or $a_i^{\circ}=a_i' \notin A$,}\\
 g_2(A|_{A(T_{\ell_p} \setminus T_{\ell_p}^1)}) & \mbox{if $a_i^{\circ}=a_i \notin A$ or $a_i^{\circ}=a_i' \in A$.}
     \end{array} \right.
\end{eqnarray}

 We show that $\psi^p$ is injective. Let $A, A' \in \Asym(T_{\ell_p})$ satisfying $A \neq A'$. In particular, the $\g$-symmetry implies that $A|_{A(T_{\ell_p} \setminus T_{\ell_p}^i)} \neq A'|_{A(T_{\ell_p} \setminus T_{\ell_p}^i)}$. If $a_i^{\circ} \in A \cap A'$ or $a_i^{\circ} \notin A \cup A'$, then $\psi^p(A) \neq \psi^p(A')$ follows from (\ref{psi}). Suppose that $a_i^{\circ} \in A$ and $a_i^{\circ} \notin A'$. Then $d^{\circ}=g_i(a_i^{\circ}) \in g_i(A) = \psi^p(A)$ and $d^{\circ}=g_j(a_j^{\circ})\notin g_j(A') = \psi^p(A')$ for $j \in \{1,2\}\setminus\{i\}$. Thus $\psi^p(A) \neq \psi^p(A')$ holds, that is $\psi^p$ is injective.

 We show that $\psi^p$ is surjective. Let $B \in \bA(T_{\g^{(p)}})$. If $d^{\circ}=d \in A$ or $d^{\circ}=d' \notin A$, then $B \subseteq A(T_{\g^{(p)}}) \setminus \{e_2\}$ by Lemma \ref{note1}(1). Thus $g^{-1}_1(B) \subseteq \bA(T_{\ell_p} \setminus T_{\ell_p}^2)$. There is the symmetric completion $\overline{g^{-1}_1(B)} \in \Asym(T_{\ell_p})$ such that $\psi^p\Bigl(\overline{g^{-1}_1(B)}\Bigr)=B$. If $d^{\circ}=d \notin A$ or $d^{\circ}=d' \in A$, then $e_1 \notin B$ by Lemma \ref{note1}(2). In the same way as above, there is $\overline{g^{-1}_2(B)} \in \Asym(T_{\ell_p})$ such that $\psi^p\Bigl(\overline{g^{-1}_2(B)}\Bigr)=B$. Therefore, $\psi^p$ is surjective.

 Let $A \in \Asym(T_{\ell_p})$. Since there is at least one $i \in \{1,2\}$ such that $c_i, c_i' \notin A$ by Lemma \ref{lema}, we have
\[
 \ox(A)=\frac{x(A)}{x(\res(A))}=\frac{x(A)}{x(A|_{A(T_{\ell_p}^i)})}=x(A|_{A(T_{\ell_p} \setminus T_{\ell_p}^i)})=x(\psi^p(A)).
\]

 We only need to prove $Y(A) \setminus Y(\res(A))=Y(\psi^p(A))$ to give $\oy(A)=y(\psi^p(A))$. Suppose that $a_i^{\circ}=a_i \in A$ or $a_i^{\circ}=a_i' \notin A$. From $A_-(T_{\ell_p})=A_-(T_{\ell_p} \setminus T_{\ell_p}^2) \sqcup A_-(T_{\ell_p}^2)$ and $c_2, c_2' \notin A$, we get a decomposition
{\setlength\arraycolsep{0.5mm}
\begin{eqnarray*}
 A_-(T_{\ell_p}) \triangle A &=& (A_-(T_{\ell_p}) \triangle A)|_{A(T_{\ell_p} \setminus T_{\ell_p}^2)} \sqcup (A_-(T_{\ell_p}) \triangle A)|_{A(T_{\ell_p}^2)}\\
 &=& (A_-(T_{\ell_p} \setminus T_{\ell_p}^2) \triangle A|_{A(T_{\ell_p} \setminus T_{\ell_p}^2)}) \sqcup (A_-(T_{\ell_p}^2) \triangle A|_{A(T_{\ell_p}^2)}).
\end{eqnarray*}}Thus we have
\begin{equation}\label{YAYres}
 Y(A)=Y(A|_{A(T_{\ell_p} \setminus T_{\ell_p}^2)}) \sqcup Y(A|_{A(T_{\ell_p}^2)})=Y(A|_{A(T_{\ell_p} \setminus T_{\ell_p}^2)}) \sqcup Y(\res(A)),
\end{equation}
where the second equality holds by Lemma \ref{lema}(1). On the other hand, the equalities
{\setlength\arraycolsep{0.5mm}
\begin{eqnarray}\label{ominuseq}
 g_1(A_-(T_{\ell_p} \setminus T_{\ell_p}^2) \triangle A|_{A(T_{\ell_p} \setminus T_{\ell_p}^2)})&=&g_1((A_-(T_{\ell_p}) \triangle A)|_{A(T_{\ell_p} \setminus T_{\ell_p}^2)})\nonumber\\
     &=&\psi^p(A_{-}(T_{\ell_p}) \triangle A)=A_{-}(T_{\g^{(p)}}) \triangle \psi^p(A)
\end{eqnarray}}hold by (\ref{psi}) and $\psi^p(A_{-}(T_{\ell_p}))=g_1(A_{-}(T_{\ell_p} \setminus T_{\ell_p}^2))=A_{-}(T_{\g^{(p)}})$. Therefore, it follows from Lemma \ref{boundseg} that $A_-(T_{\ell_p} \setminus T_{\ell_p}^2) \triangle A|_{A(T_{\ell_p} \setminus T_{\ell_p}^2)}$ contains $b_1$ (resp., $c_1$, the angle between $\z_{m-1}$ and $\z_m$) if and only if it contains $a_1$ (resp., $c_1'$, the angles between $\z_{m-1}$ and boundary segments). Thus we have $Y(A|_{A(T_{\ell_p} \setminus T_{\ell_p}^2)}) = Y(\psi^p(A))$ by (\ref{Aexg2}) and (\ref{ominuseq}). Consequently, we have
\[
 Y(A) \setminus Y(\res(A)) = Y(A|_{A(T_{\ell_p} \setminus T_{\ell_p}^2)}) = Y(\psi^p(A))
\]
 by (\ref{YAYres}).

 Suppose that $a_i^{\circ}=a_i \notin A$ or $a_i^{\circ}=a_i' \in A$. Since $c_1, c_1' \in A_-(T_{\ell_p}) \triangle A$ by Lemma \ref{lema}(2), then $\z_1 \in Y(A)$. We also have $\z_1 \in Y(\psi^p(A))$ since $e_1 \in A_-(T_{\g^{(p)}})$ and $e_1 \notin \psi^p(A)$ by Lemma \ref{note1}(2). Since we have the equalities
\[
 A_-(T_{\ell_p}^1)=A_-(T_{\ell_p})|_{A(T_{\ell_p}^1)} \sqcup \{\mbox{the angle between $\tau_n$ and $\z_1$}\},
\]
\[
 A_-(T_{\ell_p} \setminus T_{\ell_p}^1)=A_-(T_{\ell_p})|_{A(T_{\ell_p} \setminus T_{\ell_p}^1)} \sqcup \{\mbox{the angle between $\z_1$ and $\z_2$}\},
\]
 then the equalities
{\setlength\arraycolsep{0.5mm}
\begin{eqnarray*}
 Y(A) &=& Y(A)|_{T_{\ell_p}^1} \sqcup \{\z_1\} \sqcup Y(A)|_{T_{\ell_p} \setminus T_{\ell_p}^1}\\
 &=& Y(A|_{A(T_{\ell_p}^1)}) \sqcup \{\z_1\} \sqcup Y(A|_{A(T_{\ell_p} \setminus T_{\ell_p}^1)})\\
 &=& Y(\res(A)) \sqcup \{\z_1\} \sqcup Y(A|_{A(T_{\ell_p} \setminus T_{\ell_p}^1)}).
\end{eqnarray*}}hold by Lemma \ref{boundseg} and Lemma \ref{lema}. In the same way as above proof, we have $Y(A|_{A(T_{\ell_p} \setminus T_{\ell_p}^1)}) = Y(\psi^p(A)) \setminus \{\z_1\}$ by (\ref{Aexg1}). Consequently, we have 
\[
 Y(A) \setminus Y(\res(A)) = Y(A|_{A(T_{\ell_p} \setminus T_{\ell_p}^1)}) \sqcup \{\z_1\} = Y(\psi^p(A)).
\]
This finishes the proof.
\end{proof}

\begin{proof}[Proof of Theorem \ref{1-notchedbij}]
 By Propositions \ref{bijsym} and \ref{bijsymangle}, there is a bijection $\varphi_p=\varphi^p(\psi^p)^{-1} : \bA(T_{\g^{(p)}}) \rightarrow \bP(G_{\g^{(p)}})$ satisfying
\[
 x(A)=\ox((\psi^p)^{-1}(A))=\ox(\varphi^p(\psi^p)^{-1}(A))\ \ \mbox{and}\ \ y(A)=\oy((\psi^p)^{-1}(A))=\oy(\varphi^p(\psi^p)^{-1}(A))
\]
 for $A \in \bA(T_{\g^{(p)}})$.
\end{proof}

\begin{proof}[Proof of Theorem \ref{main} for $1$-notched arcs]
 The assertion follows immediately from Theorems \ref{MSW1sym} and \ref{1-notchedbij}.
\end{proof}

\subsection{The case of $2$-notched arcs}

 In this subsection, we show the following theorem.

\begin{theorem}\label{2-notchedbij}
 There is a bijection $\varphi_{pq} : \bA(T_{\g^{(pq)}}) \rightarrow \bP(G_{\g^{(pq)}})$ satisfying $x(A)=\oox(\varphi_{pq}(A))$ and $y(A)=\ooy(\varphi_{pq}(A))$ for $A \in \bA(T_{\g^{(pq)}})$.
\end{theorem}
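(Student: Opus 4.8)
The plan is to mirror the structure of the proof of Theorem~\ref{1-notchedbij}, splitting $T_{\g^{(pq)}}$ into a ``middle'' piece and the two fans around $p$ and $q$, and combining the $\g$-symmetric bijection on the $p$-side with a second one on the $q$-side. More precisely, recall that $G_{\g^{(pq)}}$ is the pair $(G_{\ell_p},G_{\ell_q})$, where $\bP(G_{\g^{(pq)}})$ consists of $\g$-compatible pairs $(P_p,P_q) \in \bP(G_{\g^{(p)}}) \times \bP(G_{\g^{(q)}})$ with $\res(P_p) \simeq \res(P_q)$. On the angle side, I would first isolate, for $A \in \bA(T_{\g^{(pq)}})$, the restrictions $A|_{p\text{-side}}$ and $A|_{q\text{-side}}$ to the two subpolygons obtained by cutting $T_{\g^{(pq)}}$ along $\tau_n$ and $\tau_1$ respectively, using the completion lemma ($C_{\tau_k}$) to extend each restriction to a perfect matching of angles of the corresponding once-punctured polygon $T_{\g^{(p)}}$ (resp.\ $T_{\g^{(q)}}$). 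The key observation is that these two extensions share the same ``central'' restriction (the part of $A$ on the subpolygon between $\tau_1$ and $\tau_n$, completed along both $\tau_1$ and $\tau_n$), which will be an element of $\bA(T_{\g})$; this common value is the angle-side analogue of $\res$.

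The first main step, then, is to define a map $\bA(T_{\g^{(pq)}}) \to \bA(T_{\g^{(p)}}) \times \bA(T_{\g^{(q)}})$, $A \mapsto (\psi^p_{\rm loc}(A), \psi^q_{\rm loc}(A))$, and to show its image consists exactly of pairs $(B_p,B_q)$ whose ``$\res$'' parts agree up to isomorphism, i.e.\ an angle-analogue of $\g$-compatibility. Injectivity should be immediate once one checks, as in Lemma~\ref{note1} and the surjectivity argument for $\psi^p$, how the angle incident to $\tau_n$ (resp.\ $\tau_1$) forces $e_2$ or $e_1$ out of $B_p$ (resp.\ the corresponding angles out of $B_q$); surjectivity follows by gluing two matchings with matching central parts via the symmetric-completion construction. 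Composing with the bijections $\varphi_p$ and $\varphi_q$ of Theorem~\ref{1-notchedbij} then produces a bijection $\bA(T_{\g^{(pq)}}) \to \bP(G_{\g^{(pq)}})$; call it $\varphi_{pq}$.

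The second main step is the bookkeeping of the $x$- and $y$-statistics. For $x(A)$: on each side, Theorem~\ref{1-notchedbij} gives $x(\psi^s_{\rm loc}(A)\text{-extension}) = \ox(\varphi_s(\cdots))$, and since the central part is counted once in $A$ but appears as $\res(P_s)$ on each side, multiplying the two sides and dividing by $x(\res)^2$ (one extra copy built into each $\ox$, minus the one genuine copy in $A$) should reproduce $\oox(P_p,P_q) = x(P_p)x(P_q)/x(\res(P_p))^3$ exactly — I will need to track the cube carefully, matching the normalizations in Theorem~\ref{1-notchedbij} and Theorem~\ref{MSW1com}. For $y(A)$: using $Y(A) = Y(\psi^p_{\rm loc}\text{-part}) \sqcup Y(\res(A)) \sqcup Y(\psi^q_{\rm loc}\text{-part})$, together with $Y(A) \setminus Y(\res(A)) = Y(\psi^s)$ from the $1$-notched case applied on each side, one obtains $\ooy$ analogously. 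The special clause in the definition of $Y(A)$ for the case $\de=\g^{(pq)}$, $n=1$ (the extra $\{\tau_1\}$) has to be matched against the behavior of $J$ on the snake-graph side when the two fans meet in a single triangle; this is exactly the degenerate configuration that needs separate verification.

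The hard part will be precisely this degenerate $n=1$ case (and, relatedly, the case where $\g$ is a $2$-notched loop, so $p=q$, handled via the loops $\ell_p,\ell_q$ of Figure~\ref{looploop}). When $n=1$ the subpolygon between $\tau_1$ and $\tau_n$ degenerates, the angles between $\z_m$ or $\xi_\ell$ and $\tau_1$ interact with both fans simultaneously, and the naive ``cut along $\tau_1$ and $\tau_n$'' decomposition breaks down; I expect to need an ad hoc analysis showing that the correction term $\{\tau_1\}$ in $Y(A)$ is precisely what restores agreement with $\ooy(P_p,P_q)$, and that the $x$-normalization $x(\res)^3$ still comes out right. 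Once these boundary cases are dispatched, the proof of Theorem~\ref{main} for $2$-notched arcs is immediate from Theorem~\ref{MSW1com} and Theorem~\ref{2-notchedbij}, and the bijection between (1) and (2) of Theorem~\ref{bijthm} for $2$-notched arcs is $\varphi_{pq}$ itself.
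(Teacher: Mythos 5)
Your proposal is correct and follows essentially the same route as the paper's proof: the paper cuts $T_{\g^{(pq)}}$ along $\tau_1$ and $\tau_n$ into three subpolygons, defines $\g$-compatible pairs $(A_p,A_q)\in\bA(T_{\g^{(p)}})\times\bA(T_{\g^{(q)}})$ whose ``central'' restrictions agree, proves that $A\mapsto(C_{\tau_1}(A^{cp}),C_{\tau_n}(A^{qc}))$ is a bijection onto those compatible pairs (Proposition~\ref{bijcomangle}), composes with $\varphi^p\times\varphi^q$ and $(\psi^p\times\psi^q)^{-1}$ to reach $\bP(G_{\g^{(pq)}})$ (Proposition~\ref{bijcom}), and isolates the $n=1$ degeneration in a separate lemma (Lemma~\ref{n=1case}) exactly as you anticipate. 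Your $x$-normalization heuristic resolves correctly — the paper verifies $x(\sr(A))\cdot\ox(P_p)\ox(P_q)^{-1}$-type cancellations give precisely the cube $x(\res(P_p))^3$ — and the $y$-side identity you sketch is the paper's $Y(A_p)\sqcup Y(A_q)=Y(\si(A))\sqcup Y(\sr(A))$ with multiplicities, so your plan is sound throughout.
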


 Theorem \ref{2-notchedbij} clearly gives the bijection between (1) and (2) in Theorem \ref{bijthm} for $2$-notched arcs. To prove Theorem \ref{2-notchedbij}, we prepare the following notations as in Figure \ref{decomp}. For $\de=\g$, $\g^{(p)}$, $\g^{(q)}$, or $\g^{(pq)}$, there are three subpolygons $T_{\de}^q := T_{\de}^{-;1}$, $T_{\de}^c := T_{\de}^{+;1} \cap T_{\de}^{-;n}$ and $T_{\de}^p := T_{\de}^{+;n}$ of $T_{\de}$. We denote by $T_{\de}^{\ast\star}$ the subpolygon $T_{\de}^{\ast} \cup T_{\de}^{\star}$ of $T_{\de}$ for $\ast, \star \in \{q,c,p\}$. We have a decomposition $A(T_{\de})=A(T_{\de})^q \sqcup A(T_{\de})^c \sqcup A(T_{\de})^p$, where $A(T_{\de})^{\ast}$ consists of angles contained in $T_{\de}^{\ast}$ for $\ast \in \{q,c,p\}$. For $A \in \bA(T_{\de})$, we define a decomposition $A=A^q \sqcup A^c \sqcup A^p$, where $A^{\ast} \in A(T_{\de})^{\ast}$ for $\ast \in \{q,c,p\}$. For an arbitrary decomposition $S=S^q \sqcup S^c \sqcup S^p$ as above, we use the notations $S^{\ast\star}:=S^{\ast} \sqcup S^{\star}$ for $\ast, \star \in \{q,c,p\}$.

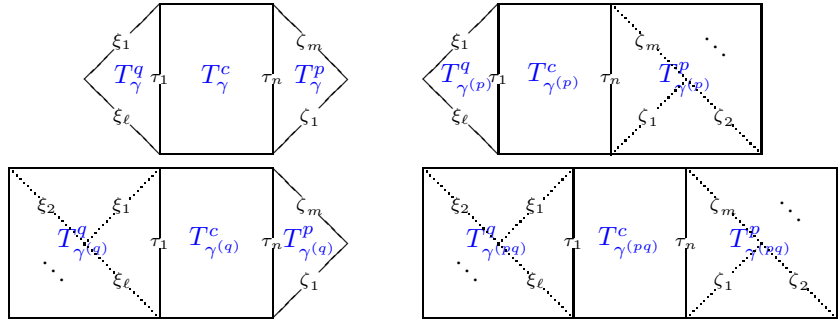
\begin{figure}[h]
   \caption{The decompositions of $T_{\g}$, $T_{\g^{(p)}}$, $T_{\g^{(q)}}$, and $T_{\g^{(pq)}}$}
   \label{decomp}
\[
 \begin{xy}
   (0,0)="q", +(10,10)="0", +(15,0)="1", +(0,-20)="2", +(-15,0)="3", "1"+(10,-10)="p", "0"+(7.5,-10)*{\color{blue}T_{\g}^c}, "p"+(-5,0)*{\color{blue}T_{\g}^p}, "q"+(6,0)*{\color{blue}T_{\g}^q}
   \ar@{-}"0";"1" \ar@{-}"2";"3"
   \ar@{-}"0";"3"|{\tau_1} \ar@{-}"1";"2"|{\tau_n}
   \ar@{-}"q";"0"|{\xi_1} \ar@{-}"q";"3"|{\xi_{\ell}}
   \ar@{-}"p";"2"|{\z_1} \ar@{-}"p";"1"|{\z_m}
 \end{xy}
\hspace{10mm}
 \begin{xy}
   (0,0)="q", +(10,10)="0", +(15,0)="1", +(0,-20)="2", +(-15,0)="3", "1"+(10,-10)="p", "1"+(20,0)="ru", +(0,-20)="rd", "p"+(4,4)*{\rotatebox{-45}{$\cdots$}}, "0"+(7.5,-10)*{\color{blue}T_{\g^{(p)}}^c}, "p"+(0,0)*{\color{blue}T_{\g^{(p)}}^p}, "q"+(6,0)*{\color{blue}T_{\g^{(p)}}^q}
   \ar@{-}"0";"ru" \ar@{-}"ru";"rd" \ar@{-}"rd";"3"
   \ar@{-}"0";"3"|{\tau_1} \ar@{-}"1";"2"|{\tau_n}
   \ar@{-}"q";"0"|{\xi_1} \ar@{-}"q";"3"|{\xi_{\ell}}
   \ar@{.}"p";"2"|{\z_1} \ar@{.}"p";"rd"|{\z_2} \ar@{.}"p";"1"|{\z_m}
 \end{xy}
\]
\[
 \begin{xy}
   (0,0)="q", +(10,10)="0", +(15,0)="1", +(0,-20)="2", +(-15,0)="3", "1"+(10,-10)="p", "0"+(-20,0)="lu", +(0,-20)="ld", "q"+(-4,-4)*{\rotatebox{-45}{$\cdots$}}, "0"+(7.5,-10)*{\color{blue}T_{\g^{(q)}}^c}, "p"+(-5,0)*{\color{blue}T_{\g^{(q)}}^p}, "q"+(0,0)*{\color{blue}T_{\g^{(q)}}^q}
   \ar@{-}"lu";"1" \ar@{-}"2";"ld" \ar@{-}"lu";"ld"
   \ar@{-}"0";"3"|{\tau_1} \ar@{-}"1";"2"|{\tau_n}
   \ar@{.}"q";"0"|{\xi_1} \ar@{.}"q";"lu"|{\xi_2} \ar@{.}"q";"3"|{\xi_{\ell}}
   \ar@{-}"p";"2"|{\z_1} \ar@{-}"p";"1"|{\z_m}
 \end{xy}
\hspace{10mm}
 \begin{xy}
   (0,0)="q", +(10,10)="0", +(15,0)="1", +(0,-20)="2", +(-15,0)="3", "1"+(10,-10)="p", "1"+(20,0)="ru", +(0,-20)="rd", "0"+(-20,0)="lu", +(0,-20)="ld", "p"+(4,4)*{\rotatebox{-45}{$\cdots$}}, "q"+(-4,-4)*{\rotatebox{-45}{$\cdots$}}, "0"+(7.5,-10)*{\color{blue}T_{\g^{(pq)}}^c}, "p"+(0,0)*{\color{blue}T_{\g^{(pq)}}^p}, "q"+(0,0)*{\color{blue}T_{\g^{(pq)}}^q}
   \ar@{-}"lu";"ru" \ar@{-}"rd";"ld" \ar@{-}"lu";"ld" \ar@{-}"ru";"rd"
   \ar@{-}"0";"3"|{\tau_1} \ar@{-}"1";"2"|{\tau_n}
   \ar@{.}"q";"0"|{\xi_1} \ar@{.}"q";"lu"|{\xi_2} \ar@{.}"q";"3"|{\xi_{\ell}}
   \ar@{.}"p";"2"|{\z_1} \ar@{.}"p";"rd"|{\z_2} \ar@{.}"p";"1"|{\z_m}
 \end{xy}
\]
\end{figure}

 Since there is the natural inclusion from $T_{\g}$ (resp., $T_{\g^{(p)}}$, $T_{\g^{(q)}}$) to $T_{\g^{(pq)}}$, we can view $T_{\g}$ (resp., $T_{\g^{(p)}}$, $T_{\g^{(q)}}$) as a subpolygon of $T_{\g^{(pq)}}$, and $A(T_{\g})$ (resp., $A(T_{\g^{(p)}})$, $A(T_{\g^{(q)}})$) as a subset of $A(T_{\g^{(pq)}})$.

\begin{definition}
 The pair $(A_p,A_q) \in \bA(T_{\g^{(p)}}) \times \bA(T_{\g^{(q)}})$ is called $\g${\it -compatible} if $A_p^c=A_q^c$ and $A_p^q \sqcup A_q^{cp} \in \bA(T_{\g})$, where we view $A_p^q \sqcup A_q^{cp}$ as a subset of $A(T_{\g})$. We denote by $\Acom(T_{\g^{(p)}}, T_{\g^{(q)}})$ the set of $\g$-compatible pairs of $\bA(T_{\g^{(p)}}) \times \bA(T_{\g^{(q)}})$.
\end{definition}

\begin{lemma}\label{defgsym}
 If $n=1$, $(A_p,A_q) \in \Acom(T_{\g^{(p)}}, T_{\g^{(q)}})$ if and only if $A_p^q \sqcup A_q^{p} \in \bA(T_{\g})$. If $n>1$, $(A_p,A_q) \in \Acom(T_{\g^{(p)}}, T_{\g^{(q)}})$ if and only if $A_p^c=A_q^c$.
\end{lemma}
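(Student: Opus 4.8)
The plan is to unwind the definition of $\Acom(T_{\g^{(p)}},T_{\g^{(q)}})$, namely that $(A_p,A_q)$ belongs to it exactly when $A_p^c = A_q^c$ \emph{and} $A_p^q \sqcup A_q^{cp} \in \bA(T_\g)$, and to show that when $n=1$ the first condition is vacuous while when $n>1$ the first condition already implies the second.

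When $n=1$, the subpolygon $T_\de^c = T_\de^{+;1}\cap T_\de^{-;1}$ is the single arc $\tau_1$ and contains no triangle, for every $\de\in\{\g,\g^{(p)},\g^{(q)},\g^{(pq)}\}$; hence $A_p^c = A_q^c = \emptyset$ automatically and $A_q^{cp} = A_q^p$. Thus $(A_p,A_q)\in\Acom(T_{\g^{(p)}},T_{\g^{(q)}})$ if and only if $A_p^q\sqcup A_q^p\in\bA(T_\g)$, which is the claim.

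When $n>1$, the forward implication is part of the definition, so assume $A_p^c = A_q^c$ and let us exhibit $A_p^q\sqcup A_q^{cp}\in\bA(T_\g)$. The key observation is that $T_\g = T_{\g^{(q)}}^{+;0}$, the polygon obtained from $T_{\g^{(q)}}^{+;1}=T_{\g^{(q)}}^{cp}$ by reattaching $\triangle_0$ along $\tau_1$ (and, symmetrically, $T_\g = T_{\g^{(p)}}^{+;0}$). Applying the completion lemma to $A_q\in\bA(T_{\g^{(q)}})$ along $\tau_1$ yields $C_{\tau_1}(A_q^{cp})\in\bA(T_{\g^{(q)}}^{+;0})=\bA(T_\g)$, which is $A_q^{cp}$ together with the unique angle of $\triangle_0$ at the endpoint $u$ of $\tau_1$ that is not met by $A_q^{cp}$. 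Since $n>1$, the triangle $\triangle_1$ is an ear of $T^{+;1}$ whose tip is the endpoint of $\tau_1$ not lying on $\tau_2$; by the cardinality equality used to prove the completion lemma, this tip is exactly the deficient vertex $u$, and it depends only on the triangle $\triangle_1$ — the same whether computed in $T_{\g^{(q)}}^{+;1}$ or in $T_{\g^{(p)}}^{+;1}$. Carrying out the same completion for $A_p\in\bA(T_{\g^{(p)}})$ along $\tau_1$, where uniqueness forces the outcome to be $A_p$ itself, shows that $A_p^q = A_p|_{\triangle_0}$ is precisely the angle of $\triangle_0$ at this same vertex $u$. Hence $A_p^q\sqcup A_q^{cp} = C_{\tau_1}(A_q^{cp})\in\bA(T_\g)$; in fact this argument shows that the second condition in the definition of $\Acom$ is automatic once $n>1$.

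The main point to get right is the identification of $u$: one must verify that the two restrictions $A_q^{cp}$ and $A_p|_{A'(T_{\g^{(p)}}^{+;1})}$ leave the same endpoint of $\tau_1$ unmatched, and that $\triangle_0$ is reattached precisely at that endpoint. This is where $n>1$ is essential — it guarantees that $\triangle_1$ genuinely separates the $q$-side triangle $\triangle_0$ from the $p$-side, so that $T^{+;1}$ has the ``snake'' shape needed for the cardinality equality in the completion lemma. The condition $A_p^c = A_q^c$ itself is used only because it is built into the definition of $\Acom$.
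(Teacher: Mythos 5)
Your $n=1$ case is correct and matches the paper. Your $n>1$ argument via the completion lemma is a legitimate alternative to the paper's direct verification, but it has a genuine gap: the claim that the deficient endpoint $u$ of $\tau_1$ ``is exactly the tip'' of $\triangle_1$ and ``depends only on the triangle $\triangle_1$'' is false. The uncovered endpoint of $\tau_1$ depends on the matching, not just on $T^{+;1}$. For example, with $n=2$ and a pentagon $V_0V_1V_2V_3V_4$, $q=V_0$, $p=V_3$, $\tau_1=V_1V_4$, $\tau_2=V_2V_4$ (so the tip is $V_1$), the matching marking $\triangle_0$ at $V_4$, $\triangle_1$ at $V_1$, $\triangle_2$ at $V_2$ restricts on $T^{+;1}$ to angles at $V_1$ and $V_2$, so the deficient vertex is $V_4$, not the tip $V_1$. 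Because of this error you incorrectly conclude that the second condition in the definition of $\Acom$ ``is automatic once $n>1$'' and that $A_p^c=A_q^c$ ``is used only because it is built into the definition'': in fact, without $A_p^c=A_q^c$ the set $A_p^q\sqcup A_q^{cp}$ can fail to lie in $\bA(T_\g)$ (e.g.\ both $A_p^q$ and $A_q^c$ can mark an angle at the same endpoint of $\tau_1$).

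What your argument actually needs is that $A_p^{cp}$ and $A_q^{cp}$ leave the \emph{same} endpoint of $\tau_1$ uncovered, and this is where $A_p^c=A_q^c$ and $n>1$ enter essentially. When $n>1$, the tip $a$ of $\tau_1$ (its endpoint not on $\tau_2$) lies only in $\triangle_0$ and $\triangle_1$, and $\triangle_1\subseteq T^c$; hence whether $a$ is covered by $A^{cp}$ is determined entirely by $A^c$ (it is covered iff $A^c$ marks $\triangle_1$ at $a$). Since $A_p^c=A_q^c$, the tip is covered by $A_p^{cp}$ iff by $A_q^{cp}$, so the deficient vertices agree, and your completion argument then closes. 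That — not a ``snake shape'' for the cardinality count, which holds for $n=1$ too — is the real role of $n>1$: it places $\triangle_1$ inside the $c$-part. For comparison, the paper skips the completion lemma and checks condition~(1) of Definition~\ref{defpm} directly for $A_p^q\sqcup A_q^{cp}$: the only possible collision is an angle from $A_p^q$ in $\triangle_0$ and an angle from $A_q^p$ in $\triangle_n$ at a common vertex, which forces $\tau_1,\dots,\tau_n$ into a fan through that vertex, and the fan structure then makes $A_p^c$ and $A_q^c$ disagree on every $\triangle_i$ with $1\le i\le n-1$, contradicting $A_p^c=A_q^c$.
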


\begin{proof}
 If $n=1$, the assertion follows from $A_p^c=\emptyset=A_q^c$. Suppose $n>1$ and $A_p^c=A_q^c$. Since $A_p$ and $A_q$ satisfy the condition (2) in Definition \ref{defpm}, so does $A_p^q \sqcup A_q^{cp}$. Therefore, we only show that $A_p^q \sqcup A_q^{cp}$ satisfies the condition (1) in Definition \ref{defpm} on $T_{\g}$, which is equivalent that any two distinct angles $a$ and $b$ in $A_p^q \sqcup A_q^{cp}$ are not incident to a common vertex. If $a, b \in A_p^{qc}$ or $a, b \in A_q^{cp}$, the assertion holds since $A_p^{qc} \subset A_p$, $A_q^{cp} \subset A_q$, and $A_p$ and $A_q$ satisfy the condition (1) in Definition \ref{defpm}. Suppose that $a \in A_p^q$ and $b \in A_q^p$ are incident to a common vertex. Then $\tau_1,\ldots,\tau_n$ must be incident to the vertex. Since $A_p \in \bA(T_{\g^{(p)}})$, $A_p^c$ contains the angle between $\tau_i$ and a boundary segment of the triangle with sides $\tau_i$ and $\tau_{i+1}$ for $i \in [1,n-1]$. Similarly, since $A_q \in \bA(T_{\g^{(q)}})$, $A_q^c$ contains the angle between $\tau_i$ and a boundary segment of the triangle with sides $\tau_{i-1}$ and $\tau_i$ for $i \in [2,n]$. It contradicts $A_p^c=A_q^c$. Thus the assertion holds.
\end{proof}

 We define the maps $\sr : \Acom(T_{\g^{(p)}},T_{\g^{(q)}}) \rightarrow \{\mbox{subsets of } A(T_{\g})\}$ and $\si : \Acom(T_{\g^{(p)}},T_{\g^{(q)}}) \rightarrow \{\mbox{subsets of } A(T_{\g^{(pq)}})\}$ by
\[
 \sr(A_p,A_q)=A_p^q \sqcup A_q^{cp},\ \ \si(A_p,A_q)=A_q^q \sqcup A_p^{cp}
\]
for $(A_p,A_q)\in\Acom(T_{\g^{(p)}},T_{\g^{(q)}})$.

\begin{lemma}\label{srsi}
 For $(A_p,A_q)\in\Acom(T_{\g^{(p)}},T_{\g^{(q)}})$, $\sr(A_p,A_q) \in \bA(T_{\g})$ and $\si(A_p,A_q) \in \bA(T_{\g^{(pq)}})$ hold.
\end{lemma}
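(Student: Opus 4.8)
The plan is to prove the two assertions separately. The first one, $\sr(A_p,A_q)=A_p^q\sqcup A_q^{cp}\in\bA(T_{\g})$, requires nothing: it is literally one of the two clauses in the definition of $\g$-compatibility, so it holds by hypothesis. Thus the entire content of the lemma is the second assertion, $\si(A_p,A_q)=A_q^q\sqcup A_p^{cp}\in\bA(T_{\g^{(pq)}})$. The first thing I would record is that $\si(A_p,A_q)$ even makes sense as a subset of $A(T_{\g^{(pq)}})$: blowing up $p$ only alters $T_{\g}$ on the $p$-side of $\tau_n$ and blowing up $q$ only alters it on the $q$-side of $\tau_1$, so the triangulated subpolygons $T_{\g^{(q)}}^q$, $T_{\g^{(p)}}^c=T_{\g^{(q)}}^c$, $T_{\g^{(p)}}^p$ coincide, together with their ambient diagonals, with $T_{\g^{(pq)}}^q$, $T_{\g^{(pq)}}^c$, $T_{\g^{(pq)}}^p$ respectively; under these identifications $A_q^q$, $A_p^c=A_q^c$ and $A_p^p$ are subsets of $A(T_{\g^{(pq)}})$.

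Next I would verify condition (2) of Definition \ref{defpm} for $\si(A_p,A_q)$ on $T_{\g^{(pq)}}$. Cutting along the diagonals $\tau_1$ and $\tau_n$ partitions the triangles of $T_{\g^{(pq)}}$ into those lying in $T_{\g^{(pq)}}^q$, those in $T_{\g^{(pq)}}^c$, and those in $T_{\g^{(pq)}}^p$. Using the identifications above and the fact that $A_q\in\bA(T_{\g^{(q)}})$ and $A_p\in\bA(T_{\g^{(p)}})$ satisfy (2), each triangle in the first block carries exactly one selected angle, which lies in $A_q^q$; each triangle in the second block exactly one, lying in $A_p^c=A_q^c$; and each in the third block exactly one, lying in $A_p^p$. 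Since an angle belongs to a unique triangle, these accounts are disjoint and cover every triangle once, so (2) holds. When $n=1$ the middle block is empty and $A_p^c=A_q^c=\emptyset$, which causes no trouble.

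For condition (1), since (2) holds we get $\#\si(A_p,A_q)=\#\{\text{triangles of }T_{\g^{(pq)}}\}=\#\{\text{vertices of }T_{\g^{(pq)}}\text{ incident to at least one diagonal}\}$, the standard count for these triangulated polygons used, for instance, in the proof of Lemma \ref{cc'}. Hence (1) is equivalent to the assertion that no two distinct angles of $\si(A_p,A_q)$ share a vertex. A clash internal to $A_q^q$, or internal to $A_p^{cp}$, is excluded by condition (1) for $A_q$ on $T_{\g^{(q)}}$, resp.\ for $A_p$ on $T_{\g^{(p)}}$. So the only thing left to rule out is a clash at a vertex $v$ lying in both the $q$-region and the $cp$-region, which forces $v$ to be an endpoint of $\tau_1$. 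At such a $v$, the set $A_q^q$ contains an angle at $v$ exactly when $A_q$ matches $v$ by an angle of a triangle on the $q$-side of $\tau_1$, and $A_p^{cp}$ contains an angle at $v$ exactly when $A_p$ matches $v$ on the $cp$-side of $\tau_1$, i.e.\ not on the $q$-side; therefore $\si(A_p,A_q)$ has exactly one angle at $v$ precisely when $A_q$ and $A_p$ agree about whether $v$ is matched on the $q$-side of $\tau_1$. The identical bookkeeping applied to $\sr(A_p,A_q)=A_p^q\sqcup A_q^{cp}$ shows that $\sr(A_p,A_q)$ has exactly one angle at $v$ under exactly the same condition; and since $\sr(A_p,A_q)\in\bA(T_{\g})$ satisfies (1) at $v$ (as $v$ is incident to the diagonal $\tau_1$ of $T_{\g}$), that agreement does hold at every endpoint of $\tau_1$. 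Hence $\si(A_p,A_q)$ has exactly one angle at each endpoint of $\tau_1$ too, so no clash occurs and (1) holds. The case $n=1$ is covered verbatim, the $\g$-compatibility clause then reading $A_p^q\sqcup A_q^p\in\bA(T_{\g})$ as in Lemma \ref{defgsym}.

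The only genuinely new point, and the step I expect to demand the most care, is this last clash analysis at the endpoints of $\tau_1$: one must make precise the bookkeeping of "which side of $\tau_1$ matches the vertex $v$" and observe that it is symmetric in the roles of $A_p^q\sqcup A_q^{cp}$ and $A_q^q\sqcup A_p^{cp}$, so that the agreement required for $\si(A_p,A_q)$ to be a perfect matching is exactly the clause already guaranteed by $\g$-compatibility. Everything else is the decomposition-and-count routine already carried out in the proof of Lemma \ref{defgsym}.
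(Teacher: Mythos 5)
Your proof is correct, and it takes a genuinely different route from the paper's. For condition~(1) of $\si(A_p,A_q)$, the paper splits into cases: for $n>1$ it reruns the argument of Lemma~\ref{defgsym}, which derives a contradiction directly from $A_p^c=A_q^c$ by locating a vertex incident to all of $\tau_1,\ldots,\tau_n$ and comparing the forced angles of $A_p^c$ and $A_q^c$; for $n=1$ it gives a short separate argument that pushes the clash from one endpoint of $\tau_1$ to the other and contradicts $A_p^q\sqcup A_q^p\in\bA(T_{\g})$. You instead cut only along $\tau_1$ and observe the clean symmetry: at either endpoint $v$ of $\tau_1$, the set $\si=A_q^q\sqcup A_p^{cp}$ has exactly one angle at $v$ if and only if $A_p$ and $A_q$ agree on which side of $\tau_1$ they match $v$, and this is the very same condition under which $\sr=A_p^q\sqcup A_q^{cp}$ has exactly one angle at $v$; since $\sr\in\bA(T_{\g})$ is a hypothesis, $\si$ also has exactly one angle there. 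This makes the argument uniform in $n$, invokes only the $\sr\in\bA(T_{\g})$ half of $\g$-compatibility at the clash, and avoids re-examining the chain $\tau_1,\ldots,\tau_n$ and the middle region $A_p^c=A_q^c$. The counting step (number of triangles equals number of vertices incident to a diagonal) that you invoke is the same Euler-characteristic fact used implicitly throughout the paper, though the citation to Lemma~\ref{cc'} is a bit loose since that lemma treats the copies $T_{\ell_p}^i\simeq T_{\g}$ rather than $T_{\g^{(pq)}}$ itself; the identity does hold for all the polygons $T_\de$ here, so this is a minor matter of attribution, not a gap.
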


\begin{proof}
 By the $\g$-compatibility, $\sr=A_p^q \sqcup A_q^{cp} \in \bA(T_{\g})$. If $n>1$, in the same as the proof of Lemma \ref{defgsym}, $\si(A_p,A_q) \in \bA(T_{\g^{(pq)}})$ holds. Suppose that $n=1$. If $\si(A_p,A_q) \notin \bA(T_{\g^{(pq)}})$, each of $A_q^q$ and $A_p^p$ has an angle incident to one endpoint of $\tau_1$. Thus each of $A_p^q$ and $A_q^p$ must have an angle incident to the other endpoint of $\tau_1$, so it contradicts $A_p^q \sqcup A_q^p \in \bA(T_{\de})$.
\end{proof}

\begin{lemma}\label{n=1case}
 Let $n=1$ and $A=(A_p,A_q) \in \Acom(T_{\g^{(p)}},T_{\g^{(q)}})$. Then the following conditions are equivalent:\par
 (1) $\tau_1 \in Y(A_p)$, \hspace{1mm} (2) $\tau_1 \in Y(A_q)$, \hspace{1mm} (3) $\tau_1 \in Y(\sr(A))$, \hspace{1mm} (4) $\tau_1 \in Y(\si(A))$.
\end{lemma}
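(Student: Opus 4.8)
The plan is to reduce all four conditions to a single combinatorial alternative describing how $A=(A_p,A_q)$ behaves at the two endpoints of the unique diagonal $\tau_1=\tau_n$. Write $v$ for the common endpoint of $\tau_1$, $\z_m$ and $\xi_1$, and $v'$ for the common endpoint of $\tau_1$, $\z_1$ and $\xi_\ell$. When $n=1$ the polygon $T_\g$ is a quadrilateral assembled from the triangle $\triangle^q$ with vertices $q,v,v'$ (whose sides are $\tau_1$ and two boundary segments) and the triangle $\triangle^p$ with vertices $v,p,v'$; moreover $\triangle^q$ is $T_\g^q=T_{\g^{(p)}}^q$ and $\triangle^p$ is $T_\g^p=T_{\g^{(q)}}^p$, since $T_{\g^{(p)}}$ carries no fan over $q$ and $T_{\g^{(q)}}$ no fan over $p$. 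Because $n=1$ all central subpolygons are degenerate, so $A(T_\de)=A(T_\de)^q\sqcup A(T_\de)^p$ for $\de\in\{\g,\g^{(p)},\g^{(q)},\g^{(pq)}\}$, and by Lemma \ref{srsi} we have $\sr(A)=A_p^q\sqcup A_q^p\in\bA(T_\g)$ and $\si(A)=A_q^q\sqcup A_p^p\in\bA(T_{\g^{(pq)}})$, where $A_p^q$ is the unique marked angle of $\triangle^q$ in $A_p$ and $A_q^p$ the unique marked angle of $\triangle^p$ in $A_q$ (Definition \ref{defpm}(2)).

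\emph{Step 1: the $\g$-compatibility dichotomy.} First I would read off what $\g$-compatibility says locally: by Lemma \ref{defgsym} it amounts, for $n=1$, to $A_p^q\sqcup A_q^p\in\bA(T_\g)$. Inspecting the conditions of Definition \ref{defpm} on the quadrilateral $T_\g$ (each of $\triangle^q,\triangle^p$ carries exactly one marked angle, and each of $v,v'$ is matched exactly once, while $q$ and $p$ are matched by nothing) leaves precisely two possibilities: \emph{(A)} $A_p^q$ is the angle of $\triangle^q$ at $v$ and $A_q^p$ is the angle of $\triangle^p$ at $v'$; or \emph{(B)} $A_p^q$ is the angle of $\triangle^q$ at $v'$ and $A_q^p$ is the angle of $\triangle^p$ at $v$. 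In particular the marked angle of $\triangle^q$ (resp.\ $\triangle^p$) is never the one at $q$ (resp.\ $p$), and the conditions ``$A_p$ assigns $v$ an angle not in $\triangle^q$'' and ``$A_q$ assigns $v'$ an angle not in $\triangle^p$'' both coincide with case (B). It follows immediately that $\sr(A)\in\bA(T_\g)$ assigns $v$ the angle of $\triangle^q$ at $v$ in case (A) and the angle of $\triangle^p$ at $v$ in case (B), and that $\si(A)\in\bA(T_{\g^{(pq)}})$ assigns $v$ an angle on the $p$-side of $\tau_1$ exactly in case (B) and $v'$ an angle on the $q$-side exactly in case (B).

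\emph{Step 2: conditions (1), (2), (3).} Next I would pin down the minimal matchings near $\tau_1$ from the min-condition and the counterclockwise conventions defining $\z_\bullet,\xi_\bullet$: one checks that $A_-(T_\g)$ uses the angle of $\triangle^q$ at $v$ and the angle of $\triangle^p$ at $v'$; that $A_-(T_{\g^{(p)}})$ uses the angle of $\triangle^q$ at $v$ but matches $v'$ inside the fan over $p$; and that $A_-(T_{\g^{(q)}})$ matches $v$ inside the fan over $q$ but uses the angle of $\triangle^p$ at $v'$. Since $Y=Y'$ for $\de\in\{\g,\g^{(p)},\g^{(q)}\}$, and since each of $T_{\g^{(p)}},T_{\g^{(q)}}$ has only two exterior angles having $\tau_1$ as a side while the four such angles of $T_\g$ are, by Lemma \ref{boundseg}, either all present in $A_-(T_\g)\triangle B$ or all absent, one deduces — after cancelling the contribution at the endpoint where $A_-$ already uses such an angle and noting that the remaining potential contribution forces the same conclusion through the one-marked-angle rule — that $\tau_1\in Y(A_p)$ iff $A_p$ assigns $v$ an angle not in $\triangle^q$, that $\tau_1\in Y(A_q)$ iff $A_q$ assigns $v'$ an angle not in $\triangle^p$, and that $\tau_1\in Y(\sr(A))$ iff $\sr(A)$ assigns $v$ an angle not in $\triangle^q$. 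By Step 1 each of these holds precisely in case (B), so (1), (2) and (3) are equivalent.

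\emph{Step 3: condition (4), and the obstacle.} Finally, since $\si(A)\in\bA(T_{\g^{(pq)}})$ and $n=1$, condition (4) is governed by the exceptional clause in the definition of $Y$: $\tau_1\in Y(\si(A))$ iff $\si(A)$ contains one of the four angles at $v$ between $\z_m$ and $\tau_1$ or a boundary segment, or at $v'$ between $\xi_\ell$ and $\tau_1$ or a boundary segment. The two such angles at $v$ are exactly the two angles of $T_{\g^{(pq)}}$ at $v$ lying on the $p$-side of $\tau_1$, so $\si(A)$ uses one of them iff $\si(A)$ assigns $v$ a $p$-side angle; likewise for $v'$ and the $q$-side. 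By the last assertion of Step 1 each of these happens precisely in case (B), hence (4) is also equivalent to case (B), which completes the chain (1)$\Leftrightarrow$(2)$\Leftrightarrow$(3)$\Leftrightarrow$(4). The delicate part is Step 2: locating the minimal matchings in a neighbourhood of $\tau_1$ in all three polygons and tracking which exterior angles of $\tau_1$ actually survive in $A_-(T_\de)\triangle B$; here the observation that a marked angle of $\triangle^q$ at $q$ or of $\triangle^p$ at $p$ is incompatible with $\g$-compatibility is exactly what collapses the a priori disjunction in the description of $Y$ to a single-vertex statement. When $\g$ is itself a loop one works with the curves $\ell_p,\ell_q$ of Figure \ref{looploop} in place of ordinary loops, but the local picture around $\tau_1$ is unchanged and the argument applies verbatim.
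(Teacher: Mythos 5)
Correct, and essentially the same argument as the paper's proof. You make explicit the dichotomy (your cases (A)/(B)) that $\g$-compatibility forces on the marked angles of the two triangles adjacent to $\tau_1$ and reduce each of (1)--(4) to case (B); the paper reduces (1)--(3) to the equivalent statement that $A_p$ contains the angle between $\tau_1$ and $\xi_\ell$, and then identifies (4) with the exceptional clause in the definition of $Y$ via the marked angle at $v$ on the $p$-side, which is exactly your case (B) again.
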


\begin{proof}
 In this case, $\sr(A) = A_p^q \sqcup A_q^p$ has exactly two angles. Each of the conditions (1)-(3) is equivalent that the angle between $\tau_1$ and $\xi_{\ell}$ is contained in $A_p$. Moreover, it is equivalent that $A_p$ contains either the angle between $\tau_1$ and $\z_m$ or the angle between $\z_m$ and a boundary segment of $T_{\g^{(pq)}}$, that is, the condition (4) holds. Therefore, the conditions (1)-(4) are equivalent.
\end{proof}

\begin{proposition}\label{bijcomangle}
 The map $\si$ is a bijection between $\Acom(T_{\g^{(p)}},T_{\g^{(q)}})$ and $\bA(T_{\g^{(pq)}})$ satisfying $\oox(A)=x(\si(A))$ and $\ooy(A)=y(\si(A))$ for $A=(A_p,A_q) \in \Acom(T_{\g^{(p)}},T_{\g^{(q)}})$, where
\[
 \oox(A):=\frac{x(A_p)x(A_q)}{x(\sr(A))},\ \ \ooy(A):=\frac{y(A_p)y(A_q)}{y(\sr(A))}.
\]
\end{proposition}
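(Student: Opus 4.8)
The plan is to exhibit an explicit inverse of $\si$ assembled from the completion operators $C_{\tau_1}$ and $C_{\tau_n}$, and then to verify the two weight identities by a region-by-region bookkeeping along the decomposition $A=A^q\sqcup A^c\sqcup A^p$; the identity $\ooy(A)=y(\si(A))$ will be the only non-formal part.

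First I would prove that $\si$ is a bijection. That $\si$ maps into $\bA(T_{\g^{(pq)}})$ is Lemma \ref{srsi}. For the inverse, given $B\in\bA(T_{\g^{(pq)}})$ decompose $B=B^q\sqcup B^c\sqcup B^p$. Since $T_{\g^{(pq)}}^{cp}$ and $T_{\g^{(p)}}^{cp}$ coincide as triangulated polygons (the $\xi$-fan sits only on the $q$-side), and likewise $T_{\g^{(pq)}}^{qc}=T_{\g^{(q)}}^{qc}$ (the $\z$-fan sits only on the $p$-side), one may regard $B^{cp}\subseteq A(T_{\g^{(p)}})$ and $B^{qc}\subseteq A(T_{\g^{(q)}})$ and set
\[
 \Theta(B):=\bigl(\,C_{\tau_1}(B^{cp})\,,\,C_{\tau_n}(B^{qc})\,\bigr).
\]
The completion lemma (with $T_{\g^{(p)}}^{+;0}=T_{\g^{(p)}}$ and $T_{\g^{(q)}}^{-;n+1}=T_{\g^{(q)}}$) gives $C_{\tau_1}(B^{cp})\in\bA(T_{\g^{(p)}})$ and $C_{\tau_n}(B^{qc})\in\bA(T_{\g^{(q)}})$, once one checks that $B^{cp}$ leaves exactly one endpoint of $\tau_1$ unmatched in $T_{\g^{(p)}}$ (and symmetrically for $\tau_n$), which follows from the same counting identity used to prove the completion lemma. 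Because $C_{\tau_1}$ only modifies the matching on the triangle $\triangle_0\subseteq T_{\g^{(p)}}^q$ and $C_{\tau_n}$ only on the triangle $\triangle_n\subseteq T_{\g^{(q)}}^p$, we have $C_{\tau_1}(B^{cp})^{cp}=B^{cp}$, $C_{\tau_n}(B^{qc})^q=B^q$, and both have $c$-part $B^c$; so Lemma \ref{defgsym} (together with a direct check when $n=1$) shows $\Theta(B)\in\Acom(T_{\g^{(p)}},T_{\g^{(q)}})$ and $\si(\Theta(B))=B^q\sqcup B^{cp}=B$. Conversely, for $A=(A_p,A_q)\in\Acom(T_{\g^{(p)}},T_{\g^{(q)}})$ one has $\si(A)^{cp}=A_p^{cp}$ and, using $A_p^c=A_q^c$, $\si(A)^{qc}=A_q^q\sqcup A_p^c=A_q^{qc}$, so uniqueness of completions yields $\Theta(\si(A))=(A_p,A_q)$. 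Thus $\si$ is a bijection with inverse $\Theta$.

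The $x$-identity is then purely formal. Writing $x(S)$ multiplicatively along $S=S^q\sqcup S^c\sqcup S^p$ and cancelling the $q$-factor of $A_p$ and the $c,p$-factors of $A_q$ against $x(\sr(A))=x(A_p^q\sqcup A_q^{cp})$ gives
\[
 \oox(A)=\frac{x(A_p)x(A_q)}{x(\sr(A))}=x(A_q^q)\,x(A_p^c)\,x(A_p^p)=x(A_q^q\sqcup A_p^{cp})=x(\si(A)),
\]
where we use that $x_a$ for an angle $a$ depends only on its opposite side, which is unchanged under the identifications above. For $\ooy(A)=y(\si(A))$ it is enough to show that for every diagonal $\tau$ of $T_{\g^{(pq)}}$,
\[
 \mathbf{1}[\tau\in Y(A_p)]+\mathbf{1}[\tau\in Y(A_q)]-\mathbf{1}[\tau\in Y(\sr(A))]=\mathbf{1}[\tau\in Y(\si(A))].
\]
Here I would use that for each $\de\in\{\g,\g^{(p)},\g^{(q)},\g^{(pq)}\}$ the minimal matching $A_-(T_{\de})$ restricts, region by region, to the minimal matchings of the common $q$-, $c$- and $p$-subregions, up to the prescribed first angle at the cut arcs; hence $A_-(T_\de)\triangle A$ decomposes along $q/c/p$, and Lemma \ref{boundseg} forces all exterior angles at a given diagonal $\tau$ to appear together in $A_-(T_\de)\triangle A$, so membership of $\tau$ in $Y$ is detected within the single region containing $\tau$. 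For $\tau=\z_j$ (present only on the $p$-side, shared by $T_{\g^{(p)}}$ and $T_{\g^{(pq)}}$, absent from $T_\g$ and $T_{\g^{(q)}}$) the identity collapses to $\mathbf{1}[\z_j\in Y(A_p)]=\mathbf{1}[\z_j\in Y(\si(A))]$, which holds because $A_p$ and $\si(A)$ agree on the $p$-part; the case $\tau=\xi_j$ is symmetric with $A_q$; and for interior $\tau_i$ with $2\le i\le n-1$ all four matchings share the $c$-region, so the identity is immediate. The remaining arcs $\tau_1$ and $\tau_n$ require care: when $n>1$ the extra summand $\{\tau_1\}$ in the definition of $Y$ never occurs, so the same local analysis applies, while when $n=1$ — the only configuration in which $Y(A)$ can acquire $\{\tau_1\}$ — the needed equivalences $\tau_1\in Y(A_p)\Leftrightarrow\tau_1\in Y(A_q)\Leftrightarrow\tau_1\in Y(\sr(A))\Leftrightarrow\tau_1\in Y(\si(A))$ are exactly Lemma \ref{n=1case}.

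The main obstacle is this last step: justifying rigorously that the minimal matchings decompose compatibly with the $q/c/p$-decomposition and keeping track of the exterior angles at the cut arcs $\tau_1$ and $\tau_n$ — in particular reconciling the counterclockwise-first convention of the min-condition with the glueing, and separately treating the degenerate case $n=1$, where $T_{\g^{(pq)}}^c$ is empty and the definition of $Y$ acquires its extra term.
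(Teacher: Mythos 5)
Your proposal follows the paper's proof essentially step for step: you build the same inverse map (the paper's $\omega$) from the completion operators $C_{\tau_1}$, $C_{\tau_n}$, prove the $x$-identity by the same cancellation, and reduce the $y$-identity to $Y(A_p)\sqcup Y(A_q)=Y(\si(A))\sqcup Y(\sr(A))$ via Lemmas \ref{boundseg}, \ref{defgsym}, and \ref{n=1case}, splitting on $n=1$ vs.\ $n>1$ exactly as the paper does. The one point you flag as an obstacle—compatibility of the minimal matchings with the $q/c/p$-decomposition—is handled in the paper simply by recording $A_-(T_{\g^{(pq)}})^c=A_-(T_{\g^{(p)}})^c=A_-(T_{\g^{(q)}})^c=A_-(T_{\g})^c$, which follows directly from the min-condition, so there is no genuine gap in your outline.
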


\begin{proof}
 First of all, we construct the inverse map of $\si$. Let $B \in \bA(T_{\g^{(pq)}})$. If $n>1$, $C_{\tau_1}(B^{cp})^c=B^c=C_{\tau_n}(B^{qc})^c$ holds. If $n=1$, then $C_{\tau_1}(B^{p})^q \sqcup C_{\tau_n}(B^{q})^{p} \in \bA(T_{\g})$ holds by the proof of Lemma \ref{srsi}. Thus $(C_{\tau_1}(B^{cp}),C_{\tau_n}(B^{qc})) \in \Acom(T_{\g^{(p)}},T_{\g^{(q)}})$ by Lemma \ref{defgsym}. We define the map $\w : \bA(T_{\g^{(pq)}}) \rightarrow \Acom(T_{\g^{(p)}},T_{\g^{(q)}})$ by $\w(B)=(C_{\tau_1}(B^{cp}),C_{\tau_n}(B^{qc}))$. Then it is easy to show that $\w\si$ and $\si\w$ are identities. Thus $\si : \Acom(T_{\g^{(p)}},T_{\g^{(q)}}) \rightarrow \bA(T_{\g^{(pq)}})$ is a bijection.

 We have
\[
 \oox(A)=\frac{x(A_p)x(A_q)}{x(A_p^q)x(A_q^{cp})}=x(A_p^{cp})x(A_q^q)=x(\si(A)).
\]

 We only need to prove $Y(A_p) \sqcup Y(A_q) = Y(\si(A)) \sqcup Y(\sr(A))$, possibly with multiple elements, to give $\ooy(A)=y(\si(A))$. Suppose that $n > 1$. By Lemma \ref{boundseg}, $\tau_i \in Y(\si(A))$ (resp., $Y(A_p)$, $Y(A_q)$, $Y(\sr(A))$) if and only if there is at least one exterior angle incident to $\tau_i$ in $(A_-(T_{\g^{(pq)}}) \triangle \si(A))^c$ (resp., $(A_-(T_{\g^{(p)}}) \triangle A_p)^c$, $(A_-(T_{\g^{(q)}}) \triangle A_q)^c$, $(A_-(T_{\g}) \triangle \sr(A))^c$). On the other hand, we have the equalities
\[
 A_-(T_{\g^{(pq)}})^c = A_-(T_{\g^{(p)}})^c = A_-(T_{\g^{(q)}})^c = A_-(T_{\g})^c \ \ \mbox{and} \ \ \si(A)^c = A_p^c = A_q^c = \sr(A)^c.
\]
 Then $\tau_i \in Y(\si(A))$ (resp., $\tau_i \in Y(\sr(A))$) if and only if $\tau_i \in Y(A_p)$ (resp., $\tau_i \in Y(A_q)$). Similarly, $\z_j \in Y(\si(A))$ (resp., $\xi_j \in Y(\si(A))$) if and only if $\z_j \in Y(A_p)$ (resp., $\xi_j \in Y(A_q)$). Thus we have $Y(A_p) \sqcup Y(A_q) = Y(\si(A)) \sqcup Y(\sr(A))$.

 Suppose that $n = 1$. As above, $\z_j \in Y(\si(A))$ (resp., $\xi_j \in Y(\si(A))$) if and only if $\z_j \in Y(A_p)$ (resp., $\xi_j \in Y(A_q)$). Therefore, Lemma \ref{n=1case} implies that $Y(A_p) \sqcup Y(A_q) = Y(\si(A)) \sqcup Y(\sr(A))$. This finishes the proof.
\end{proof}

 All that is left is to give the following proposition for the proof of Theorem \ref{2-notchedbij}.

\begin{proposition}\label{bijcom}
 There is a bijection $\varphi^{pq} : \Acom(T_{\g^{(p)}},T_{\g^{(q)}}) \rightarrow \bP(G_{\g^{(pq)}})$ satisfying $\oox(A)=\oox(\varphi^{pq}(A))$ and $\ooy(A)=\ooy(\varphi^{pq}(A))$ for $A=(A_p,A_q) \in \Acom(T_{\g^{(p)}},T_{\g^{(q)}})$.
\end{proposition}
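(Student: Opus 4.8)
The plan is to build $\varphi^{pq}$ out of the $1$-notched bijection $\varphi_p$ (Theorem \ref{1-notchedbij}) applied separately to the two factors, and then check that the $\g$-compatibility conditions on both sides match up. First I would recall that an element of $\bP(G_{\g^{(pq)}})$ is, by Definition \ref{defcom}, a pair $(P_p,P_q) \in \bP(G_{\g^{(p)}}) \times \bP(G_{\g^{(q)}})$ with $\res(P_p) \simeq \res(P_q)$, while an element of $\Acom(T_{\g^{(p)}},T_{\g^{(q)}})$ is a pair $(A_p,A_q) \in \bA(T_{\g^{(p)}}) \times \bA(T_{\g^{(q)}})$ with $A_p^c = A_q^c$ (for $n>1$) or $A_p^q \sqcup A_q^p \in \bA(T_{\g})$ (for $n=1$), by Lemma \ref{defgsym}. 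Thus the natural candidate is
\[
 \varphi^{pq}(A_p,A_q) := (\varphi_p(A_p), \varphi_q(A_q)),
\]
and the substance of the proof is to show this lands in $\bP(G_{\g^{(pq)}})$ and is a bijection onto it.

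The key step is therefore a compatibility statement: $(A_p,A_q) \in \Acom(T_{\g^{(p)}},T_{\g^{(q)}})$ if and only if $\res(\varphi_p(A_p)) \simeq \res(\varphi_q(A_q))$. To prove it I would trace through the construction of $\varphi_p = \varphi^p (\psi^p)^{-1}$ from Proposition \ref{bijsymangle} and Proposition \ref{bijsym}, noting that $\res$ on the snake-graph side corresponds under $\varphi^p$ to $\res$ on the polygon side (this is exactly what Proposition \ref{bijsym} and the map $\w$ in the proof of Proposition \ref{bijcomangle} encode), and that $\res(\psi^p(A))$ is determined by the restriction of $A$ to the ``central'' part $T_{\ell_p}^c$-analogue. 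Concretely, $\res(\varphi_p(A_p))$ is computed from $A_p$ by taking the unique completion $C_{\tau_1}$ of the restriction of $A_p$ to the subpolygon $T_{\g^{(p)}}$ cut along $\tau_1$, i.e. from $A_p^{cp}$ together with its forced completion; similarly $\res(\varphi_q(A_q))$ comes from $A_q^{qc}$. Matching these two perfect matchings of $G_{\g}$ is then equivalent to matching the corresponding elements of $\bA(T_{\g})$ under $\varphi$, hence (by Theorem \ref{p.m.bij}) to an equality of central restrictions, which is precisely the $\g$-compatibility condition after unwinding the $n=1$ versus $n>1$ cases via Lemma \ref{defgsym} and Lemma \ref{srsi}. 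I would organize this as: (i) $\varphi^{pq}$ is well-defined, using the $\res$-compatibility; (ii) injectivity, immediate from injectivity of $\varphi_p,\varphi_q$ on each factor; (iii) surjectivity, by running the inverses $\varphi_p^{-1},\varphi_q^{-1}$ and checking the resulting pair is $\g$-compatible, again by $\res$-compatibility.

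Finally, the monomial identities $\oox(A) = \oox(\varphi^{pq}(A))$ and $\ooy(A) = \ooy(\varphi^{pq}(A))$ follow by bookkeeping. By Theorem \ref{1-notchedbij} we have $x(A_p) = \ox(\varphi_p(A_p))$, $x(A_q) = \ox(\varphi_q(A_q))$, and similarly for $y$; writing $\oox(A) = \ox(\varphi_p(A_p))\ox(\varphi_q(A_q))/\ox(\res(\cdots))$ on the polygon side and comparing with $\oox(P_p,P_q) = x(P_p)x(P_q)/x(\res(P_p))^3$ on the snake side, the discrepancy is absorbed exactly because $x(\sr(A)) = x(\res(\varphi_p(A_p)))$ and because $\res(P_p) \simeq \res(P_q)$ so the two ``$\res$'' denominators coincide; the powers of $3$ match after expanding $\ox = x/x(\res)$. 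The same computation with $y$ in place of $x$, using Proposition \ref{bijcomangle}'s identity $Y(A_p) \sqcup Y(A_q) = Y(\si(A)) \sqcup Y(\sr(A))$ together with the corresponding snake-graph identity underlying Theorem \ref{MSW1com}, gives the $y$-statement.

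\begin{proof}
 Define $\varphi^{pq}(A_p,A_q) := (\varphi_p(A_p),\varphi_q(A_q))$, where $\varphi_p$ and $\varphi_q$ are the bijections of Theorem \ref{1-notchedbij}. We first check this is well-defined, i.e. that $(\varphi_p(A_p),\varphi_q(A_q)) \in \bP(G_{\g^{(pq)}})$, equivalently $\res(\varphi_p(A_p)) \simeq \res(\varphi_q(A_q))$. Unwinding $\varphi_p = \varphi^p(\psi^p)^{-1}$, the definition (\ref{psi}) of $\psi^p$ together with the compatibility of $\varphi^p$ with $\res$ (Proposition \ref{bijsym}) and of $\varphi$ with the construction of $\overline{G}_{\g}$ (Theorem \ref{p.m.bij}) shows that $\res(\varphi_p(A_p))$ equals $\varphi\bigl(C_{\tau_1}(A_p|_{A'(T_{\g^{(p)}}^{-;1})})|_{A(T_{\g})^{qc}}\bigr)$ up to isomorphism, and similarly $\res(\varphi_q(A_q))$ equals $\varphi\bigl(C_{\tau_n}(A_q|_{A'(T_{\g^{(q)}}^{+;n})})|_{A(T_{\g})^{cp}}\bigr)$. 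Using that $\varphi$ is a bijection $\bA(T_{\g}) \to \bP(G_{\g})$, these two perfect matchings of $G_{\g}$ are isomorphic if and only if the corresponding elements of $\bA(T_{\g})$ agree on their central restrictions, which by Lemma \ref{defgsym} (treating $n=1$ and $n>1$ separately, as in Lemmas \ref{srsi} and \ref{n=1case}) is exactly the statement $(A_p,A_q) \in \Acom(T_{\g^{(p)}},T_{\g^{(q)}})$; moreover the common value is $\varphi(\sr(A_p,A_q))$. Hence $\varphi^{pq}$ is well-defined and $\res(\varphi_p(A_p)) \simeq \res(\varphi_q(A_q)) \simeq \varphi(\sr(A_p,A_q))$.

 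Injectivity of $\varphi^{pq}$ is immediate from the injectivity of $\varphi_p$ and $\varphi_q$ on each factor. For surjectivity, let $(P_p,P_q) \in \bP(G_{\g^{(pq)}})$, so $\res(P_p) \simeq \res(P_q)$. Set $A_p := \varphi_p^{-1}(P_p)$ and $A_q := \varphi_q^{-1}(P_q)$. By the identity of the previous paragraph applied in reverse, $\res(P_p) \simeq \res(P_q)$ forces the central restrictions of $A_p$ and $A_q$ to coincide, i.e. $(A_p,A_q) \in \Acom(T_{\g^{(p)}},T_{\g^{(q)}})$, and $\varphi^{pq}(A_p,A_q) = (P_p,P_q)$. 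Thus $\varphi^{pq}$ is a bijection.

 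It remains to verify the monomial identities. Write $A = (A_p,A_q)$. By Theorem \ref{1-notchedbij}, $x(A_p) = \ox(\varphi_p(A_p))$ and $x(A_q) = \ox(\varphi_q(A_q))$, and by the first paragraph $x(\sr(A)) = x(\varphi(\sr(A))) = x(\res(\varphi_p(A_p)))$, using Theorem \ref{p.m.bij}. Since $\res(P_p) \simeq \res(P_q)$ gives $x(\res(P_p)) = x(\res(P_q))$, we compute
\[
 \oox(A) = \frac{x(A_p)x(A_q)}{x(\sr(A))} = \frac{\ox(\varphi_p(A_p))\,\ox(\varphi_q(A_q))}{x(\res(\varphi_p(A_p)))} = \frac{x(P_p)\,x(P_q)}{x(\res(P_p))^3} = \oox(\varphi^{pq}(A)),
\]
where the third equality expands $\ox(P_p) = x(P_p)/x(\res(P_p))$ and likewise for $P_q$. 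The same computation with $y$ in place of $x$, combined with $y(\sr(A)) = y(\res(\varphi_p(A_p)))$ (Theorem \ref{1-notchedbij}) and $y(\res(P_p)) = y(\res(P_q))$, yields $\ooy(A) = \ooy(\varphi^{pq}(A))$. This finishes the proof.
\end{proof}
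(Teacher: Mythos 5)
Your proposal is correct and takes essentially the same route as the paper: you define $\varphi^{pq}$ as the product $\varphi_p\times\varphi_q$ (which unwinds to $(\varphi^p\times\varphi^q)(\psi^p\times\psi^q)^{-1}$, exactly as in the paper), prove well-definedness/bijectivity by showing the $\g$-compatibility on the angle side matches $\res(P_p)\simeq\res(P_q)$ on the snake-graph side via Lemma \ref{defgsym}, and verify the $\oox,\ooy$ identities by the same algebraic bookkeeping using $x(\sr(A))=x(\res(\varphi_p(A_p)))$ and $\res(P_p)\simeq\res(P_q)$. The only differences are organizational (explicitly separating well-definedness, injectivity, and surjectivity, and writing an explicit if somewhat informal formula for $\res(\varphi_p(A_p))$ in terms of $C_{\tau_1}$), not substantive.
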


\begin{proof}
 By Propositions \ref{bijsym} and \ref{bijsymangle}, there are bijections
\[\SelectTips{eu}{}
\xymatrix@C=7mm@R=1mm{
 \bA(T_{\g^{(p)}}) \times \bA(T_{\g^{(q)}})
 && \ar[ll]_{\sim}^{\psi^p\times\psi^q} \Asym(T_{\ell_p}) \times \Asym(T_{\ell_q}) \ar[rr]^{\sim}_{\varphi^p\times\varphi^{q}}
 && \bP(G_{\g^{(p)}}) \times \bP(G_{\g^{(q)}})\\
 \rotatebox{90}{$\in$} && \rotatebox{90}{$\in$} &&  \rotatebox{90}{$\in$}\\
 A=(A_p,A_q) && \ar@{|->}[ll] (S_p,S_q) \ar@{|->}[rr] && (P_p,P_q)
}
\]
 satisfying $x(A_{\ast})=\ox(P_{\ast})$ and $y(A_{\ast})=\oy(P_{\ast})$, where $A_{\ast}=\psi^{\ast}(S_{\ast})$ $P_{\ast}=\varphi^{\ast}(S_{\ast})$ for $\ast\in\{p,q\}$.

 If $n>1$, by construction of $\psi^p$ and $\psi^q$, $A_p^c=A_q^c$ if and only if
\[
 C_{\tau_n}(S_p|_{A(U_{\ell_p}^1)\sqcup \{a_1^{\circ}\}})=C_{\tau_n}(S_p|_{A(U_{\ell_p}^2)\sqcup \{a_2^{\circ}\}})=C_{\tau_1}(S_q|_{A(U_{\ell_q}^1)\sqcup \{a_1^{\circ}\}})=C_{\tau_1}(S_q|_{A(U_{\ell_q}^2)\sqcup \{a_2^{\circ}\}}).
\]
 Thus it is the same as $\res(S_p)=\res(S_q)$, that is $\res(P_p)=\res(P_q)$. By Lemma \ref{defgsym}, $A \in \Acom(T_{\g^{(p)}},T_{\g^{(q)}})$ if and only if $(P_p,P_q) \in \bP(G_{\g^{(pq)}})$.

 If $n=1$ and $\res(P_p)=\res(P_q)$, then $A_p^q \sqcup A_q^{p}$ corresponds to $\res(S_p)=\res(S_q)$. Thus $A_p^q \sqcup A_q^{p} \in \bA(T_{\g})$. Conversely, suppose that $A_p^q \sqcup A_q^p \in \bA(T_{\g})$. The each angle of $S_p$ which is contained in the triangles $U_{\ell_p}^1$ and $U_{\ell_p}^2$ corresponds to the angle of $A_p^q$. Thus $A_p^q \sqcup A_q^{p}$ corresponds to $\res(S_p)$ since $A_p^q \sqcup A_q^p \in \bA(T_{\g})$. Similarly, $A_p^q \sqcup A_q^{p}$ corresponds to $\res(S_q)$. Therefore, we have $\res(S_p)=\res(S_q)$. So, by Lemma \ref{defgsym}, $A \in \Acom(T_{\g^{(p)}},T_{\g^{(q)}})$ if and only if $(P_p,P_q) \in \bP(G_{\g^{(pq)}})$, also in this case.

 Consequently, we have a bijection
\[
\varphi^{pq}:=(\varphi^p\times\varphi^{q})(\psi^p\times\psi^q)^{-1} : \Acom(T_{\g^{(p)}},T_{\g^{(q)}}) \rightarrow \bP(G_{\g^{(pq)}}).
\]
 On the other hand, we have $\sr(A) \simeq \res(S_p)$. As in the proof of Proposition \ref{bijsym}, we also have $x(\res(S_p))=x(\res(P_p))$ and $y(\res(S_p))=y(\res(P_p))$. Therefore, we have
\[
 \oox(\varphi^{pq}(A))=\frac{\ox(P_p)\ox(P_q)}{x(\res(P_p))}=\frac{x(A_p)x(A_q)}{x(\sr(A))}=\oox(A)
\]
and, similarly, $\ooy(\varphi^{pq}(A))=\ooy(A)$.
\end{proof}

\begin{proof}[Proof of Theorem \ref{2-notchedbij}]
 By Propositions \ref{bijcomangle} and \ref{bijcom}, there is a bijection $\varphi_{pq}=\varphi^{pq}\si^{-1} : \bA(T_{\g^{(pq)}}) \rightarrow \bP(G_{\g^{(pq)}})$ satisfying
\[
x(A)=\oox(\si^{-1}(A))=\oox(\varphi^{pq}\si^{-1}(A))\ \ \mbox{and}\ \ y(A)=\ooy(\si^{-1}(A))=\ooy(\varphi^{pq}\si^{-1}(A))
\]
 for $A \in \bA(T_{\g^{(pq)}})$.
\end{proof}

\begin{proof}[Proof of Theorem \ref{main} for $2$-notched arcs]
 The assertion follows immediately from Theorems \ref{MSW1com} and \ref{2-notchedbij}.
\end{proof}

\section{Proofs of our results for bipartite graphs}\label{bipartite}

 We refer the necessary notations in this section to the introduction. First, we prove the bijection between (1) and (3) in Theorem \ref{bijthm} and Proposition \ref{biparenclosed}.

\begin{proof}[Proof of the bijection between (1) and (3) in Theorem \ref{bijthm}]
 Angles incident to each vertex in $A(T_{\de})$ correspond bijectively with edges incident to the corresponding black vertex in $B_{\de}$. Angles in each triangle in $A(T_{\de})$ correspond bijectively with edges incident to the corresponding white vertex in $B_{\de}$. The assertion immediately follows from the definitions of perfect matchings of angles and perfect matchings of graphs.
\end{proof}

\begin{proof}[Proof of Proposition \ref{biparenclosed}]
 Let $E \in \bP(B_{\de})$. For any vertex $v$ of $B_{\de}$, $v$ is incident to exactly zero or two edges in $E_-(B_{\de}) \triangle E$. As a consequence, $E_-(B_{\de}) \triangle E$ is a disjoint union of non-crossing cycles. Thus the assertion holds.
\end{proof}

 Second, we have to be careful of the following special case to prove Proposition \ref{biparformula}.

\begin{lemma}\label{inI}
 Suppose that $\de=\g^{(pq)}$ and $n=1$. For $A \in \bA(T_{\g^{(pq)}})$, $\tau_1 \in Y(A)$ if and only if $\tau_1 \in I(\varpi(A))$.
\end{lemma}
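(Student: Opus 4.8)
The statement concerns the special case $\de=\g^{(pq)}$ with $n=1$, where the definition of $Y(A)$ involves the extra term $\{\tau_1\}$ precisely when $A$ contains one of the four angles between $\z_m$ or $\xi_\ell$ and $\tau_1$ or a boundary segment of $T_{\g^{(pq)}}$. On the bipartite side, $I(\varpi(A))$ records which squares of $B_{\de}$ are enclosed by $E_-(B_{\de})\triangle\varpi(A)$, and the square labelled $\tau_1$ has as vertices the two white vertices of the triangles adjacent to $\tau_1$ together with the two black vertices at the endpoints of $\tau_1$. The plan is to translate ``$\tau_1\in I(\varpi(A))$'' into a local condition on the four edges of this square and then match it with the four-angle condition defining $Y(A)$ in this case.

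First I would set up notation: with $n=1$ the polygon $T_{\g^{(pq)}}$ has the single crossed diagonal $\tau_1$ together with the diagonals $\z_1,\dots,\z_m$ around $p$ and $\xi_1,\dots,\xi_\ell$ around $q$, and $\tau_1$, $\z_m$, $\xi_\ell$ lie in one triangle while $\tau_1$, $\z_1$, $\xi_1$ lie in another (this is the $n=1$ instance of Figure~\ref{minmatching}). Under $\varpi$, the square labelled $\tau_1$ in $B_{\de}$ corresponds to the four angles of $T_{\g^{(pq)}}$ at the two endpoints of $\tau_1$ inside the two triangles containing $\tau_1$; by Proposition~\ref{biparenclosed}, $\tau_1\in I(\varpi(A))$ exactly when all four boundary edges of this square lie in $E_-(B_{\de})\triangle\varpi(A)$, equivalently (since $\varpi$ is the bijection between these angles and edges) when $A_-(T_{\g^{(pq)}})\triangle A$ contains all four of those angles. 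Next I would determine which of those four angles lies in $A_-(T_{\g^{(pq)}})$ from the min-condition, and then, using the condition~(2) in Definition~\ref{defpm} (each triangle has exactly one marked angle) together with the structure of matchings around the two triangles containing $\tau_1$, argue that ``$A_-(T_{\g^{(pq)}})\triangle A$ contains one of these four angles'' already forces it to contain all four — this is the same propagation phenomenon as in Lemma~\ref{boundseg}, adapted to the two $\tau_1$-triangles.

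Then I would show that ``$A$ contains at least one of the four angles between $\z_m$ or $\xi_\ell$ and $\tau_1$ or a boundary segment of $T_{\g^{(pq)}}$'' — which is exactly the hypothesis that puts $\tau_1$ into $Y(A)$ via the first clause of the definition of $Y(A)$ — is equivalent to the condition on the $\tau_1$-square just obtained. Here one direction is immediate: such an angle is an exterior angle with side $\tau_1$ (or an angle at an endpoint of $\tau_1$ in a $\tau_1$-triangle), and if it is in $A$ but not in $A_-(T_{\g^{(pq)}})$, then it lies in the symmetric difference and hence in $Y'(A)\sqcup\{\tau_1\}$; conversely if $\tau_1$ is a side of an exterior angle in $A_-(T_{\g^{(pq)}})\triangle A$, tracing around the two $\tau_1$-triangles and using condition~(2) again shows that one of the four designated angles must be in $A$. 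I would organize this as a short case analysis according to whether $a_1^{\circ}$-type considerations place the minimal-matching angle at the $\z$-side or the $\xi$-side.

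\textbf{Main obstacle.} The delicate point is the bookkeeping of exactly which angles are exterior and which belong to the minimal matching in the degenerate $n=1$ geometry, since $\tau_1$ is flanked by two triangles that each also contain a $\z$-edge and a $\xi$-edge, so $\tau_1$ is itself a diagonal (not a boundary segment) and the ``exterior angle'' terminology must be handled carefully; the four-angle list in the definition of $Y(A)$ is tailored to catch precisely the configurations where $\tau_1$ gets enclosed, and I expect the bulk of the work to be verifying that this list is neither too large nor too small by exhausting the possible restrictions of $A$ to the two $\tau_1$-triangles. Once that local dictionary is pinned down, the equivalence with $\tau_1\in I(\varpi(A))$ follows formally from Proposition~\ref{biparenclosed} and the definition of $\varpi$.
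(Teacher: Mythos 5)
There is a genuine gap: the central translation step is wrong. You assert that, by Proposition~\ref{biparenclosed}, $\tau_1\in I(\varpi(A))$ exactly when all four boundary edges of the $\tau_1$-square lie in $E_-(B_{\de})\triangle\varpi(A)$. This is false, and in fact never holds. By the min-condition, the angle of $A_-(T_{\g^{(pq)}})$ at the top endpoint of $\tau_1$ is the exterior angle between $\xi_1$ and a boundary segment (and at the bottom endpoint, between $\z_1$ and a boundary segment); these angles lie in triangles that do \emph{not} contain $\tau_1$, so their $\varpi$-images are edges of $E_-(B_{\de})$ at those black vertices that are not edges of the $\tau_1$-square. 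Since $E_-(B_\de)\triangle\varpi(A)$ has exactly $0$ or $2$ edges at each black vertex, one of which (if any) is this $E_-$-edge, at most one $\tau_1$-square edge can lie in $E_-(B_\de)\triangle\varpi(A)$ at each endpoint of $\tau_1$. More fundamentally, a square lying in $B_{\varpi(A)}$ need not have all its edges on the \emph{boundary} of $B_{\varpi(A)}$; when the $\tau_1$-square is enclosed, so typically is an adjacent square (e.g.\ the $\xi_1$-square), and the shared edge is interior to $B_{\varpi(A)}$, not on its boundary. You also conflate the four angles of the $\tau_1$-square (those between $\tau_1$ and each of $\xi_1,\z_m,\xi_\ell,\z_1$) with the four angles appearing in the definition of $Y(A)$ (between $\z_m$ or $\xi_\ell$ and $\tau_1$ or a boundary segment); these overlap in only two angles.

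The correct route — which the paper's one-line proof is gesturing at — starts from exactly the observation about $A_-(T_{\g^{(pq)}})$ that you also flag as the delicate part, but then uses it differently. Fix the top endpoint $v$ of $\tau_1$. The two edges of $E_-(B_\de)\triangle\varpi(A)$ at $v$ are $\varpi$ of the $A_-$-angle (the $\xi_1$–boundary one) and $\varpi$ of $A$'s angle at $v$. Because $v$ is a boundary vertex of $B_\de$, the ``outer'' sector at $v$ must lie outside $B_{\varpi(A)}$. Hence the $\tau_1$-sector at $v$ lies inside $B_{\varpi(A)}$ if and only if $A$'s angle at $v$ lies on the $\z_m$-side of $\tau_1$ — i.e.\ is the angle between $\tau_1$ and $\z_m$ or the angle between $\z_m$ and a boundary segment. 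These are exactly two of the four angles in the definition of $Y(A)$; symmetrically at the bottom endpoint one gets the other two (involving $\xi_\ell$). The two local criteria agree because each is equivalent to the single well-defined statement $\tau_1\in I(\varpi(A))$, and since $\tau_1$ can never lie in $Y'(A)$ (no exterior angle has side $\tau_1$ when $n=1$), this gives the lemma. Your plan's final step — claiming that one of the four designated angles being in $A$ already forces all four square-edges into $E_-\triangle\varpi(A)$ ``by the same propagation as Lemma~\ref{boundseg}'' — would be chasing an equivalence with a vacuous condition, so it cannot close the argument as written.
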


\begin{proof}
 Since $A_{-}(T_{\g^{(pq)}})$ contains the angle between $\xi_1$ and a boundary segment of $T_{\g^{(pq)}}$, the assertion immediately follows from Proposition \ref{biparenclosed}.
\end{proof}

 Finally, we prove Proposition \ref{biparformula} and give an example for the results of this section.

\begin{proof}[Proof of Proposition \ref{biparformula}]
 It is trivial that $\varpi$ induce a bijection between $A_{\rm ex}(T_{\de})$ and the set of boundary edges of $B_{\de}$. Therefore, for $A \in \bA(T_{\de})$ and $\tau \in T_{\de}$, $\tau \in Y'(A)$ if and only if $E_-(B_{\de}) \triangle \varpi(A)$ contains at least one boundary edge of a square labeled by $\tau$, thus $\tau \in I(\varpi(A))$. By Lemma \ref{inI}, $\tau \in Y(A)$ if and only if $\tau \in I(\varpi(A))$.
\end{proof}

\begin{example}\label{ex2b}
 For the tagged arc $\de_2$ given in Subsection \ref{excoefffree}(2), we have
\[
B_{\de_2}\hspace{2mm}
\begin{tikzpicture}[baseline=0]
 \coordinate (l) at (0,0);
 \coordinate (lu) at (0.5,1); \fill (lu) circle (0.7mm);
 \coordinate (ru) at (1.5,1); \fill (ru) circle (0.7mm);
 \coordinate (r) at (3,0); \fill (r) circle (0.7mm);
 \coordinate (d) at (1.5,-1); \fill (d) circle (0.7mm);
 \coordinate (c) at (2.2,0); \fill (c) circle (0.7mm);
 \coordinate (1) at (0.5,0);
 \coordinate (2) at (1.2,0.2);
 \coordinate (3) at (1.7,0);
 \coordinate (4) at (2.4,0.4);
 \coordinate (5) at (2.4,-0.4);
 \draw[dotted] (l)--(lu)--(ru)--(r)--(d)--(l);
 \draw[dotted] (lu)--(d)--(ru)--(c)--(r) (d)--(c);
 \draw (d) to node[fill=white,inner sep=1]{\scriptsize $1$} (1);
 \draw (d)--(2);
 \draw (d) to node[fill=white,inner sep=1]{\scriptsize $6$} (3);
 \draw (d) to node[fill=white,inner sep=1]{\scriptsize $5$} (5);
 \draw (lu) to node[fill=white,inner sep=1]{\scriptsize $1$} (1);
 \draw (lu) to node[fill=white,inner sep=1]{\scriptsize $3$} (2);
 \draw (ru) to node[fill=white,inner sep=1]{\scriptsize $2$} (2);
 \draw (ru) to node[fill=white,inner sep=1]{\scriptsize $4$} (3);
 \draw (ru) to node[fill=white,inner sep=1]{\scriptsize $5$} (4);
 \draw (r) to node[fill=white,inner sep=1]{\scriptsize $6$} (4);
 \draw (r) to node[fill=white,inner sep=1]{\scriptsize $4$} (5);
 \draw (c) to node[fill=white,inner sep=1]{\scriptsize $3$} (3);
 \draw (c) to node[fill=white,inner sep=1]{\scriptsize $7$} (4);
 \draw (c)--(5);
 \filldraw[fill=white] (1) circle (0.7mm); \filldraw[fill=white] (2) circle (0.7mm); \filldraw[fill=white] (3) circle (0.7mm); \filldraw[fill=white] (4) circle (0.7mm); \filldraw[fill=white] (5) circle (0.7mm);
\end{tikzpicture},
 \hspace{7mm}
 E_-(B_{\de_2})=
\begin{tikzpicture}[baseline=0]
 \coordinate (l) at (0,0);
 \coordinate (lu) at (0.5,1);
 \coordinate (ru) at (1.5,1);
 \coordinate (r) at (3,0);
 \coordinate (d) at (1.5,-1);
 \coordinate (c) at (2.2,0);
 \coordinate (1) at (0.5,0);
 \coordinate (2) at (1.2,0.2);
 \coordinate (3) at (1.7,0);
 \coordinate (4) at (2.4,0.4);
 \coordinate (5) at (2.4,-0.4);
 \draw[dotted] (d) to (1);
 \draw[dotted] (d)--(2);
 \draw[dotted] (d) to (3);
 \draw[red,ultra thick] (d) to (5);
 \draw[red,ultra thick] (lu) to (1);
 \draw[dotted] (lu) to (2);
 \draw[red,ultra thick] (ru) to (2);
 \draw[dotted] (ru) to (3);
 \draw[dotted] (ru) to (4);
 \draw[red,ultra thick] (r) to (4);
 \draw[dotted] (r) to (5);
 \draw[red,ultra thick] (c) to (3);
 \draw[dotted] (c) to (4);
 \draw[dotted] (c)--(5);
 \fill (lu) circle (0.7mm); \fill (ru) circle (0.7mm); \fill (r) circle (0.7mm); \fill (d) circle (0.7mm); \fill (c) circle (0.7mm);
 \filldraw[fill=white] (1) circle (0.7mm); \filldraw[fill=white] (2) circle (0.7mm); \filldraw[fill=white] (3) circle (0.7mm); \filldraw[fill=white] (4) circle (0.7mm); \filldraw[fill=white] (5) circle (0.7mm);
\end{tikzpicture}.
\]
Then there are nine perfect matchings of $B_{\de_2}$ as follows:
\[
\begin{tikzpicture}[scale=0.9]
 \coordinate (l) at (0,0);
 \coordinate (lu) at (0.5,1);
 \coordinate (ru) at (1.5,1);
 \coordinate (r) at (3,0);
 \coordinate (d) at (1.5,-1);
 \coordinate (c) at (2.2,0);
 \coordinate (1) at (0.5,0);
 \coordinate (2) at (1.2,0.2);
 \coordinate (3) at (1.7,0);
 \coordinate (4) at (2.4,0.4);
 \coordinate (5) at (2.4,-0.4);
 \draw[dotted] (d) to (1);
 \draw[dotted] (d)--(2);
 \draw[dotted] (d) to (3);
 \draw[blue,ultra thick] (d) to (5);
 \draw[blue,ultra thick] (lu) to (1);
 \draw[dotted] (lu) to (2);
 \draw[blue,ultra thick] (ru) to (2);
 \draw[dotted] (ru) to (3);
 \draw[dotted] (ru) to (4);
 \draw[blue,ultra thick] (r) to (4);
 \draw[dotted] (r) to (5);
 \draw[blue,ultra thick] (c) to (3);
 \draw[dotted] (c) to (4);
 \draw[dotted] (c)--(5);
 \fill (lu) circle (0.7mm); \fill (ru) circle (0.7mm); \fill (r) circle (0.7mm); \fill (d) circle (0.7mm); \fill (c) circle (0.7mm);
 \filldraw[fill=white] (1) circle (0.7mm); \filldraw[fill=white] (2) circle (0.7mm); \filldraw[fill=white] (3) circle (0.7mm); \filldraw[fill=white] (4) circle (0.7mm); \filldraw[fill=white] (5) circle (0.7mm);
\end{tikzpicture}
   \hspace{5mm}
\begin{tikzpicture}[scale=0.9]
 \coordinate (l) at (0,0);
 \coordinate (lu) at (0.5,1);
 \coordinate (ru) at (1.5,1);
 \coordinate (r) at (3,0);
 \coordinate (d) at (1.5,-1);
 \coordinate (c) at (2.2,0);
 \coordinate (1) at (0.5,0);
 \coordinate (2) at (1.2,0.2);
 \coordinate (3) at (1.7,0);
 \coordinate (4) at (2.4,0.4);
 \coordinate (5) at (2.4,-0.4);
 \draw[dotted] (d) to (1);
 \draw[dotted] (d)--(2);
 \draw[blue,ultra thick] (d) to (3);
 \draw[dotted] (d) to (5);
 \draw[blue,ultra thick] (lu) to (1);
 \draw[dotted] (lu) to (2);
 \draw[blue,ultra thick] (ru) to (2);
 \draw[dotted] (ru) to (3);
 \draw[dotted] (ru) to (4);
 \draw[blue,ultra thick] (r) to (4);
 \draw[dotted] (r) to (5);
 \draw[dotted] (c) to (3);
 \draw[dotted] (c) to (4);
 \draw[blue,ultra thick] (c)--(5);
 \fill (lu) circle (0.7mm); \fill (ru) circle (0.7mm); \fill (r) circle (0.7mm); \fill (d) circle (0.7mm); \fill (c) circle (0.7mm);
 \filldraw[fill=white] (1) circle (0.7mm); \filldraw[fill=white] (2) circle (0.7mm); \filldraw[fill=white] (3) circle (0.7mm); \filldraw[fill=white] (4) circle (0.7mm); \filldraw[fill=white] (5) circle (0.7mm);
\end{tikzpicture}
   \hspace{5mm}
\begin{tikzpicture}[scale=0.9]
 \coordinate (l) at (0,0);
 \coordinate (lu) at (0.5,1);
 \coordinate (ru) at (1.5,1);
 \coordinate (r) at (3,0);
 \coordinate (d) at (1.5,-1);
 \coordinate (c) at (2.2,0);
 \coordinate (1) at (0.5,0);
 \coordinate (2) at (1.2,0.2);
 \coordinate (3) at (1.7,0);
 \coordinate (4) at (2.4,0.4);
 \coordinate (5) at (2.4,-0.4);
 \draw[dotted] (d) to (1);
 \draw[dotted] (d)--(2);
 \draw[blue,ultra thick] (d) to (3);
 \draw[dotted] (d) to (5);
 \draw[blue,ultra thick] (lu) to (1);
 \draw[dotted] (lu) to (2);
 \draw[blue,ultra thick] (ru) to (2);
 \draw[dotted] (ru) to (3);
 \draw[dotted] (ru) to (4);
 \draw[dotted] (r) to (4);
 \draw[blue,ultra thick] (r) to (5);
 \draw[dotted] (c) to (3);
 \draw[blue,ultra thick] (c) to (4);
 \draw[dotted] (c)--(5);
 \fill (lu) circle (0.7mm); \fill (ru) circle (0.7mm); \fill (r) circle (0.7mm); \fill (d) circle (0.7mm); \fill (c) circle (0.7mm);
 \filldraw[fill=white] (1) circle (0.7mm); \filldraw[fill=white] (2) circle (0.7mm); \filldraw[fill=white] (3) circle (0.7mm); \filldraw[fill=white] (4) circle (0.7mm); \filldraw[fill=white] (5) circle (0.7mm);
\end{tikzpicture}
   \hspace{5mm}
\begin{tikzpicture}[scale=0.9]
 \coordinate (l) at (0,0);
 \coordinate (lu) at (0.5,1);
 \coordinate (ru) at (1.5,1);
 \coordinate (r) at (3,0);
 \coordinate (d) at (1.5,-1);
 \coordinate (c) at (2.2,0);
 \coordinate (1) at (0.5,0);
 \coordinate (2) at (1.2,0.2);
 \coordinate (3) at (1.7,0);
 \coordinate (4) at (2.4,0.4);
 \coordinate (5) at (2.4,-0.4);
 \draw[dotted] (d) to (1);
 \draw[blue,ultra thick] (d)--(2);
 \draw[dotted] (d) to (3);
 \draw[dotted] (d) to (5);
 \draw[blue,ultra thick] (lu) to (1);
 \draw[dotted] (lu) to (2);
 \draw[dotted] (ru) to (2);
 \draw[blue,ultra thick] (ru) to (3);
 \draw[dotted] (ru) to (4);
 \draw[blue,ultra thick] (r) to (4);
 \draw[dotted] (r) to (5);
 \draw[dotted] (c) to (3);
 \draw[dotted] (c) to (4);
 \draw[blue,ultra thick] (c)--(5);
 \fill (lu) circle (0.7mm); \fill (ru) circle (0.7mm); \fill (r) circle (0.7mm); \fill (d) circle (0.7mm); \fill (c) circle (0.7mm);
 \filldraw[fill=white] (1) circle (0.7mm); \filldraw[fill=white] (2) circle (0.7mm); \filldraw[fill=white] (3) circle (0.7mm); \filldraw[fill=white] (4) circle (0.7mm); \filldraw[fill=white] (5) circle (0.7mm);
\end{tikzpicture}
   \hspace{5mm}
\begin{tikzpicture}[scale=0.9]
 \coordinate (l) at (0,0);
 \coordinate (lu) at (0.5,1);
 \coordinate (ru) at (1.5,1);
 \coordinate (r) at (3,0);
 \coordinate (d) at (1.5,-1);
 \coordinate (c) at (2.2,0);
 \coordinate (1) at (0.5,0);
 \coordinate (2) at (1.2,0.2);
 \coordinate (3) at (1.7,0);
 \coordinate (4) at (2.4,0.4);
 \coordinate (5) at (2.4,-0.4);
 \draw[dotted] (d) to (1);
 \draw[blue,ultra thick] (d)--(2);
 \draw[dotted] (d) to (3);
 \draw[dotted] (d) to (5);
 \draw[blue,ultra thick] (lu) to (1);
 \draw[dotted] (lu) to (2);
 \draw[dotted] (ru) to (2);
 \draw[blue,ultra thick] (ru) to (3);
 \draw[dotted] (ru) to (4);
 \draw[dotted] (r) to (4);
 \draw[blue,ultra thick] (r) to (5);
 \draw[dotted] (c) to (3);
 \draw[blue,ultra thick] (c) to (4);
 \draw[dotted] (c)--(5);
 \fill (lu) circle (0.7mm); \fill (ru) circle (0.7mm); \fill (r) circle (0.7mm); \fill (d) circle (0.7mm); \fill (c) circle (0.7mm);
 \filldraw[fill=white] (1) circle (0.7mm); \filldraw[fill=white] (2) circle (0.7mm); \filldraw[fill=white] (3) circle (0.7mm); \filldraw[fill=white] (4) circle (0.7mm); \filldraw[fill=white] (5) circle (0.7mm);
\end{tikzpicture}
\]
\[
\begin{tikzpicture}[scale=0.9]
 \coordinate (l) at (0,0);
 \coordinate (lu) at (0.5,1);
 \coordinate (ru) at (1.5,1);
 \coordinate (r) at (3,0);
 \coordinate (d) at (1.5,-1);
 \coordinate (c) at (2.2,0);
 \coordinate (1) at (0.5,0);
 \coordinate (2) at (1.2,0.2);
 \coordinate (3) at (1.7,0);
 \coordinate (4) at (2.4,0.4);
 \coordinate (5) at (2.4,-0.4);
 \draw[blue,ultra thick] (d) to (1);
 \draw[dotted] (d)--(2);
 \draw[dotted] (d) to (3);
 \draw[dotted] (d) to (5);
 \draw[dotted] (lu) to (1);
 \draw[blue,ultra thick] (lu) to (2);
 \draw[dotted] (ru) to (2);
 \draw[blue,ultra thick] (ru) to (3);
 \draw[dotted] (ru) to (4);
 \draw[blue,ultra thick] (r) to (4);
 \draw[dotted] (r) to (5);
 \draw[dotted] (c) to (3);
 \draw[dotted] (c) to (4);
 \draw[blue,ultra thick] (c)--(5);
 \fill (lu) circle (0.7mm); \fill (ru) circle (0.7mm); \fill (r) circle (0.7mm); \fill (d) circle (0.7mm); \fill (c) circle (0.7mm);
 \filldraw[fill=white] (1) circle (0.7mm); \filldraw[fill=white] (2) circle (0.7mm); \filldraw[fill=white] (3) circle (0.7mm); \filldraw[fill=white] (4) circle (0.7mm); \filldraw[fill=white] (5) circle (0.7mm);
\end{tikzpicture}
   \hspace{5mm}
\begin{tikzpicture}[scale=0.9]
 \coordinate (l) at (0,0);
 \coordinate (lu) at (0.5,1);
 \coordinate (ru) at (1.5,1);
 \coordinate (r) at (3,0);
 \coordinate (d) at (1.5,-1);
 \coordinate (c) at (2.2,0);
 \coordinate (1) at (0.5,0);
 \coordinate (2) at (1.2,0.2);
 \coordinate (3) at (1.7,0);
 \coordinate (4) at (2.4,0.4);
 \coordinate (5) at (2.4,-0.4);
 \draw[dotted] (d) to (1);
 \draw[blue,ultra thick] (d)--(2);
 \draw[dotted] (d) to (3);
 \draw[dotted] (d) to (5);
 \draw[blue,ultra thick] (lu) to (1);
 \draw[dotted] (lu) to (2);
 \draw[dotted] (ru) to (2);
 \draw[dotted] (ru) to (3);
 \draw[blue,ultra thick] (ru) to (4);
 \draw[dotted] (r) to (4);
 \draw[blue,ultra thick] (r) to (5);
 \draw[blue,ultra thick] (c) to (3);
 \draw[dotted] (c) to (4);
 \draw[dotted] (c)--(5);
 \fill (lu) circle (0.7mm); \fill (ru) circle (0.7mm); \fill (r) circle (0.7mm); \fill (d) circle (0.7mm); \fill (c) circle (0.7mm);
 \filldraw[fill=white] (1) circle (0.7mm); \filldraw[fill=white] (2) circle (0.7mm); \filldraw[fill=white] (3) circle (0.7mm); \filldraw[fill=white] (4) circle (0.7mm); \filldraw[fill=white] (5) circle (0.7mm);
\end{tikzpicture}
   \hspace{5mm}
\begin{tikzpicture}[scale=0.9]
 \coordinate (l) at (0,0);
 \coordinate (lu) at (0.5,1);
 \coordinate (ru) at (1.5,1);
 \coordinate (r) at (3,0);
 \coordinate (d) at (1.5,-1);
 \coordinate (c) at (2.2,0);
 \coordinate (1) at (0.5,0);
 \coordinate (2) at (1.2,0.2);
 \coordinate (3) at (1.7,0);
 \coordinate (4) at (2.4,0.4);
 \coordinate (5) at (2.4,-0.4);
 \draw[blue,ultra thick] (d) to (1);
 \draw[dotted] (d)--(2);
 \draw[dotted] (d) to (3);
 \draw[dotted] (d) to (5);
 \draw[dotted] (lu) to (1);
 \draw[blue,ultra thick] (lu) to (2);
 \draw[dotted] (ru) to (2);
 \draw[blue,ultra thick] (ru) to (3);
 \draw[dotted] (ru) to (4);
 \draw[dotted] (r) to (4);
 \draw[blue,ultra thick] (r) to (5);
 \draw[dotted] (c) to (3);
 \draw[blue,ultra thick] (c) to (4);
 \draw[dotted] (c)--(5);
 \fill (lu) circle (0.7mm); \fill (ru) circle (0.7mm); \fill (r) circle (0.7mm); \fill (d) circle (0.7mm); \fill (c) circle (0.7mm);
 \filldraw[fill=white] (1) circle (0.7mm); \filldraw[fill=white] (2) circle (0.7mm); \filldraw[fill=white] (3) circle (0.7mm); \filldraw[fill=white] (4) circle (0.7mm); \filldraw[fill=white] (5) circle (0.7mm);
\end{tikzpicture}
   \hspace{5mm}
\begin{tikzpicture}[scale=0.9]
 \coordinate (l) at (0,0);
 \coordinate (lu) at (0.5,1);
 \coordinate (ru) at (1.5,1);
 \coordinate (r) at (3,0);
 \coordinate (d) at (1.5,-1);
 \coordinate (c) at (2.2,0);
 \coordinate (1) at (0.5,0);
 \coordinate (2) at (1.2,0.2);
 \coordinate (3) at (1.7,0);
 \coordinate (4) at (2.4,0.4);
 \coordinate (5) at (2.4,-0.4);
 \draw[blue,ultra thick] (d) to (1);
 \draw[dotted] (d)--(2);
 \draw[dotted] (d) to (3);
 \draw[dotted] (d) to (5);
 \draw[dotted] (lu) to (1);
 \draw[blue,ultra thick] (lu) to (2);
 \draw[dotted] (ru) to (2);
 \draw[dotted] (ru) to (3);
 \draw[blue,ultra thick] (ru) to (4);
 \draw[dotted] (r) to (4);
 \draw[blue,ultra thick] (r) to (5);
 \draw[blue,ultra thick] (c) to (3);
 \draw[dotted] (c) to (4);
 \draw[dotted] (c)--(5);
 \fill (lu) circle (0.7mm); \fill (ru) circle (0.7mm); \fill (r) circle (0.7mm); \fill (d) circle (0.7mm); \fill (c) circle (0.7mm);
 \filldraw[fill=white] (1) circle (0.7mm); \filldraw[fill=white] (2) circle (0.7mm); \filldraw[fill=white] (3) circle (0.7mm); \filldraw[fill=white] (4) circle (0.7mm); \filldraw[fill=white] (5) circle (0.7mm);
\end{tikzpicture}
\]
 It is easy to check that these correspond bijectively with perfect matchings of angles in $T_{\de_2}$ given in Subsection \ref{excoefffree}(2). Moreover, for each $E \in \bP(B_{\de_2})$, the subgraph $B_E$ in Proposition \ref{biparenclosed} is given as follows:
\[
\begin{tikzpicture}[scale=0.9]
 \coordinate (l) at (0,0);
 \coordinate (lu) at (0.5,1);
 \coordinate (ru) at (1.5,1);
 \coordinate (r) at (3,0);
 \coordinate (d) at (1.5,-1);
 \coordinate (c) at (2.2,0);
 \coordinate (1) at (0.5,0);
 \coordinate (2) at (1.2,0.2);
 \coordinate (3) at (1.7,0);
 \coordinate (4) at (2.4,0.4);
 \coordinate (5) at (2.4,-0.4);
 \draw[dotted] (1)--(d)--(2)--(lu)--(1) (2)--(ru)--(3)--(d) (3)--(c)--(4)--(ru) (4)--(r)--(5)--(c) (5)--(d);
\end{tikzpicture}
   \hspace{5mm}
\begin{tikzpicture}[scale=0.9]
 \coordinate (l) at (0,0);
 \coordinate (lu) at (0.5,1);
 \coordinate (ru) at (1.5,1);
 \coordinate (r) at (3,0);
 \coordinate (d) at (1.5,-1);
 \coordinate (c) at (2.2,0);
 \coordinate (1) at (0.5,0);
 \coordinate (2) at (1.2,0.2);
 \coordinate (3) at (1.7,0);
 \coordinate (4) at (2.4,0.4);
 \coordinate (5) at (2.4,-0.4);
 \draw[dotted] (1)--(d)--(2)--(lu)--(1) (2)--(ru)--(3)--(d) (3)--(c)--(4)--(ru) (4)--(r)--(5)--(c) (5)--(d);
 \draw[ultra thick] (3)--(c)--(5)--(d)--(3);
\end{tikzpicture}
   \hspace{5mm}
\begin{tikzpicture}[scale=0.9]
 \coordinate (l) at (0,0);
 \coordinate (lu) at (0.5,1);
 \coordinate (ru) at (1.5,1);
 \coordinate (r) at (3,0);
 \coordinate (d) at (1.5,-1);
 \coordinate (c) at (2.2,0);
 \coordinate (1) at (0.5,0);
 \coordinate (2) at (1.2,0.2);
 \coordinate (3) at (1.7,0);
 \coordinate (4) at (2.4,0.4);
 \coordinate (5) at (2.4,-0.4);
 \draw[dotted] (1)--(d)--(2)--(lu)--(1) (2)--(ru)--(3)--(d) (3)--(c)--(4)--(ru) (4)--(r)--(5)--(c) (5)--(d);
 \draw[ultra thick] (3)--(c)--(4)--(r)--(5)--(d)--(3) ;
\end{tikzpicture}
   \hspace{5mm}
\begin{tikzpicture}[scale=0.9]
 \coordinate (l) at (0,0);
 \coordinate (lu) at (0.5,1);
 \coordinate (ru) at (1.5,1);
 \coordinate (r) at (3,0);
 \coordinate (d) at (1.5,-1);
 \coordinate (c) at (2.2,0);
 \coordinate (1) at (0.5,0);
 \coordinate (2) at (1.2,0.2);
 \coordinate (3) at (1.7,0);
 \coordinate (4) at (2.4,0.4);
 \coordinate (5) at (2.4,-0.4);
 \draw[dotted] (1)--(d)--(2)--(lu)--(1) (2)--(ru)--(3)--(d) (3)--(c)--(4)--(ru) (4)--(r)--(5)--(c) (5)--(d);
 \draw[ultra thick] (3)--(c)--(5)--(d)--(2)--(ru)--(3);
\end{tikzpicture}
   \hspace{5mm}
\begin{tikzpicture}[scale=0.9]
 \coordinate (l) at (0,0);
 \coordinate (lu) at (0.5,1);
 \coordinate (ru) at (1.5,1);
 \coordinate (r) at (3,0);
 \coordinate (d) at (1.5,-1);
 \coordinate (c) at (2.2,0);
 \coordinate (1) at (0.5,0);
 \coordinate (2) at (1.2,0.2);
 \coordinate (3) at (1.7,0);
 \coordinate (4) at (2.4,0.4);
 \coordinate (5) at (2.4,-0.4);
 \draw[dotted] (1)--(d)--(2)--(lu)--(1) (2)--(ru)--(3)--(d) (3)--(c)--(4)--(ru) (4)--(r)--(5)--(c) (5)--(d);
 \draw[ultra thick] (3)--(c)--(4)--(r)--(5)--(d)--(2)--(ru)--(3);
\end{tikzpicture}
\]
\[
\begin{tikzpicture}[scale=0.9]
 \coordinate (l) at (0,0);
 \coordinate (lu) at (0.5,1);
 \coordinate (ru) at (1.5,1);
 \coordinate (r) at (3,0);
 \coordinate (d) at (1.5,-1);
 \coordinate (c) at (2.2,0);
 \coordinate (1) at (0.5,0);
 \coordinate (2) at (1.2,0.2);
 \coordinate (3) at (1.7,0);
 \coordinate (4) at (2.4,0.4);
 \coordinate (5) at (2.4,-0.4);
 \draw[dotted] (1)--(d)--(2)--(lu)--(1) (2)--(ru)--(3)--(d) (3)--(c)--(4)--(ru) (4)--(r)--(5)--(c) (5)--(d);
 \draw[ultra thick] (3)--(c)--(5)--(d)--(1)--(lu)--(2)--(ru)--(3);
\end{tikzpicture}
   \hspace{5mm}
\begin{tikzpicture}[scale=0.9]
 \coordinate (l) at (0,0);
 \coordinate (lu) at (0.5,1);
 \coordinate (ru) at (1.5,1);
 \coordinate (r) at (3,0);
 \coordinate (d) at (1.5,-1);
 \coordinate (c) at (2.2,0);
 \coordinate (1) at (0.5,0);
 \coordinate (2) at (1.2,0.2);
 \coordinate (3) at (1.7,0);
 \coordinate (4) at (2.4,0.4);
 \coordinate (5) at (2.4,-0.4);
 \draw[dotted] (1)--(d)--(2)--(lu)--(1) (2)--(ru)--(3)--(d) (3)--(c)--(4)--(ru) (4)--(r)--(5)--(c) (5)--(d);
 \draw[ultra thick] (2)--(ru)--(4)--(r)--(5)--(d)--(2);
\end{tikzpicture}
   \hspace{5mm}
\begin{tikzpicture}[scale=0.9]
 \coordinate (l) at (0,0);
 \coordinate (lu) at (0.5,1);
 \coordinate (ru) at (1.5,1);
 \coordinate (r) at (3,0);
 \coordinate (d) at (1.5,-1);
 \coordinate (c) at (2.2,0);
 \coordinate (1) at (0.5,0);
 \coordinate (2) at (1.2,0.2);
 \coordinate (3) at (1.7,0);
 \coordinate (4) at (2.4,0.4);
 \coordinate (5) at (2.4,-0.4);
 \draw[dotted] (1)--(d)--(2)--(lu)--(1) (2)--(ru)--(3)--(d) (3)--(c)--(4)--(ru) (4)--(r)--(5)--(c) (5)--(d);
 \draw[ultra thick] (1)--(lu)--(2)--(ru)--(3)--(c)--(4)--(r)--(5)--(d)--(1);
\end{tikzpicture}
   \hspace{5mm}
\begin{tikzpicture}[scale=0.9]
 \coordinate (l) at (0,0);
 \coordinate (lu) at (0.5,1);
 \coordinate (ru) at (1.5,1);
 \coordinate (r) at (3,0);
 \coordinate (d) at (1.5,-1);
 \coordinate (c) at (2.2,0);
 \coordinate (1) at (0.5,0);
 \coordinate (2) at (1.2,0.2);
 \coordinate (3) at (1.7,0);
 \coordinate (4) at (2.4,0.4);
 \coordinate (5) at (2.4,-0.4);
 \draw[dotted] (1)--(d)--(2)--(lu)--(1) (2)--(ru)--(3)--(d) (3)--(c)--(4)--(ru) (4)--(r)--(5)--(c) (5)--(d);
 \draw[ultra thick] (1)--(lu)--(2)--(ru)--(4)--(r)--(5)--(d)--(1);
\end{tikzpicture}
\]
By comparing with Subsection \ref{exprincipal}(2), we can check that Proposition \ref{biparformula} holds in this case.
\end{example}

\section{Minimal cuts of quivers with potential}\label{pmmc}

 In this section, we show that perfect matchings of angles in $T_{\de}$ coincide with minimal cuts of quiver with potential obtained from $T_{\de}$, that is the bijection between (1) and (4) in Theorem \ref{bijthm}.


\subsection{Quivers with potential and cuts}\label{QPandcuts}

 We recall the definitions of quivers with potential \cite{DWZ} and of their cuts \cite{BFPPT,HI}. We denote by $\bZ Q$ the path algebra of a quiver $Q$ over the ring $\bZ$ of integers.

\begin{definition}
 (1) A {\it quiver with potential} (QP for short) is a pair $(Q,W)$ of a quiver $Q$ and an element $W \in \bZ Q$ which is a linear combination of cyclic paths.\par
 (2) A {\it cut} of a QP $(Q,W)$ is a subset $C$ of $Q_1$ such that any cyclic path appearing in $W$ contains precisely one arrow in $C$.
\end{definition}

 We define a quiver $Q_{\de}$ as follows: the set of vertices consists of diagonals and boundary segments of $T_{\de}$; the set of arrows consists of arrows from $i$ to $j$, where $i$ and $j$ are in the common triangle of $T_{\de}$ and $j$ follows $i$ in the counterclockwise order. We denote by $\oQ_{\de}$ the quiver obtained from $Q_{\de}$ by adding arrows from $i$ to $j$, where $i$ and $j$ are boundary segments which are not in the common triangle of $T_{\de}$ and $i$ is a predecessor of $j$ with respect to clockwise order.

 To define a potential $\oW_{\de}$ of $\oQ_{\de}$, we consider the following cycles of $\oQ_{\de}$. A {\it triangle cycle} is a cycle of length $3$ inside a triangle of $T_{\de}$. An {\it exterior cycle} is a cycle winding around a vertex (possibly a puncture) of $T_{\de}$. We define
\begin{equation*}
 \oW_{\de}=\sum(\mbox{triangle cycles in }\oQ_{\de})-\sum(\mbox{exterior cycles in }\oQ_{\de}).
\end{equation*}
 Note that this extends QPs for triangulated polygons without punctures defined in \cite{DL} to QPs for triangulated polygons with punctures.

\begin{lemma}\label{number}
 The number of triangle cycles in $\oQ_{\de}$ and the number of exterior cycles in $\oQ_{\de}$ coincide.
\end{lemma}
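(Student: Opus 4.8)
The statement is a counting identity for the quiver $\oQ_{\de}$. The plan is to set up a bijection between triangle cycles and exterior cycles, or more efficiently, to count both quantities directly in terms of the combinatorics of the triangulated polygon $T_{\de}$ and see that the two counts agree. First I would recall that $T_{\de}$ is a triangulated polygon (possibly with one or two punctures $p$, $q$, according to whether $\de$ is $\g$, $\g^{(p)}$, or $\g^{(pq)}$) obtained by gluing $n+1$ copies of triangles $\triangle_0,\ldots,\triangle_n$, together with (in the notched cases) the copies of triangles incident to the puncture(s). Triangle cycles in $\oQ_{\de}$ are in obvious bijection with the triangles of $T_{\de}$, so the number of triangle cycles equals the number of triangles of $T_{\de}$; call this $t$. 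Exterior cycles, by definition, wind around a vertex of $T_{\de}$ (possibly a puncture), so the number of exterior cycles equals the number of vertices of $T_{\de}$ that are incident to at least one diagonal of $T_{\de}$ — a boundary vertex incident to no diagonal gives no exterior cycle in $\oQ_{\de}$ because there the relevant arrows among boundary segments are precisely those added passing to $\oQ_{\de}$, and one checks they do not form a cycle of the right type unless a diagonal is present. So it suffices to show that $t$ equals the number $v$ of such vertices.

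The key step is therefore the combinatorial identity $t=v$ for $T_{\de}$, which I would prove by a straightforward Euler-characteristic / boundary-counting argument, treating the three cases $\de=\g$, $\g^{(p)}$, $\g^{(pq)}$ in turn. For the unpunctured case $T_{\g}$: it is a triangulation of an $(n+3)$-gon, hence has $t=n+1$ triangles, $n$ diagonals, and $n+3$ vertices, of which exactly two (the endpoints of the blue arc $\g$ at the two "ends" of the strip of triangles) are not incident to any diagonal; thus $v=(n+3)-2=n+1=t$. For $T_{\g^{(p)}}$ one adjoins to $T_{\g}$ the copies of the triangles incident to $p$, producing a polygon with one puncture; I would count its triangles and its diagonal-incident vertices using that every added triangle has $p$ (a puncture, automatically incident to diagonals) as a vertex, and again verify the count matches, using the description in Figure \ref{minmatching} of the diagonals $\z_1,\ldots,\z_m$ around $p$. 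For $T_{\g^{(pq)}}$ one adjoins triangles at both $p$ and $q$ and repeats the computation with the $\xi_1,\ldots,\xi_\ell$ around $q$ as well. In each case the bookkeeping reduces to: (number of triangles) $=$ (number of interior arcs) $+1$ plus contributions from the puncture fans, and (number of diagonal-incident vertices) $=$ (total vertices) minus (the two "dead-end" boundary vertices of the central strip), and these are seen to coincide.

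A cleaner alternative, which I would likely present instead, is to build the bijection explicitly: to each triangle $\triangle$ of $T_{\de}$ associate a vertex of $T_{\de}$ incident to a diagonal, for instance the vertex of $\triangle$ at which the "counterclockwise-first" diagonal sits, and check this assignment is a bijection onto the diagonal-incident vertices. Concretely, since $T_{\de}$ is a strip of triangles capped off (and possibly fanned at punctures), walking along the strip/fan sets up a one-to-one correspondence between triangles and "incoming diagonal corners"; the two boundary ends of the strip contribute no vertex precisely because there the diagonal is missing, matching the fact that the cap triangles are already accounted for. This is essentially the same content dressed as a bijection, and it is the version that generalizes to the explicit matchings used later.

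\textbf{Main obstacle.} The hard part is not any deep idea but the careful case analysis at the punctures: correctly enumerating the triangles and the diagonal-incident vertices in the fan around $p$ (and around $q$), making sure the self-folded-triangle replacements of Figure \ref{replace1notched} do not change either count, and handling the degenerate small cases ($n=1$, or $\g$ itself a loop as in Figure \ref{looploop}) where the central strip degenerates. I expect the bulk of the write-up to be a short lemma-internal verification in each of the three cases, pointing to Figures \ref{Tdelta} and \ref{minmatching} for the relevant local pictures.
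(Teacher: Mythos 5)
Your proof is correct and takes essentially the same route as the paper's: both reduce the lemma to the two observations that triangle cycles are in bijection with triangles of $T_{\de}$ and exterior cycles with vertices of $T_{\de}$ incident to at least one diagonal, so that the claim becomes the combinatorial identity $\#(\mbox{triangles})=\#(\mbox{diagonal-incident vertices})$. The paper simply asserts this last equality ("So all these numbers coincide"), treating it as self-evident; your case-by-case count for $T_{\g}$, $T_{\g^{(p)}}$, $T_{\g^{(pq)}}$ supplies the verification the paper leaves to the reader, and is consistent with the same equality the paper uses implicitly elsewhere (e.g.\ when showing $\bA(T_{\de})\neq\emptyset$ and in the proof of Lemma \ref{cc'}).
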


\begin{proof}
 By construction, the number of triangle cycles in $\oQ_{\de}$ and the number of triangles in $T_{\de}$ coincide. Similarly, the number of exterior cycles in $\oQ_{\de}$ and the number of vertices incident to at least one diagonal in $T_{\de}$. So all these numbers coincide.
\end{proof}

 We denote by $n(\de)$ the number in $Lemma$ \ref{number}.

\subsection{Minimal cuts of QPs and Perfect matchings of angles}

  We have a natural injection $\rho : A(T_{\de}) \rightarrow (Q_{\de})_1$ given by the following picture:
\[\SelectTips{eu}{}
  \begin{xy}
   (0,0)="1", +(15,0)="2", +(15,0)="3", +(0,-15)="4", +(-7,0)="5", +(-16,0)="6", +(-7,0)="7", "1"+(0,-7)*{T_{\de}}, "3"+(10,-7)*{\longrightarrow}, +(15,0)*{Q_{\de}}, "3"+(25,0)="01", +(15,0)="02", +(15,0)="03", +(0,-15)="04", +(-7,0)="05", +(-16,0)="06", +(-7,0)="07", "2"+(0,-5)*{a}, "02"+(-3.5,-7.5)="A", +(7,0)="B"
   \ar@{-}"5";"6" \ar@{-}"2";"6" \ar@{-}"2";"5" \ar@{.}"05";"06" \ar@{.}"02";"06" \ar@{.}"02";"05" \ar@{->}"B";"A"^*{\rho(a)}
  \end{xy}
\]

 Cuts of $(\oQ_{\de},\oW_{\de})$ have the following property using the map $\rho$.

\begin{lemma}\label{minnumber}
 (a) Any cut $C$ has the cardinality $|C| \ge n(\de)$.\\
 (b) The equality in (a) holds if and only if $C$ is contained in $\rho(A(T_{\de}))$.
\end{lemma}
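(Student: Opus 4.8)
The strategy is to analyze, for an arbitrary cut $C$ of $(\oQ_{\de},\oW_{\de})$, how $C$ interacts with the two families of cycles in $\oW_{\de}$: the $n(\de)$ triangle cycles and the $n(\de)$ exterior cycles. Since $C$ must contain exactly one arrow from each of the $n(\de)$ triangle cycles, and distinct triangle cycles are arrow-disjoint (two triangles of $T_{\de}$ share at most an edge, never an arrow), the arrows of $C$ lying inside triangles already number exactly $n(\de)$ \emph{if} $C$ consists only of such arrows. In general $C$ might also contain arrows of $\oQ_{\de}$ that are not sides of any triangle cycle, namely the ``exterior'' arrows (those between boundary segments not sharing a triangle, added when forming $\oQ_{\de}$ from $Q_{\de}$), so a priori $|C|$ could be larger. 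This immediately suggests that part (a) will follow from counting triangle-cycle contributions and part (b) will pin down exactly when no extra arrows are needed.

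\textbf{Part (a).} First I would observe that each triangle cycle of $\oW_{\de}$ uses three arrows, all of which are arrows of $Q_{\de}$ (not the extra exterior arrows), and that two distinct triangle cycles use disjoint arrow sets. Hence the map sending a cut $C$ to the function ``which arrow of the triangle cycle lies in $C$'' is well-defined on all $n(\de)$ triangle cycles, and the chosen arrows are pairwise distinct. Therefore $|C| \ge n(\de)$. This is the easy half and should be a couple of sentences.

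\textbf{Part (b).} For the ``if'' direction, suppose $C \subseteq \rho(A(T_{\de}))$. Then every arrow of $C$ is of the form $\rho(a)$ for some angle $a$ of $T_{\de}$, so it lies inside a triangle of $T_{\de}$; in particular no arrow of $C$ is one of the exterior arrows. Each triangle of $T_{\de}$ contributes exactly one arrow to $C$ (from the triangle-cycle condition), and since these arrows are the only ones in $C$, we get $|C| = n(\de)$. For the ``only if'' direction, suppose $|C| = n(\de)$. By the argument of part (a), $C$ already contains one arrow from each triangle cycle, and these account for $n(\de)$ distinct arrows; since $|C| = n(\de)$, $C$ contains \emph{no} other arrows. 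It remains to argue that an arrow which is a side of a triangle cycle necessarily lies in $\rho(A(T_{\de}))$: indeed, by the definition of $Q_{\de}$ and of $\rho$, an arrow from $i$ to $j$ with $i,j$ in a common triangle corresponds exactly to the angle of that triangle between sides $i$ and $j$, i.e. it equals $\rho(a)$ for the corresponding angle $a$. Hence $C \subseteq \rho(A(T_{\de}))$. (One should double-check the edge case where a boundary segment participates in an arrow that looks exterior but is in fact a triangle side, and the case of triangles incident to a puncture, where the exterior cycle winding around the puncture and the triangle cycles share arrows — but since $\oW_{\de}$ separates triangle cycles from exterior cycles with opposite signs, the cut condition is still ``one arrow per cyclic summand'', and the counting is unaffected.)

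\textbf{Main obstacle.} The genuinely delicate point is the bookkeeping at punctures and at the small polygons $T_{\g^{(p)}}$, $T_{\g^{(pq)}}$: there one must be certain that the exterior cycles in $\oW_{\de}$ (which wind around vertices, possibly the puncture) do not force additional arrows into $C$ beyond those already supplied by the triangle cycles, and that Lemma~\ref{number} (equality of the two counts) is genuinely what makes the minimal count $n(\de)$ rather than something larger. Concretely, one must verify that the cut condition applied to an exterior cycle is automatically satisfied once the triangle-cycle arrows are chosen consistently — i.e., that the combinatorics of ``one arrow per triangle'' is compatible with ``one arrow per exterior cycle.'' I expect this compatibility to be exactly the content that forces $C$ to come from a perfect matching of angles, and it is the step I would spend the most care on; everything else is routine arrow-counting.
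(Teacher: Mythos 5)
Your argument for part (a) and for the ``if'' direction of (b) is correct and essentially coincides with the paper's proof. However, your ``only if'' direction contains a genuine gap. You argue that once $|C|=n(\de)$ is known, all arrows of $C$ are triangle-cycle arrows, and then you claim that \emph{every} triangle-cycle arrow lies in $\rho(A(T_{\de}))$. That claim is false. The set $A(T_{\de})$ consists only of angles whose vertex is incident to at least one \emph{diagonal}, so $\rho(A(T_{\de}))$ omits exactly those triangle-cycle arrows between two boundary segments that meet at a vertex of $T_{\de}$ lying on no diagonal. Such arrows exist for every $\de$: for instance, in $T_{\g}$ the triangle $\triangle_{\g,0}$ has an arrow between its two boundary segments, opposite the angle at the endpoint of $\g$, and that arrow is not in $\rho(A(T_{\g}))$. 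Your argument never rules out $C$ containing such an arrow.

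The ingredient you are missing is the second half of Lemma~\ref{number}: there are also exactly $n(\de)$ exterior cycles, and they too are pairwise arrow-disjoint. Because $C$ is a cut, it must hit each exterior cycle exactly once, so $C$ has at least $n(\de)$ arrows lying in exterior cycles. Combined with the fact that $C$ already has $n(\de)$ arrows lying in triangle cycles and $|C|=n(\de)$, every arrow of $C$ must lie simultaneously in a triangle cycle \emph{and} in an exterior cycle. An arrow shared by a triangle cycle of a triangle $\triangle$ and the exterior cycle around a vertex $v$ is precisely $\rho(a)$ for $a$ the angle of $\triangle$ at $v$; since exterior cycles wind only around vertices incident to a diagonal, such $a$ lies in $A(T_{\de})$. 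This double-counting with exterior cycles is exactly what the paper uses, and it is what excludes the boundary-to-boundary arrows your argument allows. Your closing paragraph shows you sensed some subtlety involving exterior cycles, but you located it in the wrong place (automatic satisfiability of the exterior-cycle condition, or puncture bookkeeping) rather than in the counting step that forces $C\subseteq\rho(A(T_{\de}))$.
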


\begin{proof}
 Since there are $n(\de)$ triangle cycles (resp., $n(\de)$ exterior cycles) not sharing arrows with each other, (a) holds. There is an exterior cycle sharing arrows with each triangle cycle. Since the shared arrows are contained in $\rho(A(T_{\de}))$, the sufficiency of (b) holds. Since $\rho(A(T_{\de}))$ is contained in the set of arrows appearing in a triangle cycle of $\oQ_{\de}$, then $|C| \le n(\de)$ for $C \subset \rho(A(T_{\de}))$. Thus the necessity of (b) holds.
\end{proof}

\begin{definition}\label{defmin}
 A cut $C$ of $(\oQ_{\de},\oW_{\de})$ is called {\it minimal} if $|C| = n(\de)$.
\end{definition}

By Theorem \ref{bijthm}, $(\oQ_{\de},\oW_{\de})$ always has minimal cuts.

\begin{proof}[Proof of the bijection between (1) and (4) in Theorem \ref{bijthm}]
 Let $A \subseteq A(T_{\de})$ and $C:=\rho(A) \subseteq (Q_{\de})_1$. Then there is exactly one element $a$ of $A$ in any triangle of $T_{\de}$ (resp., incident to any vertex of $T_{\de}$) if and only if the corresponding triangle cycle (resp., exterior cycle) contains precisely one arrow $\rho(a)$ in $C$. Thus $A \in \bA(T_{\de})$ if and only if $C$ is a cut. Since minimal cuts are precisely cuts contained in $\rho(A(T_{\de}))$ by Lemma \ref{minnumber}(b), the assertion follows.
\end{proof}

 Consequently, we can give another cluster expansion formula in terms of minimal cuts.

\begin{corollary}\label{cutformula}
 We have
\[
 x_{\de}=\Phi\Biggl(\frac{1}{\cross(T,\de)}\sum_{C}x(\rho^{-1}(C))y(\rho^{-1}(C))\Biggr),
\]
where $C$ runs over all minimal cuts of $(\oQ_{\de},\oW_{\de})$ and $\cross(T,\de)$, $x(\rho^{-1}(C))$ and $y(\rho^{-1}(C))$ are defined in Theorems \ref{main'} and \ref{main}.
\end{corollary}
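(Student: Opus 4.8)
The goal is to prove Corollary \ref{cutformula}, which gives a cluster expansion formula in terms of minimal cuts of the quiver with potential $(\oQ_{\de},\oW_{\de})$. The plan is to combine Theorem \ref{main}(1) with the bijection between perfect matchings of angles in $T_{\de}$ and minimal cuts of $(\oQ_{\de},\oW_{\de})$ established in the preceding proof (the bijection between (1) and (4) in Theorem \ref{bijthm}). Since that bijection is precisely the restriction of the injective map $\rho : A(T_{\de}) \rightarrow (Q_{\de})_1$ to $\bA(T_{\de})$, with image exactly the set of minimal cuts, the formula should follow by rewriting the sum in Theorem \ref{main}(1) after a change of indexing variable.

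The key steps, in order, are as follows. First I would recall Theorem \ref{main}(1): for a tagged arc $\de$ with $\overline{\de} \notin T$, we have
\[
 x_{\de}=\Phi\Biggl(\frac{1}{\cross(T,\de)}\sum_{A\in\bA(T_{\de})}x(A)y(A)\Biggr).
\]
Second, I would invoke the bijection between (1) and (4) in Theorem \ref{bijthm}, which says that $A \mapsto \rho(A)$ is a bijection from $\bA(T_{\de})$ onto the set of minimal cuts of $(\oQ_{\de},\oW_{\de})$ (this is exactly what was shown using Lemma \ref{minnumber}(b): minimal cuts are precisely the cuts contained in $\rho(A(T_{\de}))$, and the proof identifies $\bA(T_{\de})$ with them via $\rho$). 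Third, I would perform the substitution $C = \rho(A)$, i.e.\ $A = \rho^{-1}(C)$, so that $x(A) = x(\rho^{-1}(C))$ and $y(A) = y(\rho^{-1}(C))$; since $\rho$ restricted to $\bA(T_{\de})$ is a bijection onto the minimal cuts, the sum over $A \in \bA(T_{\de})$ is identical to the sum over minimal cuts $C$. This yields
\[
 x_{\de}=\Phi\Biggl(\frac{1}{\cross(T,\de)}\sum_{C}x(\rho^{-1}(C))y(\rho^{-1}(C))\Biggr)
\]
with $C$ running over all minimal cuts, which is the claimed identity.

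There is essentially no genuine obstacle here: the corollary is a direct consequence of two already-established results, and the only thing to verify is that the notations $x(\rho^{-1}(C))$ and $y(\rho^{-1}(C))$ in the statement agree with $x(A)$ and $y(A)$ under the bijection — which is true by definition since $\rho^{-1}(C) = A \in \bA(T_{\de}) \subseteq A(T_{\de})$ and $x(\cdot)$, $y(\cdot)$ were defined on subsets of $A(T_{\de})$ in Theorem \ref{main}. If I wanted to be extra careful, the one point worth a sentence is that $\rho^{-1}(C)$ makes sense: $\rho$ is injective on all of $A(T_{\de})$, and a minimal cut $C$ lies in $\rho(A(T_{\de}))$ by Lemma \ref{minnumber}(b), so $\rho^{-1}(C)$ is a well-defined subset of $A(T_{\de})$, and in fact an element of $\bA(T_{\de})$ by the bijection. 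The write-up is therefore short: state the two inputs, make the change of variables, and conclude.

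\begin{proof}[Proof of Corollary \ref{cutformula}]
 By the bijection between (1) and (4) in Theorem \ref{bijthm}, the map $\rho$ restricts to a bijection from $\bA(T_{\de})$ onto the set of minimal cuts of $(\oQ_{\de},\oW_{\de})$; in particular, for any minimal cut $C$, the set $\rho^{-1}(C)$ is a well-defined element of $\bA(T_{\de})$ by Lemma \ref{minnumber}(b), and every $A \in \bA(T_{\de})$ is of this form for a unique minimal cut $C=\rho(A)$. Substituting $A=\rho^{-1}(C)$ in the formula of Theorem \ref{main}(1), we obtain
\[
 x_{\de}=\Phi\Biggl(\frac{1}{\cross(T,\de)}\sum_{A\in\bA(T_{\de})}x(A)y(A)\Biggr)=\Phi\Biggl(\frac{1}{\cross(T,\de)}\sum_{C}x(\rho^{-1}(C))y(\rho^{-1}(C))\Biggr),
\]
 where $C$ runs over all minimal cuts of $(\oQ_{\de},\oW_{\de})$. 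This is the desired formula.
\end{proof}
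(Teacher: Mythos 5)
Your proof is correct and takes the same approach as the paper, which simply cites Theorems \ref{bijthm} and \ref{main}. You have merely spelled out the change of variables $C = \rho(A)$ and the well-definedness of $\rho^{-1}(C)$ (via Lemma \ref{minnumber}(b)) in more detail, which is harmless and accurate.
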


\begin{proof}
 The assertion follows immediately from Theorems \ref{bijthm} and \ref{main}.
\end{proof}

\begin{example}\label{ex2q}
 For the tagged arc $\de_2$ given in Subsection \ref{excoefffree}(2), we have
\begin{eqnarray*}
(\oQ_{\de_2},\oW_{\de_2})=
\left(\hspace{2mm}
\begin{tikzpicture}[baseline=-1mm]
 \coordinate (0) at (0,0); \node (a) at (0) {$2$};
 \coordinate (1) at (1,0); \node (b) at (1) {$3$};
 \coordinate (2) at (1.765,-0.5); \node (c) at (2) {$4$};
 \coordinate (3) at (2.53,0); \node (d) at (3) {$5$};
 \coordinate (4) at (1.765,0.5); \node (e) at (4) {$6$};
 \coordinate (5) at (2.53,1); \node (ru) at (5) {$7$};
 \coordinate (6) at (2.53,-1); \node (rd) at (6) {$\circ$};
 \coordinate (7) at (0.5,0.765); \node (ab) at (7) {$\circ$};
 \coordinate (8) at (-0.765,0.5); \node (lu) at (8) {$1$};
 \coordinate (9) at (-0.765,-0.5); \node (ld) at (9) {$1$};
 \draw[->] (a) -- (b);
 \draw[->] (b) -- (c);
 \draw[->] (d) -- (c);
 \draw[->] (e) -- (d);
 \draw[->] (e) -- (b);
 \draw[->] (c) -- (e);
 \draw[->] (d) -- (ru);
 \draw[->] (ru) -- (e);
 \draw[->] (c) -- (rd);
 \draw[->] (rd) -- (d);
 \draw[->] (b) -- (ab);
 \draw[->] (ab) -- (a);
 \draw[->] (a) -- (lu);
 \draw[->] (lu) -- (ld);
 \draw[->] (ld) -- (a);
 \draw[->] (lu) .. controls (-0.6,1.2) and (0,1.4) .. (ab);
 \draw[->] (ab) .. controls (1,1.6) and (2,1.6) .. (ru);
 \draw[->] (ru) .. controls (3.5,0.7) and (3.5,-0.7) .. (rd);
 \draw[->] (rd) .. controls (1.7,-1.7) and (0,-1.5) .. (ld);
\end{tikzpicture}
, 
     \left.\begin{array}{ll}
\sum\Bigl(\mbox{five triangle cycles}\hspace{2mm}
\begin{tikzpicture}[scale=0.6,baseline=-1mm]
 \coordinate (a) at (0,0);
 \coordinate (b) at (1,0);
 \coordinate (c) at (1.765,-0.5);
 \coordinate (d) at (2.53,0);
 \coordinate (e) at (1.765,0.5);
 \coordinate (ru) at (2.53,1);
 \coordinate (rd) at (2.53,-1);
 \coordinate (ab) at (0.5,0.765);
 \coordinate (lu) at (-0.765,0.5);
 \coordinate (ld) at (-0.765,-0.5);
 \filldraw [pattern=north east lines] (a)--(lu)--(ld)--(a);
 \filldraw [pattern=north east lines] (a)--(ab)--(b)--(a);
 \filldraw [pattern=north east lines] (b)--(c)--(e)--(b);
 \filldraw [pattern=north east lines] (e)--(ru)--(d)--(e);
 \filldraw [pattern=north east lines] (d)--(rd)--(c)--(d);
 \draw (a) -- (b);
 \draw (b) -- (c);
 \draw (d) -- (c);
 \draw (e) -- (d);
 \draw (e) -- (b);
 \draw (c) -- (e);
 \draw (d) -- (ru);
 \draw (ru) -- (e);
 \draw (c) -- (rd);
 \draw (rd) -- (d);
 \draw (b) -- (ab);
 \draw (ab) -- (a);
 \draw (a) -- (lu);
 \draw (lu) -- (ld);
 \draw (ld) -- (a);
 \draw (lu) .. controls (-0.6,1.2) and (0,1.4) .. (ab);
 \draw (ab) .. controls (1,1.6) and (2,1.6) .. (ru);
 \draw (ru) .. controls (3.5,0.7) and (3.5,-0.7) .. (rd);
 \draw (rd) .. controls (1.7,-1.7) and (0,-1.5) .. (ld);
\end{tikzpicture}
\Bigr)\\
-\sum\Bigl(\mbox{five exterior cycles}\hspace{2mm}
\begin{tikzpicture}[scale=0.6,baseline=-1mm]
 \coordinate (a) at (0,0);
 \coordinate (b) at (1,0);
 \coordinate (c) at (1.765,-0.5);
 \coordinate (d) at (2.53,0);
 \coordinate (e) at (1.765,0.5);
 \coordinate (ru) at (2.53,1);
 \coordinate (rd) at (2.53,-1);
 \coordinate (ab) at (0.5,0.765);
 \coordinate (lu) at (-0.765,0.5);
 \coordinate (ld) at (-0.765,-0.5);
 \filldraw [pattern=north east lines] (a)--(lu) .. controls (-0.6,1.2) and (0,1.4) .. (ab)--(a);
 \filldraw [pattern=north east lines] (b)--(ab) .. controls (1,1.6) and (2,1.6) .. (ru)--(b);
 \filldraw [pattern=north east lines] (rd)--(ru) .. controls (3.5,0.7) and (3.5,-0.7) .. (rd);
 \filldraw [pattern=north east lines] (b)--(rd) .. controls (1.7,-1.7) and (0,-1.5) .. (ld)--(a)--(b);
 \filldraw [pattern=north east lines] (d)--(e)--(c)--(d);
 \draw (a) -- (b);
 \draw (b) -- (c);
 \draw (d) -- (c);
 \draw (e) -- (d);
 \draw (e) -- (b);
 \draw (c) -- (e);
 \draw (d) -- (ru);
 \draw (ru) -- (e);
 \draw (c) -- (rd);
 \draw (rd) -- (d);
 \draw (b) -- (ab);
 \draw (ab) -- (a);
 \draw (a) -- (lu);
 \draw (lu) -- (ld);
 \draw (ld) -- (a);
 \draw (lu) .. controls (-0.6,1.2) and (0,1.4) .. (ab);
 \draw (ab) .. controls (1,1.6) and (2,1.6) .. (ru);
 \draw (ru) .. controls (3.5,0.7) and (3.5,-0.7) .. (rd);
 \draw (rd) .. controls (1.7,-1.7) and (0,-1.5) .. (ld);
\end{tikzpicture}
\Bigr)
     \end{array} \right.
\right).
\end{eqnarray*}
Then there are nine minimal cuts of $(\oQ_{\de_2},\oW_{\de_2})$ as follows:
\[
\begin{tikzpicture}[scale=0.6]
 \coordinate (a) at (0,0);
 \coordinate (b) at (1,0);
 \coordinate (c) at (1.765,-0.5);
 \coordinate (d) at (2.53,0);
 \coordinate (e) at (1.765,0.5);
 \coordinate (ru) at (2.53,1);
 \coordinate (rd) at (2.53,-1);
 \coordinate (ab) at (0.5,0.765);
 \coordinate (lu) at (-0.765,0.5);
 \coordinate (ld) at (-0.765,-0.5);
 \draw[dotted] (a) -- (b) -- (c) -- (d) -- (e) -- (b);
 \draw[blue,ultra thick] (c) -- (e);
 \draw[blue,ultra thick] (d) -- (ru);
 \draw[dotted] (ru) -- (e);
 \draw[blue,ultra thick] (c) -- (rd);
 \draw[dotted] (rd) -- (d);
 \draw[blue,ultra thick] (b) -- (ab);
 \draw[dotted] (ab) -- (a);
 \draw[blue,ultra thick] (a) -- (lu);
 \draw[dotted] (lu) -- (ld);
 \draw[dotted] (ld) -- (a);
 \draw[dotted] (lu) .. controls (-0.6,1.2) and (0,1.4) .. (ab);
 \draw[dotted] (ab) .. controls (1,1.6) and (2,1.6) .. (ru);
 \draw[dotted] (ru) .. controls (3.5,0.7) and (3.5,-0.7) .. (rd);
 \draw[dotted] (rd) .. controls (1.7,-1.7) and (0,-1.5) .. (ld);
\end{tikzpicture}
   \hspace{3mm}
\begin{tikzpicture}[scale=0.6]
 \coordinate (a) at (0,0);
 \coordinate (b) at (1,0);
 \coordinate (c) at (1.765,-0.5);
 \coordinate (d) at (2.53,0);
 \coordinate (e) at (1.765,0.5);
 \coordinate (ru) at (2.53,1);
 \coordinate (rd) at (2.53,-1);
 \coordinate (ab) at (0.5,0.765);
 \coordinate (lu) at (-0.765,0.5);
 \coordinate (ld) at (-0.765,-0.5);
 \draw[dotted] (a) -- (b);
 \draw[blue,ultra thick] (b) -- (c);
 \draw[blue,ultra thick] (d) -- (c);
 \draw[dotted] (e) -- (d);
 \draw[dotted] (e) -- (b);
 \draw[dotted] (c) -- (e);
 \draw[blue,ultra thick] (d) -- (ru);
 \draw[dotted] (ru) -- (e);
 \draw[dotted] (c) -- (rd);
 \draw[dotted] (rd) -- (d);
 \draw[blue,ultra thick] (b) -- (ab);
 \draw[dotted] (ab) -- (a);
 \draw[blue,ultra thick] (a) -- (lu);
 \draw[dotted] (lu) -- (ld);
 \draw[dotted] (ld) -- (a);
 \draw[dotted] (lu) .. controls (-0.6,1.2) and (0,1.4) .. (ab);
 \draw[dotted] (ab) .. controls (1,1.6) and (2,1.6) .. (ru);
 \draw[dotted] (ru) .. controls (3.5,0.7) and (3.5,-0.7) .. (rd);
 \draw[dotted] (rd) .. controls (1.7,-1.7) and (0,-1.5) .. (ld);
\end{tikzpicture}
   \hspace{3mm}
\begin{tikzpicture}[scale=0.6]
 \coordinate (a) at (0,0);
 \coordinate (b) at (1,0);
 \coordinate (c) at (1.765,-0.5);
 \coordinate (d) at (2.53,0);
 \coordinate (e) at (1.765,0.5);
 \coordinate (ru) at (2.53,1);
 \coordinate (rd) at (2.53,-1);
 \coordinate (ab) at (0.5,0.765);
 \coordinate (lu) at (-0.765,0.5);
 \coordinate (ld) at (-0.765,-0.5);
 \draw[dotted] (a) -- (b);
 \draw[blue,ultra thick] (b) -- (c);
 \draw[dotted] (d) -- (c);
 \draw[blue,ultra thick] (e) -- (d);
 \draw[dotted] (e) -- (b);
 \draw[dotted] (c) -- (e);
 \draw[dotted] (d) -- (ru);
 \draw[dotted] (ru) -- (e);
 \draw[dotted] (c) -- (rd);
 \draw[blue,ultra thick] (rd) -- (d);
 \draw[blue,ultra thick] (b) -- (ab);
 \draw[dotted] (ab) -- (a);
 \draw[blue,ultra thick] (a) -- (lu);
 \draw[dotted] (lu) -- (ld);
 \draw[dotted] (ld) -- (a);
 \draw[dotted] (lu) .. controls (-0.6,1.2) and (0,1.4) .. (ab);
 \draw[dotted] (ab) .. controls (1,1.6) and (2,1.6) .. (ru);
 \draw[dotted] (ru) .. controls (3.5,0.7) and (3.5,-0.7) .. (rd);
 \draw[dotted] (rd) .. controls (1.7,-1.7) and (0,-1.5) .. (ld);
\end{tikzpicture}
   \hspace{3mm}
\begin{tikzpicture}[scale=0.6]
 \coordinate (a) at (0,0);
 \coordinate (b) at (1,0);
 \coordinate (c) at (1.765,-0.5);
 \coordinate (d) at (2.53,0);
 \coordinate (e) at (1.765,0.5);
 \coordinate (ru) at (2.53,1);
 \coordinate (rd) at (2.53,-1);
 \coordinate (ab) at (0.5,0.765);
 \coordinate (lu) at (-0.765,0.5);
 \coordinate (ld) at (-0.765,-0.5);
 \draw[blue,ultra thick] (a) -- (b);
 \draw[dotted] (b) -- (c);
 \draw[blue,ultra thick] (d) -- (c);
 \draw[dotted] (e) -- (d);
 \draw[blue,ultra thick] (e) -- (b);
 \draw[dotted] (c) -- (e);
 \draw[blue,ultra thick] (d) -- (ru);
 \draw[dotted] (ru) -- (e);
 \draw[dotted] (c) -- (rd);
 \draw[dotted] (rd) -- (d);
 \draw[dotted] (b) -- (ab);
 \draw[dotted] (ab) -- (a);
 \draw[blue,ultra thick] (a) -- (lu);
 \draw[dotted] (lu) -- (ld);
 \draw[dotted] (ld) -- (a);
 \draw[dotted] (lu) .. controls (-0.6,1.2) and (0,1.4) .. (ab);
 \draw[dotted] (ab) .. controls (1,1.6) and (2,1.6) .. (ru);
 \draw[dotted] (ru) .. controls (3.5,0.7) and (3.5,-0.7) .. (rd);
 \draw[dotted] (rd) .. controls (1.7,-1.7) and (0,-1.5) .. (ld);
\end{tikzpicture}
   \hspace{3mm}
\begin{tikzpicture}[scale=0.6]
 \coordinate (a) at (0,0);
 \coordinate (b) at (1,0);
 \coordinate (c) at (1.765,-0.5);
 \coordinate (d) at (2.53,0);
 \coordinate (e) at (1.765,0.5);
 \coordinate (ru) at (2.53,1);
 \coordinate (rd) at (2.53,-1);
 \coordinate (ab) at (0.5,0.765);
 \coordinate (lu) at (-0.765,0.5);
 \coordinate (ld) at (-0.765,-0.5);
 \draw[blue,ultra thick] (a) -- (b);
 \draw[dotted] (b) -- (c);
 \draw[dotted] (d) -- (c);
 \draw[blue,ultra thick] (e) -- (d);
 \draw[blue,ultra thick] (e) -- (b);
 \draw[dotted] (c) -- (e);
 \draw[dotted] (d) -- (ru);
 \draw[dotted] (ru) -- (e);
 \draw[dotted] (c) -- (rd);
 \draw[blue,ultra thick] (rd) -- (d);
 \draw[dotted] (b) -- (ab);
 \draw[dotted] (ab) -- (a);
 \draw[blue,ultra thick] (a) -- (lu);
 \draw[dotted] (lu) -- (ld);
 \draw[dotted] (ld) -- (a);
 \draw[dotted] (lu) .. controls (-0.6,1.2) and (0,1.4) .. (ab);
 \draw[dotted] (ab) .. controls (1,1.6) and (2,1.6) .. (ru);
 \draw[dotted] (ru) .. controls (3.5,0.7) and (3.5,-0.7) .. (rd);
 \draw[dotted] (rd) .. controls (1.7,-1.7) and (0,-1.5) .. (ld);
\end{tikzpicture}
\]
\[
\begin{tikzpicture}[scale=0.6]
 \coordinate (a) at (0,0);
 \coordinate (b) at (1,0);
 \coordinate (c) at (1.765,-0.5);
 \coordinate (d) at (2.53,0);
 \coordinate (e) at (1.765,0.5);
 \coordinate (ru) at (2.53,1);
 \coordinate (rd) at (2.53,-1);
 \coordinate (ab) at (0.5,0.765);
 \coordinate (lu) at (-0.765,0.5);
 \coordinate (ld) at (-0.765,-0.5);
 \draw[dotted] (a) -- (b);
 \draw[dotted] (b) -- (c);
 \draw[blue,ultra thick] (d) -- (c);
 \draw[dotted] (e) -- (d);
 \draw[blue,ultra thick] (e) -- (b);
 \draw[dotted] (c) -- (e);
 \draw[blue,ultra thick] (d) -- (ru);
 \draw[dotted] (ru) -- (e);
 \draw[dotted] (c) -- (rd);
 \draw[dotted] (rd) -- (d);
 \draw[dotted] (b) -- (ab);
 \draw[blue,ultra thick] (ab) -- (a);
 \draw[dotted] (a) -- (lu);
 \draw[dotted] (lu) -- (ld);
 \draw[blue,ultra thick] (ld) -- (a);
 \draw[dotted] (lu) .. controls (-0.6,1.2) and (0,1.4) .. (ab);
 \draw[dotted] (ab) .. controls (1,1.6) and (2,1.6) .. (ru);
 \draw[dotted] (ru) .. controls (3.5,0.7) and (3.5,-0.7) .. (rd);
 \draw[dotted] (rd) .. controls (1.7,-1.7) and (0,-1.5) .. (ld);
\end{tikzpicture}
   \hspace{3mm}
\begin{tikzpicture}[scale=0.6]
 \coordinate (a) at (0,0);
 \coordinate (b) at (1,0);
 \coordinate (c) at (1.765,-0.5);
 \coordinate (d) at (2.53,0);
 \coordinate (e) at (1.765,0.5);
 \coordinate (ru) at (2.53,1);
 \coordinate (rd) at (2.53,-1);
 \coordinate (ab) at (0.5,0.765);
 \coordinate (lu) at (-0.765,0.5);
 \coordinate (ld) at (-0.765,-0.5);
 \draw[blue,ultra thick] (a) -- (b);
 \draw[dotted] (b) -- (c);
 \draw[dotted] (d) -- (c);
 \draw[dotted] (e) -- (d);
 \draw[dotted] (e) -- (b);
 \draw[blue,ultra thick] (c) -- (e);
 \draw[dotted] (d) -- (ru);
 \draw[blue,ultra thick] (ru) -- (e);
 \draw[dotted] (c) -- (rd);
 \draw[blue,ultra thick] (rd) -- (d);
 \draw[dotted] (b) -- (ab);
 \draw[dotted] (ab) -- (a);
 \draw[blue,ultra thick] (a) -- (lu);
 \draw[dotted] (lu) -- (ld);
 \draw[dotted] (ld) -- (a);
 \draw[dotted] (lu) .. controls (-0.6,1.2) and (0,1.4) .. (ab);
 \draw[dotted] (ab) .. controls (1,1.6) and (2,1.6) .. (ru);
 \draw[dotted] (ru) .. controls (3.5,0.7) and (3.5,-0.7) .. (rd);
 \draw[dotted] (rd) .. controls (1.7,-1.7) and (0,-1.5) .. (ld);
\end{tikzpicture}
   \hspace{3mm}
\begin{tikzpicture}[scale=0.6]
 \coordinate (a) at (0,0);
 \coordinate (b) at (1,0);
 \coordinate (c) at (1.765,-0.5);
 \coordinate (d) at (2.53,0);
 \coordinate (e) at (1.765,0.5);
 \coordinate (ru) at (2.53,1);
 \coordinate (rd) at (2.53,-1);
 \coordinate (ab) at (0.5,0.765);
 \coordinate (lu) at (-0.765,0.5);
 \coordinate (ld) at (-0.765,-0.5);
 \draw[dotted] (a) -- (b);
 \draw[dotted] (b) -- (c);
 \draw[dotted] (d) -- (c);
 \draw[blue,ultra thick] (e) -- (d);
 \draw[blue,ultra thick] (e) -- (b);
 \draw[dotted] (c) -- (e);
 \draw[dotted] (d) -- (ru);
 \draw[dotted] (ru) -- (e);
 \draw[dotted] (c) -- (rd);
 \draw[blue,ultra thick] (rd) -- (d);
 \draw[dotted] (b) -- (ab);
 \draw[blue,ultra thick] (ab) -- (a);
 \draw[dotted] (a) -- (lu);
 \draw[dotted] (lu) -- (ld);
 \draw[blue,ultra thick] (ld) -- (a);
 \draw[dotted] (lu) .. controls (-0.6,1.2) and (0,1.4) .. (ab);
 \draw[dotted] (ab) .. controls (1,1.6) and (2,1.6) .. (ru);
 \draw[dotted] (ru) .. controls (3.5,0.7) and (3.5,-0.7) .. (rd);
 \draw[dotted] (rd) .. controls (1.7,-1.7) and (0,-1.5) .. (ld);
\end{tikzpicture}
   \hspace{3mm}
\begin{tikzpicture}[scale=0.6]
 \coordinate (a) at (0,0);
 \coordinate (b) at (1,0);
 \coordinate (c) at (1.765,-0.5);
 \coordinate (d) at (2.53,0);
 \coordinate (e) at (1.765,0.5);
 \coordinate (ru) at (2.53,1);
 \coordinate (rd) at (2.53,-1);
 \coordinate (ab) at (0.5,0.765);
 \coordinate (lu) at (-0.765,0.5);
 \coordinate (ld) at (-0.765,-0.5);
 \draw[dotted] (a) -- (b);
 \draw[dotted] (b) -- (c);
 \draw[dotted] (d) -- (c);
 \draw[dotted] (e) -- (d);
 \draw[dotted] (e) -- (b);
 \draw[blue,ultra thick] (c) -- (e);
 \draw[dotted] (d) -- (ru);
 \draw[blue,ultra thick] (ru) -- (e);
 \draw[dotted] (c) -- (rd);
 \draw[blue,ultra thick] (rd) -- (d);
 \draw[dotted] (b) -- (ab);
 \draw[blue,ultra thick] (ab) -- (a);
 \draw[dotted] (a) -- (lu);
 \draw[dotted] (lu) -- (ld);
 \draw[blue,ultra thick] (ld) -- (a);
 \draw[dotted] (lu) .. controls (-0.6,1.2) and (0,1.4) .. (ab);
 \draw[dotted] (ab) .. controls (1,1.6) and (2,1.6) .. (ru);
 \draw[dotted] (ru) .. controls (3.5,0.7) and (3.5,-0.7) .. (rd);
 \draw[dotted] (rd) .. controls (1.7,-1.7) and (0,-1.5) .. (ld);
\end{tikzpicture}
\]
 It is easy to check that these correspond bijectively with perfect matchings of angles in $T_{\de_2}$ given in Subsection \ref{excoefffree}(2).
\end{example}

\section{Essential loops}\label{essentialloops}
 Recall the definition of essential loops \cite{MSW2}. Throughout this section, we suppose that a marked surface $(S,M)$ has no punctures. An {\it essential loop} $\z$ in $(S,M)$ is a closed curve in $S$, considered up to isotopy, such that: $\z$ is disjoint from $M$ and the boundary of $S$; $\z$ does not intersect itself; $\z$ is not a contractible loop.

 Choose a triangle $\triangle$ of $T$ that $\z$ crosses. Let $p$ be a point in the interior of $\triangle$ that lies on $\z$. Let $\alpha$ and $\beta$ be the two sides of $\triangle$ crossed $\z$ immediately before and following its travel through $p$, and let $\tau$ be the third side of $\triangle$. Let $\wz$ be the curve whose starting and ending points are $p$ that exactly follows $\z$. We can construct the triangulated polygon $T_{\wz}$ associated with $\wz$ in the same way as for plain arcs. Also, we obtain the snake graph $G_{\wz}$ from $T_{\wz}$. Let $v$ (resp., $w$) be the endpoint of $\tau$ and $\alpha$ (resp., $\beta$) in the first triangle of $T_{\wz}$ or $G_{\wz}$, and let $v'$ (resp., $w'$) be the endpoint of $\tau$ and $\beta$ (resp., $\alpha$) in the last triangle of $T_{\wz}$ or $G_{\wz}$ (see Figure \ref{TwzGwz}).

\begin{figure}[h]
   \caption{$T_{\wz}$ and $G_{\wz}$ associated with an essential loop $\z$}
   \label{TwzGwz}
\[
\begin{tikzpicture}
 \coordinate (0) at (0,0);   \fill[blue] (0) circle (0.7mm) node[above]{\scriptsize $p$};
 \coordinate (l) at (-0.7,-0.5);
 \coordinate (u) at (0,0.8);
 \coordinate (r) at (0.7,-0.5);
 \coordinate (lu) at (-0.8,1);
 \coordinate (ru) at (0.8,1);
 \draw[blue] (0) .. controls (0.8,0) and (1,0.7) .. node[right]{\scriptsize $\z$} (ru);
 \draw[blue,dotted] (ru) .. controls (0.5,1.7) and (-0.5,1.7) .. (lu);
 \draw[blue] (0) .. controls (-0.8,0) and (-1,0.7) .. (lu);
 \draw (l) to node[fill=white,inner sep=1,pos=0.2]{\scriptsize $\alpha$} (u);
 \draw (u) to node[fill=white,inner sep=1,pos=0.8]{\scriptsize $\beta$} (r);
 \draw (l) to node[fill=white,inner sep=1]{\scriptsize $\tau$} (r);
\end{tikzpicture}
 \hspace{7mm} T_{\wz}
\begin{tikzpicture}
 \coordinate (l) at (0,0);   \fill (l) circle (0.7mm) node[left]{\scriptsize $v$};
 \coordinate (lu) at (0.8,1);
 \coordinate (ru) at (1.8,1);
 \coordinate (ld) at (0.8,-1);   \fill (ld) circle (0.7mm) node[left]{\scriptsize $w$};
 \coordinate (rd) at (1.8,-1);   \fill (rd) circle (0.7mm) node[right]{\scriptsize $w'$};
 \coordinate (r) at (2.6,0);   \fill (r) circle (0.7mm) node[right]{\scriptsize $v'$};
 \coordinate (pl) at (0.5,0);   \fill[blue] (pl) circle (0.7mm) node[above]{\scriptsize $p$};
 \coordinate (pr) at (2.1,0);   \fill[blue] (pr) circle (0.7mm) node[above]{\scriptsize $p$};
 \draw (l) to node[fill=white,inner sep=1]{\scriptsize $\alpha$} (lu);
 \draw (l) to node[fill=white,inner sep=1]{\scriptsize $\tau$} (ld);
 \draw (lu) to node[fill=white,inner sep=1,pos=0.7]{\scriptsize $\beta$} (ld);
 \draw (lu) to (ru);
 \draw (ld) to (rd);
 \draw (ru) to node[fill=white,inner sep=1]{\scriptsize $\beta$} (r);
 \draw (rd) to node[fill=white,inner sep=1]{\scriptsize $\tau$} (r);
 \draw (ru) to node[fill=white,inner sep=1,pos=0.7]{\scriptsize $\alpha$} (rd);
 \draw[blue] (pl) to node[fill=white,inner sep=1]{\scriptsize $\z$} (pr);
\end{tikzpicture}
 \hspace{7mm} G_{\wz} \hspace{1mm}
\begin{tikzpicture}
 \coordinate (lld) at (0,0);   \fill (lld) circle (0.7mm) node[below]{\scriptsize $v$};
 \coordinate (llu) at (0,0.8);
 \coordinate (lru) at (0.8,0.8);
 \coordinate (lrd) at (0.8,0);  
 \coordinate (rld) at (1.7,0.8);
 \coordinate (rlu) at (1.7,1.6);
 \coordinate (rru) at (2.5,1.6);   \fill (rru) circle (0.7mm) node[above]{\scriptsize $v'$};
 \coordinate (rrd) at (2.5,0.8);  
 \node at (3.3,0.8) {\mbox{or}};
 \node at (1.3,0.8) {\mbox{\rotatebox{45}{$\cdots$}}};
 \draw (lld) to node[fill=white,inner sep=1]{\scriptsize $\alpha$} (llu);
 \draw (lld) to node[fill=white,inner sep=1]{\scriptsize $\tau$} (lrd);
 \draw (llu) to (lru);
 \draw (lru) to (lrd);
 \draw[white,ultra thin] (llu) to node[black]{\scriptsize $\beta$} (lrd);
 \draw (rld) to (rlu);
 \draw (rld) to (rrd);
 \draw (rlu) to node[fill=white,inner sep=1]{\scriptsize $\beta$} (rru);
 \draw (rru) to node[fill=white,inner sep=1]{\scriptsize $\tau$} (rrd);
 \draw[white,ultra thin] (rlu) to node[black]{\scriptsize $\alpha$} (rrd);
 \fill (lrd) circle (0.7mm) node[below]{\scriptsize $w$}; \fill (rrd) circle (0.7mm) node[below]{\scriptsize $w'$};
\end{tikzpicture}
 \hspace{3mm}
\begin{tikzpicture}
 \coordinate (lld) at (0,0);   \fill (lld) circle (0.7mm) node[below]{\scriptsize $v$};
 \coordinate (llu) at (0,0.8);
 \coordinate (lru) at (0.8,0.8);
 \coordinate (lrd) at (0.8,0);
 \coordinate (rld) at (1.7,0.8);
 \coordinate (rlu) at (1.7,1.6);
 \coordinate (rru) at (2.5,1.6);   \fill (rru) circle (0.7mm) node[above]{\scriptsize $v'$};
 \coordinate (rrd) at (2.5,0.8);
 \node at (1.3,0.8) {\mbox{\rotatebox{45}{$\cdots$}}};
 \draw (lld) to node[fill=white,inner sep=1]{\scriptsize $\alpha$} (llu);
 \draw (lld) to node[fill=white,inner sep=1]{\scriptsize $\tau$} (lrd);
 \draw (llu) to (lru);
 \draw (lru) to (lrd);
 \draw[white,ultra thin] (llu) to node[black]{\scriptsize $\beta$} (lrd);
 \draw (rld) to (rlu);
 \draw (rld) to (rrd);
 \draw (rlu) to node[fill=white,inner sep=1]{\scriptsize $\tau$} (rru);
 \draw (rru) to node[fill=white,inner sep=1]{\scriptsize $\beta$} (rrd);
 \draw[white,ultra thin] (rlu) to node[black]{\scriptsize $\alpha$} (rrd);
 \fill (rlu) circle (0.7mm) node[above]{\scriptsize $w'$}; \fill (lrd) circle (0.7mm) node[below]{\scriptsize $w$};
\end{tikzpicture}
\]
\[
\begin{tikzpicture}
 \coordinate (0) at (0,0);   \fill[blue] (0) circle (0.7mm) node[above]{\scriptsize $p$};
 \coordinate (l) at (-0.7,0.8);
 \coordinate (u) at (0,-0.5);
 \coordinate (r) at (0.7,0.8);
 \coordinate (lu) at (-0.8,1);
 \coordinate (ru) at (0.8,1);
 \draw[blue] (0) .. controls (0.8,0) and (1,0.7) .. node[right]{\scriptsize $\z$} (ru);
 \draw[blue,dotted] (ru) .. controls (0.5,1.7) and (-0.5,1.7) .. (lu);
 \draw[blue] (0) .. controls (-0.8,0) and (-1,0.7) .. (lu);
 \draw (l) to node[fill=white,inner sep=1,pos=0.3]{\scriptsize $\alpha$} (u);
 \draw (u) to node[fill=white,inner sep=1,pos=0.7]{\scriptsize $\beta$} (r);
 \draw (l) to node[fill=white,inner sep=1]{\scriptsize $\tau$} (r);
\end{tikzpicture}
 \hspace{7mm} T_{\wz}
\begin{tikzpicture}
 \coordinate (l) at (0,0);   \fill (l) circle (0.7mm) node[left]{\scriptsize $v$};
 \coordinate (lu) at (0.8,1);   \fill (lu) circle (0.7mm) node[left]{\scriptsize $w$};
 \coordinate (ru) at (1.8,1);   \fill (ru) circle (0.7mm) node[right]{\scriptsize $w'$};
 \coordinate (ld) at (0.8,-1);
 \coordinate (rd) at (1.8,-1);
 \coordinate (r) at (2.6,0);   \fill (r) circle (0.7mm) node[right]{\scriptsize $v'$};
 \coordinate (pl) at (0.5,0);   \fill[blue] (pl) circle (0.7mm) node[above]{\scriptsize $p$};
 \coordinate (pr) at (2.1,0);   \fill[blue] (pr) circle (0.7mm) node[above]{\scriptsize $p$};
 \draw (l) to node[fill=white,inner sep=1]{\scriptsize $\tau$} (lu);
 \draw (l) to node[fill=white,inner sep=1]{\scriptsize $\alpha$} (ld);
 \draw (lu) to node[fill=white,inner sep=1,pos=0.7]{\scriptsize $\beta$} (ld);
 \draw (lu) to (ru);
 \draw (ld) to (rd);
 \draw (ru) to node[fill=white,inner sep=1]{\scriptsize $\tau$} (r);
 \draw (rd) to node[fill=white,inner sep=1]{\scriptsize $\beta$} (r);
 \draw (ru) to node[fill=white,inner sep=1,pos=0.7]{\scriptsize $\alpha$} (rd);
 \draw[blue] (pl) to node[fill=white,inner sep=1]{\scriptsize $\z$} (pr);
\end{tikzpicture}
 \hspace{7mm} G_{\wz} \hspace{1mm}
\begin{tikzpicture}
 \coordinate (lld) at (0,0);   \fill (lld) circle (0.7mm) node[below]{\scriptsize $v$};
 \coordinate (llu) at (0,0.8);
 \coordinate (lru) at (0.8,0.8);
 \coordinate (lrd) at (0.8,0);
 \coordinate (rld) at (1.7,0.8);
 \coordinate (rlu) at (1.7,1.6);
 \coordinate (rru) at (2.5,1.6);   \fill (rru) circle (0.7mm) node[above]{\scriptsize $v'$};
 \coordinate (rrd) at (2.5,0.8);
 \node at (3.3,0.8) {\mbox{or}};
 \node at (1.3,0.8) {\mbox{\rotatebox{45}{$\cdots$}}};
 \draw (lld) to node[fill=white,inner sep=1]{\scriptsize $\tau$} (llu);
 \draw (lld) to node[fill=white,inner sep=1]{\scriptsize $\alpha$} (lrd);
 \draw (llu) to (lru);
 \draw (lru) to (lrd);
 \draw[white,ultra thin] (llu) to node[black]{\scriptsize $\beta$} (lrd);
 \draw (rld) to (rlu);
 \draw (rld) to (rrd);
 \draw (rlu) to node[fill=white,inner sep=1]{\scriptsize $\tau$} (rru);
 \draw (rru) to node[fill=white,inner sep=1]{\scriptsize $\beta$} (rrd);
 \draw[white,ultra thin] (rlu) to node[black]{\scriptsize $\alpha$} (rrd);
 \fill (rlu) circle (0.7mm) node[above]{\scriptsize $w'$}; \fill (llu) circle (0.7mm) node[above]{\scriptsize $w$};
\end{tikzpicture}
 \hspace{3mm}
\begin{tikzpicture}
 \coordinate (lld) at (0,0);   \fill (lld) circle (0.7mm) node[below]{\scriptsize $v$};
 \coordinate (llu) at (0,0.8);
 \coordinate (lru) at (0.8,0.8);
 \coordinate (lrd) at (0.8,0);
 \coordinate (rld) at (1.7,0.8);
 \coordinate (rlu) at (1.7,1.6);
 \coordinate (rru) at (2.5,1.6);   \fill (rru) circle (0.7mm) node[above]{\scriptsize $v'$};
 \coordinate (rrd) at (2.5,0.8);
 \node at (1.3,0.8) {\mbox{\rotatebox{45}{$\cdots$}}};
 \draw (lld) to node[fill=white,inner sep=1]{\scriptsize $\tau$} (llu);
 \draw (lld) to node[fill=white,inner sep=1]{\scriptsize $\alpha$} (lrd);
 \draw (llu) to (lru);
 \draw (lru) to (lrd);
 \draw[white,ultra thin] (llu) to node[black]{\scriptsize $\beta$} (lrd); \fill (llu) circle (0.7mm) node[above]{\scriptsize $w$};
 \draw (rld) to (rlu);
 \draw (rld) to (rrd);
 \draw (rlu) to node[fill=white,inner sep=1]{\scriptsize $\beta$} (rru);
 \draw (rru) to node[fill=white,inner sep=1]{\scriptsize $\tau$} (rrd);
 \draw[white,ultra thin] (rlu) to node[black]{\scriptsize $\alpha$} (rrd); \fill (rrd) circle (0.7mm) node[below]{\scriptsize $w'$};
\end{tikzpicture}
\]
\end{figure}

\begin{definition}\cite[Definition 3.4, 3.8]{MSW2}
 The {\it band graph} $\wG_{\z}$ associated with the essential loop $\z$ is the graph obtained from $G_{\wz}$ by identifying the edges $\tau$ in the first and last squares such that $v$ corresponds to $v'$. That is, the band graph lies on an annulus or a M\"{o}bius strip. A perfect matching $P$ of $\wG_{\z}$ is called {\it good} either if $\tau \in P$ or if both edges incident to $v$ and incident to $w$ in $P$ lie on the same square. We denote by $\bP_g(\wG_{\z})$ the set of good perfect matchings of $\wG_{\z}$.
\end{definition}

 Viewing $P \in \bP_g(\wG_{\z})$ as a subset of $(G_{\wz})_1$, we can obtain $\oP \in \bP(G_{\wz})$ from $P$ by adding either the edge $\tau$ in the first square or in the last square in $\wG_{\z}$. Then it is easy to show that there is a bijection $\bP_g(\wG_{\z})$ and the set
\[
 \bP_g(G_{\wz}) := \{P \in \bP(G_{\wz}) \mid \mbox{$P$ contains $\tau$ in the first or the last triangle of $G_{\wz}$}\}
\]
given by sending $P$ to $\oP$. In particular, there is a unique good perfect matching $P_{-}(\wG_{\z})$ such that $\overline{P_{-}(\wG_{\z})} = P_{-}(G_{\wz})$, called the {\it minimal matching} (see \cite[Remark 3.9]{MSW2}).

\begin{definition}\cite[Definition 3.14]{MSW2}
 For an essential loop $\z$ in $(S,M)$, we define a Laurent polynomial
\[
 x_{\z}:=\frac{1}{\cross(T,\z)}\sum_{P \in \bP_g(\wG_{\z})}x(P)y(P).
\]
\end{definition}

 One reason to consider $x_{\z}$ is that they give rise to a base for the cluster algebra with principal coefficients obtained from a triangulated surface without punctures. Let $T$ be a triangulation of $(S,M)$. A collection of arcs and essential loops in $(S,M)$ is $\mathcal{C}^{\circ}${\it-compatible} if they do not intersect each other.

\begin{theorem}\cite[Theorem 1.1, 4.1]{MSW2}\label{base}
 Let $(S,M)$ be a marked surface without punctures and $T$ be a triangulation of $(S,M)$. Then the set
\[
 \Bigl\{ \prod_{c \in C}x_c \mid C \ \mbox{is a 
$\mathcal{C}^{\circ}$-compatible collection of} \ (S,M)\Bigr\}
\]
is a base of $\cA(T)$.
\end{theorem}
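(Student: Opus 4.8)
This statement is a restatement of \cite[Theorems 1.1 and 4.1]{MSW2}, so the plan is to recall the structure of that argument, highlighting where the results of this paper streamline its combinatorial input. The proof will split into two independent parts: first, that the displayed set spans $\cA(T)$; second, that it is linearly independent over $\bZ[y_1^{\pm 1},\ldots,y_N^{\pm 1}]$. For both parts the key new observation is that Theorem \ref{main}(1) and Theorem \ref{loopformula} pin down, for each plain arc $\de$ and each essential loop $\z$, the minimal Laurent monomial of $x_{\de}$ (resp.\ $x_{\z}$): it is the contribution of the minimal matching $A_{-}(T_{\de})$ (resp.\ $P_{-}(\wG_{\z})$), and its $y$-factor is trivial because $Y(A_{-}(T_{\de}))=\emptyset$. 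This monomial is exactly the $\mathbf g$-vector term and will serve as the book-keeping device for linear independence.

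For spanning, I would first note that, since $(S,M)$ has no punctures, every cluster variable of $\cA(T)$ has the form $x_{\de}$ for a plain tagged arc $\de$ by Theorem \ref{bij}; hence $\cA(T)$ is generated as a $\bZ[y_1^{\pm 1},\ldots,y_N^{\pm 1}]$-algebra by the $x_{\de}$, and monomials in the $x_{\de}$ span. One then clears self-intersections and mutual intersections by induction on the total geometric intersection number of the underlying multicurve, using the Ptolemy/skein relations in $\cA(T)$: whenever two components cross, the product $x_{c}x_{c'}$ is rewritten as a linear combination of products of arcs and essential loops whose multicurves have strictly fewer crossings, a resolved component becoming an essential loop exactly when it closes up. The terminal collections of this recursion are precisely the $\mathcal{C}^{\circ}$-compatible ones, which yields spanning; positivity of the relevant Laurent expansions \cite{MSW1} is used to control the signs that occur in the skein relations.

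For linear independence, I would attach to each $\mathcal{C}^{\circ}$-compatible collection $C$ the $\mathbf g$-vector of $\prod_{c\in C}x_{c}$, which by the previous paragraph equals $\sum_{c\in C}\mathbf g_{c}$, with $\mathbf g_{c}$ read off the minimal matching of $T_{c}$. Interpreting $\mathbf g_{c}$ as the vector of shear coordinates of the lamination carried by $c$ in the sense of Fomin--Thurston \cite{FT}, the assignment $C\mapsto\sum_{c\in C}\mathbf g_{c}$ is injective, because a lamination is recovered from its shear coordinates and distinct $\mathcal{C}^{\circ}$-compatible multicurves carry distinct laminations. Thus the displayed elements have pairwise distinct minimal monomials; combined with the fact that (again by Theorem \ref{main} and positivity \cite{MSW1}) each $\prod_{c\in C}x_{c}$ is a nonnegative $\bZ[y_1^{\pm 1},\ldots,y_N^{\pm 1}]$-combination of Laurent monomials every one of which dominates its $\mathbf g$-vector monomial in the natural partial order, a nontrivial vanishing linear combination would force cancellation among distinct extremal monomials, which is impossible.

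The step I expect to be the main obstacle is the injectivity of the $\mathbf g$-vector (equivalently, lamination) map on $\mathcal{C}^{\circ}$-compatible collections: this is a global statement about $(S,M)$ rather than a local matching computation, and it must be combined with a careful verification that the skein recursion terminates exactly at $\mathcal{C}^{\circ}$-compatible collections; the one-sided case, where the band graph lives on a M\"obius strip, needs separate treatment. Since all of this is carried out in full in \cite{MSW2}, the cleanest route for the present paper is to invoke those theorems directly, observing only that the Laurent polynomial $x_{\z}$ used here agrees with the one of \cite{MSW2} by Theorem \ref{loopformula} and the bijection of Theorem \ref{bijthm}.
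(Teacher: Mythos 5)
The paper does not prove this theorem; it imports it verbatim from \cite{MSW2}, as the citation in the theorem's header indicates, and your closing paragraph correctly identifies that as the cleanest route. Your reconstruction of the \cite{MSW2} argument is broadly faithful: spanning is obtained by resolving crossings of multicurves via skein relations with an induction on total intersection number, and linear independence follows by interpreting $\mathbf g$-vectors as (negated) shear coordinates and invoking the Fomin--Thurston result that an allowed lamination is recovered from its shear coordinate vector. Your observation that the minimal matching $A_{-}(T_{\de})$ contributes exactly the $\mathbf g$-vector monomial with trivial coefficient, because $Y(A_{-}(T_{\de}))=A_{-}(T_{\de})\triangle A_{-}(T_{\de})=\emptyset$, is a correct translation of that leading-term bookkeeping into the framework of the present paper, although the paper does not draw that connection here. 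One inaccuracy at the end: you do not need Theorem \ref{loopformula} or Theorem \ref{bijthm} to match the $x_{\z}$ of this paper with the one in \cite{MSW2}, since the paper adopts \cite[Definition 3.14]{MSW2} verbatim (a sum over good perfect matchings of the band graph $\wG_{\z}$); Theorem \ref{loopformula} merely re-expresses the same $x_{\z}$ via good perfect matchings of angles, so it is a reformulation, not a bridge between two a priori different definitions.
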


 In this case, we study perfect matchings of angles. For an essential loop $\z$ in $(S,M)$, we can construct a triangulated polygon $T_{\z}$ in the same way as for plain arcs, that is, it is a triangulated annulus (see Figure \ref{T closed loop}). In particular, it is not twisted unlike band graphs. Since $T_{\z}$ has the same numbers of triangles and of vertices, then $\bA(T_{\z})\neq\emptyset$.

\begin{figure}[h]
   \caption{Example of $T_{\z}$ for an essential loop $\z$}
   \label{T closed loop}
\[
 T \hspace{1mm}
\begin{tikzpicture}
 \coordinate (0) at (0,0.2);   \fill (0) circle (0.7mm);
 \coordinate (1) at (0,-0.2);   \fill (1) circle (0.7mm);
 \coordinate (u) at (0,1.3);   \fill (u) circle (0.7mm);
 \coordinate (r) at (1.3,0);   \fill (r) circle (0.7mm);
 \coordinate (d) at (0,-1.3);   \fill (d) circle (0.7mm);
 \coordinate (l) at (-1.3,0);   \fill (l) circle (0.7mm);
 \draw (0,0) circle (0.2);
 \draw (0,0) circle (1.3);
 \draw[blue] (0,0) circle (0.7) node[above=16,left=8,fill=white,inner sep=1]{\scriptsize $\z$};
 \draw (d) .. controls (-0.8,0) and (-0.5,0.5) .. node[fill=white,inner sep=1]{\scriptsize $1$} (0);
 \draw (d) .. controls (0.8,0) and (0.5,0.5) .. node[fill=white,inner sep=1]{\scriptsize $3$} (0);
 \draw (u) to node[fill=white,inner sep=1,pos=0.3]{\scriptsize $2$} (0);
 \draw (d) to node[fill=white,inner sep=1,pos=0.3]{\scriptsize $4$} (1);
 \draw (d) .. controls (-1.3,-0.7) and (-1.3,0.7) .. node[fill=white,inner sep=1]{\scriptsize $5$} (u);
 \draw (d) .. controls (1.3,-0.7) and (1.3,0.7) .. node[fill=white,inner sep=1]{\scriptsize $6$} (u);
\end{tikzpicture}
 \hspace{15mm} T_{\z} \hspace{1mm}
\begin{tikzpicture}
 \coordinate (0) at (0,0.2);
 \coordinate (1) at (0,-0.2);
 \coordinate (u) at (0,1.3);
 \coordinate (d) at (0,-1.3);
 \draw (0,0) circle (0.2);
 \draw (d) arc (-90:90:1.3) node[below=37,right=34,fill=white,inner sep=1]{\scriptsize $6$};
 \draw (d) arc [start angle = -90, delta angle = -180, radius = 1.3] node[below=37,left=34,fill=white,inner sep=1]{\scriptsize $5$};
 \draw (d) .. controls (-0.8,0) and (-0.5,0.5) .. node[fill=white,inner sep=1]{\scriptsize $1$} (0);
 \draw (d) .. controls (0.8,0) and (0.5,0.5) .. node[fill=white,inner sep=1]{\scriptsize $3$} (0);
 \draw (u) to node[fill=white,inner sep=1]{\scriptsize $2$} (0);
 \draw (d) to node[fill=white,inner sep=1]{\scriptsize $4$} (1);
\end{tikzpicture}
\]
\end{figure}
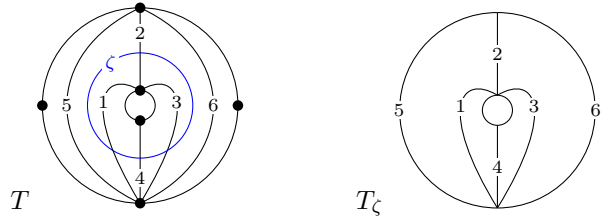

 We define {\it max-condition} as the dual min-condition.

\begin{definition}
 Let $\z$ be an essential loop in $(S,M)$. We say that a perfect matching of angles in $T_{\z}$ is {\it bad} if all angles incident to one boundary component satisfy min-condition and all angles incident to the other boundary component satisfy max-condition (see Figure \ref{badpm}). A non-bad perfect matching of angles in $T_{\z}$ is called {\it good}. We denote by $\bA_g(T_{\z})$ the set of good perfect matchings of angles in $T_{\z}$.
\end{definition}

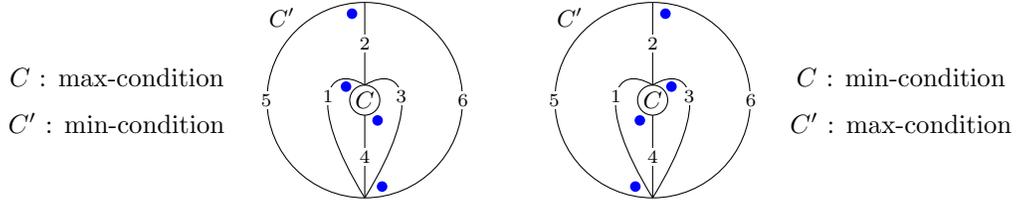
\begin{figure}[h]
   \caption{Bad perfect matchings of angles in the above $T_{\z}$ with boundary components $C$ and $C'$}
   \label{badpm}
\[
\begin{tikzpicture}
 \coordinate (0) at (0,0.2);
 \coordinate (1) at (0,-0.2);
 \coordinate (u) at (0,1.3);
 \coordinate (d) at (0,-1.3);
 \node at (0,0) {\small$C$};
 \node at (-1.1,1.1) {\small$C'$};
 \node at (-3.3,0.3) {$C$ : max-condition};
 \node at (-3.3,-0.3) {$C'$ : min-condition};
 \fill[blue] (-0.17,1.15) circle (0.7mm);
 \fill[blue] (-0.25,0.18) circle (0.7mm);
 \fill[blue] (0.23,-1.15) circle (0.7mm);
 \fill[blue] (0.17,-0.27) circle (0.7mm);
 \draw (0,0) circle (0.2);
 \draw (d) arc (-90:90:1.3) node[below=37,right=34,fill=white,inner sep=1]{\scriptsize $6$};
 \draw (d) arc [start angle = -90, delta angle = -180, radius = 1.3] node[below=37,left=34,fill=white,inner sep=1]{\scriptsize $5$};
 \draw (d) .. controls (-0.8,0) and (-0.5,0.5) .. node[fill=white,inner sep=1]{\scriptsize $1$} (0);
 \draw (d) .. controls (0.8,0) and (0.5,0.5) .. node[fill=white,inner sep=1]{\scriptsize $3$} (0);
 \draw (u) to node[fill=white,inner sep=1]{\scriptsize $2$} (0);
 \draw (d) to node[fill=white,inner sep=1]{\scriptsize $4$} (1);
\end{tikzpicture}
 \hspace{10mm}
\begin{tikzpicture}
 \coordinate (0) at (0,0.2);
 \coordinate (1) at (0,-0.2);
 \coordinate (u) at (0,1.3);
 \coordinate (d) at (0,-1.3);
 \node at (0,0) {\small$C$};
 \node at (-1.1,1.1) {\small$C'$};
 \node at (3.3,0.3) {$C$ : min-condition};
 \node at (3.3,-0.3) {$C'$ : max-condition};
 \fill[blue] (0.17,1.15) circle (0.7mm);
 \fill[blue] (0.25,0.18) circle (0.7mm);
 \fill[blue] (-0.23,-1.15) circle (0.7mm);
 \fill[blue] (-0.17,-0.27) circle (0.7mm);
 \draw (0,0) circle (0.2);
 \draw (d) arc (-90:90:1.3) node[below=37,right=34,fill=white,inner sep=1]{\scriptsize $6$};
 \draw (d) arc [start angle = -90, delta angle = -180, radius = 1.3] node[below=37,left=34,fill=white,inner sep=1]{\scriptsize $5$};
 \draw (d) .. controls (-0.8,0) and (-0.5,0.5) .. node[fill=white,inner sep=1]{\scriptsize $1$} (0);
 \draw (d) .. controls (0.8,0) and (0.5,0.5) .. node[fill=white,inner sep=1]{\scriptsize $3$} (0);
 \draw (u) to node[fill=white,inner sep=1]{\scriptsize $2$} (0);
 \draw (d) to node[fill=white,inner sep=1]{\scriptsize $4$} (1);
\end{tikzpicture}
\]
\end{figure}

 Then we have the following result.

\begin{theorem}\label{loopformula}
 Let $\z$ be an essential loop in $(S,M)$. There is a bijection $\psi_{\z} : \bP_g(\wG_{\z}) \rightarrow \bA_g(T_{\z})$ satisfying $x(P)=x(\psi_{\z}(P))$ and $y(P)=y(\psi_{\z}(P))$ for $P \in \bP_g(\wG_{\z})$. In particular, we have the equation
\[
 x_{\z}=\frac{1}{\cross(T,\z)}\sum_{A \in \bA_g(T_{\z})}x(A)y(A).
\]
\end{theorem}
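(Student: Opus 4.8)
The plan is to reuse the machinery developed for plain arcs and adapt the cut-down/cut-up argument used for $1$- and $2$-notched arcs to the annular setting. First I would recall that for the curve $\wz$ (a plain-arc-like curve obtained by cutting $\z$ at $p$), the triangulated polygon $T_{\wz}$ and the snake graph $G_{\wz}$ are related by Theorem \ref{p.m.bij}, giving a bijection $\varphi : \bA(T_{\wz}) \to \bP(G_{\wz})$ with $x(A)=x(\varphi(A))$, and by Proposition \ref{ybijgamma}, $y(A)=y(\varphi(A))$. The key combinatorial fact to establish is that $\varphi$ restricts to a bijection between $\bP_g(G_{\wz})$ — perfect matchings of $G_{\wz}$ containing $\tau$ in the first or last triangle — and a corresponding subset $\bA_g'(T_{\wz}) \subseteq \bA(T_{\wz})$ of matchings of angles in $T_{\wz}$ containing the angle of the first or last triangle opposite $\tau$. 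This is immediate from the commuting triangle in Theorem \ref{p.m.bij}, since the edge $\tau$ in the first (resp. last) square corresponds under $\overline{\varphi}$ to the angle at $v$ (resp. $v'$) opposite $\tau$.

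Next I would set up the gluing. The band graph $\wG_{\z}$ is obtained from $G_{\wz}$ by identifying the two copies of $\tau$ with $v \leftrightarrow v'$; dually, $T_{\z}$ is obtained from $T_{\wz}$ by identifying the two boundary segments labeled $\tau$ with the same vertex identification (this is exactly the non-twisted annulus in Figure \ref{T closed loop}). I would define $\psi_\z$ on a good matching $P \in \bP_g(\wG_{\z})$ by: take $\oP \in \bP_g(G_{\wz})$ (the bijection recalled just before Theorem \ref{base}), apply $\varphi^{-1}$ to get $A \in \bA_g'(T_{\wz})$, then observe that $A$ descends to a well-defined perfect matching of angles $\psi_\z(P)$ on the glued surface $T_{\z}$. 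The point is that the matching $A$ contains the angle opposite $\tau$ in exactly one of the two distinguished triangles, and after identifying the two $\tau$-segments and the vertices $v,v'$, both triangle-condition (2) and vertex-condition (1) of Definition \ref{defpm} continue to hold — this uses precisely the ``goodness'' of $P$, which by the definition of good band-graph matchings controls the local configuration at $v$ and $w$. Conversely, any $A \in \bA(T_{\z})$ lifts to $T_{\wz}$ by cutting along $\tau$; I would need to check that the lift lands in $\bA_g'(T_{\wz})$ iff $A$ is good, i.e. iff $A$ is not bad in the sense of Figure \ref{badpm}. The bad matchings are exactly the two in which the lift fails to place the opposite-$\tau$ angle in the first or last triangle — equivalently, the two dual configurations where one boundary component satisfies min-condition and the other max-condition — so removing them exactly matches the condition defining $\bP_g$.

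Once $\psi_\z$ is established as a bijection, the weight identities $x(P) = x(\psi_\z(P))$ and $y(P) = y(\psi_\z(P))$ follow: the $x$-weight is a product over edges/angles and is preserved by $\varphi$ and unaffected by the gluing (the extra edge $\tau$ added to pass from $P$ to $\oP$ carries weight $x_\tau$, which is cancelled consistently — I would track this carefully, comparing $\cross(T,\z)$ with $\cross(T,\wz)$ which differs by the factor $x_\tau$); the $y$-weight is handled by Proposition \ref{ybijgamma} together with the observation that $P_-(\wG_\z)$ corresponds to $A_-(T_\z)$ under the gluing, so the symmetric-difference description of $Y$ and $J$ transfers verbatim. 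Summing over all good matchings and invoking the definition of $x_\z$ on both sides then yields the displayed formula. The main obstacle I anticipate is the bookkeeping at the seam: precisely verifying that ``good'' on the band-graph side and ``non-bad'' on the angle side cut out the same subset, including the subtle case where $\tau \in P$ (the band graph lying on a M\"obius strip versus an annulus should not arise here since $T_\z$ is an annulus, but one must confirm the curve $\z$ under consideration is two-sided, or otherwise handle the one-sided case separately), and correctly matching the two ``boundary-edge'' exceptional matchings on each side. I would dispatch this by a careful local analysis of the first and last triangles of $T_{\wz}$ using Figure \ref{TwzGwz}, exactly paralleling Lemmas \ref{cc'}, \ref{lema} and \ref{note1} from the $1$-notched case.
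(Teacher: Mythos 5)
The high-level plan is right — use Theorem~\ref{p.m.bij} on $T_{\wz}$, transfer $\bP_g(G_{\wz})$ to the subset of $\bA(T_{\wz})$ containing the angle opposite $\tau$ in the first or last triangle, and then pass to $T_{\z}$ — and you correctly identify that $\bA_g(T_{\wz})$ and $\bP_g(G_{\wz})$ correspond under $\varphi$, and that the bad matchings are the two min/max configurations. But the central construction, the passage from $T_{\wz}$ to $T_{\z}$, is wrong, and this is not a bookkeeping issue you could ``dispatch by local analysis'' — it is a genuine gap.

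You claim $T_{\z}$ is obtained from $T_{\wz}$ by identifying its two boundary segments labelled $\tau$, ``dually'' to the way $\wG_{\z}$ is obtained from $G_{\wz}$. This does not produce $T_{\z}$. The curve $\wz$ crosses the same $n$ arcs of $T$ as $\z$, so $T_{\wz}$ is a polygon with $n$ diagonals and $n+1$ triangles, while $T_{\z}$ is an annulus with $n$ diagonals and only $n$ triangles. Identifying the two $\tau$-segments of $T_{\wz}$ yields an annulus with $n+1$ triangles in which $\tau$ is an \emph{interior} edge — in particular the vertices $w=w'$ become incident to a diagonal and would need a marked angle in any perfect matching of angles — whereas in $T_{\z}$ the segment $\tau$ lies on the outer boundary circle (see Figure~\ref{T closed loop} and the right-hand picture in Figure~\ref{abcc'}). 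So your glued surface is simply not $T_{\z}$, and the ``descent'' of the matching of angles you describe is a map into the wrong target. The paper instead \emph{removes} one of the two copies of the triangle $\triangle$ from $T_{\wz}$ and glues along the resulting copy of $\alpha$ (respectively $\beta$); crucially, which triangle to remove depends on whether the angle $b$ lies in $A$, which is what forces the case split into $\bA_g(T_{\wz})_{\refin b}$ and $\bA_g(T_{\wz})_{\reflectbox{$\notin$}\,b}$ (Lemmas in the paper parallel to \ref{cc'}--\ref{note1}). Your proposal has no analogue of this choice, and without it the map is not even well defined: after your gluing both triangle copies survive, so the condition ``$A$ contains $c$ or $c'$'' does not let $A$ descend.

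A smaller point on the bookkeeping: $\cross(T,\wz)$ and $\cross(T,\z)$ are in fact equal (the diagonals of $T_{\wz}$ and $T_{\z}$ are the same arcs), so the extra factor $x_{\tau}$ you worry about is not a discrepancy between the two $\cross$ terms. It is instead accounted for by discarding the marked angle $c_A\in\{c,c'\}$ (whose weight is $x_{c_A}=x_{\tau}$) when passing from $\bA_g(T_{\wz})$ to $\bA_g(T_{\z})$, which matches the weight $x_{\tau}$ of the edge added when passing from $P$ to $\overline{P}$. You would only recover this cancellation after first repairing the gluing.
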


 To prove Theorem \ref{loopformula}, we need some preparations. By rotational symmetry of order two, we can assume that $T_{\wz}$ is the above case in Figure \ref{TwzGwz}. Since there is a bijection between $\bP_g(\wG_{\z})$ and $\bP_g(G_{\wz})$, Theorem \ref{p.m.bij} induces a bijection between $\bP_g(\wG_{\z})$ and the set
\[
 \bA_g(T_{\wz}) := \{A \in \bA(T_{\wz}) \mid \mbox{$A$ contains $c$ or $c'$}\},
\]
where $c$ (resp., $c'$) is the angle between $\alpha$ and $\beta$ in the first (resp., last) triangle of $T_{\wz}$ (see Figure \ref{abcc'}). In particular, this bijection preserves the values of $x(-)$ and $y(-)$ by Theorem \ref{p.m.bij} and Proposition \ref{ybijgamma}. We denote by $c_A$ an angle $c$ or $c'$ contained in $A \in \bA_g(T_{\wz})$. If both $c$ and $c'$ are contained in $A$, we define $c_A=c$. We only need to construct a bijection $\psi_{\z}' : \bA_g(T_{\wz}) \rightarrow \bA_g(T_{\z})$ satisfying $x(A \setminus \{c_A\})=x(\psi_{\z}'(A))$ and $y(A \setminus \{c_A\})=y(\psi_{\z}'(A))$ for $A \in \bA_g(T_{\wz})$. Let $a$ (resp., $b$) be the angle between $\alpha$ (resp., $\beta$) and $\tau$ in the first triangle of $T_{\wz}$ (see Figure \ref{abcc'}). We denote by $\bA_g(T_{\wz})_{\reflectbox{$\in$}b}$ (resp., $\bA_g(T_{\wz})_{\reflectbox{$\notin$}b}$) the subset of elements in $\bA_g(T_{\wz})$ containing (resp., not containing) $b$, in particular, $\bA_g(T_{\wz})=\bA_g(T_{\wz})_{\reflectbox{$\in$}b}\sqcup\bA_g(T_{\wz})_{\reflectbox{$\notin$}b}$.

\begin{figure}[h]
   \caption{$T_{\wz}$ and $T_{\z}$ for an essential loop $\z$}
   \label{abcc'}
\[
 T_{\wz}
\begin{tikzpicture}
 \coordinate (l) at (0,0);   \fill (l) circle (0mm) node[left]{\scriptsize $v$};   \fill (l) circle (0mm) node[right]{\small $a$};
 \coordinate (lu) at (1,1);   \fill (lu) circle (0mm) node[left=4,below=4]{\small $c$};
 \coordinate (ru) at (2,1);   \fill (ru) circle (0mm) node[right=4,below=4]{\small $c'$};
 \coordinate (ld) at (1,-1);   \fill (ld) circle (0mm) node[left=5,below=-1]{\scriptsize $w$};   \fill (ld) circle (0mm) node[left=4,above=3]{\small $b$};
 \coordinate (rd) at (2,-1);   \fill (rd) circle (0mm) node[right=5,below=-3]{\scriptsize $w'$};   \fill (rd) circle (0mm) node[right=4.5,above=3]{\small $$};
 \coordinate (r) at (3,0);   \fill (r) circle (0mm) node[right]{\scriptsize $v'$};   \fill (r) circle (0mm) node[left=2]{\small $$};
 \draw (l) to node[fill=white,inner sep=1]{\scriptsize $\alpha$} (lu);
 \draw (l) to node[fill=white,inner sep=1]{\scriptsize $\tau$} (ld);
 \draw (lu) to node[fill=white,inner sep=1]{\scriptsize $\beta$} (ld);
 \draw (lu) to (ru);
 \draw (ld) to (rd);
 \draw (ru) to node[fill=white,inner sep=1]{\scriptsize $\beta$} (r);
 \draw (rd) to node[fill=white,inner sep=1]{\scriptsize $\tau$} (r);
 \draw (ru) to node[fill=white,inner sep=1]{\scriptsize $\alpha$} (rd);
\end{tikzpicture}
 \hspace{10mm} T_{\z}
\begin{tikzpicture}
 \coordinate (u) at (0,0);
 \coordinate (s) at (-0.3,0.3);
 \coordinate (l) at (-2.2,0.3);
 \coordinate (ld) at (0,-1.9);
 \node at (-1.5,-0.55) {\rotatebox{-60}{$\cdots$}};
 \node at (1.5,-0.55) {\rotatebox{60}{$\cdots$}};
 \node at (-0.45,-1.7) {\small $a$};
 \node at (0.45,-1.7) {\small $b$};
 \node at (0,-0.5) {\small $c$};
 \node at (0,0.2) {\small $u$};
 \draw (s) arc (-180:0:0.3);
 \draw (ld) node[fill=white,inner sep=1]{} arc (-90:0:2.2);
 \draw (l) arc (-180:-90:2.2) node[fill=white,inner sep=1]{\scriptsize $\tau$};
 \draw (u) -- node[fill=white,inner sep=1,pos=0.65]{\scriptsize $\alpha_1${\tiny $=$}$\alpha$} (-110:1.9);
 \draw (u) -- node[fill=white,inner sep=1,pos=0.55]{\scriptsize $\alpha_0${\tiny $=$}} node[fill=white,inner sep=1,pos=0.7]{\scriptsize $\beta${\tiny $=$}$\beta_1$} (-70:1.9);
 \draw (u) -- node[fill=white,inner sep=1]{\scriptsize $\alpha_2$} (-140:2);
 \draw (u) -- node[fill=white,inner sep=1]{\scriptsize $\beta_2$} (-40:2);
 \draw (u) -- node[fill=white,inner sep=1]{\scriptsize $\alpha_s$} (-180:2.2);
 \draw (u) -- node[fill=white,inner sep=1]{\scriptsize $\beta_t$} (0:2.2);
 \draw (-2.2,0) -- (-1.5,0.3);
 \draw (2.2,0) -- (1.5,0.3);
\end{tikzpicture}
\]
\end{figure}

 Let $A \in \bA_g(T_{\wz})_{\reflectbox{$\in$}b}$. Then $c' \in A$ follows from the definition of $\bA_g(T_{\wz})$, that is $c_A=c'$.  The triangulated annulus $T_{\z}$ is obtained from $T_{\wz}$ by removing the last triangle in $T_{\wz}$ and by identifying the edges $\alpha$ in the first triangle and in the last triangle. It is easy to show that this construction induces a natural map $\psi_{\z}^{\reflectbox{$\in$}b} : \bA_g(T_{\wz})_{\reflectbox{$\in$}b} \rightarrow \bA(T_{\z})$. Abusing notation, let $a$ (resp., $b$, $c$) be the angle between $\tau$ and $\alpha$ (resp., $\tau$ and $\beta$, $\alpha$ and $\beta$) in $T_{\z}$. We denote by $u$ the common endpoint of $\alpha$ and $\beta$ in $T_{\z}$. Let $\alpha_s,\ldots,\alpha_1=\alpha,\alpha_0=\beta=\beta_1,\ldots,\beta_t$ be all arcs incident to $u$ winding counter-clockwisely around $u$ (see Figure \ref{abcc'}).

\begin{lemma}\label{bin}
 For $A \in \bA_g(T_{\wz})_{\reflectbox{$\in$}b}$, then $\psi_{\z}^{\reflectbox{$\in$}b}(A) \in \bA_g(T_{\z})$. Moreover, the map $\psi_{\z}^{\reflectbox{$\in$}b}$ induces a bijection between $\bA_g(T_{\wz})_{\reflectbox{$\in$}b}$ and the set
\[
 \bA_g(T_{\z})_{\reflectbox{$\in$}b} := \{A' \in \bA_g(T_{\z}) \mid b \in A'\}.
\]
\end{lemma}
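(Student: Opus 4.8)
The plan is to combine the local structure of a matching near the triangle $\triangle_0$ with an explicit inverse for $\psi_{\z}^{\reflectbox{$\in$}b}$, the latter being: cut $T_{\z}$ open along the diagonal $\alpha$ and re-attach a copy $\triangle_n$ of $\triangle$ carrying the marked angle $c'$. The recurring observation is that $a$, $b$, $c$ are the three angles of $\triangle_0$, so in any perfect matching of angles exactly one of them is marked; hence $b\in A$ forces $a,c\notin A$, and the goodness of $A$ (which means $c\in A$ or $c'\in A$) gives $c'\in A$. Thus $c'$ is the marked angle of $\triangle_n$, and it sits at the interior vertex $z_n$ of $\triangle_n$ rather than at $v'$ or $w'$; this is exactly what makes the deletion of $\triangle_n$ leave a perfect matching of angles of $T_{\z}$, with the amalgamated vertex $u=z_0=z_n$ carrying the (unique) angle of $A$ at $z_0$ (which lies in $\triangle_1$ since $c\notin A$) and $v=w'$ carrying the angle of $A$ at $w'$ (which lies in $\triangle_{n-1}$). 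In particular $b\in\psi_{\z}^{\reflectbox{$\in$}b}(A)$, which re-proves well-definedness into $\bA(T_{\z})$.

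Next I would upgrade ``$\psi_{\z}^{\reflectbox{$\in$}b}(A)\in\bA(T_{\z})$'' to ``$\psi_{\z}^{\reflectbox{$\in$}b}(A)\in\bA_g(T_{\z})$'' by showing that \emph{no bad perfect matching of angles of $T_{\z}$ contains $b$}; since $b\in\psi_{\z}^{\reflectbox{$\in$}b}(A)$ this places $\psi_{\z}^{\reflectbox{$\in$}b}(A)$ in $\bA_g(T_{\z})_{\reflectbox{$\in$}b}$. Here $b$ is the angle of $\triangle_0$ at the boundary vertex $w$ between the boundary segment $\tau$ and the diagonal $\beta$; running through the two bad matchings of Figure~\ref{badpm} and the definitions of the min- and max-condition at the vertices $v$, $w$, $u$ of $\triangle_0$, one checks that in each the marked angle of $\triangle_0$ is $a$ (at $v$) or $c$ (at $u$), never $b$. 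This little case analysis, which hinges on the precise counterclockwise order of the angles around $v$, $w$ and $u$ in the annulus $T_{\z}$, is the step I expect to be the most delicate.

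Finally, bijectivity. Injectivity of $\psi_{\z}^{\reflectbox{$\in$}b}$ is immediate, since the recipe above recovers $A$ from $\psi_{\z}^{\reflectbox{$\in$}b}(A)$. For surjectivity onto $\bA_g(T_{\z})_{\reflectbox{$\in$}b}$, take $A'\in\bA(T_{\z})$ with $b\in A'$ (by the previous paragraph this is the same as $A'\in\bA_g(T_{\z})_{\reflectbox{$\in$}b}$) and apply the recipe. What must be checked is that the cut frees the vertex $z_n$, so that the new $\triangle_n$ is forced to carry $c'$ and one lands in $\bA_g(T_{\wz})_{\reflectbox{$\in$}b}$: since in $T_{\z}$ the triangle $\triangle_{n-1}$ is glued to $\triangle_0$ along $\alpha$, both $v$ and $u$ are vertices of $\triangle_{n-1}$; as $a\notin A'$ and $v$ is incident in $T_{\z}$ only to $\triangle_0$ and $\triangle_{n-1}$, the marked angle of $A'$ at $v$ lies in $\triangle_{n-1}$, so the marked angle of $\triangle_{n-1}$ is not at $u$, and a short cascade around the triangles meeting at $u$ then shows the marked angle of $A'$ at $u$ lies on the $\triangle_1$-side of $\alpha$, hence goes to $z_0$ under the cut. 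The recipe thus produces $A\in\bA(T_{\wz})$ with $c'\in A$ and $b\in A$, i.e.\ $A\in\bA_g(T_{\wz})_{\reflectbox{$\in$}b}$, with $\psi_{\z}^{\reflectbox{$\in$}b}(A)=A'$. A small separate check handles the degenerate configurations (small $n$, or $u$ a vertex of every triangle of $T_{\z}$), where $T_{\wz}$ and $T_{\z}$ are best compared by direct inspection.
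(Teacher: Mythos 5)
There is a genuine gap, and it lies in the step the student flagged as delicate. The claim that \emph{no bad perfect matching of angles of $T_{\z}$ contains $b$} is false. Since $b$ is the angle of $\triangle_0$ at $w$ between the boundary segment $\tau$ and the diagonal $\beta$, it is adjacent to a boundary edge, hence $b$ is necessarily the first or the last angle in the counterclockwise order at $w$; i.e.\ $b$ satisfies the min- or the max-condition at $w$ (with the paper's conventions, the max-condition). Consequently, exactly one of the two bad matchings of $T_{\z}$ — the one whose boundary component through $w$ is the all-max component — does contain $b$. One can verify this directly in the worked example of Figures~\ref{T closed loop} and~\ref{badpm}: for any choice of $\triangle$, one of the two bad matchings pictured marks the angle of the copy of $\triangle$ at the endpoint of $\tau$ lying on the $\beta$-side, which is $b$. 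So the asserted case analysis cannot come out as claimed.

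This failure propagates. The well-definedness into $\bA_g(T_{\z})$ does not follow from $b\in\psi_{\z}^{\reflectbox{$\in$}b}(A)$ alone: the hypothesis $c'\in A$ is essential. It is what guarantees that the angle of $\psi_{\z}^{\reflectbox{$\in$}b}(A)$ at $u$ is not the angle between $\alpha_s$ and a boundary segment, i.e.\ fails the min-condition at $u$; that rules out the bad matching with the $u$-component all-min, while $b$ being the max angle at $w$ rules out the other, and both facts are needed. The surjectivity step has the same defect: the equality $\{A'\in\bA(T_{\z})\mid b\in A'\}=\bA_g(T_{\z})_{\reflectbox{$\in$}b}$ used there is false, the left-hand set containing exactly one additional bad matching, so if the recipe really produced a preimage for every $A'$ with $b\in A'$, the bijection would be off by one. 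In fact the ``cascade around the triangles meeting at $u$'' must fail for that one bad matching, and the only way to see that it does is to invoke goodness of $A'$, exactly as the paper does when it treats the case that the marked angle of $A'$ at $u$ is between $\alpha_s$ and a boundary segment: there the cascade forces $A'$ to be bad, giving the needed contradiction. In short, goodness must enter both the well-definedness and the surjectivity arguments; deducing it from $b\in A$ alone does not work.
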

\begin{proof}
 Since $c' \in A$, then $\psi_{\z}^{\reflectbox{$\in$}b}(A)$ does not contain the angle between $\alpha_s$ and a boundary segment incident to $u$. Thus the angle incident to $u$ in $\psi_{\z}^{\reflectbox{$\in$}b}(A)$ does not satisfy min-condition. Since $b \in \psi_{\z}^{\reflectbox{$\in$}b}(A)$ satisfies max-condition, $\psi_{\z}^{\reflectbox{$\in$}b}(A)$ is good.

 By construction, there is a bijection between $\bA_g(T_{\wz})_{\reflectbox{$\in$}b}$ and the set
\begin{equation}\label{setalpha}
 \{A' \in \bA_g(T_{\z})_{\reflectbox{$\in$}b} \mid \mbox{$A'$ contains the angle between $\beta_i$ and $\beta_{i+1}$ for some $i \in [1,t-1]$}\}.
\end{equation}
 Let $A' \in \bA_g(T_{\z})_{\reflectbox{$\in$}b}$. If $A'$ contains the angle between $\alpha_j$ and $\alpha_{j+1}$ for some $j \in [1,s-1]$, $A'$ must contain the angle between $\alpha_j$ and the boundary segment of the triangle with sides $\alpha_j$ and $\alpha_{j-1}$. Continuing this process, $A'$ contains $a$, and it contradicts $b \in A'$. If $A'$ contains the angle between $\alpha_s$ and a boundary segment incident to $u$, then $A'$ must contain the angle between $\alpha_i$ and the boundary segment of the triangle with sides $\alpha_i$ and $\alpha_{i+1}$ for $[1,s-1]$. Then by the same argument for the other endpoint $u' \neq u$ of $\alpha_s$, the angle of $A'$ incident to $u'$ satisfies max-condition. Continuing this process, $A'$ consists only of exterior angles whose angles incident to the boundary with $u$ satisfy min-condition and angles incident to the boundary with $\tau$ satisfy max-condition. It contradicts that $A'$ is good. Therefore any $A' \in \bA_g(T_{\z})_{\reflectbox{$\in$}b}$ satisfies the condition of (\ref{setalpha}), thus the set (\ref{setalpha}) and $\bA_g(T_{\z})_{\reflectbox{$\in$}b}$ coincide.
\end{proof}

 Let $A \in \bA_g(T_{\wz})_{\reflectbox{$\notin$}b}$. Then $c \in A$ follows from the definition of perfect matchings of angles, that is $c_A=c$.  The triangulated annulus $T_{\z}$ is obtained from $T_{\wz}$ by removing the first triangle in $T_{\wz}$ and by identifying the edges $\beta$ in the first triangle and in the last triangle. In particular, $c'$ in $T_{\wz}$ corresponds to $c$ in $T_{\z}$. It is easy to show that this construction induces a natural map $\psi_{\z}^{\reflectbox{$\notin$}b} : \bA_g(T_{\wz})_{\reflectbox{$\notin$}b} \rightarrow \bA(T_{\z})$.

\begin{lemma}
 For $A \in \bA_g(T_{\wz})_{\reflectbox{$\notin$}b}$, then $\psi_{\z}^{\reflectbox{$\notin$}b}(A) \in \bA_g(T_{\z})$. In particular, the map $\psi_{\z}^{\reflectbox{$\notin$}b}$ induces a bijection between $\bA_g(T_{\wz})_{\reflectbox{$\notin$}b}$ and the set
\[
\bA_g(T_{\z})_{\reflectbox{$\notin$}b} := \{A' \in \bA_g(T_{\z}) \mid b \notin A'\}.
\]
\end{lemma}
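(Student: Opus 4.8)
The plan is to prove this lemma as the exact dual of Lemma~\ref{bin}, interchanging the roles of the first and last triangles of $T_{\wz}$ (equivalently, of $\alpha$ and $\beta$, and of the min- and max-conditions). Recall that for $A \in \bA_g(T_{\wz})_{\reflectbox{$\notin$}b}$ we already know $c \in A$ (so $c_A = c$), and that $\psi_{\z}^{\reflectbox{$\notin$}b}$ deletes the first triangle of $T_{\wz}$ and identifies its edge $\beta$ with the edge $\beta$ of the last triangle, so that the angle $c'$ of $T_{\wz}$ becomes the angle $c$ of $T_{\z}$ while the angle $c$ of $T_{\wz}$ is destroyed. Thus the lemma only needs two things: that the image is good, and that $\psi_{\z}^{\reflectbox{$\notin$}b}$ is a bijection onto $\bA_g(T_{\z})_{\reflectbox{$\notin$}b}$.

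First I would check goodness. Write $C$ for the boundary component of the annulus $T_{\z}$ through $u$ and $C'$ for the other one (the one carrying $\tau$). Since $b\notin A$, the vertex $w$ of $T_{\wz}$ is matched in $A$ to the exterior angle between the diagonal $\beta$ and the adjacent boundary segment; this angle survives in $\psi_{\z}^{\reflectbox{$\notin$}b}(A)$ as an exterior angle on $C'$ which is not last in the counterclockwise order around its vertex, so $C'$ does not satisfy the max-condition. On the other hand, because $c\in A$ is destroyed, the vertex $u=[lu=ru]$ of $T_{\z}$ inherits the match of $ru$ in $A$, which is $c'$ (between $\alpha_1$ and $\beta_1$) or the angle between $\alpha_1$ and $\alpha_2$; in either case this is an \emph{interior} angle of the arc-fan at $u$, hence not first in the counterclockwise order, so $C$ does not satisfy the min-condition. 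Since $C$ is not all-min and $C'$ is not all-max, neither of the two configurations making a matching bad can occur; as $b\notin\psi_{\z}^{\reflectbox{$\notin$}b}(A)$ is clear, we get $\psi_{\z}^{\reflectbox{$\notin$}b}(A)\in\bA_g(T_{\z})_{\reflectbox{$\notin$}b}$.

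Next comes bijectivity. Injectivity is immediate since $\psi_{\z}^{\reflectbox{$\notin$}b}$ is reversed by cutting $T_{\z}$ open along $\beta$ and re-inserting the first triangle with marked angle $c$. For surjectivity, given $A'\in\bA_g(T_{\z})_{\reflectbox{$\notin$}b}$ I would perform this inverse construction and verify that the resulting $A\in\bA(T_{\wz})$ still omits $b$ and is still good; this is the step where, exactly as in the proof of Lemma~\ref{bin}, one runs a chain argument on forced marked angles. If the construction ever forced $b\in A$, then retracing would force $A'$ onto the min-angle at $u$ while leaving a contradiction on $C'$; and if $A'$ consisted only of exterior angles propagating min-condition along one boundary and max-condition along the other, $A'$ would itself be bad. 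Both possibilities being excluded, every $A'\in\bA_g(T_{\z})_{\reflectbox{$\notin$}b}$ has a unique preimage.

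The step I expect to be the main obstacle is precisely this bookkeeping: correctly identifying which boundary component of $T_{\z}$ carries $u$ and which carries $w$, confirming that the exterior angle at $w$ and the inherited interior angle at $u$ genuinely violate the max- and min-conditions respectively, and pushing through the forced-angle chain in the surjectivity argument (the analogue of the two contradictions at the end of the proof of Lemma~\ref{bin}). Once this lemma is in place, together with Lemma~\ref{bin} it yields the bijection $\psi_{\z}' : \bA_g(T_{\wz}) = \bA_g(T_{\wz})_{\reflectbox{$\in$}b}\sqcup\bA_g(T_{\wz})_{\reflectbox{$\notin$}b} \to \bA_g(T_{\z})$, after which $x$- and $y$-invariance (and hence Theorem~\ref{loopformula}) follows from Theorem~\ref{p.m.bij} and Proposition~\ref{ybijgamma}, since the construction deletes exactly the angle $c$ (already removed on the left-hand side via $c_A$) and leaves every remaining angle with the same opposite side.
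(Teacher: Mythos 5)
Your goodness argument has a genuine gap. You conclude from ``$C$ is not all-min and $C'$ is not all-max'' that the image is good, but the definition of a bad matching allows \emph{either} assignment of min/max to the two boundary components (see Figure~\ref{badpm}), so this excludes only the configuration ($C$ all-min, $C'$ all-max) and says nothing about ($C$ all-max, $C'$ all-min). Worse, the two facts you feed into this conclusion are themselves not right. The angle inherited at $u$ from $ru$ need not be interior: in $T_{\wz}$ the vertex $ru$ carries the angles $c'$, the interior angles $(\alpha_i,\alpha_{i+1})$, \emph{and} the exterior angle $(\alpha_s,\text{boundary})$, and nothing forces the matching to avoid the last one --- after gluing that exterior angle is precisely the first angle (min) around $u$, so $C$ could in fact satisfy the min-condition at $u$. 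Similarly, the assertion that ``$w$ is matched in $A$ to the exterior angle between $\beta$ and the adjacent boundary segment'' cannot be right as stated, since that angle is $b$ itself and you have just assumed $b\notin A$; the matching at $w$ could be any of the other angles at $w$, exterior or interior.

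The paper's proof avoids all of this by establishing the one statement that actually suffices and is true: the angle inherited at $u$ (being one of the angles at $ru$, all of which lie on the $\alpha$-side of the fan) can never be the max angle $(\beta_t,\text{boundary})$, so $C$ is not all-max; and the max angle $b$ at the other vertex $w$ is never in $\psi_{\z}^{\reflectbox{$\notin$}b}(A)$, so $C'$ is not all-max either. Having \emph{both} boundary components fail the max-condition kills both bad configurations in one stroke, which is exactly the point your min/max split misses. Your injectivity remark and the sketch of the forced-angle chain for surjectivity are consistent in spirit with the paper, which likewise reduces the chain argument to the one already carried out for Lemma~\ref{bin}, but the goodness step needs to be replaced by the two ``not max'' statements above.
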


\begin{proof}
 Since $c \in A$, then $\psi_{\z}^{\reflectbox{$\notin$}b}(A)$ does not contain the angle between $\beta_t$ and a boundary segment incident to $u$. Thus $\psi_{\z}^{\reflectbox{$\notin$}b}(A)$ is good since $b \notin \psi_{\z}^{\reflectbox{$\notin$}b}(A)$.

 By construction, there is a bijection between $\bA_g(T_{\wz})_{\reflectbox{$\notin$}b}$ and the set
\begin{equation}\label{setbeta}
 \{A' \in \bA_g(T_{\z})_{\reflectbox{$\notin$}b} \mid \mbox{$A'$ does not contain the angles between $\beta_i$ and $\beta_{i+1}$ for all $i \in [1,t-1]$}\}.
\end{equation}
  Let $A' \in \bA_g(T_{\z})_{\reflectbox{$\notin$}b}$. If $c \in A'$, it satisfies the condition of (\ref{setbeta}). Suppose that $a \in A'$. Then, in the same way as the proof of Lemma \ref{bin}, $A'$ satisfies the condition of (\ref{setbeta}). Therefore, the set (\ref{setbeta}) and $\bA_g(T_{\z})_{\reflectbox{$\notin$}b}$ coincide. Thus the assertion holds.
\end{proof}

\begin{proof}[Proof of Theorem \ref{loopformula}]
  We have decompositions $\bA_g(T_{\wz})=\bA_g(T_{\wz})_{\reflectbox{$\in$}b}\sqcup\bA_g(T_{\wz})_{\reflectbox{$\notin$}b}$ and $\bA_g(T_{\z})=\bA_g(T_{\z})_{\reflectbox{$\in$}b}\sqcup\bA_g(T_{\z})_{\reflectbox{$\notin$}b}$. We define the map $\psi_{\z}' : \bA_g(T_{\wz}) \rightarrow \bA_g(T_{\z})$ by
\[
   \psi_{\z}'(A) = \left\{
     \begin{array}{ll}
 \psi_{\z}^{\reflectbox{$\in$}b}(A) & \mbox{if $A \in \bA_g(T_{\wz})_{\reflectbox{$\in$}b}$},\\
 \psi_{\z}^{\reflectbox{$\notin$}b}(A) & \mbox{if $A \in \bA_g(T_{\wz})_{\reflectbox{$\notin$}b}$.}
     \end{array} \right.
\]
 By Lemmas \ref{setalpha} and \ref{setbeta}, $\psi_{\z}'$ is bijective. It satisfies $x(A \setminus \{c_A\})=x(\psi_{\z}'(A))$ and $y(A \setminus \{c_A\})=y(\psi_{\z}'(A))$ for $A \in \bA_g(T_{\wz})$ since $\psi_{\z}^{\reflectbox{$\in$}b}$ and $\psi_{\z}^{\reflectbox{$\notin$}b}$ are natural maps. Therefore, we have a bijection
\[
 \psi_{\z} :
\SelectTips{eu}{}
\xymatrix@R=0.1mm{
 \bP_g(\wG_{\z}) \ar[r] & \bP_g(G_{\wz}) \ar[r] & \bA_g(T_{\wz}) \ar[r] & \bA_g(T_{\z})\\
 \rotatebox{90}{$\in$} & \rotatebox{90}{$\in$} &  \rotatebox{90}{$\in$} & \rotatebox{90}{$\in$}\\
 P \ar@{|->}[r] & \oP \ar@{|->}[r] & \varphi_{\wz}(\oP) \ar@{|->}[r] & \psi_{\z}'\phi_{\wz}(\oP),
}
\]
 where $\varphi_{\wz}$ is the bijection between $\bP_g(G_{\wz})$ and $\bA_g(T_{\wz})$ induced by Theorem \ref{p.m.bij}, satisfying $x(P)=x(\psi_{\z}(P))$ and $y(P)=y(\psi_{\z}(P))$.
\end{proof}

\medskip\noindent{\bf Acknowledgements}.
The author is a Research Fellow of Society for the Promotion of Science (JSPS). This work was supported by JSPS KAKENHI Grant Number JP17J04270.\par
The author would like to thank Laurent Demonet and his supervisor Osamu Iyama for their guidance and helpful advice.


\end{document}